\RequirePackage{fix-cm}
\documentclass[11pt]{imsart}
\usepackage[T1]{fontenc}
\usepackage[utf8]{inputenc}

\usepackage[margin=1in]{geometry}
\usepackage{microtype}

\RequirePackage{amsthm,amsmath,amsfonts,amssymb}
\RequirePackage[numbers]{natbib}
\RequirePackage[colorlinks,citecolor=blue,urlcolor=blue]{hyperref}

\usepackage{comment} 
\usepackage{graphicx}

\usepackage{subcaption}
\usepackage{cleveref}
\crefname{figure}{Figure}{Figure}

\usepackage{booktabs}
\usepackage{calc}

\usepackage{algorithm}
\usepackage{algpseudocode}

\providecommand{\tightlist}{%
  \setlength{\itemsep}{0pt}\setlength{\parskip}{0pt}}

\allowdisplaybreaks 

\startlocaldefs
\theoremstyle{plain}

\newtheorem{theorem}{Theorem}[section]
\newtheorem{lemma}[theorem]{Lemma}
\newtheorem{proposition}[theorem]{Proposition}
\newtheorem{corollary}[theorem]{Corollary}
\theoremstyle{definition}
\newtheorem{definition}[theorem]{Definition}
\newtheorem{assumption}{Assumption}
\newtheorem*{remark}{Remark}
\newtheorem{example}{Example}
\newtheorem*{fact}{Fact}
\newcounter{fact}[section]
\newenvironment{property}[1][]{\refstepcounter{fact}\par
   \noindent \textbf{P\thefact. #1} \rmfamily}{\smallskip}
\labelformat{fact}{P#1}

\usepackage{amsmath}
\DeclareMathOperator*{\argmax}{arg\,max}

\newcommand{\DD}{{\mathcal D}}
\newcommand{\dd}{{\operatorname d}}
\newcommand{\F}{{\mathcal F}}

\newcommand{\PP}{{\mathbb P}}
\newcommand{\QQ}{{\mathbb Q}}
\newcommand{\R}{{\mathbb R}}
\newcommand{\bSS}{{\mathbb S}}

\newcommand{\vol}{{\operatorname{vol}}}
\newcommand{\Sec}{{\operatorname{Sec}}}

\newcommand{\grad}{{\operatorname{grad}}}

\newcommand{\hess}{{\mathcal{H}}}
\newcommand{\supp}{{\operatorname{Supp}}}
\newcommand{\tang}{{\operatorname{tang}}}

\newcommand{\tv}{{\operatorname{TV}}}

\endlocaldefs

\begin{document}

\begin{frontmatter}
\title{Horospherical Depth and Busemann Median on Hadamard Manifolds}
\runtitle{Horospherical Depth on Hadamard Manifolds}

\begin{aug}
\author{\fnms{Yangdi}~\snm{Jiang}\ead[label=e1]{yangdi.jiang@ntu.edu.sg}},
\author{\fnms{Xiaotian}~\snm{Chang}\ead[label=e2]{xiaotian.chang@ntu.edu.sg}}
\and
\author{\fnms{Cyrus}~\snm{Mostajeran}\ead[label=e3]{cyrussam.mostajeran@ntu.edu.sg}}
\address{Division of Mathematical Sciences,
Nanyang Technological University
\printead[presep={,\ }]{e1,e2,e3}}

\end{aug}

\begin{abstract}
We introduce the \emph{horospherical depth}, an intrinsic notion of statistical depth on Hadamard manifolds, and define the \emph{Busemann median} as the set of its maximizers. The construction exploits the fact that the linear functionals appearing in Tukey’s half-space depth are themselves limits of renormalized distance functions; on a Hadamard manifold the same limiting procedure produces Busemann functions, whose sublevel sets are horoballs, the intrinsic replacements for halfspaces. The resulting depth is parametrized by the visual boundary, is isometry-equivariant, and requires neither tangent-space linearization nor a chosen base point. For arbitrary Hadamard manifolds, we prove that the depth regions are nested and geodesically convex, that a centerpoint of depth at least $1/(d+1)$ exists, and hence that the Busemann median exists for every Borel probability measure. Under strictly negative sectional curvature and mild regularity assumptions, the depth is strictly quasi-concave and the median is unique. We also establish robustness: the depth is stable under total-variation perturbations, and under contamination escaping to infinity the limiting median depends on the escape direction but not on how far the contaminating mass has moved along the geodesic ray, in contrast with the Fr\'echet mean. Finally, we establish uniform consistency of the sample depth and convergence of sample depth regions and sample Busemann medians; on symmetric spaces of noncompact type, the argument proceeds through a VC analysis of upper horospherical halfspaces, while on general Hadamard manifolds it follows from a compactness argument under a mild non-atomicity assumption.
\end{abstract} 



\end{frontmatter}
\tableofcontents


\section{Introduction}
\label{sec:intro}

\subsection{A Euclidean benchmark}
Tukey's halfspace depth, going back to Tukey's center-outward view of multivariate data, is one of the benchmark notions in multivariate nonparametric statistics \cite{tukey1975mathematics, donoho1992breakdown, rousseeuw1999depth, zuo2000general}. It simultaneously supplies a multivariate rank, a family of depth contours, and a robust location estimator through the Tukey median. Because it packages ordering, geometry, and robustness in a single construction, it remains a natural standard against which newer depth notions are compared \cite{zuo2000general, zuo2000structural}. For a Borel probability measure $\PP$ on $\R^d$ and a point $z \in \R^d$, it is defined by
\begin{equation}\label{eqn:tukey_depth}
D^T(z;\PP)
= \inf_{u \in S^{d-1}} \PP\bigl(\{x \in \R^d : \langle u,x\rangle \le \langle u,z\rangle\}\bigr).
\end{equation}
Points maximizing $D^T(\cdot;\PP)$ are Tukey medians \cite{donoho1992breakdown}. The force of \eqref{eqn:tukey_depth} lies not only in providing a center-outward ordering of $\R^d$, but in the geometric structure built into the definition. The depth regions are convex and nested \cite{donoho1992breakdown}; a point of depth at least $1/(d+1)$ always exists \cite{rado1946theorem, rousseeuw1999depth}; and the deepest points enjoy strong robustness properties \cite{donoho1992breakdown, chen2002influence}. The axiomatic framework of \cite{zuo2000general} makes the same point from another angle: affine invariance, maximality at a center, monotonicity away from a deepest point, and vanishing at infinity all hold because the relevant sets in \eqref{eqn:tukey_depth} are halfspaces, hence convex. Any intrinsic extension to curved spaces must therefore identify the right substitute for halfspaces, not only the right substitute for the median.

\subsection{The manifold problem and related work}

Many modern datasets are not vectors in $\R^d$ but points on manifolds or other non-Euclidean spaces. In such settings one still wants a notion of location, a center-outward ordering, and robust summaries that respect the intrinsic geometry of the sample space. This perspective underlies a substantial literature on manifold-valued statistics, where Fr\'echet means, intrinsic and extrinsic means, and related $L^p$-type centers serve as basic location summaries \cite{frechet1948elements, bhattacharya2003large, pennec2006intrinsic, afsari2011riemannian, bacak2014computing}. Representative applications include diffusion tensor imaging  \cite{fletcher2007riemannian}, hyperbolic representation learning for hierarchical data \cite{nickel2017poincare}, and structured object spaces such as phylogenetic tree space \cite{billera2001geometry}. Recent work on robust Fr\'echet-type procedures underscores the same need from a different angle \cite{lee2025huber}.

Hadamard manifolds are a natural setting in which to seek depth-based analogues of Euclidean location. Their nonpositive curvature guarantees unique geodesics between points and preserves a useful notion of convexity, so optimization-based estimators are globally well behaved \cite{bridson2013metric, afsari2011riemannian, bacak2014computing}. At the same time, the dominant statistical tools on such spaces remain distance-based. Fr\'echet-type estimators summarize data through minimization of $d(x,\cdot)$ or $d^2(x,\cdot)$, whereas a depth function would provide an order-based notion of centrality together with contours, centerpoint guarantees, and outlyingness diagnostics. This distinction matters especially when robustness is a primary concern, because contamination can interact with distance in ways that are qualitatively different from order-based depth.

Existing notions of depth recover only part of the Tukey theory. Lens depth \cite{liu2011lens}, and its extensions to general metric spaces \cite{cholaquidis2023weighted, cholaquidis2023level}, is built from metric balls and preserves a useful notion of centrality, but it does not retain the halfspace geometry that drives convex depth regions and centerpoint results. Metric halfspace depth \cite{dai2023tukey} is closer in spirit: it replaces a Euclidean halfspace by a metric comparison set of the form $\{x : d(x,z) \le d(x,y)\}$. This gives a broadly applicable and isometry-invariant construction, but its very generality comes at a price: because it depends only on the metric, the resulting regions need not reflect the geodesic convexity of the ambient manifold, and the classical $1/(d+1)$ centerpoint bound is not available in that framework. Tangent-space depth \cite{rusciano2018riemannian} restores a centerpoint theorem by projecting to tangent spaces, but it depends on a chosen base point and is therefore not fully intrinsic. What remains missing is a notion of halfspace depth on Hadamard manifolds that is simultaneously intrinsic, isometry-equivariant, and strong enough to recover the structural features that make Tukey depth effective.

\subsection{Horospheres as intrinsic halfspaces}
Our starting point is that the linear functionals in \eqref{eqn:tukey_depth} already admit a metric interpretation. Fix $u \in S^{d-1}$, where $S^{d-1}$ denotes the $(d-1)$-dimensional unit sphere in $\R^d$. Then
\[
\|x-tu\|-t \to -\langle u,x\rangle \qquad \text{as } t\to\infty.
\]
Thus a Euclidean halfspace can be read as a super-level set of a limit of renormalized distance functions. On a Hadamard manifold, the same limiting procedure along a geodesic ray produces a Busemann function, introduced by \cite{busemann1955geometry} and standard in the geometry of nonpositive curvature \cite{bridson2013metric}. Its level sets are horospheres, its sublevel sets are horoballs, and horoballs are geodesically convex \cite{heintze1977geometry}. The relevant directions are parametrized by the visual boundary $\partial X$ \cite{eberlein1973visibility}. These objects are not ad hoc analogues of Euclidean notions; they arise from the same asymptotic distance construction that in flat space collapses to linear functionals.

This suggests the definition 
\[
D(z;\PP)=\inf_{\xi\in\partial X} \PP\big(\{x\in X:B_\xi(x)\ge B_\xi(z)\}\big),
\] 
which we call the \emph{horospherical depth}. Its maximizers form the \emph{Busemann median}. In Euclidean space this reduces to Tukey's halfspace depth; on a Hadamard manifold it replaces Euclidean directions by points at infinity and halfspaces by the sides of horospheres, while the resulting depth regions are intersections of horoballs. The construction is intrinsic: changing the base point in the definition of $B_\xi$ changes the function only by an additive constant depending on $\xi$ and the chosen base point, so the comparison sets $\{x:B_\xi(x)\ge B_\xi(z)\}$ are unchanged. The main question of the paper is whether the theory follows the definition: once halfspaces are replaced by horoballs, do the central structural and statistical properties of Tukey depth survive the passage to nonpositive curvature?

\subsection{Main contributions and scope}

The results of the paper come in three layers.

First, for arbitrary Hadamard manifolds, we define horospherical depth and show that it satisfies the core structural properties expected of a Tukey-type depth: isometric equivariance, maximality at natural symmetry centers, vanishing at infinity, and geodesically convex nested depth regions represented as intersections of horoballs (Theorem \ref{lemma:superlevel} and Section \ref{sec:properties}). We also prove a centerpoint theorem: every Borel probability measure on a $d$-dimensional Hadamard manifold has a point of depth of at least $1/(d+1)$ (Theorem \ref{thm:depth_lbound}). Hence the Busemann median exists for every Borel probability measure.

Second, under strictly negative sectional curvature and mild regularity assumptions on the one-dimensional Busemann projections, the theory becomes sharper. The depth is strictly quasi-concave, so the Busemann median is unique (Theorem \ref{thm:unique_rank_1}). We also prove robustness in total variation and analyze contamination escaping to infinity. In that regime, the contaminated depths converge pointwise to an explicit limit determined by the escape direction; when the limiting depth has a unique maximizer above level $\varepsilon$, the contaminated medians converge
accordingly, whereas distance-based estimators such as the Fr\'echet mean are pulled toward the boundary (Theorems \ref{thm:robust} and \ref{thm:boundary_robust}).

Third, on the statistical side, we prove uniform almost sure convergence of the sample horospherical depth and derive convergence of sample depth regions and, in the singleton-median regime, of sample Busemann medians (Theorems \ref{thm:sample_converg}--\ref{thm:sample_converg_depth_region}). On symmetric spaces of noncompact type, the argument proceeds through a VC analysis of the class $\{H^+_{\xi,z}:\xi\in\partial X,\ z\in X\}$; on general Hadamard manifolds, the same conclusion holds under a mild non-atomicity assumption via compactness of the visual boundary and continuity of the Busemann functions. Section \ref{sec:numerical} then gives explicit formulas and finite-direction computational illustrations in hyperbolic space $\mathbb H^d$ and the manifold of $p\times p$ symmetric positive-definite matrices $\bSS_p^+$ equipped with the affine-invariant metric; these are intended to make the construction concrete rather than to provide a final algorithmic theory.



Our construction occupies a middle ground between existing approaches. Relative to metric halfspace depth \cite{dai2023tukey}, it is less general in the underlying space but yields stronger geometric consequences. Relative to tangent-space depth \cite{rusciano2018riemannian}, it avoids any chosen origin in a tangent space. The definition and the convex-region/existence theory apply to all Hadamard manifolds. The sharper uniqueness results require strict negative curvature, so they apply directly to rank-one examples such as $\mathbb{H}^d$. Higher-rank spaces such as $\bSS_p^+$ remain within the scope of the definition and the centerpoint theory, but uniqueness there requires additional ideas. Possible extensions beyond the smooth setting, for example to singular CAT$(0)$ spaces such as phylogenetic tree space \cite{billera2001geometry}, are discussed later as open problems.

Because the geometric hypotheses vary across sections, we summarize the scope of each main result in Table \ref{table:result}.  Throughout, $X$ denotes a $d$-dimensional Hadamard manifold and $\PP$ a Borel probability measure on $X$.

\begin{table}
\caption{Scope of the main results.}
\begin{center}
\small
\resizebox{\columnwidth}{!}{%
\begin{tabular}{l l l}
\toprule
\textbf{Result} & \textbf{Geometry required} & \textbf{Measure conditions}\\
\midrule
Depth-region convexity (Thm. \ref{lemma:superlevel}) & Hadamard & None\\
Isometry equivariance (Thm. \ref{lemma:g-depth}) & Hadamard & None\\
Maximality at center (Prop. \ref{lemma:maximality}) & Hadamard & Horospherical symmetry\\
Centerpoint theorem (Thm. \ref{thm:depth_lbound}) & Hadamard & None\\
Strict quasi-concavity (Thm. \ref{thm:unique_rank_1}) & $\mathrm{Sec}_X < 0$ & Assump. \ref{assump:no_atom}-\ref{assump_connected_supp}\\
TV robustness (Thm. \ref{thm:robust}) & Hadamard & None\\
Boundary robustness (Thm. \ref{thm:boundary_robust}) & $\mathrm{Sec}_X < 0$ & Assump. \ref{assump:no_atom}\\
VC consistency (Thm. \ref{thm:sample_converg}) & Symmetric space, noncompact & None\\
General consistency (Thm. \ref{thm:sample_converg_2}) & Hadamard & Assump. \ref{assump:no_atom}\\
\bottomrule
\end{tabular}
}
\end{center}
\label{table:result}
\end{table}

\subsection{Organization}
Section \ref{sec:background} reviews Busemann functions, horospheres, and the visual boundary. Section \ref{sec:definition} introduces horospherical depth and studies the geometry of its depth regions, including the choice between one-sided and two-sided formulations. Section \ref{sec:properties} establishes isometric equivariance, symmetry-based maximality, and basic properties of the depth regions. Section \ref{sec:existence} proves vanishing at infinity, existence of the Busemann median, and the centerpoint theorem. Section \ref{sec:uniqueness} treats strict quasi-concavity and uniqueness under negative curvature. Section \ref{sec:robust} studies robustness under total variation perturbations and contamination escaping to the boundary. Section \ref{sec:computation} proves empirical convergence of the sample depth and depth regions. Section \ref{sec:numerical} presents examples and computational constructions in $\mathbb{H}^d$ and $\bSS_p^+$. Section \ref{sec:discussion} concludes with comparisons, limitations, and open problems. All the proofs are collected in the supplement.

\section{Busemann functions, horospheres, and the visual boundary}
\label{sec:background}

The introduction identified Busemann functions as the correct replacements for linear functionals on a Hadamard manifold. This section fixes notation and records the geometric facts that drive the rest of the paper. A useful heuristic is that if a point escapes along a unit-speed geodesic ray $\gamma$, then in a $\operatorname{CAT}(0)$ space one has $d(x,\gamma(t))-t\to B_\gamma(x)$ and $d^2(x,\gamma(t))=t^2+2tB_\gamma(x)+o(t)$. The same asymptotic object therefore governs both directional location and the effect of contamination escaping to infinity.

\subsection{Visual boundary and asymptotic rays}
\label{sec:boundary}

Let $X$ be a metric space. Two geodesic rays $\gamma,\gamma':[0,\infty)\to X$ are called \emph{asymptotic} if $d(\gamma(t),\gamma'(t))\le K$ for all $t\ge 0$ for some constant $K<\infty$. This is an equivalence relation on the set of rays. The equivalence class of $\gamma$ is denoted by $\gamma(\infty)$, and the set of all such classes is denoted by $\partial X$. We call $\partial X$ the \emph{visual boundary} of $X$.

When $X$ is proper, $\partial X$ is compact in the usual cone topology. In the settings considered here, a geodesic ray is uniquely determined by its initial point and its endpoint at infinity: if $\gamma(0)=\gamma'(0)$ and $\gamma(\infty)=\gamma'(\infty)$, then $\gamma=\gamma'$. If $X$ is a complete Riemannian manifold, every geodesic ray $\gamma:[0,\infty)\to X$ extends uniquely to a complete geodesic $\widetilde\gamma:\R\to X$ by Hopf--Rinow \cite[Chapter I.3, Proposition 3.7]{bridson2013metric}. We write $\gamma^{-}(t):=\widetilde\gamma(-t)$ for the opposite ray and, when convenient, write $\gamma(-\infty)$ for the class of $\gamma^-$. 

\subsection{Busemann functions and horospheres}
\label{sec:busemann_def}

Given a geodesic ray $\gamma:[0,\infty)\to X$, its \emph{Busemann function} is
\begin{equation}
\label{eqn:busemann_def}
B_\gamma(x):=\lim_{t\to\infty}\bigl(d(\gamma(t),x)-t\bigr),
\end{equation}
whenever the limit exists. On every $\operatorname{CAT}(0)$ space, and hence on every Hadamard manifold, the limit exists for all $x\in X$ \cite[Lemma 8.18]{bridson2013metric}. If $\widetilde\gamma$ denotes the bi-infinite extension of $\gamma$, then $B_\gamma(\widetilde\gamma(s))=-s$ for every $s\in\R$; in particular $B_\gamma(\gamma(0))=0$.

The next proposition records the dependence of the Busemann function on the choice of base point along a fixed boundary direction.

\begin{proposition}
\label{prop:busemann_asymptotic}
Let $X$ be a Hadamard manifold, and let $\gamma,\gamma':[0,\infty)\to X$ be geodesic rays with the same endpoint at infinity, say $\gamma(\infty)=\gamma'(\infty)=\xi$. Then for every $x\in X$, 
\[
B_\gamma(x)=B_{\gamma'}(x)-B_{\gamma'}(\gamma(0)).
\] 
Equivalently, $B_\gamma-B_{\gamma'}$ is constant, with value $-B_{\gamma'}(\gamma(0))=B_\gamma(\gamma'(0))$. In particular, if $\gamma(0)=\gamma'(0)$, then $B_\gamma\equiv B_{\gamma'}$.
\end{proposition}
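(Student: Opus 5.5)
The plan is to show that $B_\gamma-B_{\gamma'}$ is constant by controlling the distance between the two rays after a time shift, and then to evaluate that constant at $\gamma(0)$. Put $c:=B_{\gamma'}(\gamma(0))$.

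The geometric input is the following standard fact in a Hadamard (more generally CAT$(0)$) space: if two rays $\gamma,\gamma'$ are asymptotic, then for a suitable shift $s$ one has $d(\gamma(t),\gamma'(t+s))\to 0$ as $t\to\infty$. If this strong form is not available for general Hadamard manifolds, I will instead use the weaker estimate $\bigl|\,d(\gamma(t),\gamma'(t+s))\,\bigr|=o(1)$ only in the combination needed below, and fall back to the argument via the projection onto $\gamma'$.

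A cleaner route, which I will actually follow, avoids this fact. Set $p=\gamma(0)$ and let $\gamma''$ be the ray from $p$ in the class $\xi$; by uniqueness of rays with prescribed initial point and endpoint, $\gamma''=\gamma$. The ray from $p$ to $\xi$ is constructed as the limit of the geodesic segments $[p,\gamma'(n)]$. Write $\sigma_n$ for the unit-speed geodesic from $p$ to $\gamma'(n)$, and let $L_n:=d(p,\gamma'(n))$.

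First observe that $L_n-n\to c$, by the definition of $B_{\gamma'}$. Next, for $x\in X$ and fixed $t$, convexity of the distance function in CAT$(0)$ space, applied to the segments $[p,\gamma'(n)]$ and $[p,\gamma(T)]$, shows that $\sigma_n(t)\to\gamma(t)$ uniformly on compacts as $n\to\infty$. Finally, the triangle inequality gives
\[
d(x,\gamma'(n))-n\le d(x,\sigma_n(t))+(L_n-t)-n ,
\]
and letting $n\to\infty$ yields
\[
B_{\gamma'}(x)\le d(x,\gamma(t))-t+c .
\]
Letting $t\to\infty$ then gives $B_{\gamma'}(x)-c\le B_\gamma(x)$.

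For the reverse inequality, I will swap the roles of the two rays. By the same argument with $q=\gamma'(0)$,
\[
B_\gamma(x)\le B_{\gamma'}(x)+B_\gamma(q).
\]
This leaves the identity $B_\gamma(q)=-c$ to be shown. Evaluating the first inequality at $x=q$ gives $0=B_{\gamma'}(q)\le B_\gamma(q)+c$. Evaluating the second at $x=p$ gives $0=B_\gamma(p)\le -c\cdot(-1)$, that is, $0\le c+B_\gamma(q)$, which points the same way, so these two evaluations alone do not close the argument.

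This is the main obstacle: proving the equality $B_\gamma(q)+B_{\gamma'}(p)=0$. I would obtain it from the shift estimate $d(\gamma(t),\gamma'(t+s))\to 0$, which is standard for Hadamard manifolds. With it,
\[
d(x,\gamma(t))-t=d(x,\gamma'(t+s))-(t+s)+s+o(1),
\]
so $B_\gamma=B_{\gamma'}+s$ directly. Setting $x=p$ then gives $s=-c$.

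If the shift estimate fails, for example in flat directions, I will replace it with the fact that $t\mapsto d(\gamma(t),\gamma'(t+s))$ is convex and bounded, hence nonincreasing with a limit $\ell$. Then
\[
\bigl|\,d(x,\gamma(t))-d(x,\gamma'(t+s))\,\bigr|\le \ell ,
\]
which only bounds the difference. To get exact equality I will choose $s$ by projecting $p$ onto the complete geodesic $\widetilde\gamma'$. Alternatively, I will use the standard result \cite[II.8.20]{bridson2013metric} that Busemann functions of asymptotic rays differ by a constant, and then evaluate that constant at $p$.

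The final statement follows immediately: when $\gamma(0)=\gamma'(0)$ we have $c=B_{\gamma'}(\gamma'(0))=0$.
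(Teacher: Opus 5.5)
Your first inequality is correct: from $\sigma_n\to\gamma$ and the triangle inequality you do get $B_{\gamma'}(x)-B_{\gamma'}(p)\le B_\gamma(x)$, and by symmetry $B_\gamma(x)\le B_{\gamma'}(x)+B_\gamma(q)$. But the content of the proposition is the reverse inequality, equivalently $B_\gamma(q)+B_{\gamma'}(p)\le 0$, and none of your routes to it works.

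The shift estimate $d(\gamma(t),\gamma'(t+s))\to 0$ is not standard for Hadamard manifolds. It already fails in $\R^2$ for $\gamma(t)=(t,0)$ and $\gamma'(t)=(t,w)$, where $d(\gamma(t),\gamma'(t+s))=\sqrt{s^2+w^2}\ge w>0$ for every $s$. The same happens in any manifold containing a flat strip.

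Your fallback, based on convexity and boundedness of $t\mapsto d(\gamma(t),\gamma'(t+s))$, only yields $|B_\gamma(x)-B_{\gamma'}(x)-s|\le\ell$ in the limit. That shows the difference is bounded, not constant. In the flat strip $\ell\ge w$ for every $s$, so choosing $s$ by projecting $p$ onto $\widetilde\gamma'$ picks the right candidate constant but cannot prove equality.

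The Bridson--Haefliger result you finally fall back on is essentially the statement itself (constancy of $B_\gamma-B_{\gamma'}$). Citing it makes your construction superfluous rather than completing it. The paper also relies on a citation, but of a different, structural fact. By Heintze--Im Hof, $\grad B_\gamma=-Z$, where $Z$ is the unit radial field toward $\xi$ and depends only on $\xi$. Hence $\grad(B_\gamma-B_{\gamma'})\equiv 0$, the difference is constant, and the constant is found by evaluating at $\gamma(0)$.

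If you want a synthetic proof, the missing idea is a CAT$(0)$ comparison estimate in the triangle $\Delta(x,p,\gamma'(n))$. It gives the reverse inequality directly and needs no shift.

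Put $a:=d(x,p)$, let $\bar\theta_n$ be the Euclidean comparison angle at $\bar p$, and set $f_n(s):=\sqrt{s^2-2as\cos\bar\theta_n+a^2}-s$. Then:
\begin{itemize}
\item $f_n(L_n)=d(x,\gamma'(n))-L_n$;
\item the CAT$(0)$ inequality gives $d(x,\sigma_n(t))-t\le f_n(t)$;
\item the elementary bounds $-a\cos\bar\theta_n\le f_n(s)\le -a\cos\bar\theta_n+\frac{a^2}{2(s-a)}$ hold for $s>a$.
\end{itemize}
Combining these gives
\[
d(x,\sigma_n(t))-t\le d(x,\gamma'(n))-L_n+\frac{a^2}{2(t-a)},\qquad a<t\le L_n .
\]
Let $n\to\infty$, using $\sigma_n(t)\to\gamma(t)$ and $L_n-n\to c$, and then let $t\to\infty$. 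This gives $B_\gamma(x)\le B_{\gamma'}(x)-B_{\gamma'}(p)$, which together with your inequality proves the proposition. Compared with the paper's gradient argument, this route avoids the $C^1$ regularity of Busemann functions and works verbatim in any complete CAT$(0)$ space.
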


Accordingly, once a boundary point $\xi\in\partial X$ and a base point $x_0\in X$ are fixed, we may write $B_{\xi,x_0}$ for the Busemann function of the unique ray starting at $x_0$ and ending at $\xi$. After fixing an origin $o\in X$, we abbreviate $B_{\xi,o}$ to $B_\xi$. Then for any other base point $p\in X$, 
\begin{equation}\label{eqn:busemann_relation}
B_{\xi,p}(x)=B_\xi(x)-B_\xi(p).
\end{equation}
The sublevel sets $H_\xi(t):=\{x\in X:B_\xi(x)\le t\}$ are called \emph{horoballs}, and their level sets $\partial H_\xi(t):=\{x\in X:B_\xi(x)=t\}$ are the corresponding \emph{horospheres}.

\begin{example}[Busemann function on $\R^d$]
\label{example:euclidean}
If $X=\R^d$ and $\gamma(t)=tu$ for a unit vector $u$, then $d(x,\gamma(t))=\|tu-x\|=t-\langle x,u\rangle+o(1)$ as $t\to\infty$. Hence $B_\gamma(x)=-\langle x,u\rangle$, so Busemann functions reduce to affine functionals and horospheres reduce to hyperplanes.
\end{example}

\begin{example}[Busemann function on $\mathbb{H}^d$]
\label{example:busemann_hyper}
Under the Poincar\'e ball model of $\mathbb{H}^d$, the visual boundary is identified with $S^{d-1}$. If $\gamma$ is the ray from the origin to $\xi\in S^{d-1}$, then
\[
B_\xi(x)=\log\bigl(\|x-\xi\|^2\bigr)-\log\bigl(1-\|x\|^2\bigr).
\]
The horosphere $\partial H_\xi(c)$ is therefore a Euclidean sphere internally tangent to the boundary sphere at $\xi$ \cite{heintze1977geometry}; see Figure \ref{fig:horosphere_busemann}.
\end{example}

\begin{figure}
    \centering
    \begin{subfigure}{0.35\columnwidth}
        \includegraphics[width=\columnwidth]{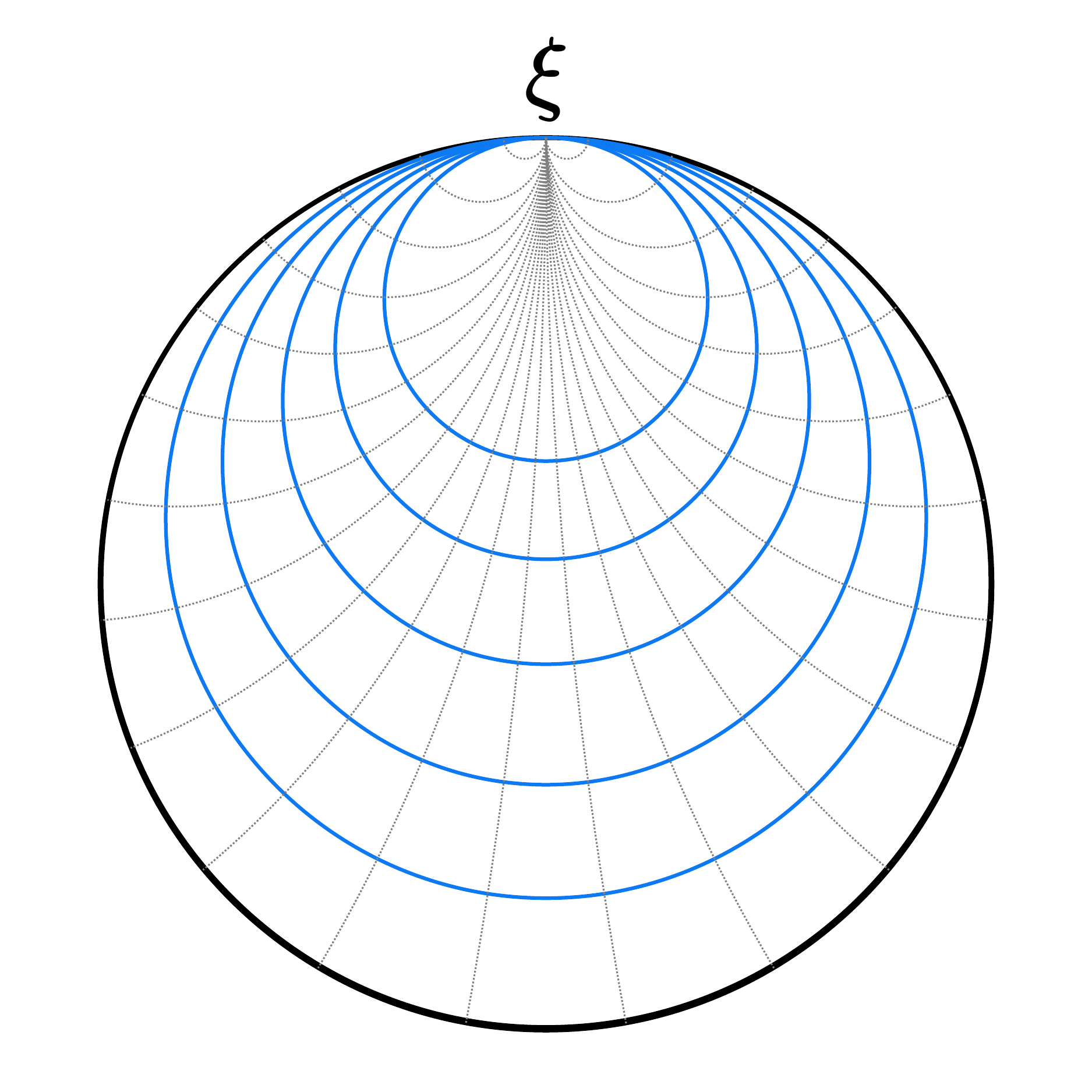}
          \caption{}
          \label{fig:horospheres}
      \end{subfigure}
    \hspace{0.15\columnwidth}
    \begin{subfigure}{0.35\columnwidth}
        \includegraphics[width=\columnwidth]{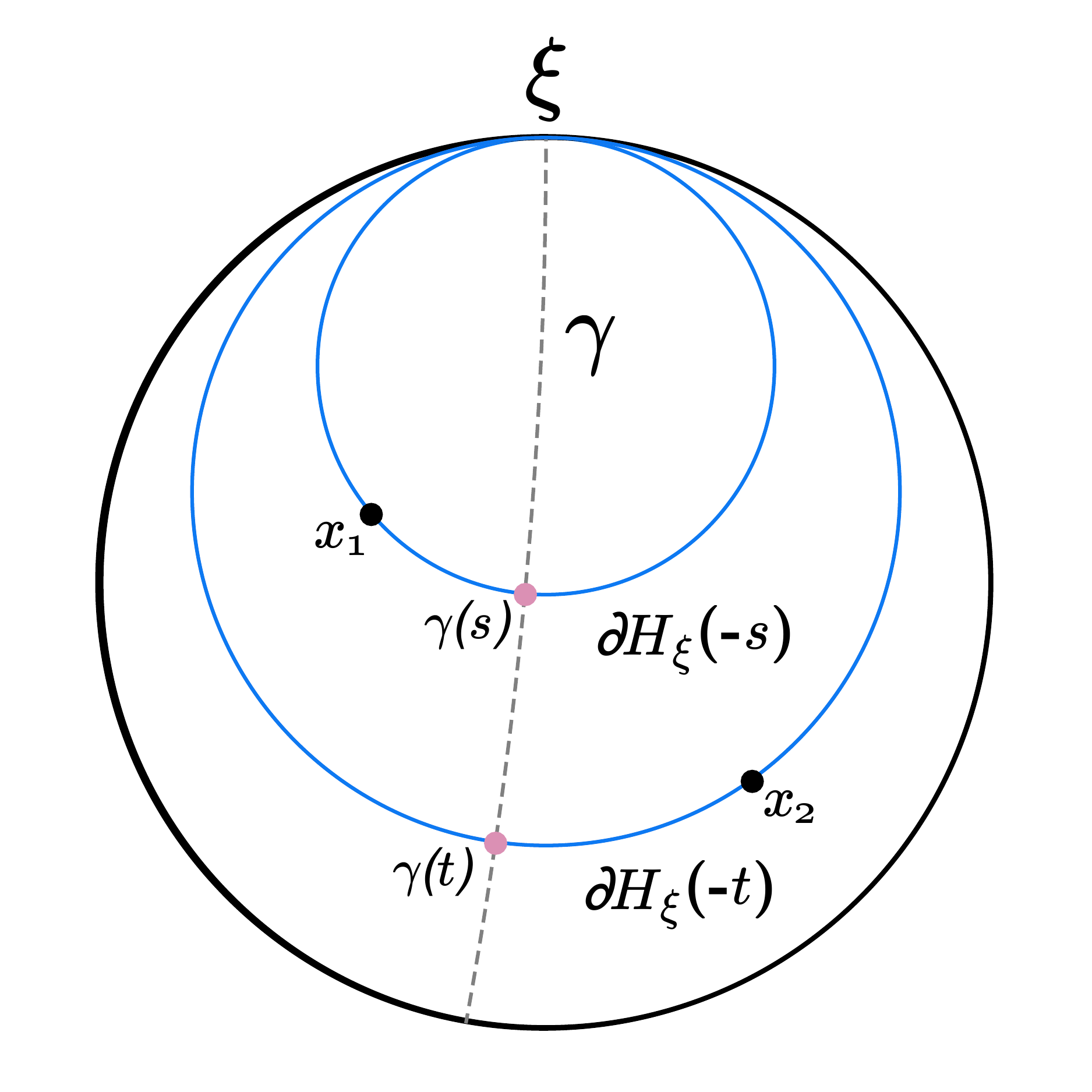}
          \caption{}
          \label{fig:busemann}
      \end{subfigure}
    \caption{Horospheres and the Busemann function in the Poincar\'e ball model of hyperbolic geometry. Panel \textup{(a)} shows horospheres $\partial H_\xi(t)$ with common center $\xi$ and several geodesics asymptotic to $\xi$. Larger values of $t$ correspond to larger horoballs. Panel \textup{(b)} illustrates the horospherical-coordinate interpretation of $B_\xi$: the horosphere through a point $x$ meets the reference ray to $\xi$ at a unique parameter value, and the Busemann function records that level.}
    \label{fig:horosphere_busemann}
\end{figure}

Figure \ref{fig:busemann} is worth reading as a coordinate picture. Fix an origin $o$ and let $\gamma$ be the geodesic ray from $o$ to $\xi$. For each $x\in X$, the horosphere centered at $\xi$ that passes through $x$ intersects $\gamma$ at a unique point $\gamma(s)$, and then $B_\xi(x)=B_\xi(\gamma(s))=-s$. Thus $B_\xi$ is not a distance to $\xi$; rather, it is a horospherical coordinate whose level sets foliate $X$ by hypersurfaces orthogonal to the radial flow toward $\xi$.

We now record the basic regularity properties that will be used repeatedly.

\begin{proposition}
\label{prop:cont}
For every $\xi\in\partial X$, the Busemann function $B_\xi$ is geodesically convex and $1$-Lipschitz \cite[Proposition 8.22]{bridson2013metric}. Consequently every horoball $H_\xi(t)$ is geodesically convex. If $X$ is a Hadamard manifold and $\gamma:[0,\infty)\to X$ is the geodesic ray with $\gamma(0)=p$ and $\gamma(\infty)=\xi$, then $\grad_p B_\xi=-\dot\gamma(0)$; in particular $\|\grad B_\xi\|\equiv 1$. Furthermore, $B_\xi$ is $C^2$. \cite[Proposition 3.1]{heintze1977geometry}.
\end{proposition}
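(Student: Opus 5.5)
The proposition bundles four claims: convexity and the $1$-Lipschitz property of $B_\xi$; convexity of horoballs; the gradient formula $\grad_p B_\xi=-\dot\gamma(0)$, which gives $\|\grad B_\xi\|\equiv1$; and $C^2$ regularity. I take them in that order.

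For convexity and the Lipschitz bound, fix the ray $\gamma$ from $o$ to $\xi$ and set $f_t(x)=d(\gamma(t),x)-t$.
\begin{itemize}
\item Each $f_t$ is $1$-Lipschitz by the triangle inequality.
\item Each $f_t$ is geodesically convex, because on a Hadamard manifold (a CAT$(0)$ space) the distance to a fixed point is convex along geodesics.
\item The limit $B_\xi=\lim_{t\to\infty}f_t$ exists pointwise (the CAT$(0)$ lemma cited at \eqref{eqn:busemann_def}). Both convexity inequalities $f_t(c(s))\le(1-s)f_t(c(0))+sf_t(c(1))$ and the Lipschitz bound are preserved under pointwise limits, so $B_\xi$ inherits them.
\end{itemize}
Convexity of horoballs is then immediate: the sublevel sets $H_\xi(t)$ of a convex function are geodesically convex.

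For the gradient, fix $p$. By Proposition \ref{prop:busemann_asymptotic} and \eqref{eqn:busemann_relation}, $B_\xi$ and $B_{\xi,p}$ differ by a constant, so I may compute with the ray $\gamma$ based at $p$.
\begin{itemize}
\item \emph{Directional derivative along $\gamma$:} since $B_\gamma(\gamma(s))=-s$, the derivative in direction $\dot\gamma(0)$ is $-1$.
\item \emph{Upper bound on the gradient:} the $1$-Lipschitz bound gives $\|\grad_p B_\xi\|\le1$ wherever the gradient exists.
\item \emph{Conclusion:} a vector of norm at most $1$ whose inner product with the unit vector $\dot\gamma(0)$ equals $-1$ must be $-\dot\gamma(0)$.
\end{itemize}
Differentiability itself comes from regularity. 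Each $f_t$ is smooth away from $\gamma(t)$, with gradient equal to the unit vector pointing away from $\gamma(t)$. As $t\to\infty$, these unit vectors converge locally uniformly to $-\dot\gamma_x(0)$, where $\gamma_x$ is the ray from $x$ to $\xi$; this uses the continuous dependence of asymptotic rays on their initial point in a Hadamard manifold. Uniform convergence of $f_t$ together with uniform convergence of $\grad f_t$ yields $C^1$ regularity of the limit and the stated gradient.

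The main obstacle is $C^2$ regularity, and I would follow Heintze--Im Hof. Under the Riccati equation, the Hessians of the distance functions $f_t$ are the second fundamental forms of the spheres of radius $d(x,\gamma(t))$. Rauch comparison in nonpositive curvature bounds these below by $0$. Bounds on curvature over compact sets give uniform local upper bounds, and they also give monotone convergence of the stable Jacobi tensors (spheres converging to horospheres). Monotone convergence of these solutions of the Riccati equation, uniform on compact sets, yields continuity of the limiting Hessian. This comparison step is where all the real work lies, so I would treat it by citing \cite[Proposition 3.1]{heintze1977geometry}, as the statement does, rather than reproving it.
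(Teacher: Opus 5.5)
Your proposal is correct. The paper gives no argument of its own for this proposition, only the citations \cite[Proposition 8.22]{bridson2013metric} and \cite[Proposition 3.1]{heintze1977geometry}. Your convexity/Lipschitz step is exactly the Bridson--Haefliger argument: a pointwise limit of the convex, $1$-Lipschitz functions $d(\gamma(t),\cdot)-t$. For $C^2$ regularity you defer to Heintze--Im Hof, as the paper does.

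The part that genuinely differs is the gradient identity. The paper takes $\grad B_\xi=-Z$ from Heintze--Im Hof's Jacobi-tensor analysis. You instead get $C^1$ regularity and the formula from first-order information only, which is more elementary and makes everything except $C^2$ self-contained. The one step you assert without proof is locally uniform convergence of the unit vectors at $x$ pointing toward $\gamma(t)$. It follows from CAT$(0)$ angle comparison: if $d(o,x)\le R<t<t'$, the comparison angle $\bar\theta$ at $x$ in the triangle $(x,\gamma(t),\gamma(t'))$ satisfies $1-\cos\bar\theta\le 2R/(t-R)$, and the Riemannian angle at $x$ is at most $\bar\theta$.

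One caution about logical order. The paper proves Proposition~\ref{prop:busemann_asymptotic} by invoking this very gradient identity. Your Cauchy--Schwarz argument uses that proposition to re-base the ray at $p$, so it would be circular inside the paper. You can either appeal to the direct CAT$(0)$ proof that Busemann functions of asymptotic rays differ by a constant \cite[Chapter II.8]{bridson2013metric}, or drop the re-basing altogether, since your gradient-convergence argument already yields $\grad_x B_\xi=-\dot\gamma_x(0)$ at every $x$.

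Your Riccati sketch for $C^2$ is only heuristic. Monotone convergence plus local upper bounds does not by itself give continuity of the limiting Hessian. One needs an estimate uniform in the geodesic, such as $0\le U'(0)-D_s'(0)\le s^{-1}I$ for the stable tensor $U$ and the tensor $D_s$ with $D_s(0)=I$, $D_s(s)=0$. Since you cite rather than reprove that step, nothing is lost.
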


The gradient identity has a simple geometric meaning: horospheres centered at $\xi$ are everywhere orthogonal to the geodesic rays asymptotic to $\xi$, exactly as suggested by Figure \ref{fig:horospheres}.

The facts recorded in this section enter the statistical theory in three distinct ways. Geodesic convexity of horoballs is the structural input behind the representation and convexity of depth regions in Section \ref{sec:definition}. Under strictly negative curvature, the strict form of this convexity becomes the key geometric ingredient in the uniqueness theory of Section \ref{sec:uniqueness}. Finally, the asymptotic interpretation of Busemann functions along escaping rays explains why they are the correct objects for studying contamination that moves toward the visual boundary, which is the focus of Section \ref{sec:robust}.

\section{Horospherical depth and depth regions}
\label{sec:definition}

We now use the geometric objects of Section \ref{sec:background} to build a depth function. In one dimension, the median can be characterized as the maximizer of the depth $D^1(z;\PP)=\min\{\PP([z,\infty)),\PP(( -\infty,z])\}$. On a Hadamard manifold, the role of a half-line through $z$ is played by the side of a horosphere through $z$.

\subsection{Definition and superlevel representation}

Throughout this section, $X$ is a Hadamard manifold. For $\xi\in\partial X$ and $z\in X$, define the two horospherical halfspaces through $z$ by
\[
H^+_{\xi,z}:=\{x\in X:B_\xi(x)\ge B_\xi(z)\},
\qquad
H^-_{\xi,z}:=\{x\in X:B_\xi(x)\le B_\xi(z)\}.
\]
These sets are intrinsic. Indeed, if $B_{\xi,p}$ is defined using a base point $p$ instead of the fixed origin $o$, then Proposition \ref{prop:busemann_asymptotic} gives $B_{\xi,p}(x)=B_\xi(x)-B_\xi(p)$, so the inequality $B_{\xi,p}(x)\ge B_{\xi,p}(z)$ is equivalent to $B_\xi(x)\ge B_\xi(z)$.

\begin{definition}
\label{def:depth}
Given a probability measure $\PP$ on $X$ and a point $z\in X$, the \emph{horospherical depth} of $z$ with respect to $\PP$ is
\begin{equation}
\label{eq:depth_one_sided}
D(z;\PP):=\inf_{\xi\in\partial X}\PP(H^+_{\xi,z}).
\end{equation}
For $\alpha\ge 0$, the \emph{$\alpha$-depth region} is $\DD^\alpha(\PP):=\{z\in X:D(z;\PP)\ge \alpha\}$, and the corresponding \emph{$\alpha$-depth contour} is $\partial \DD^\alpha(\PP):=\{z\in X:D(z;\PP)=\alpha\}$.
\end{definition}


The main structural fact is that these superlevel sets are intersections of horoballs. For $\xi\in\partial X$, write $S_\xi(t):=\PP(\{x\in X:B_\xi(x)\ge t\})$ and define the upper survival quantile by
\begin{equation}
\label{eq:t_xi}
t_\xi(\alpha):=\sup\{t\in\R:S_\xi(t)\ge \alpha\}, \qquad \alpha\in(0,1).
\end{equation}

\begin{theorem}
\label{lemma:superlevel}
For every $\alpha\in(0,1)$,
\begin{equation}
\label{eq:superlevel}
\DD^\alpha(\PP)=\bigcap_{\xi\in\partial X} H_\xi\bigl(t_\xi(\alpha)\bigr)=\bigcap_{\xi\in\partial X}\{z\in X:B_\xi(z)\le t_\xi(\alpha)\}.
\end{equation}
Consequently, every depth region $\DD^\alpha(\PP)$ is geodesically convex.
\end{theorem}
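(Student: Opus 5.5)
The plan is to reduce the identity \eqref{eq:superlevel} to a one-dimensional statement about each survival function $S_\xi$, applied boundary point by boundary point. Geodesic convexity will then follow from Proposition \ref{prop:cont}.

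Fix $\xi\in\partial X$. The function $S_\xi(t)=\PP(B_\xi\ge t)$ is nonincreasing in $t$. It is also left-continuous, by continuity of measure along the decreasing family of closed sets $\{B_\xi\ge t\}$; here we use that $B_\xi$ is continuous by Proposition \ref{prop:cont}, although measurability alone suffices for the limit $\{B_\xi\ge t_n\}\downarrow\{B_\xi\ge t\}$ as $t_n\uparrow t$. Moreover $S_\xi(t)\to 1$ as $t\to-\infty$ and $S_\xi(t)\to 0$ as $t\to+\infty$, since $B_\xi$ is real-valued. For $\alpha\in(0,1)$ the set $\{t:S_\xi(t)\ge\alpha\}$ is therefore nonempty and bounded above, so $t_\xi(\alpha)$ is a finite real number.

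The key claim is that, for each fixed $\xi$,
\[
S_\xi(s)\ge\alpha \iff s\le t_\xi(\alpha).
\]
The forward implication is immediate from the definition of the supremum. For the converse, monotonicity shows that $\{t:S_\xi(t)\ge\alpha\}$ is an interval unbounded below, so it contains every $s<t_\xi(\alpha)$. For $s=t_\xi(\alpha)$ itself, take $t_n\uparrow t_\xi(\alpha)$ with $S_\xi(t_n)\ge\alpha$; left-continuity gives $S_\xi(t_\xi(\alpha))\ge\alpha$, so the supremum is attained. This attainment step is the only delicate point, and it is exactly why the upper survival quantile is defined with ``$\ge$'' and the halfspaces $H^+_{\xi,z}$ are closed.

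Applying the claim with $s=B_\xi(z)$ gives $\PP(H^+_{\xi,z})=S_\xi(B_\xi(z))\ge\alpha$ if and only if $B_\xi(z)\le t_\xi(\alpha)$. Next, $D(z;\PP)\ge\alpha$ holds if and only if $\PP(H^+_{\xi,z})\ge\alpha$ for every $\xi$, since an infimum is at least $\alpha$ exactly when every term is. Hence $z\in\DD^\alpha(\PP)$ if and only if $z\in H_\xi(t_\xi(\alpha))$ for all $\xi\in\partial X$, which is \eqref{eq:superlevel}. Finally, each horoball $H_\xi(t_\xi(\alpha))$ is a sublevel set of the convex function $B_\xi$, and is therefore geodesically convex by Proposition \ref{prop:cont}. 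An arbitrary intersection of geodesically convex sets is geodesically convex, since geodesics are unique in a Hadamard manifold. So $\DD^\alpha(\PP)$ is geodesically convex, possibly empty.
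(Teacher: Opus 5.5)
Your proof is correct and follows the same route as the paper: you rewrite $D(z;\PP)\ge\alpha$ as the per-direction condition $S_\xi(B_\xi(z))\ge\alpha$, convert it to $B_\xi(z)\le t_\xi(\alpha)$, and intersect the resulting horoballs, which are convex by Proposition~\ref{prop:cont}. The paper asserts the equivalence $S_\xi(s)\ge\alpha\iff s\le t_\xi(\alpha)$ simply ``by \eqref{eq:t_xi}''; you go further by proving the one nontrivial direction, attainment of the supremum at $s=t_\xi(\alpha)$, via left-continuity of $S_\xi$, and by checking that $t_\xi(\alpha)$ is finite.
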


Theorem \ref{lemma:superlevel} is the reason the one-sided definition in \eqref{eq:depth_one_sided} is so effective. For each boundary direction $\xi$, the projected measure $(B_\xi)_\#\PP$ determines a threshold $t_\xi(\alpha)$, and the depth region is obtained by intersecting the corresponding horoballs. Thus, convexity is not a secondary theorem but is built directly into the definition. In Euclidean space, the same formula reduces to the familiar representation of Tukey regions as intersections of closed halfspaces.

\subsection{Why we use the one-sided definition}
\label{sec:busemann_median_def}

In $\R^d$, the usual one-sided formula for Tukey depth already encodes both sides of a hyperplane because replacing $u$ by $-u$ swaps the two complementary halfspaces. It is therefore natural to ask whether the more visibly symmetric definition
\begin{equation}
\label{eq:depth_two_sided}
\widetilde D(z;\PP):=\inf_{\xi\in\partial X}\min\{\PP(H^+_{\xi,z}),\PP(H^-_{\xi,z})\}
\end{equation}
could be used instead of \eqref{eq:depth_one_sided}.

The key point is that, on a curved Hadamard manifold, the one-sided and two-sided formulas are not equivalent. In order to mimic the Euclidean argument, one would need that for every $\xi\in\partial X$ there exists an opposite direction $\eta\in\partial X$ such that $H^+_{\xi,z}=H^-_{\eta,z}$ for all $z$. Equivalently, one would need $B_\xi=-B_\eta+C$ for some constant $C$. This is far too restrictive.

\begin{proposition}
\label{prop:depth_non_equivalence}
If for every $\xi\in\partial X$ there exists $\eta\in\partial X$ such that $B_\xi=-B_\eta+C$ for some constant $C$, then $X$ has vanishing sectional curvature.
\end{proposition}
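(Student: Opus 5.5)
The plan is to show that the hypothesis forces every Busemann function to be affine along geodesics, so its Hessian vanishes identically. Vanishing Hessians for all boundary directions will then kill the sectional curvature through the Riccati equation.

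\emph{Step 1 (affineness).} Fix $\xi\in\partial X$ and choose $\eta$ with $B_\xi=-B_\eta+C$. By Proposition \ref{prop:cont}, both $B_\xi$ and $B_\eta$ are geodesically convex. Hence $B_\xi=-B_\eta+C$ is also geodesically concave, so $B_\xi$ is affine along every geodesic. Since $B_\xi$ is $C^2$ (Proposition \ref{prop:cont}), this gives $\hess B_\xi\equiv 0$ on $X$, where $\hess$ is the Riemannian Hessian. Because the hypothesis holds for every $\xi$, the conclusion $\hess B_\xi\equiv0$ holds for all $\xi\in\partial X$.

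\emph{Step 2 (from Hessian to curvature).} Fix $p\in X$ and a unit vector $v\in T_pX$. Let $\gamma$ be the geodesic ray with $\dot\gamma(0)=-v$, and let $\xi=\gamma(\infty)$. By Proposition \ref{prop:cont}, $\grad B_\xi$ is the unit vector field $N=-\dot c$, where $c$ ranges over rays asymptotic to $\xi$; in particular $N(p)=v$. The integral curves of $N$ are unit-speed geodesics, so along the geodesic $\sigma(s)$ with $\dot\sigma=N$ the shape operator $S=\nabla N=\hess B_\xi$ of the horospheres satisfies the Riccati equation
\[
S'+S^2+R(\cdot,\dot\sigma)\dot\sigma=0 .
\]
Step 1 gives $S\equiv0$, so $R(w,v)v=0$ for every $w\perp v$ at $p$. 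Hence $\Sec(v,w)=\langle R(w,v)v,w\rangle=0$ for every $2$-plane containing $v$. Since $p$ and $v$ are arbitrary, and every unit vector arises as $N(p)$ for some $\xi$ because the exponential map at $p$ is a diffeomorphism and geodesic rays from $p$ realize every boundary point, all sectional curvatures vanish.

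The main obstacle is the regularity of Step 2. Heintze--Im Hof only guarantee that $B_\xi$ is $C^2$, so $S=\hess B_\xi$ is merely continuous, and differentiating it along $\sigma$ in the Riccati equation needs justification. To get around this, I will use the $C^\infty$ approximants $f_t(x)=d(x,\gamma(t))-t$, whose Hessians are the shape operators of distance spheres. These converge locally uniformly to $\hess B_\xi$ (this is the content of the Heintze--Im Hof construction). Since $S\equiv0$ is smooth, the Riccati identity then holds in the limit.

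Alternatively, I can avoid Riccati entirely with a Jacobi-field argument. Along the integral geodesic $\sigma$, the flow of $N$ generates Jacobi fields $J$ with $J'=SJ=0$, so the stable Jacobi fields are parallel. The Jacobi equation $J''+R(J,\dot\sigma)\dot\sigma=0$ then gives $R(J,\dot\sigma)\dot\sigma=0$ for a full $(d-1)$-dimensional family of $J$ spanning $\dot\sigma^\perp$. Both routes give the same conclusion; I will present the Jacobi version, citing the standard fact that the stable Jacobi fields along a ray toward $\xi$ are the variations through geodesics asymptotic to $\xi$.
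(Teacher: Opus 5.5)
Your proof is correct, but it follows a different route from the paper.

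\textbf{The paper's route.} It works with first-order, synthetic information. The gradient identity of Proposition \ref{prop:cont} turns $B_\xi=-B_\eta+C$ into opposite radial fields, $\dot\gamma_{p,\xi}=-\dot\gamma_{p,\eta}$. So every $p$ lies on a complete geodesic from $\eta$ to $\xi$, and any two of these lines are bi-asymptotic. The Flat Strip Theorem then gives flat strips between them, from which vanishing curvature is read off.

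\textbf{Your route.} You use second-order information instead. Convexity of $B_\xi$ together with concavity of $-B_\eta+C$ forces $\hess B_\xi\equiv 0$. The Heintze--Im Hof identity linking $\hess B_\xi$ to stable Jacobi fields (the same identity the paper uses to prove Theorem \ref{thm:busemann_f_convex}) then makes those fields parallel, and the Jacobi equation kills $R(\cdot,\dot\sigma)\dot\sigma$.

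\textbf{What each buys.} The paper's argument is metric and global. It never needs $C^2$ regularity or Jacobi fields, and its parallel-lines/flat-strip picture is the one that still makes sense in CAT$(0)$ spaces, where no curvature tensor exists. However, it leaves the last step implicit: passing from flat strips to zero sectional curvature needs the strips to be totally geodesic, Gauss's equation, and the hypothesis for every $\xi$ so that the strips through $p$ are tangent to every $2$-plane. Your argument is local and makes that plane-covering step explicit through the surjectivity of $\xi\mapsto\grad_p B_\xi$.

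\textbf{Regularity.} The regularity obstacle you anticipate does not actually arise. A $C^1$ field $N=\grad B_\xi$ with $\nabla N\equiv 0$ satisfies $\partial_i N^k=-\Gamma^k_{ij}N^j$ in coordinates, so it bootstraps to a smooth parallel field. Hence $R(X,Y)N=0$ directly, and $\Sec(N,w)=\langle R(w,N)N,w\rangle=0$, with no need for the Riccati equation, the approximants $f_t$, or even the Jacobi fields.
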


Thus the one-sided definition is not merely a simplification of the two-sided one; it is the formulation that remains intrinsic in curved geometry while retaining the halfspace-type structure needed later.

\begin{figure}
    \centering
    \begin{subfigure}{0.35\columnwidth}
        \includegraphics[width=\columnwidth]{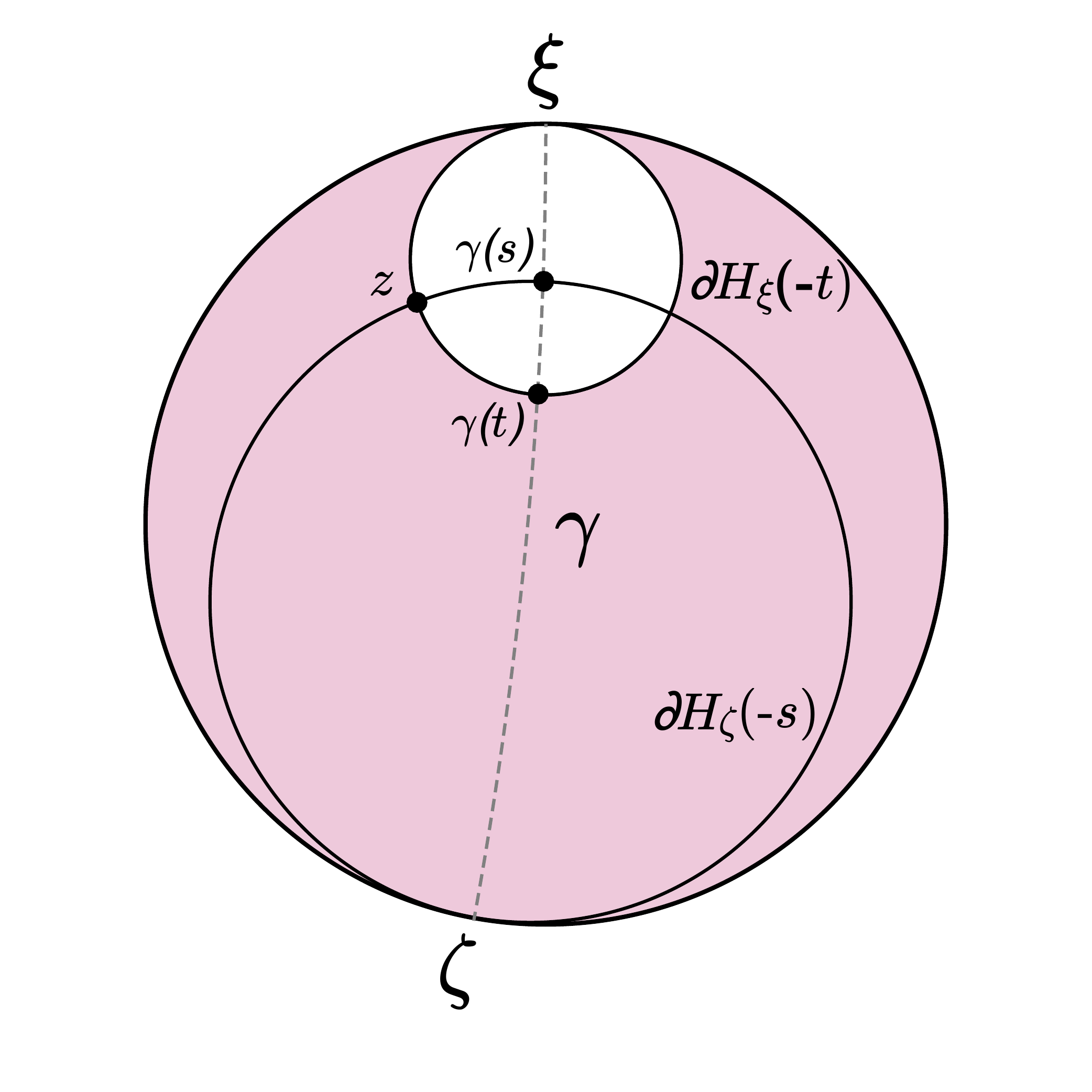}
          \caption{}
          \label{fig:set_2}
      \end{subfigure}
    \hspace{0.15\columnwidth}
    \begin{subfigure}{0.35\columnwidth}
        \includegraphics[width=\columnwidth]{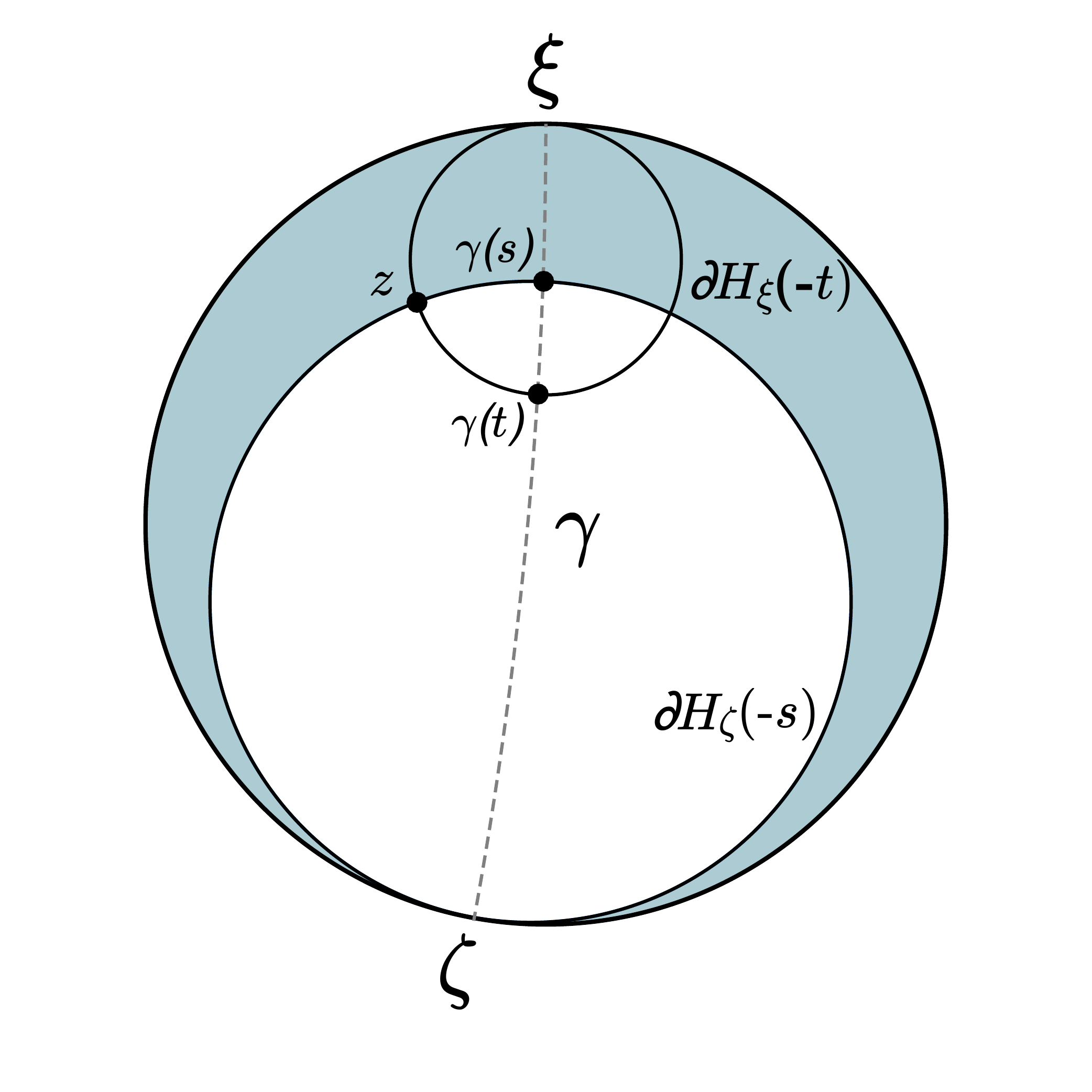}
          \caption{}
          \label{fig:set_1}
      \end{subfigure}
    \caption{Opposite boundary directions through the same point $z$ in the Poincar\'e disc model. The two panels show the sets $H^+_{\xi,z}$ and $H^+_{\zeta,z}$ for opposite-looking directions $\xi$ and $\zeta$. They illustrate why the one-sided definition does not privilege a direction: a large mass on one side is typically offset by a small mass for a competing boundary direction.}
    \label{fig:horospherical_depth}
\end{figure}

Figure \ref{fig:horospherical_depth} helps explain the point geometrically. Although a single set $H^+_{\xi,z}$ only looks at one side of the horosphere through $z$, the infimum over all $\xi\in\partial X$ automatically forces competition between opposite directions. What the two-sided definition changes is not the centrality intuition but the geometry of the resulting contours.

Indeed, the superlevel sets of $\widetilde D$ are intersections of horospherical strips rather than horoballs. If we define the lower quantile 
\begin{equation}\label{eq:q_xi}
q_\xi(\alpha):=\inf\{q\in\R:\PP(\{x\in X:B_\xi(x)\le q\})\ge \alpha\}, \quad \alpha \in (0,1), 
\end{equation}
then one obtains the representation below.

\begin{proposition}\label{prop:horo_strip}
For every $\alpha\in(0,1)$,
\begin{equation}\label{eq:strip}
\widetilde \DD^\alpha(\PP)=\bigcap_{\xi\in\partial X}\{z\in X:q_\xi(\alpha)\le B_\xi(z)\le t_\xi(\alpha)\}.
\end{equation}
In particular, the two-sided depth regions are intersections of horospherical strips and need not be geodesically convex.
\end{proposition}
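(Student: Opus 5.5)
The plan is to prove the set identity \eqref{eq:strip} pointwise in each direction $\xi$, in the same way as Theorem \ref{lemma:superlevel}, and then to give an explicit example showing that convexity can fail.

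\textbf{Reduction to one direction.} By definition, $\widetilde D(z;\PP)\ge\alpha$ holds if and only if, for every $\xi\in\partial X$, both
\[
\PP(H^+_{\xi,z})\ge\alpha \quad\text{and}\quad \PP(H^-_{\xi,z})\ge\alpha .
\]
So it suffices to show two one-dimensional equivalences for fixed $\xi$:
\begin{enumerate}
\item $S_\xi(B_\xi(z))\ge\alpha$ if and only if $B_\xi(z)\le t_\xi(\alpha)$;
\item $F_\xi(B_\xi(z))\ge\alpha$ if and only if $B_\xi(z)\ge q_\xi(\alpha)$, where $F_\xi(q):=\PP(B_\xi\le q)$.
\end{enumerate}

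\textbf{The upper equivalence.} The first equivalence is exactly the one-direction step already used for Theorem \ref{lemma:superlevel}, so I would reuse it. For completeness, the argument goes as follows. The function $S_\xi$ is nonincreasing and left-continuous, since $\{B_\xi\ge t\}=\bigcap_{s<t}\{B_\xi\ge s\}$ and continuity from above applies. Hence $\{t: S_\xi(t)\ge\alpha\}$ is a closed half-line $(-\infty,t_\xi(\alpha)]$. Note that $t_\xi(\alpha)$ is finite for $\alpha\in(0,1)$, because $S_\xi\to 1$ at $-\infty$ and $S_\xi\to 0$ at $+\infty$.

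\textbf{The lower equivalence.} The second equivalence is symmetric. $F_\xi$ is nondecreasing and right-continuous, so $\{q: F_\xi(q)\ge\alpha\}=[q_\xi(\alpha),\infty)$, with $q_\xi(\alpha)$ finite by the same limit argument. Intersecting over $\xi$ then yields \eqref{eq:strip}.

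\textbf{Failure of convexity.} This is the main obstacle, because a genuine counterexample is needed. The strip sets $\{B_\xi\ge q\}$ are complements of open horoballs, which are not convex in curved space. The difficulty is that the intersection over all $\xi$ might still happen to be convex.

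\emph{First attempt: a two-point measure.} In $\mathbb H^2$, take $\PP=\tfrac12(\delta_a+\delta_b)$ and $\alpha=1/2$. Then $z$ lies in $\widetilde\DD^{1/2}$ if and only if, for every $\xi$, $B_\xi(z)$ lies between $B_\xi(a)$ and $B_\xi(b)$. The points $a$ and $b$ themselves belong to the region; it is the endpoints of the closed interval $[a,b]$, not the open interior, that matter here. I would then test the midpoint $m$ of $[a,b]$. By strict convexity of $B_\xi$ along the geodesic in negative curvature, whenever the geodesic is not tangent to the radial flow we get
\[
B_\xi(m)<\max\{B_\xi(a),B_\xi(b)\}.
\]
However, choosing $\xi$ with $B_\xi(a)=B_\xi(b)$, strict convexity also gives $B_\xi(m)<B_\xi(a)=\min\{B_\xi(a),B_\xi(b)\}$. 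So $m$ violates the lower bound, and $m\notin\widetilde\DD^{1/2}$ while $a,b$ belong to it. Such a $\xi$ exists by the intermediate value theorem on the connected circle $\partial\mathbb H^2$: the difference $B_\xi(a)-B_\xi(b)$ changes sign between the endpoints of the geodesic through $a$ and $b$. This is the explicit non-convexity example I would write down.

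In Euclidean space this argument collapses, because there $B_\xi$ is affine.
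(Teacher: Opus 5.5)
Your proof is correct. For the set identity you follow the paper's argument: reduce to a single $\xi$, then read off $B_\xi(z)\le t_\xi(\alpha)$ and $B_\xi(z)\ge q_\xi(\alpha)$ from the definitions of the two quantiles. You also supply a step the paper leaves implicit. Left-continuity of $S_\xi$ and right-continuity of $F_\xi$ make $\{t:S_\xi(t)\ge\alpha\}$ and $\{q:F_\xi(q)\ge\alpha\}$ closed half-lines, so the supremum and infimum are attained and the equivalences hold with non-strict inequalities.

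Where you go beyond the paper is the clause that the two-sided regions need not be geodesically convex. The paper asserts this without any argument, and your two-point example supplies a valid one. For $\PP=\tfrac12(\delta_a+\delta_b)$ on $\mathbb H^2$, the points $a$ and $b$ have two-sided depth at least $1/2$. The continuous function $\xi\mapsto B_\xi(a)-B_\xi(b)$ equals $\pm d(a,b)$ at the two endpoints of the geodesic through $a$ and $b$. Since the boundary circle is connected, it vanishes at some $\xi_0$ distinct from both endpoints. Strict convexity of $B_{\xi_0}$ along that geodesic then gives $B_{\xi_0}(m)<B_{\xi_0}(a)=B_{\xi_0}(b)$, so $\PP(H^-_{\xi_0,m})=0$ and $\widetilde D(m;\PP)=0$. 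Your example therefore shows that $\widetilde\DD^\alpha(\PP)$ fails to be geodesically convex for every $\alpha\in(0,1/2]$, not only for $\alpha=1/2$.
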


For the purposes of this paper, that loss of convexity is decisive. The one-sided depth keeps the horoball intersection structure, and with it the entire convex-region theory developed in the next sections.

\subsection{Depth axioms and roadmap}

The Euclidean axiomatic framework of \cite{zuo2000general} adapts naturally to the present setting. We record the four properties that will organize the rest of the paper.

\smallskip
\begin{property}[Isometry invariance.]
\label{p:invariance}
For every isometry $\varphi:X \to X$, the depth function should satisfy $D(\varphi(z);\varphi_\#\PP)=D(z;\PP)$.
\end{property}

\begin{property}[Maximality at a center.]
\label{p:maximality}
If $\PP$ has a geometrically distinguished center, for example a symmetry center, then the depth should attain its maximum there.
\end{property}

\begin{property}[Vanishing at infinity.]
\label{p:vanishing}
If $d(o,z)\to\infty$, then $D(z;\PP)\to 0$.
\end{property}

\begin{property}[Monotonicity from a deepest point.]
\label{p:monotonicity}
If $z_*\in\argmax D(\cdot;\PP)$ and $\gamma:[0,1]\to X$ is the geodesic segment from $z_*$ to $z$, then $D(\gamma(t);\PP)\ge D(z;\PP)$ for every $t\in[0,1]$.
\end{property}

The remainder of the paper verifies these properties in a natural order. Section \ref{sec:properties} proves isometry invariance and maximality at symmetry centers. Section \ref{sec:existence} proves vanishing at infinity and uses it, together with a centerpoint theorem, to establish existence and compactness of positive-depth regions. Section \ref{sec:uniqueness} proves monotonicity and then a strictly stronger statement, namely strict quasi-concavity under negative curvature.

\section{Symmetry and equivariance}
\label{sec:properties}

Section \ref{sec:definition} identified the horospherical depth regions as intersections of horoballs. We now study how this depth behaves under the intrinsic symmetries of the manifold and of the underlying probability measure. The construction is intrinsic: isometries transport Busemann functions to Busemann functions, and therefore transport directional halfspaces, depth regions, and deepest points in a canonical way. We begin by naming the maximizers of the depth.

\begin{definition}[Busemann median]
Given a probability measure $\PP$ on $X$, the \emph{Busemann median} is the set of maximizers $\mu_*(\PP):=\argmax_{z\in X} D(z;\PP)$. We write $D_*(\PP):=\sup_{z\in X} D(z;\PP)$ for the maximal depth. When there is no risk of confusion, we abbreviate $\mu_*(\PP)$ and $D_*(\PP)$ to $\mu_*$ and $D_*$.
\end{definition}

The first basic property is the manifold analogue of affine invariance for Tukey depth.

\begin{theorem}[\ref{p:invariance}]\label{lemma:g-depth}
Let $\varphi:X\to X$ be an isometry. Then
\[
D(\varphi(z);\varphi_\#\PP)=D(z;\PP)
\qquad\text{for every } z\in X.
\]
Consequently, for every $\alpha\ge 0$,
\[
\DD^\alpha(\varphi_\#\PP)=\varphi\bigl(\DD^\alpha(\PP)\bigr),
\qquad
\mu_*(\varphi_\#\PP)=\varphi\bigl(\mu_*(\PP)\bigr).
\]
\end{theorem}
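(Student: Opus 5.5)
The plan is to show that an isometry $\varphi$ carries the horospherical halfspaces through $z$ bijectively onto those through $\varphi(z)$. The depth identity is then a change of variables in the infimum over $\partial X$.

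\emph{Step 1: boundary action.} The isometry $\varphi$ maps geodesic rays to geodesic rays and preserves the asymptotic relation, since $d(\varphi\gamma(t),\varphi\gamma'(t))=d(\gamma(t),\gamma'(t))$. It therefore induces a map $\partial\varphi:\partial X\to\partial X$, $\gamma(\infty)\mapsto(\varphi\circ\gamma)(\infty)$. This map is a bijection, with inverse $\partial(\varphi^{-1})$.

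\emph{Step 2: transport of Busemann functions.} Let $\gamma$ be the ray from $o$ to $\xi$. For every $x$,
\[
B_{\varphi\circ\gamma}(\varphi(x))=\lim_{t\to\infty}\bigl(d(\varphi\gamma(t),\varphi(x))-t\bigr)=B_\gamma(x).
\]
The ray $\varphi\circ\gamma$ starts at $\varphi(o)$, not at $o$. By Proposition \ref{prop:busemann_asymptotic} (equivalently \eqref{eqn:busemann_relation}), $B_{\varphi\circ\gamma}$ differs from $B_{\partial\varphi(\xi)}$ only by an additive constant $c(\xi)$. Hence
\[
B_{\partial\varphi(\xi)}(\varphi(x))=B_\xi(x)+c(\xi)\quad\text{for all } x.
\]
Because the constant cancels in comparisons, we get $\varphi(H^+_{\xi,z})=H^+_{\partial\varphi(\xi),\varphi(z)}$. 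This use of base-point independence is the only point needing care, and Proposition \ref{prop:busemann_asymptotic} handles it.

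\emph{Step 3: the depth identity.} By the definition of pushforward and Step 2,
\[
\varphi_\#\PP\bigl(H^+_{\partial\varphi(\xi),\varphi(z)}\bigr)=\PP\bigl(\varphi^{-1}(H^+_{\partial\varphi(\xi),\varphi(z)})\bigr)=\PP(H^+_{\xi,z}).
\]
As $\xi$ ranges over $\partial X$, so does $\partial\varphi(\xi)$, by the bijectivity in Step 1. Taking the infimum over $\xi$ gives $D(\varphi(z);\varphi_\#\PP)=D(z;\PP)$.

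\emph{Step 4: the consequences.} The region identity follows directly: $w\in\DD^\alpha(\varphi_\#\PP)$ if and only if $D(\varphi^{-1}(w);\PP)\ge\alpha$, that is, if and only if $w\in\varphi(\DD^\alpha(\PP))$. Since $\varphi$ is a bijection of $X$, the two depth functions have the same supremum, $D_*(\varphi_\#\PP)=D_*(\PP)$. The maximizers therefore correspond under $\varphi$, which gives $\mu_*(\varphi_\#\PP)=\varphi(\mu_*(\PP))$.
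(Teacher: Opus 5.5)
Your proof is correct and follows essentially the same route as the paper: the paper packages your Step 2 as a separate lemma, $B_{\partial\varphi(\xi)}(\varphi(x))=B_\xi(x)-B_\xi(\varphi^{-1}(o))$, obtained via Proposition \ref{prop:busemann_asymptotic}, and then changes variables in the infimum using bijectivity of $\partial\varphi$. Your identity $\varphi(H^+_{\xi,z})=H^+_{\partial\varphi(\xi),\varphi(z)}$ has the correct orientation, whereas the paper's displayed chain writes $\partial\varphi(\xi)$ where $\partial\varphi^{-1}(\xi)$ is meant; that slip is harmless once the infimum over all of $\partial X$ is taken.
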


Thus, the entire depth landscape is transported equivariantly by the full isometry group of $X$. This is the precise intrinsic counterpart of affine invariance in $\R^d$: the relevant symmetry group is no longer the affine group of a vector space, but the isometry group of the manifold itself.

\begin{corollary}\label{coro:g-action}
Suppose a group $G$ acts on $X$ by isometries. Then for every $g\in G$,
\[
D(gz;g_\#\PP)=D(z;\PP),
\qquad
\mu_*(g_\#\PP)=g\mu_*(\PP).
\]
In particular, if $g_\#\PP=\PP$, then $g\mu_*(\PP)=\mu_*(\PP)$.
\end{corollary}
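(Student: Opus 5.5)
The corollary is a direct specialization of Theorem~\ref{lemma:g-depth}. For each $g\in G$ I will apply that theorem to the isometry $\varphi=\Phi_g:X\to X$, $x\mapsto gx$. The only thing to check is that $\Phi_g$ really is an isometry of $X$, i.e.\ a bijective distance-preserving map. Distance preservation is the hypothesis that $G$ acts by isometries. Bijectivity comes from the group-action axioms: $\Phi_{g^{-1}}\circ\Phi_g=\Phi_e=\mathrm{id}$, and likewise in the other order. With this, Theorem~\ref{lemma:g-depth} gives $D(gz;g_\#\PP)=D(\Phi_g(z);(\Phi_g)_\#\PP)=D(z;\PP)$ for every $z\in X$, together with $\mu_*(g_\#\PP)=\Phi_g(\mu_*(\PP))=g\mu_*(\PP)$ as sets.

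For the final assertion, I will assume $g_\#\PP=\PP$ and substitute this into the set identity just obtained. This yields $g\mu_*(\PP)=\mu_*(g_\#\PP)=\mu_*(\PP)$, which is the required setwise invariance of the Busemann median.

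If desired, one can add the following consequence. When $\mu_*(\PP)$ is a singleton, that point is fixed by every $g$ with $g_\#\PP=\PP$. When it is not a singleton, it is still a $g$-invariant set; it is convex, since it equals the top depth region $\DD^{D_*}(\PP)$, and Theorem~\ref{lemma:superlevel} applies provided $D_*\in(0,1)$.

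There is no real obstacle here. The only point needing care is stating that the action maps $x\mapsto gx$ are isometries in the full sense, including surjectivity, so that Theorem~\ref{lemma:g-depth} applies verbatim. Everything else is substitution.
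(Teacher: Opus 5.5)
Your proposal is correct and follows the paper's route: the corollary is Theorem \ref{lemma:g-depth} applied to the isometry $\varphi = (x\mapsto gx)$. Your check that each action map is bijective (via $g^{-1}$) is the only point that needs stating, and the final invariance follows, as you say, by substituting $g_\#\PP=\PP$ into $\mu_*(g_\#\PP)=g\mu_*(\PP)$.
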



We next turn to maximality at a center of symmetry. As in the Euclidean theory, the right conclusion should hold under a symmetry hypothesis tailored to the geometry that the depth actually sees. On a Hadamard manifold, the following notions are useful.

\begin{definition}
Let $X$ be a Hadamard manifold and $\PP$
be a probability measure on $X$.

\begin{itemize}
\tightlist
    \item $\PP$ is \textbf{centrally symmetric} about $\theta\in X$ if $(s_\theta)_\#\PP=\PP$, where $s_\theta(x):=\exp_\theta(-\exp_\theta^{-1}x)$ is the geodesic reflection at $\theta$.
    \item $\PP$ is \textbf{horospherically symmetric} about $\theta\in X$ if $ \PP(H^+_{\xi,\theta})\ge 1/2 $ for every $\xi\in\partial X$.
\end{itemize}
\end{definition}


Horospherical symmetry is the weaker notion from the viewpoint of the depth itself. On spaces where point reflections are isometries---e.g., symmetric spaces---central symmetry coincides with invariance under geodesic symmetry. We show that central symmetry implies horospherical symmetry.

\begin{proposition}\label{prop:symmetric_prob}
If $\PP$ is centrally symmetric about $\theta$, then it is horospherically symmetric about $\theta$.
\end{proposition}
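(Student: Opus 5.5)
The plan is to use only the geodesic convexity of $B_\xi$ from Proposition \ref{prop:cont}, together with the fact that $\theta$ is the midpoint of the geodesic joining $x$ and $s_\theta(x)$. We never need $s_\theta$ to be an isometry, which matters because on a general Hadamard manifold it need not be one.

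\textbf{Step 1 (regularity of $s_\theta$).} On a Hadamard manifold, $\exp_\theta:T_\theta X\to X$ is a global diffeomorphism by Cartan--Hadamard. Hence $s_\theta(x)=\exp_\theta(-\exp_\theta^{-1}x)$ is a well-defined homeomorphism of $X$ and an involution, $s_\theta\circ s_\theta=\mathrm{id}$. In particular $s_\theta$ is Borel, so $(s_\theta)_\#\PP$ makes sense. Fix $\xi\in\partial X$. Since $B_\xi$ is $1$-Lipschitz, the set $H^+_{\xi,\theta}$ is closed, hence Borel.

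\textbf{Step 2 (pointwise midpoint inequality).} Take $x\in X$ and write $v=\exp_\theta^{-1}x$. The curve $c(t)=\exp_\theta(tv)$, $t\in[-1,1]$, is a geodesic with
\[
c(1)=x,\qquad c(-1)=s_\theta(x),\qquad c(0)=\theta .
\]
By geodesic convexity of $B_\xi$ (Proposition \ref{prop:cont}), the function $t\mapsto B_\xi(c(t))$ is convex, so
\[
B_\xi(\theta)\le \tfrac12\bigl(B_\xi(x)+B_\xi(s_\theta(x))\bigr).
\]
Therefore $\max\{B_\xi(x),B_\xi(s_\theta x)\}\ge B_\xi(\theta)$. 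This means that for every $x$, either $x\in H^+_{\xi,\theta}$ or $s_\theta(x)\in H^+_{\xi,\theta}$. Equivalently,
\[
X=H^+_{\xi,\theta}\cup s_\theta^{-1}\bigl(H^+_{\xi,\theta}\bigr).
\]

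\textbf{Step 3 (use the symmetry of $\PP$).} Apply subadditivity to the covering from Step 2, and then use $(s_\theta)_\#\PP=\PP$:
\[
1\le \PP(H^+_{\xi,\theta})+\PP\bigl(s_\theta^{-1}H^+_{\xi,\theta}\bigr)=2\,\PP(H^+_{\xi,\theta}).
\]
Hence $\PP(H^+_{\xi,\theta})\ge 1/2$. Since $\xi\in\partial X$ was arbitrary, $\PP$ is horospherically symmetric about $\theta$.

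The only point needing care is Step 2: one must avoid assuming that $s_\theta$ preserves Busemann functions. Convexity along the single geodesic through $\theta$ is enough for the argument.
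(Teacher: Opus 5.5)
Your proof is correct and follows essentially the same route as the paper: midpoint convexity of $B_\xi$ along the geodesic through $\theta$ joining $x$ and $s_\theta(x)$ gives the covering $X=H^+_{\xi,\theta}\cup s_\theta^{-1}(H^+_{\xi,\theta})$, and invariance of $\PP$ under $s_\theta$ then yields $\PP(H^+_{\xi,\theta})\ge 1/2$. Your remarks that $s_\theta$ is Borel and need not be an isometry are accurate; the paper leaves these points implicit.
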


Horospherical symmetry is already sufficient for the depth to peak at the symmetry center.

\begin{proposition}[\ref{p:maximality}]\label{lemma:maximality}
If $\PP$ is horospherically symmetric about a unique point $\theta\in X$, then $\mu_*(\PP)=\{\theta\}$.
\end{proposition}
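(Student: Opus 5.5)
The plan is to argue directly from Definition~\ref{def:depth} and the definition of horospherical symmetry. The key observation is that horospherical symmetry about a point $z$ is, by definition, exactly the statement $D(z;\PP)\ge 1/2$:
\[
D(z;\PP)\ge \tfrac12
\iff
\inf_{\xi\in\partial X}\PP(H^+_{\xi,z})\ge \tfrac12
\iff
\PP(H^+_{\xi,z})\ge \tfrac12\ \text{for all }\xi\in\partial X .
\]
I read the hypothesis ``about a unique point $\theta$'' as saying that $\theta$ is the only point of $X$ about which $\PP$ is horospherically symmetric. Under this reading the equivalence gives $\DD^{1/2}(\PP)=\{\theta\}$, and in particular $D(\theta;\PP)\ge 1/2$.

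First I show that $\theta$ is a maximizer. Suppose some $z\in X$ had $D(z;\PP)>D(\theta;\PP)$. Then $D(z;\PP)>1/2$, so $\PP$ is horospherically symmetric about $z$. By uniqueness $z=\theta$, which contradicts the strict inequality. Hence $D(z;\PP)\le D(\theta;\PP)$ for all $z$, so $D_*(\PP)=D(\theta;\PP)$ and the supremum is attained at $\theta$.

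Next I show there is no other maximizer. If $z\in\mu_*(\PP)$, then $D(z;\PP)=D_*(\PP)=D(\theta;\PP)\ge 1/2$. By the equivalence above, $\PP$ is horospherically symmetric about $z$, and uniqueness forces $z=\theta$. Therefore $\mu_*(\PP)=\{\theta\}$.

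No geometric input such as Theorem~\ref{lemma:superlevel} or curvature is needed. The only delicate point is the interpretation of the hypothesis. If ``unique'' were instead meant as a separate uniqueness assumption on the median, the argument would still only use that no point other than $\theta$ has depth at least $1/2$. I would state this reading explicitly in the proof.
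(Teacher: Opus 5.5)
Your proposal is correct and is essentially the paper's argument. Both use that horospherical symmetry about $z$ is exactly $D(z;\PP)\ge 1/2$, so uniqueness of the center gives $D(z;\PP)<1/2\le D(\theta;\PP)$ for every $z\neq\theta$. Your two-step version (first that $\theta$ is a maximizer, then that no other point is) just unpacks this single strict inequality.
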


\begin{remark}[Uniqueness of horospherical symmetry centers]
Proposition \ref{lemma:maximality} requires that $\theta$ be the \emph{unique} point of horospherical symmetry. Can a measure be horospherically symmetric about two distinct points $\theta_1\ne\theta_2$?  If so, then $D(\theta_i;\PP)\ge 1/2$ for $i=1,2$, so $\mu^*(\PP)$ contains both $\theta_1$ and $\theta_2$.  In $\mathbb{R}^d$ this can happen (e.g. a measure that is halfspace-symmetric about every point on a line segment).  On a Hadamard manifold the same phenomenon is possible when the curvature is not strictly negative: for instance, a measure on a Euclidean factor of a product Hadamard manifold can be horospherically symmetric along an entire flat.  Under $\Sec_X<0$, however, the strict convexity of Busemann functions (Theorem \ref{thm:busemann_f_convex}) eventually forces any two such centers to coincide once Assumptions \ref{assump:no_atom}-\ref{assump_connected_supp} hold; this is precisely the content of the uniqueness theorem (Theorem \ref{thm:unique_rank_1}).
\end{remark}

Theorem \ref{lemma:g-depth} has an immediate consequence for the depth regions themselves.

\begin{corollary}\label{coro:region-equivariance}
For every $\alpha\ge 0$ and every isometry $\varphi:X\to X$,
\[
\DD^\alpha(\varphi_\#\PP)=\varphi\bigl(\DD^\alpha(\PP)\bigr).
\]
In particular, if $\varphi_\#\PP=\PP$, then $\DD^\alpha(\PP)$ is $\varphi$-invariant.
\end{corollary}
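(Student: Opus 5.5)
This corollary follows from Theorem \ref{lemma:g-depth}, read at the level of superlevel sets; I would argue directly from the pointwise identity $D(\varphi(z);\varphi_\#\PP)=D(z;\PP)$ rather than from the horoball representation of Theorem \ref{lemma:superlevel}, since the pointwise identity also covers $\alpha=0$ and $\alpha\ge 1$.

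The key step is a double inclusion. Fix $\alpha\ge 0$. If $z\in\DD^\alpha(\PP)$, then $D(\varphi(z);\varphi_\#\PP)=D(z;\PP)\ge\alpha$, so $\varphi(z)\in\DD^\alpha(\varphi_\#\PP)$. This gives $\varphi(\DD^\alpha(\PP))\subseteq\DD^\alpha(\varphi_\#\PP)$.

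For the reverse inclusion I use that an isometry of a Hadamard manifold onto itself is bijective. This holds because isometries in this setting are surjective by convention, and Theorem \ref{lemma:g-depth} is stated for such maps. Given $w\in\DD^\alpha(\varphi_\#\PP)$, write $w=\varphi(z)$ with $z=\varphi^{-1}(w)$. Then $D(z;\PP)=D(w;\varphi_\#\PP)\ge\alpha$, so $z\in\DD^\alpha(\PP)$ and $w\in\varphi(\DD^\alpha(\PP))$.

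For the invariance statement, suppose $\varphi_\#\PP=\PP$. Substituting into the identity just proved gives $\DD^\alpha(\PP)=\varphi(\DD^\alpha(\PP))$, which is exactly $\varphi$-invariance of the region.

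There is no real obstacle here. The only point needing care is surjectivity of $\varphi$ in the reverse inclusion. If one worried about it, one could instead apply the forward inclusion to $\varphi^{-1}$ with the measure $\varphi_\#\PP$, using $(\varphi^{-1})_\#\varphi_\#\PP=\PP$; that route also needs $\varphi^{-1}$ to be an isometry, so it still relies on $\varphi$ being a bijective isometry.
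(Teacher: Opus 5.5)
Your argument is correct and essentially the paper's: the corollary is Step~2 of the proof of Theorem~\ref{lemma:g-depth}. There the region $\DD^\alpha(\varphi_\#\PP)$ is reindexed through $z\mapsto\varphi(z)$, which implicitly uses surjectivity of $\varphi$, and the pointwise identity $D(\varphi(z);\varphi_\#\PP)=D(z;\PP)$ is then applied; your double inclusion does the same thing and makes the bijectivity step explicit.
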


\begin{figure}
    \centering
    \begin{subfigure}{0.30\columnwidth}
        \includegraphics[width=\columnwidth]{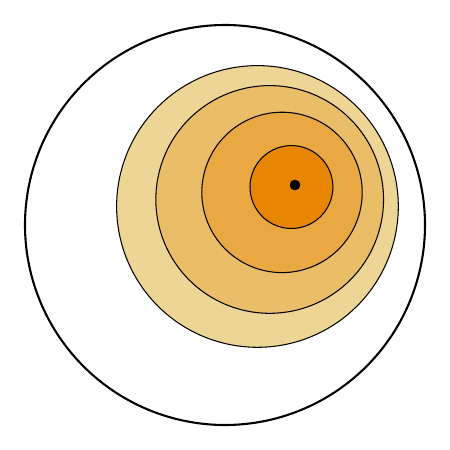}
          \caption{}
          \label{fig:rotational}
      \end{subfigure}
    \hspace{0.15\columnwidth}
    \begin{subfigure}{0.30\columnwidth}
        \includegraphics[width=\columnwidth]{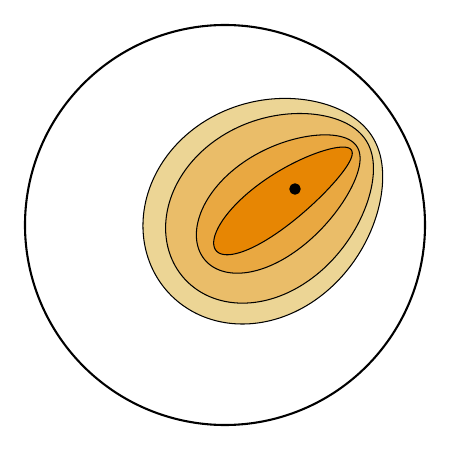}
          \caption{}
          \label{fig:central}
      \end{subfigure}
    \caption{$\alpha$-depth regions for symmetric distributions on $\mathbb H^2$ in the Poincar\'e disc model where $\alpha$ increases from outermost to innermost. Panel (a) shows a rotationally symmetric distribution and Panel (b) a centrally symmetric distribution, both centered at $(0.35,0.2)$. The common center, nested contours, and inherited symmetries visualize Theorem \ref{lemma:g-depth}, Proposition \ref{lemma:maximality}, and the depth-region representation of Theorem \ref{lemma:superlevel}.}
    \label{fig:depth_region}
\end{figure}

Combined with Theorem \ref{lemma:superlevel}, this says that every depth region is a geodesically convex set that inherits all isometric symmetries of the underlying probability measure. Figure \ref{fig:depth_region} makes the structural picture concrete. In \cref{fig:rotational}, rotational symmetry forces the contours to inherit the full stabilizer of the center, so the displayed regions appear as a concentric family about $(0.35,0.2)$. Here ``concentric'' is meant in the hyperbolic sense. In the Poincar\'e disc model, hyperbolic circles centered away from the origin are represented by Euclidean circles whose Euclidean centers are shifted toward the origin, because the model is conformal but not distance-preserving. For the same reason, the contours in \cref{fig:central} can look like distorted Euclidean ellipses even though they are centered at the same hyperbolic point and remain geodesically convex. The visual asymmetry is therefore a coordinate effect of the disc model, not a failure of symmetry of the depth itself.

At this point the qualitative geometry is clear, but one essential analytic question remains: do nontrivial depth levels actually occur for an arbitrary probability measure? This question is addressed in the next section, where vanishing at infinity and a centerpoint theorem turn the formal level sets into a genuine location theory.

\section{Existence and the centerpoint theorem}
\label{sec:existence}

Section \ref{sec:properties} shows that horospherical depth is intrinsic and symmetry-adapted. To obtain a genuine location functional, however, one still has to prove that maximal depth is attained for an arbitrary probability measure and that the positive-depth regions are not empty formal shells. The Euclidean strategy remains the correct one: first show vanishing at infinity, then prove upper semicontinuity, and finally invoke compactness.

\subsection{Vanishing at infinity and existence}

\begin{proposition}[\ref{p:vanishing}]\label{prop:no_infty}
For any Borel probability measure $\PP$ on $X$, the horospherical depth vanishes at infinity. That is,
\[
D(z;\PP)\to 0
\qquad\text{as } d(o,z)\to\infty.
\]
\end{proposition}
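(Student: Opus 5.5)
Given $\varepsilon>0$, the plan is to find a compact set $K$ with $\PP(K)\ge 1-\varepsilon$ and, for every $z$ far from $o$, a boundary direction $\xi$ for which the halfspace $H^+_{\xi,z}$ misses $K$. Then $D(z;\PP)\le \PP(H^+_{\xi,z})\le \PP(X\setminus K)\le\varepsilon$. Since $X$ is a proper metric space and $\PP$ is a Borel probability measure, $\PP$ is tight, so I would take $K=\bar B(o,R)$ with $R$ large enough that $\PP(\bar B(o,R))\ge 1-\varepsilon$.

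Next, fix $z$ with $d(o,z)>R$. Let $\sigma$ be the unit-speed geodesic from $z$ through $o$, extended past $o$ to a complete geodesic by Hopf--Rinow, and let $\xi:=\sigma(+\infty)$ be the boundary point in the direction pointing from $z$ through $o$ and beyond. I would use the base point $z$ itself, which is legitimate because $H^+_{\xi,z}$ does not depend on the base point. The Busemann function $B_{\xi,z}$ of the ray $\sigma|_{[0,\infty)}$ satisfies $B_{\xi,z}(z)=0$, so $H^+_{\xi,z}=\{x: B_{\xi,z}(x)\ge 0\}$. Along the ray, $B_{\xi,z}(\sigma(s))=-s$, and in particular $B_{\xi,z}(o)=-d(o,z)$.

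By Proposition \ref{prop:cont}, $B_{\xi,z}$ is $1$-Lipschitz. Hence every $x\in \bar B(o,R)$ satisfies
\[
B_{\xi,z}(x)\le B_{\xi,z}(o)+d(o,x)\le -d(o,z)+R<0 .
\]
So $\bar B(o,R)\cap H^+_{\xi,z}=\emptyset$, which gives $D(z;\PP)\le\varepsilon$ whenever $d(o,z)>R$. Since $\varepsilon$ was arbitrary, this proves the claim.

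I expect no serious obstacle. The one delicate point is choosing the sign correctly: $\xi$ must be the endpoint beyond $o$, not the endpoint behind $z$, so that $o$ lies deep inside the horoball centered at $\xi$ passing through $z$. Everything else follows from tightness and the Lipschitz bound. The argument uses no curvature beyond the Hadamard hypothesis, which is needed for the existence of the extended geodesic and of the Busemann functions.
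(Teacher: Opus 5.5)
Your proof is correct and is essentially the paper's argument. Your $\xi$, the endpoint of the geodesic from $z$ through $o$, is exactly the paper's $\xi^-_z$, and the paper likewise uses the $1$-Lipschitz bound $B_\xi(x)\le d(o,x)$ to get $H^+_{\xi,z}\subset\{x: d(o,x)\ge d(o,z)\}$, normalizing at base point $o$ rather than $z$; as there, continuity of $\PP$ from below along exhausting balls already suffices, so you do not need properness or tightness.
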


Proposition \ref{prop:no_infty} is the manifold analogue of the usual vanishing-at-infinity axiom for statistical depth: points escaping to spatial infinity cannot remain deep. In particular, any maximizer must lie in a bounded region.

\begin{proposition}\label{prop:usc}
The map $(z,\PP)\mapsto D(z;\PP)$ is jointly upper semicontinuous, where $z\in X$ carries the manifold topology and $\PP$ carries weak convergence of probability measures. In particular, for each fixed Borel probability measure $\PP$, the function $z\mapsto D(z;\PP)$ is upper semicontinuous.
\end{proposition}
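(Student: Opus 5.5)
Joint upper semicontinuity means: if $z_n\to z$ and $\PP_n\Rightarrow\PP$, then $\limsup_n D(z_n;\PP_n)\le D(z;\PP)$. Since $D$ is an infimum over $\xi\in\partial X$, it suffices to show that for each fixed $\xi$ the map $(z,\PP)\mapsto \PP(H^+_{\xi,z})$ is jointly upper semicontinuous. Indeed, an infimum of upper semicontinuous functions is upper semicontinuous. Concretely, fix $\xi$; then $\limsup_n D(z_n;\PP_n)\le\limsup_n \PP_n(H^+_{\xi,z_n})\le \PP(H^+_{\xi,z})$, and we take the infimum over $\xi$ on the right.

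To prove the per-direction bound, fix $\xi$ and $\delta>0$. By Proposition \ref{prop:cont}, $B_\xi$ is $1$-Lipschitz, so $|B_\xi(z_n)-B_\xi(z)|\le d(z_n,z)\to 0$. Hence for all large $n$ we have $B_\xi(z_n)\ge B_\xi(z)-\delta$, and therefore
\[
H^+_{\xi,z_n}\subseteq F_\delta:=\{x\in X: B_\xi(x)\ge B_\xi(z)-\delta\}.
\]
By continuity of $B_\xi$, the set $F_\delta$ is closed. The Portmanteau theorem for weak convergence then gives
\[
\limsup_n \PP_n(H^+_{\xi,z_n})\le \limsup_n\PP_n(F_\delta)\le \PP(F_\delta).
\]
As $\delta\downarrow 0$, the sets $F_\delta$ decrease to $\{x:B_\xi(x)\ge B_\xi(z)\}=H^+_{\xi,z}$. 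Continuity from above of the finite measure $\PP$ gives $\PP(F_\delta)\downarrow \PP(H^+_{\xi,z})$, which is the claimed bound.

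The second statement is the special case $\PP_n\equiv\PP$. No uniformity in $\xi$ is needed, because we only bound the infimum from above by fixing one direction at a time. The only delicate point is that the moving sets $H^+_{\xi,z_n}$ must be placed inside a single fixed closed set so that Portmanteau applies; the slack $\delta$ together with the Lipschitz bound does this, and the closedness of the superlevel set finishes the argument.
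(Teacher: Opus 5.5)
Your proof is correct and follows essentially the same route as the paper. Both arguments reduce to upper semicontinuity of $(z,\PP)\mapsto \PP(H^+_{\xi,z})$ for each fixed $\xi$, with the paper invoking Lemma \ref{lemma:uppersemicont} for the infimum step. They then trap $H^+_{\xi,z_n}$ in the closed set $\{B_\xi\ge B_\xi(z)-\delta\}$, apply Portmanteau, and let $\delta\downarrow 0$ by continuity from above; the only cosmetic difference is that you use the $1$-Lipschitz bound where the paper uses mere continuity of $B_\xi$.
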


\begin{remark}[Attainment of the infimum direction]
For every fixed $z\in X$ and every Borel probability measure $\PP$ on $X$, the map $\xi\mapsto \PP(H^+_{\xi,z})$ is upper semicontinuous on the compact space $\partial X$ (equipped with the cone topology), so the infimum in \eqref{eq:depth_one_sided} is attained. That is, there exists a \emph{least favorable direction} $\xi^*(z)\in\partial X$ with $D(z;\PP) = \PP(H^+_{\xi^*(z),z})$. 
\end{remark}

Upper semicontinuity is exactly what turns boundedness of approximate maximizers into actual existence of a deepest point.

\begin{lemma}\label{lemma:existence}
For any Borel probability measure $\PP$ on $X$, the set of maximizers
\[
\mu_*(\PP):=\argmax_{z\in X} D(z;\PP)
\]
is nonempty. If $D_*(\PP)>0$, then $\mu_*(\PP)$ is compact.
\end{lemma}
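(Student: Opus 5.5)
Two cases are needed. Let $D_*=\sup_z D(z;\PP)$; note $D_*\le 1$ and $D_*\ge 0$. If $D_*=0$, then $D(z;\PP)\ge 0=D_*$ for every $z$, so every point of $X$ is a maximizer and $\mu_*(\PP)=X\neq\emptyset$. The compactness claim is only asserted when $D_*>0$, so this case needs nothing more.

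Now assume $D_*>0$. First, I localize the maximizing sequence. By Proposition \ref{prop:no_infty}, there is $R<\infty$ such that $D(z;\PP)<D_*/2$ whenever $d(o,z)>R$. Pick $z_n$ with $D(z_n;\PP)\to D_*$. Discarding finitely many terms, $D(z_n;\PP)>D_*/2$, so $z_n\in \bar B(o,R)$. A Hadamard manifold is complete, so by Hopf--Rinow this closed ball is compact. Passing to a subsequence, $z_n\to z_\infty$.

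Second, I pass to the limit. By Proposition \ref{prop:usc} (upper semicontinuity in $z$ for fixed $\PP$),
\[
D(z_\infty;\PP)\ge\limsup_n D(z_n;\PP)=D_*,
\]
so $z_\infty\in\mu_*(\PP)$. Hence the set of maximizers is nonempty.

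Third, compactness. We have $\mu_*(\PP)=\{z:D(z;\PP)\ge D_*\}=\DD^{D_*}(\PP)$. This is a superlevel set of an upper semicontinuous function, so it is closed. By the choice of $R$, it is contained in $\bar B(o,R)$, since any $z$ with $d(o,z)>R$ has $D(z;\PP)<D_*/2<D_*$. A closed subset of a compact ball is compact.

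The only delicate point is the use of vanishing at infinity, which is what confines near-maximizers to a bounded set. That is exactly why positivity of $D_*$ is required for compactness. When $D_*=0$ the maximizer set is all of $X$, which is noncompact, as it must be, since $X$ is diffeomorphic to $\R^d$.
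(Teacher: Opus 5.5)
Your proof is correct and follows essentially the same argument as the paper. The case $D_*(\PP)=0$ gives $\mu_*(\PP)=X$. For $D_*(\PP)>0$, vanishing at infinity (Proposition \ref{prop:no_infty}) confines near-maximizers to a compact closed ball, where upper semicontinuity (Proposition \ref{prop:usc}) gives both attainment and closedness of the maximizer set. The only cosmetic difference is that you write out attainment via a maximizing sequence, whereas the paper simply cites that an upper semicontinuous function attains its supremum on a compact set.
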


Lemma \ref{lemma:existence} leaves open one degenerate possibility, namely that $D_*(\PP)=0$ and the depth carries no locational information. The next theorem rules this out uniformly in dimension.

\subsection{A centerpoint theorem}

\begin{theorem}[Centerpoint]\label{thm:depth_lbound}
Let $X$ be a $d$-dimensional Hadamard manifold and let $\PP$ be a Borel probability measure on $X$. Then
\[
D_*(\PP)\ge \frac{1}{d+1}.
\]
\end{theorem}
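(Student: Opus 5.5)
The plan is to reduce the statement to Helly's theorem for geodesically convex sets in a $d$-dimensional Hadamard manifold, following the classical Rado proof. By Theorem \ref{lemma:superlevel}, for $\alpha\in(0,1)$ the region $\DD^\alpha(\PP)$ is the intersection over $\xi\in\partial X$ of the closed horoballs $H_\xi(t_\xi(\alpha))$. It therefore suffices to show that for $\alpha = 1/(d+1)$ this family has nonempty intersection. Once $\DD^{1/(d+1)}(\PP)\neq\emptyset$, we get $D_*(\PP)\ge 1/(d+1)$, and Lemma \ref{lemma:existence} then says the supremum is attained.

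\textbf{Step 1 (mass of each horoball).} Fix $\alpha<1/(d+1)$ first, and pass to the limit at the end. I claim the complement of $H_\xi(t_\xi(\alpha))$ has mass at most $\alpha$. The complement is $\{B_\xi > t_\xi(\alpha)\} = \bigcup_n \{B_\xi\ge t_\xi(\alpha)+1/n\}$. Each set in this union has $S_\xi<\alpha$ by the definition of the supremum $t_\xi(\alpha)$. So the complement has mass $\le\alpha$, and hence $\PP(H_\xi(t_\xi(\alpha)))\ge 1-\alpha$. I also need $t_\xi(\alpha)$ finite. This holds since $S_\xi(t)\to 1$ as $t\to-\infty$ and $S_\xi(t)\to 0$ as $t\to+\infty$.

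\textbf{Step 2 (finite intersections, Helly).} Take any $d+1$ of the horoballs. Their complements have total mass at most $(d+1)\alpha<1$, so the horoballs share a point. Helly's theorem holds for closed geodesically convex sets in a $d$-dimensional Hadamard manifold. One way to see this is to transport to $\R^d$ via the Beltrami-type argument: on Hadamard manifolds Helly's number is $d+1$ for geodesically convex sets, a known result (e.g. Ivanov / via topological Helly, since finite intersections of convex sets are contractible or empty, so the nerve argument of the topological Helly theorem applies). With that, every finite subfamily has nonempty intersection.

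\textbf{Step 3 (compactness).} To pass from finite to arbitrary intersections I need one compact member. I would use the intersection of finitely many horoballs $K$ and argue that it can be chosen bounded. The idea is to pick a ball $\bar B(o,R)$ with $\PP(\bar B(o,R))>1-\beta$. For each $z$ at distance $>R'$ from $o$, take $\xi$ to be the direction of the ray from $z$ away from $o$. Then $H^+_{\xi,z}$ essentially misses the ball, which is exactly the mechanism behind Proposition \ref{prop:no_infty}. Using compactness of the sphere of directions, finitely many horoballs $H_\xi(t_\xi(\alpha))$ should then suffice to exclude the complement of a large ball.

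A cleaner alternative avoids that construction. Each $\DD^\alpha$ is closed, because $D(\cdot;\PP)$ is upper semicontinuous by Proposition \ref{prop:usc}. Vanishing at infinity makes $\DD^\alpha$ bounded for $\alpha>0$. Then I would intersect the family with a large closed ball $\bar B$ containing every point of depth $\ge\alpha$. The key fact needed is that the finite intersections all meet $\bar B$. To get this, I would redo Step 2 with the family augmented by $\bar B$ itself, which has mass $>1-\beta$, at the cost of replacing $(d+1)\alpha$ by $d\alpha+\beta<1$. Compactness of $\bar B$ plus the finite intersection property then gives a point in $\DD^{\alpha}$.

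\textbf{Step 4 (limit).} Finally let $\alpha\uparrow 1/(d+1)$. The sets $\DD^\alpha$ are nested, nonempty and compact, so their intersection $\DD^{1/(d+1)}$ is nonempty. The main obstacle is Step 2, justifying Helly's theorem in the curved setting. Step 3 is also delicate: the sets $H_\xi(t_\xi(\alpha))$ must remain closed and the bounding ball must be compatible, and the ball trick handles this.
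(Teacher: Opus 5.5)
Your proof is correct, but it takes a genuinely different route from the paper.

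\textbf{The paper's route.} The paper never invokes Helly's theorem. It smooths $\PP$ by the heat kernel to get $\PP_h\ll\vol$. It then applies Rusciano's tangent-space centerpoint theorem to obtain $z_h$ with $\PP_h(H^{\tang}_{v,z_h})\ge 1/(d+1)$ for every unit $v$. Next it observes two facts: convexity of $B_\xi$ gives $H^{\tang}_{\grad_{z_h}B_\xi,\,z_h}\subset H^+_{\xi,z_h}$, and $\xi\mapsto\grad_{z_h}B_\xi$ is onto the unit sphere, so $D\ge D^{\tang}$ pointwise. Finally it passes to $h\downarrow 0$ using tightness of $(\PP_h)$ and joint upper semicontinuity of $D$.

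\textbf{Your route.} You run the classical Rado/Donoho--Gasko argument directly on the horoball representation of Theorem~\ref{lemma:superlevel}:
\begin{itemize}
\item each $H_\xi(t_\xi(\alpha))$ has mass at least $1-\alpha$;
\item for $\alpha<1/(d+1)$, any $d+1$ members of the family augmented by a heavy closed ball must meet;
\item Helly upgrades this to the finite intersection property;
\item compactness of the ball yields a point of $\DD^\alpha(\PP)$;
\item nested compactness lets $\alpha\uparrow 1/(d+1)$.
\end{itemize}

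\textbf{What each approach buys.} Your route handles an arbitrary Borel $\PP$ with no regularization and no external centerpoint theorem. It also uses no gradients, exponential map or heat kernel. It needs only convexity and closedness of horoballs and balls, plus a Helly theorem with Helly number $d+1$. That makes it the more natural candidate for the CAT$(0)$ extension the paper lists as open. You can even run Step 4 inside one fixed ball $\overline{B}(o,R)$: any $\beta<1/(d+1)$ gives $d\alpha+\beta<1$ for all $\alpha<1/(d+1)$, so vanishing at infinity is not needed. The paper's route instead reuses an existing theorem, and it yields the pointwise comparison $D(\cdot;\PP)\ge D^{\tang}(\cdot;\PP)$, which is of independent interest.

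\textbf{Correction to Step 2.} Drop the ``Beltrami-type'' transport. By Beltrami's theorem, a geodesic map onto a domain in $\R^d$ exists only in constant curvature, so it does not cover general Hadamard manifolds. Justify Helly in one of two ways:
\begin{itemize}
\item the topological Helly (nerve) argument, since $X$ is homeomorphic to $\R^d$ and nonempty finite intersections of geodesically convex sets are contractible by geodesic homotopy;
\item Ivanov's Helly theorem for convex sets in uniquely geodesic spaces.
\end{itemize}
The nerve argument is usually stated for open sets. So apply it to the open convex sets $\{B_\xi<t_\xi(\alpha)+\epsilon\}$ and $B(o,R+\epsilon)$, then let $\epsilon\downarrow 0$. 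The resulting nonempty sets $\overline{B}(o,R+\epsilon)\cap\bigcap_i\{B_{\xi_i}\le t_{\xi_i}(\alpha)+\epsilon\}$ are nested and compact, so their intersection is nonempty. This is a second place where your augmented ball is essential.
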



\begin{proof}[Proof Sketch]
Following \cite[Lemma 6.3]{donoho1992breakdown}, regularize $\PP$ by the heat kernel to obtain $\PP_h\ll\vol$ with $\PP_h\Rightarrow \PP$ as $h\downarrow 0$. For $z\in X$ and $v\in T_zX$, write
\[
H^{\tang}_{v,z}:=\{\exp_z(w):\langle w,v\rangle_z\ge 0\}.
\]
Rusciano's centerpoint theorem \cite[Theorem 2]{rusciano2018riemannian} yields, for each $h>0$, a point $z_h$ such that every tangent halfspace $H^{\tang}_{v,z_h}$ has $\PP_h$-mass at least $1/(d+1)$. Write $v_h(\xi):= \grad_{z_h}B_\xi$, then geodesic convexity of $B_\xi$ implies $H^{\tang}_{v_h(\xi),z_h}\subset H^+_{\xi,z_h}$, and the map $\xi\mapsto v_h(\xi)=\grad_{z_h}B_\xi$ exhausts the unit sphere in $T_{z_h}X$. Hence $D(z_h;\PP_h)\ge 1/(d+1)$ for every $h$. Finally, the family $(\PP_h)$ is tight as $h\downarrow0$, so the points $z_h$ cannot escape to infinity; after passing to a convergent subsequence, joint upper semicontinuity of $D$ yields a limit point $z_*$ with
\[
D(z_*;\PP)\ge \limsup_{h\downarrow0} D(z_h;\PP_h)\ge \frac{1}{d+1}.
\]
Thus $D_*(\PP)\ge 1/(d+1)$. The full argument is given in the supplement.
\end{proof}

\begin{corollary}\label{thm:busemann_exist}
Let $X$ be a $d$-dimensional Hadamard manifold and let $\PP$ be a Borel probability measure on $X$. Then the Busemann median $\mu_*(\PP)$ is nonempty and compact.
\end{corollary}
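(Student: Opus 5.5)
The corollary is obtained by combining Theorem \ref{thm:depth_lbound} with Lemma \ref{lemma:existence}. First, Lemma \ref{lemma:existence} already gives that $\mu_*(\PP)$ is nonempty for every Borel probability measure $\PP$. Second, Theorem \ref{thm:depth_lbound} gives $D_*(\PP)\ge 1/(d+1)>0$. This places us in the regime covered by the second clause of Lemma \ref{lemma:existence}, so $\mu_*(\PP)$ is compact.

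If one wishes to make the compactness self-contained rather than quoting the lemma, the argument runs as follows.

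\textbf{Closedness.} Write $\mu_*(\PP)=\DD^{D_*}(\PP)=\{z: D(z;\PP)\ge D_*(\PP)\}$. This is a superlevel set of the function $z\mapsto D(z;\PP)$, which is upper semicontinuous by Proposition \ref{prop:usc}. Hence $\mu_*(\PP)$ is closed.

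\textbf{Boundedness.} By Proposition \ref{prop:no_infty}, there is $R$ such that $D(z;\PP)<1/(d+1)\le D_*$ whenever $d(o,z)>R$. Thus $\mu_*(\PP)$ lies in the closed ball of radius $R$ about $o$.

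\textbf{Compactness.} By Hopf--Rinow, closed and bounded subsets of the complete manifold $X$ are compact, so $\mu_*(\PP)$ is compact.

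Nonemptiness likewise follows directly: an upper semicontinuous function attains its maximum on the compact ball $\{d(o,z)\le R\}$. By the choice of $R$, that maximum dominates all values outside the ball, so it is the global maximum $D_*$.

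The only genuine content is the strict positivity $D_*>0$, which is supplied by Theorem \ref{thm:depth_lbound}. Without it, depth vanishing at infinity would not confine the maximizers to a bounded set. For example, if $D_*=0$, every point would be a maximizer. There is therefore no real obstacle beyond invoking the centerpoint bound.
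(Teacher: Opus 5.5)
Your proposal is correct and is the intended argument: Lemma \ref{lemma:existence} gives nonemptiness, and its second clause gives compactness once Theorem \ref{thm:depth_lbound} supplies $D_*(\PP)\ge 1/(d+1)>0$. In your optional self-contained nonemptiness step, the threshold $R$ with $D<1/(d+1)$ off the ball needs a small extra argument (e.g., that a point of depth $\ge 1/(d+1)$ must lie in the ball) before the maximum over the ball can be said to dominate the outside; choosing $R$ so that $D<D_*(\PP)/2$ off the ball, as in the proof of Lemma \ref{lemma:existence}, avoids this.
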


The centerpoint theorem also tells us where the structural theorem for depth regions truly belongs. The geodesic convexity and nesting of $\DD^\alpha(\PP)$ come directly from their representation as intersections of horoballs in Section \ref{sec:definition}; the genuinely new information supplied here is boundedness, nonemptiness, and topological regularity at positive depth levels.

\begin{theorem}\label{thm:region_region_prop}
Let $X$ be a $d$-dimensional Hadamard manifold and let $\PP$ be a Borel probability measure on $X$. Then the depth regions satisfy the following properties.
\begin{enumerate}
    \item $\DD^\alpha(\PP)$ is geodesically convex and nested in $\alpha$.
    \item $\DD^\alpha(\PP)$ is compact for every $\alpha>0$.
    \item $\DD^\alpha(\PP)$ is nonempty for every $0<\alpha\le 1/(d+1)$.
    \item If $D(\cdot;\PP)$ is continuous, then $\DD^\alpha(\PP)$ is a topological $d$-ball for every $0<\alpha<D_*(\PP)$.
\end{enumerate}
\end{theorem}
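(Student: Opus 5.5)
Items (1)--(3) follow from results already in hand, and the only genuinely new work is item (4).

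For (1), I would invoke Theorem \ref{lemma:superlevel}. Each $\DD^\alpha(\PP)$ is an intersection of horoballs, and each horoball is geodesically convex by Proposition \ref{prop:cont}, so $\DD^\alpha(\PP)$ is geodesically convex. The degenerate levels $\alpha=0$ (giving $X$) and $\alpha\ge 1$ are handled directly. Nesting is immediate from the definition, since $\alpha\le\beta$ and $D(z)\ge\beta$ give $D(z)\ge\alpha$.

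For (2), I would fix $\alpha>0$. By Proposition \ref{prop:no_infty} there is $R$ with $D(z;\PP)<\alpha$ whenever $d(o,z)>R$, so $\DD^\alpha(\PP)$ lies in a closed ball. By Proposition \ref{prop:usc}, $z\mapsto D(z;\PP)$ is upper semicontinuous, so the superlevel set is closed. Closed balls are compact by Hopf--Rinow, so $\DD^\alpha(\PP)$ is compact. For (3), Theorem \ref{thm:depth_lbound} combined with Lemma \ref{lemma:existence} gives a maximizer $z_*$ with $D(z_*;\PP)=D_*(\PP)\ge 1/(d+1)\ge\alpha$. Hence $z_*\in\DD^\alpha(\PP)$.

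For (4), fix $0<\alpha<D_*$. By (1)--(3), $K:=\DD^\alpha(\PP)$ is a compact geodesically convex set. Continuity of $D$ makes $U:=\{D>\alpha\}$ open, and $U$ contains $\mu_*$, so $K$ has nonempty interior. The plan is the standard fact that a compact convex body with nonempty interior in a Hadamard manifold is a closed topological $d$-ball. I would prove it by radial projection from an interior point $p\in U$:
\begin{enumerate}
\item Using the normal chart $\exp_p$, which is a global diffeomorphism, define for each unit $v\in T_pX$ the exit time $\rho(v):=\sup\{s\ge 0:\exp_p(sv)\in K\}$. This is finite by compactness and positive because $p$ is interior.
\item Show that the map $\Phi(sv)=\exp_p(s\rho(v)v)$ is a homeomorphism from the closed unit ball of $T_pX$ onto $K$. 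This requires $\rho$ to be continuous and requires each radial segment from $p$ to meet $\partial K$ exactly once.
\end{enumerate}

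Continuity of $\rho$ is the expected obstacle, and here I would use the one-sided structure of $K$. Suppose $q=\exp_p(\rho(v)v)\in\partial K$ and $s<\rho(v)$. Consider the geodesic cone from $q$ over a small ball around $p$ contained in $K$. By convexity this cone lies in $K$, and it contains a neighborhood of $\exp_p(sv)$; hence points strictly before $q$ are interior. This gives both uniqueness of the boundary crossing and lower semicontinuity of $\rho$. Upper semicontinuity follows from closedness of $K$. Since exponential maps and geodesics vary continuously in a Hadamard manifold, the cone argument transfers from the Euclidean case essentially verbatim.
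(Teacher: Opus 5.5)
Your proposal is correct and follows the paper's proof essentially step for step. Parts (1)--(3) come from the horoball representation, vanishing at infinity, and the centerpoint theorem, and part (4) uses the exit-radius function $\rho$ on the unit sphere of $T_pX$, with upper semicontinuity from closedness and lower semicontinuity from the geodesic-cone argument (the paper's homothety $x\mapsto\exp_q((1-\lambda)\exp_q^{-1}x)$). The one small deviation is that you get closedness in (2) from upper semicontinuity of $D(\cdot;\PP)$ rather than from the horoball intersection, which has the minor advantage of covering all $\alpha>0$ uniformly, including $\alpha\ge 1$, where Theorem~\ref{lemma:superlevel} is not stated.
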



\begin{proof}[Proof Sketch for part (4)]
Fix $0<\alpha<D_*(\PP)$ and choose $y\in X$ with $D(y;\PP)>\alpha$. Continuity of $D(\cdot;\PP)$ gives a small ball $B(y,r_0)\subset \DD^\alpha(\PP)$, so the region has nonempty interior. For each unit vector $u\in S_yX$, let $\rho(u)$ be the exit radius of the ray $t\mapsto \exp_y(tu)$ from $\DD^\alpha(\PP)$. Geodesic convexity and compactness imply that each ray meets $\DD^\alpha(\PP)$ in exactly the interval
\[
\{\exp_y(tu):0\le t\le \rho(u)\},
\]
with $0<\rho(u)<\infty$. A standard compactness/openness argument shows that $u\mapsto \rho(u)$ is continuous. Defining $\Phi(0):=y$ and
\[
\Phi(ru):=\exp_y(r\,\rho(u)\,u),\qquad u\in S_yX,\ 0<r\le 1,
\]
one obtains a homeomorphism from the closed unit ball in $T_yX$ onto $\DD^\alpha(\PP)$. Hence $\DD^\alpha(\PP)$ is a topological $d$-ball. The details are given in the supplement.
\end{proof}

\begin{remark}[Conditions for continuity of the depth]
Part (4) of Theorem \ref{thm:region_region_prop} assumes that $D(\cdot;\PP)$ is continuous. Since the depth is always upper semicontinuous (Proposition \ref{prop:usc}), continuity reduces to lower semicontinuity. A sufficient condition is that $(\xi,t)\mapsto \PP(\{x:B_\xi(x)\ge t\})$ is jointly continuous on $\partial X\times\mathbb{R}$, which holds whenever $\PP$ satisfies Assumption \ref{assump:no_atom} (atomlessness of all Busemann projections) and the Busemann functions vary continuously in the boundary parameter.  On a Hadamard manifold, both conditions are automatic when $\PP\ll\vol$.
\end{remark}

Theorem \ref{thm:region_region_prop} closes the structural loop initiated in Section \ref{sec:definition}. The depth regions are now known to be symmetry-adapted, geodesically convex, nested, compact sets at every positive level, and nonempty throughout the centerpoint range. In particular, the Busemann median is always a nonempty compact geodesically convex set. What remains is the issue of uniqueness: the terminal region at level $D_*(\PP)$ could still, a priori, be a segment or a higher-dimensional convex body rather than a single point.

In $\R^d$, ruling out such flat maxima typically requires additional regularity or general-position assumptions. On negatively curved manifolds, part of the necessary rigidity comes from the geometry itself, and this is what the next section exploits.

\section{Strict quasi-concavity and uniqueness}
\label{sec:uniqueness}

By Corollary \ref{thm:busemann_exist} and Theorem \ref{thm:region_region_prop}, every positive-depth region is a compact geodesically convex body and the Busemann median exists. The remaining question is whether the deepest region can have positive diameter. On negatively curved Hadamard manifolds, the decisive input is that Busemann functions are strictly convex along geodesics that do not point directly toward their own boundary direction. This turns weak convexity of horoballs into strict convexity and ultimately forces the depth function to have a single peak.

\subsection{Busemann functions along geodesics}

\begin{theorem}[Convexity along geodesics]\label{thm:busemann_f_convex}
Let $\widetilde\gamma:\R\to X$ be a complete unit-speed geodesic, and write $\xi^+:=\widetilde\gamma(+\infty)$ and $\xi^-:=\widetilde\gamma(-\infty)$. Then the following hold.
\begin{enumerate}
    \item If $\xi\notin\{\xi^+,\xi^-\}$, then $B_\xi\circ\widetilde\gamma$ is convex on $\R$. If $\Sec_X<0$, it is strictly convex.
    \item If $\xi\in\{\xi^+,\xi^-\}$, then $B_\xi\circ\widetilde\gamma$ is affine with nonzero slope. More precisely,
    \[
    B_{\xi^+}(\widetilde\gamma(s))=-s+ B_{\xi^+}(\widetilde\gamma(0)),
    \qquad
    B_{\xi^-}(\widetilde\gamma(s))=s+B_{\xi^-}(\widetilde\gamma(0)).
    \]
\end{enumerate}
In particular, any geodesic segment inherits the same conclusion by restriction.
\end{theorem}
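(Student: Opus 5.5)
Part (2) is the easier half, so I would settle it first. Let $\xi=\xi^+$. The ray $\gamma(t):=\widetilde\gamma(t)$, $t\ge0$, has $\gamma(\infty)=\xi^+$ and $\gamma(0)=\widetilde\gamma(0)$. The paper already records that $B_\gamma(\widetilde\gamma(s))=-s$ for all $s\in\R$. By Proposition \ref{prop:busemann_asymptotic} and \eqref{eqn:busemann_relation}, $B_{\xi^+}=B_{\gamma}+B_{\xi^+}(\widetilde\gamma(0))$, which gives the first formula. For $\xi^-$, apply the same argument to the reversed geodesic $s\mapsto\widetilde\gamma(-s)$, whose forward endpoint is $\xi^-$. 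This yields $B_{\xi^-}(\widetilde\gamma(s))=s+B_{\xi^-}(\widetilde\gamma(0))$. In both cases the slope is $\mp1\neq0$.

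For part (1), weak convexity is immediate from Proposition \ref{prop:cont}: $B_\xi$ is geodesically convex, so $B_\xi\circ\widetilde\gamma$ is convex for every $\xi$. The real content is strict convexity when $\Sec_X<0$ and $\xi\notin\{\xi^\pm\}$. I would argue by contradiction. Suppose $f:=B_\xi\circ\widetilde\gamma$ is not strictly convex. A convex function on $\R$ that is not strictly convex is affine on some nondegenerate interval $[a,b]$.

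Since $B_\xi$ is $C^2$ (Proposition \ref{prop:cont}), $f''(s)=\hess B_\xi(\dot{\widetilde\gamma}(s),\dot{\widetilde\gamma}(s))=0$ on $[a,b]$. By Heintze--Im Hof \cite{heintze1977geometry}, when $\Sec_X<0$ the Hessian of $B_\xi$ is positive definite on the orthogonal complement of $\grad B_\xi$. Its kernel is spanned by $\grad B_\xi$, because $\hess B_\xi(\grad B_\xi,\cdot)=\tfrac12\,\dd\|\grad B_\xi\|^2=0$. Equivalently, the second fundamental form of a horosphere, given by the stable Jacobi field / Riccati solution, is positive definite, since the stable Jacobi fields strictly decrease in norm under negative curvature. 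Hence $\dot{\widetilde\gamma}(s)=\pm\grad_{\widetilde\gamma(s)}B_\xi$ for every $s\in[a,b]$. By Proposition \ref{prop:cont}, $\grad_p B_\xi=-\dot\sigma_p(0)$, where $\sigma_p$ is the ray from $p$ to $\xi$.

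Taking $s=a$, uniqueness of geodesics with given initial velocity shows that $\widetilde\gamma$ coincides, up to orientation, with the geodesic extending $\sigma_{\widetilde\gamma(a)}$. That geodesic has an endpoint equal to $\xi$. Therefore $\xi\in\{\xi^+,\xi^-\}$, a contradiction.

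The main obstacle is justifying the positive definiteness of $\hess B_\xi$ on $(\grad B_\xi)^\perp$ under pointwise $\Sec_X<0$, with no uniform curvature bound. My first approach is the Riccati equation $U'+U^2+R=0$ for the shape operator along rays to $\xi$. With $U\ge0$ (by convexity) and $R>0$ on the normal space, a vector $v$ with $Uv=0$ at some point forces $\langle U'v,v\rangle=-\langle Rv,v\rangle<0$. This would make $U$ negative slightly further along the ray, contradicting $U\ge 0$. If that step becomes delicate, I would cite \cite[Proposition 3.1]{heintze1977geometry} directly. Restricting to geodesic segments then gives the final remark of the statement.
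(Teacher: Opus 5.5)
Your proposal is correct and is in substance the paper's proof: part (2) is the same base-point computation, and part (1) rests, as in the paper, on three facts. These are $\hess_{B_\xi}(\grad B_\xi)=0$, positive definiteness of the Hessian of $B_\xi$ on $(\grad B_\xi)^\perp$ when $\Sec_X<0$, and the fact that $\dot{\widetilde\gamma}$ can be parallel to $\grad B_\xi$ only if $\xi\in\{\xi^+,\xi^-\}$. The only difference is how you get positive definiteness (a local Riccati computation at a kernel vector against the known semidefiniteness, rather than the paper's monotone, nonpositive $\langle J',J\rangle$ for stable Jacobi fields); for $U=\hess_{B_\xi}$ along the ray $\sigma$ toward $\xi$ the equation is actually $U'=U^2+R(\cdot,\dot\sigma)\dot\sigma$, but the $U^2$ term vanishes on a kernel vector, so $\langle U'v,v\rangle=\Sec(v,\dot\sigma)\|v\|^2<0$ still holds.
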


Theorem \ref{thm:busemann_f_convex} is the curvature mechanism behind uniqueness. In flat space, the corresponding functions are affine along every geodesic, which is precisely why strict quasi-concavity is delicate in the Euclidean case. Under negative curvature, the only directions along which a Busemann function fails to bend are the two asymptotic directions of the geodesic itself.

\begin{corollary}\label{prop:horoball_convex}
For every $\xi\in\partial X$ and $t\in\R$, the horoball
\[
H_\xi(t):=\{x\in X:B_\xi(x)\le t\}
\]
is geodesically convex. If $\Sec_X<0$, then it is strictly geodesically convex.\footnote{That is, if $x\neq y$ belong to $H_\xi(t)$ and $\gamma:[0,1]\to X$ is the geodesic segment joining them, then $\gamma(s)\in H_\xi^{\circ}(t)$ for every $s\in(0,1)$. Equivalently, no nontrivial geodesic segment is contained in the boundary $\partial H_\xi(t)$.}
\end{corollary}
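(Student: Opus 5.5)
The plan is to deduce both claims from the convexity of $B_\xi$ along geodesics, that is, from Theorem \ref{thm:busemann_f_convex} together with Proposition \ref{prop:cont}. Weak convexity is immediate. Take $x,y\in H_\xi(t)$ and let $\gamma:[0,1]\to X$ be the geodesic segment joining them. Proposition \ref{prop:cont} says $B_\xi$ is geodesically convex, so
\[
B_\xi(\gamma(s))\le (1-s)B_\xi(x)+sB_\xi(y)\le t .
\]
Hence $\gamma(s)\in H_\xi(t)$ for every $s$.

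Now assume $\Sec_X<0$ and take $x\neq y$ in $H_\xi(t)$. Extend the segment to a complete unit-speed geodesic $\widetilde\gamma$, reparametrized so that $\gamma(s)=\widetilde\gamma(sL)$ with $L=d(x,y)>0$. Set $f(s):=B_\xi(\gamma(s))$. I would split into the two cases of Theorem \ref{thm:busemann_f_convex}.

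\textbf{Case $\xi\notin\{\xi^+,\xi^-\}$.} Part (1) of Theorem \ref{thm:busemann_f_convex} makes $f$ strictly convex. So for $s\in(0,1)$,
\[
f(s)<(1-s)f(0)+sf(1)\le t .
\]

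\textbf{Case $\xi\in\{\xi^+,\xi^-\}$.} Part (2) makes $f$ affine with nonzero slope $\pm L$. Then $f(s)$ lies strictly between $f(0)$ and $f(1)$, and since $f(0)\neq f(1)$ we get $f(s)<\max\{f(0),f(1)\}\le t$.

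In either case $B_\xi(\gamma(s))<t$ for $s\in(0,1)$.

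To conclude $\gamma(s)\in H_\xi^\circ(t)$, I would use continuity of $B_\xi$ (it is $1$-Lipschitz). The set $\{B_\xi<t\}$ is open and contained in $H_\xi(t)$, so it lies in the interior. The equivalent formulation in the footnote follows at once. If a nontrivial segment were contained in $\partial H_\xi(t)=\{B_\xi=t\}$, its interior points would satisfy $B_\xi<t$, a contradiction.

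The only point needing care is the affine case. There strict convexity of $f$ fails, and strictness instead comes from the nonzero slope forcing $f(0)\neq f(1)$. I expect no further obstacle.
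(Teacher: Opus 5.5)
Your proof is correct and follows the paper's argument. Both rest on the two parts of Theorem~\ref{thm:busemann_f_convex}: strict convexity when $\xi\notin\{\xi^+,\xi^-\}$, and the nonzero affine slope otherwise. The only difference is the case split. The paper asks whether some endpoint lies in $\{B_\xi<t\}$, in which case weak convexity already gives strictness. It uses the nonzero slope only to rule out $\xi\in\{\xi^+,\xi^-\}$ when both endpoints lie on the horosphere. You split directly on the asymptotic direction, which is slightly cleaner.
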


A second byproduct is the monotonicity axiom for depth.

\begin{corollary}[\ref{p:monotonicity}]\label{coro:monotonicity}
Let $z_*\in\mu_*(\PP)$ and let $z\in X$. If $\gamma:[0,1]\to X$ is the geodesic segment with $\gamma(0)=z_*$ and $\gamma(1)=z$, then
\[
D(\gamma(t);\PP)\ge D(z;\PP)
\qquad\text{for every } t\in[0,1].
\]
\end{corollary}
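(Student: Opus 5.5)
The plan is to deduce monotonicity directly from the convexity of depth regions in Theorem \ref{lemma:superlevel}, rather than from any pointwise comparison of horospherical halfspaces. Set $\alpha := D(z;\PP)$. If $\alpha = 0$ the claim is trivial, since depth is nonnegative. So assume $\alpha > 0$. Because $z_* \in \mu_*(\PP)$, we have $D(z_*;\PP) = D_*(\PP) \ge D(z;\PP) = \alpha$. Hence both endpoints $z_*$ and $z$ lie in the superlevel set $\DD^\alpha(\PP)$.

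Next I would handle the range of $\alpha$ that Theorem \ref{lemma:superlevel} covers, namely $\alpha \in (0,1)$. The case $\alpha = 1$ needs separate treatment. In that case $\DD^1(\PP) = \{w : \PP(H^+_{\xi,w}) = 1 \ \forall \xi\}$, which is the intersection over $\xi$ and over $\beta<1$ of the regions $\DD^\beta(\PP)$. Equivalently, $\DD^1(\PP) = \bigcap_{\beta<1}\DD^\beta(\PP)$. This is an intersection of geodesically convex sets, so it is again geodesically convex.

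In either case, $\DD^\alpha(\PP)$ is geodesically convex. On a Hadamard manifold the geodesic segment between two points is unique, so the segment $\gamma$ from $z_*$ to $z$ is contained in $\DD^\alpha(\PP)$. This means $D(\gamma(t);\PP) \ge \alpha = D(z;\PP)$ for every $t \in [0,1]$, which is the claim.

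The only delicate point is the boundary case $\alpha = 1$ and the correct use of the intersection representation. Alternatively, one can bypass Theorem \ref{lemma:superlevel} and argue directly. Take any $\xi$ and any $w = \gamma(t)$. Convexity of $B_\xi$ (Proposition \ref{prop:cont}) gives
\[
B_\xi(w) \le \max\{B_\xi(z_*), B_\xi(z)\}.
\]
Consequently $H^+_{\xi,w}$ contains either $H^+_{\xi,z_*}$ or $H^+_{\xi,z}$, and so
\[
\PP(H^+_{\xi,w}) \ge \min\{D(z_*;\PP), D(z;\PP)\} = D(z;\PP).
\]
Taking the infimum over $\xi$ finishes the proof. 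This direct route is clean and avoids the quantile bookkeeping, so I would present it as the main argument.
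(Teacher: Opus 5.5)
Your direct argument is correct and is essentially the paper's proof: convexity of $B_\xi$ along $\gamma$ gives $B_\xi(\gamma(t))\le\max\{B_\xi(z_*),B_\xi(z)\}$, hence $\PP(H^+_{\xi,\gamma(t)})\ge\min\{\PP(H^+_{\xi,z_*}),\PP(H^+_{\xi,z})\}$, and taking the infimum over $\xi$ together with $D(z_*;\PP)\ge D(z;\PP)$ gives the claim. Your alternative route through geodesic convexity of $\DD^\alpha(\PP)$ with $\alpha=D(z;\PP)$ is also valid, including your handling of the case $\alpha=1$ via $\DD^1(\PP)=\bigcap_{\beta<1}\DD^\beta(\PP)$.
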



\subsection{Nondegeneracy assumptions}

To turn strict convexity of horoballs into strict quasi-concavity of the depth itself, one needs mild regularity assumptions on the one-dimensional projected measures $(B_\xi)_\#\PP$.

\begin{assumption}\label{assump:no_atom}
For every $\xi\in\partial X$ and every $t\in\R$,
\[
\PP\bigl(\{x\in X:B_\xi(x)=t\}\bigr)=0.
\]
Equivalently, each pushforward measure $(B_\xi)_\#\PP$ is atomless.
\end{assumption}

A sufficient condition is $\PP\ll\vol$, because every horosphere has zero Riemannian volume. Assumption \ref{assump:no_atom} fails for any discrete measure, and in particular for the empirical measure $\PP_n=n^{-1}\sum_{i=1}^n\delta_{X_i}$.  This means that the sample Busemann median $\mu_*(\PP_n)$ may not be a singleton, even under strictly negative curvature.  The situation is analogous to the Euclidean Tukey median: for a finite point cloud in general position in $\R^d$, the Tukey median is generically a set of positive diameter, and uniqueness is a property of the population depth, not of the sample depth.  In practice, one therefore works with the sample median \emph{set} $\mu_*(\PP_n)$, which by Theorem \ref{thm:sample_converg_depth_region} part (3) converges to the unique population median whenever the latter exists. Alternatively, one may adopt a decision rule for selecting a representative from the set $\mu_*(\PP)$, analogous to the approach used in the Euclidean setting \cite{donoho1992breakdown}.

\begin{lemma}\label{lemma:D-upper}
Under Assumption \ref{assump:no_atom}, one has
\[
D(z;\PP)<1
\qquad\text{for every } z\in X.
\]
\end{lemma}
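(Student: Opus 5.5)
Fix $z\in X$. To prove $D(z;\PP)<1$ it is enough to find one $\xi\in\partial X$ with $\PP(H^+_{\xi,z})<1$, since $D$ is an infimum over directions. Equivalently, we want $\xi$ with $\PP(\{x:B_\xi(x)<B_\xi(z)\})>0$. By Assumption \ref{assump:no_atom} the horosphere through $z$ is $\PP$-null for every $\xi$. So for each $\xi$,
\[
\PP(H^+_{\xi,z})+\PP(\{x:B_\xi(x)<B_\xi(z)\})=1,
\]
and the strict sublevel set differs from $H^-_{\xi,z}$ only by a null set.

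The argument is by contradiction. Suppose $\PP(H^+_{\xi,z})=1$ for every $\xi\in\partial X$. Then for every $\xi$ we have $\PP(\{x:B_\xi(x)>B_\xi(z)\})=1$, again using atomlessness. Now pick any unit vector $v\in T_zX$. Let $\widetilde\gamma$ be the complete geodesic with $\widetilde\gamma(0)=z$ and $\dot{\widetilde\gamma}(0)=v$, and set $\xi^\pm:=\widetilde\gamma(\pm\infty)$.

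By Theorem \ref{thm:busemann_f_convex}(2), or directly from Proposition \ref{prop:cont} with $\grad_z B_{\xi^\pm}=\mp v$, the two Busemann functions have opposite gradients at $z$. Geodesic convexity then gives the supporting inequalities
\[
B_{\xi^+}(x)\ge B_{\xi^+}(z)-\langle v,\exp_z^{-1}x\rangle, \qquad B_{\xi^-}(x)\ge B_{\xi^-}(z)+\langle v,\exp_z^{-1}x\rangle .
\]
These bound the functions from below, which does not yet give a contradiction. So I use a cleaner comparison: the sum $B_{\xi^+}+B_{\xi^-}$. It is convex, it is minimized along $\widetilde\gamma$, where it is constant, and it is $\ge$ its value at $z$ everywhere. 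That is consistent with both supports being large, so it gives nothing either.

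Instead I would argue through the tangent-space halfspace inclusion used in the proof of Theorem \ref{thm:depth_lbound}, applied in the reverse direction. Convexity of $B_\xi$ together with $\grad_z B_\xi=-\dot\gamma(0)$ gives
\[
\{x:B_\xi(x)<B_\xi(z)\}\supseteq\{\exp_z(w):\langle w,\dot\gamma_\xi(0)\rangle_z>\cdots\}.
\]
This is not quite right either, since convexity only bounds $B_\xi$ from below. The correct inclusion is that $\{B_\xi<B_\xi(z)\}$ \emph{contains} a smaller set, namely a horoball $H_\xi(B_\xi(z)-\epsilon)$, and that horoball contains the ray $\gamma_\xi(t)$ for $t>\epsilon$.

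The decisive step is a covering argument. I claim that the open sets $U_\xi:=\{x:B_\xi(x)<B_\xi(z)\}$, $\xi\in\partial X$, cover $X\setminus\{z\}$. Take $x\neq z$ and let $\xi$ be the endpoint of the ray from $z$ through $x$. Then $x=\gamma_\xi(s)$ with $s>0$, so $B_\xi(x)=B_\xi(z)-s<B_\xi(z)$. Since $\{z\}$ is a $\PP$-null set (it lies on a horosphere), we get $\PP(X\setminus\{z\})=1$.

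It remains to show that some $U_\xi$ has positive mass. Write $X\setminus\{z\}=\bigcup_{n}K_n$ with $K_n$ compact, where $K_n$ is the closed annulus $1/n\le d(z,x)\le n$; some $K_n$ has positive mass. Each $K_n$ is covered by the open sets $U_\xi$, so by compactness finitely many $U_{\xi_1},\dots,U_{\xi_m}$ cover it. Hence $\PP(U_{\xi_i})>0$ for some $i$. Therefore $\PP(H^+_{\xi_i,z})=1-\PP(U_{\xi_i})<1$, and so $D(z;\PP)<1$.

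The main obstacle was identifying the right inclusion. The covering argument, using rays from $z$, avoids the sign issues that convexity bounds introduce. The only points to check are the openness of $U_\xi$, which follows from continuity of $B_\xi$, and the fact that $\PP(\{z\})=0$, which follows from Assumption \ref{assump:no_atom}.
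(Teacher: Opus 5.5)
Your final covering argument is correct, and it proves the claim directly. The contradiction hypothesis and the abandoned attempts are never used and should be deleted; these are the supporting inequalities for $B_{\xi^\pm}$, the sum $B_{\xi^+}+B_{\xi^-}$, and the tangent-halfspace inclusion.

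The geometric core is the same as Step 1 of the paper's proof. For $x\neq z$, the ray from $z$ through $x$ yields $\xi$ with $B_\xi(x)=B_\xi(z)-d(z,x)$. Hence $\bigcap_{\xi}H^+_{\xi,z}=\{z\}$, i.e.\ the sets $U_\xi$ cover $X\setminus\{z\}$.

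The difference lies in how the uncountable family indexed by $\partial X$ is reduced to a countable one. The paper takes a countable dense set $\{\xi_k\}\subset\partial X$ and uses continuity of $\xi\mapsto B_\xi(x)$ (Lemma \ref{lemma:compact_uniform_convergence_busemann}). This shows $\bigcap_k H^+_{\xi_k,z}=\bigcap_\xi H^+_{\xi,z}$, hence $\PP(\{z\})=1$. You instead use only continuity of each $B_\xi$ on $X$, which makes $U_\xi$ open, together with $\sigma$-compactness of $X\setminus\{z\}$ to extract a finite subcover. Your route therefore needs neither continuity in the boundary parameter nor separability of $\partial X$.

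Both arguments actually show that $D(z;\PP)=1$ forces $\PP=\delta_z$. So Assumption \ref{assump:no_atom} enters only through the much weaker fact $\PP(\{z\})<1$.
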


\begin{assumption}\label{assump_connected_supp}
For every $\xi\in\partial X$, the support of $(B_\xi)_\#\PP$ is an interval in $\R$.
\end{assumption}

Assumption \ref{assump_connected_supp} asks that every one-dimensional Busemann projection have connected support. A convenient sufficient condition is that $\supp(\PP)$ be connected, since $B_\xi$ is continuous and continuous images of connected sets are connected. In particular, Assumptions \ref{assump:no_atom} and \ref{assump_connected_supp} hold simultaneously whenever $\PP\ll\vol$ and $\supp(\PP)$ is connected, the same kind of regularity commonly imposed in the Euclidean uniqueness theory \cite{rousseeuw1999depth, mizera2002continuity}.

\subsection{Strict quasi-concavity and uniqueness}

\begin{theorem}[Strict quasi-concavity on negatively curved manifolds]\label{thm:unique_rank_1}
Assume $\Sec_X<0$ and that $\PP$ satisfies Assumptions \ref{assump:no_atom} and \ref{assump_connected_supp}. Let $x\neq y$ with $\min\{D(x; \PP), D(y; \PP)\} > 0$ and let $\gamma:[0,1]\to X$ be the geodesic segment joining them. Then
\[
D(\gamma(s);\PP)
>
\min\{D(x;\PP),D(y;\PP)\}
\qquad\text{for every } s\in(0,1).
\]
That is, $D(\cdot;\PP)$ is strictly quasi-concave on $\{z \in X: D(z; \PP) > 0\}$. In particular, the deepest depth region, Busemann median $\mu_*(\PP)$, is a singleton.
\end{theorem}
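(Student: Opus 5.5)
Set $z_s:=\gamma(s)$, $m:=\min\{D(x;\PP),D(y;\PP)\}>0$, and fix $s\in(0,1)$. Since $\partial X$ is compact and $\xi\mapsto\PP(H^+_{\xi,z_s})$ is upper semicontinuous, the infimum in \eqref{eq:depth_one_sided} is attained at some least favorable $\xi_s$. It therefore suffices to show $\PP(H^+_{\xi,z_s})>m$ for every single $\xi\in\partial X$, and then apply this at $\xi=\xi_s$. This reduction is what lets us avoid any uniformity argument over $\xi$.

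Fix $\xi$ and put $f:=B_\xi\circ\gamma$ on $[0,1]$. I split into two cases according to Theorem \ref{thm:busemann_f_convex}.

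\textbf{Case 1: $\xi$ is not an endpoint of the extension of $\gamma$.} Then $f$ is strictly convex, so $f(s)<\max\{f(0),f(1)\}$. Say the maximum is $f(0)=B_\xi(x)$. Then $H^+_{\xi,x}\subset H^+_{\xi,z_s}$, and the difference contains the slab $\{B_\xi(z_s)\le B_\xi<B_\xi(x)\}$.

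\textbf{Case 2: $\xi=\xi^\pm$.} Then $f$ is affine with nonzero slope, so again $f(s)<\max\{f(0),f(1)\}$ strictly. The same slab inclusion holds.

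In both cases $\PP(H^+_{\xi,z_s})\ge \PP(H^+_{\xi,x})+\PP(B_\xi(z_s)\le B_\xi<B_\xi(x))\ge m+(\text{slab mass})$. Note that $\PP(H^+_{\xi,x})\ge D(x;\PP)\ge m$.

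The main obstacle is showing that the slab mass is positive. Let $\mu:=(B_\xi)_\#\PP$ and write $a:=B_\xi(z_s)<b:=B_\xi(x)$. Since $\mu([b,\infty))\ge m>0$, the support of $\mu$ meets $[b,\infty)$. If the support also meets $(-\infty,a]$, then by Assumption \ref{assump_connected_supp} it is an interval containing $(a,b)$, so $\mu((a,b))>0$ and we are done.

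Otherwise, $\mu$ is supported in $(a,\infty)$, so $\PP(H^+_{\xi,z_s})=1$. By Lemma \ref{lemma:D-upper}, which uses Assumption \ref{assump:no_atom}, $m\le D(x;\PP)<1$, so $\PP(H^+_{\xi,z_s})=1>m$ anyway. Assumption \ref{assump:no_atom} is also what guarantees that the boundary value $a$ carries no mass in the first subcase.

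Hence $\PP(H^+_{\xi,z_s})>m$ for every $\xi$, and in particular $D(z_s;\PP)=\PP(H^+_{\xi_s,z_s})>m$.

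For uniqueness of the median, Theorem \ref{thm:depth_lbound} gives $D_*\ge 1/(d+1)>0$. If $\mu_*$ contained two points $x\ne y$, the midpoint of the geodesic joining them would have depth $>D_*$, which is a contradiction.
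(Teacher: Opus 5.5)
Your proposal follows the same route as the paper's proof, with one small simplification. In Case 1 strict convexity of $B_\xi\circ\gamma$, and in Case 2 its affinity with nonzero slope, places $B_\xi(\gamma(s))$ strictly below the relevant level. Connectedness of the support of $(B_\xi)_\#\PP$ makes the intervening slab charged, and Lemma~\ref{lemma:D-upper} handles the case where all projected mass lies above $B_\xi(\gamma(s))$. Attainment of the infimum over the compact boundary then upgrades the pointwise bound to $D(\gamma(s);\PP)>m$. The simplification is that you compare against $B_\xi(x)$ (or $B_\xi(y)$) rather than against the quantile $t_\xi(\alpha)$, which lets you skip Lemma~\ref{lemma:threshold-value}.

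The step that does not work as written is the attainment. An upper semicontinuous function on a compact space attains its supremum, not its infimum. For example, $f(0)=1$, $f(u)=u$ on $(0,1]$ is upper semicontinuous with unattained infimum $0$. So upper semicontinuity of $\xi\mapsto\PP(H^+_{\xi,z_s})$ gives you nothing here, and without attainment your pointwise bound only yields $D(z_s;\PP)\ge m$. The remark after Proposition~\ref{prop:usc} contains the same slip.

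The paper's proof of this theorem instead uses Lemma~\ref{lemma:p_usc}. Under Assumption~\ref{assump:no_atom}, the map $\xi\mapsto\PP(H^+_{\xi,z_s})$ is lower semicontinuous, because the limiting horosphere $\{B_\xi=B_\xi(z_s)\}$ is $\PP$-null. It therefore attains its minimum on $\partial X$, and with that substitution your argument is complete.

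This attainment step, together with Lemma~\ref{lemma:D-upper}, is where Assumption~\ref{assump:no_atom} is genuinely needed. The use you mention (no mass at level $a$) is unnecessary: your half-open slab already contains $\{a<B_\xi<b\}$, which has positive mass. Finally, for the singleton claim you should also invoke nonemptiness of $\mu_*(\PP)$ (Corollary~\ref{thm:busemann_exist}).
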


\begin{remark}
    It is important to interpret Theorem \ref{thm:unique_rank_1} as a genuinely negative-curvature result. The strict part of the argument comes from the fact that in strictly negatively curved manifolds, a nontrivial geodesic segment cannot remain in the boundary of a horoball. In higher-rank nonpositively curved spaces, flats (regions of zero curvature) create boundary directions along which this strictness fails, so one cannot expect the same argument to force uniqueness. The obstruction is therefore geometric rather than technical: the presence of flats weakens the horoball geometry in precisely the way relevant to strict quasi-concavity.
\end{remark}

It is instructive to compare this with the Euclidean situation. For Tukey depth on $\R^d$, uniqueness usually requires extra hypotheses such as general position for empirical measures or absolute continuity together with connected convex support for population measures \cite{rousseeuw1999depth, mizera2002continuity}. Here the support condition enters only through the projected one-dimensional measures, while the strict part of the argument is supplied directly by curvature via Theorem \ref{thm:busemann_f_convex}. In this sense, negative curvature is not merely compatible with the classical theory; it strengthens it.

\begin{corollary}\label{coro:stability}
Assume $\Sec_X<0$ and that $\PP$ satisfies Assumptions \ref{assump:no_atom} and \ref{assump_connected_supp}, so $\mu_*(\PP)$ is singleton. Then there exists a nondecreasing function
\[
\omega:[0,1]\to[0,\infty)
\qquad\text{with}\qquad
\omega(\eta)\to 0 \text{ as } \eta\downarrow 0,
\]
such that for every Borel probability measure $\QQ$ on $X$,
\[
\sup_{z\in\mu_*(\QQ)} d\bigl(z,\mu_*(\PP)\bigr)
\le
\omega\bigl(\|\PP-\QQ\|_{\mathrm{TV}}\bigr).
\]
\end{corollary}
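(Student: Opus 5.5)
We set $\omega(\eta):=\sup\{d(z,\mu_*(\PP)) : z\in\mu_*(\QQ),\ \|\PP-\QQ\|_{\mathrm{TV}}\le \eta\}$ (with the convention $\sup\emptyset=0$, capped appropriately). This function is nondecreasing by construction, so the work is to show it is finite and that $\omega(\eta)\to0$ as $\eta\downarrow0$. Write $\mu_*(\PP)=\{\theta\}$ and $D_*:=D_*(\PP)$; by Theorem \ref{thm:depth_lbound}, $D_*\ge 1/(d+1)>0$.

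The first ingredient is a uniform depth perturbation bound. For every $z$ and $\xi$ we have $|\PP(H^+_{\xi,z})-\QQ(H^+_{\xi,z})|\le\|\PP-\QQ\|_{\mathrm{TV}}$, and taking infima over $\xi$ gives $\sup_z|D(z;\PP)-D(z;\QQ)|\le\eta$, where $\eta:=\|\PP-\QQ\|_{\mathrm{TV}}$. Now take $z\in\mu_*(\QQ)$, which exists by Corollary \ref{thm:busemann_exist}. Then
\[
D(z;\PP)\ge D(z;\QQ)-\eta\ge D(\theta;\QQ)-\eta\ge D_*-2\eta .
\]
Consequently $\mu_*(\QQ)\subset\DD^{D_*-2\eta}(\PP)$, and for $\eta<D_*/4$ this region is compact by Theorem \ref{thm:region_region_prop}(2). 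Since $\DD^{D_*/2}(\PP)$ is compact, we may define $\omega(\eta)$ to be at most $\operatorname{diam}$ of that compact set for $\eta<D_*/4$, and set $\omega(\eta):=\max\{\omega(D_*/4),\ \sup_{z\in \mu_*(\QQ)}\ldots\}$ otherwise. This last choice is problematic for large $\eta$: in that regime $\mu_*(\QQ)$ can be arbitrarily far from $\theta$, and $\omega$ is required to be finite-valued. The usual way out is to take $\omega(\eta)=+\infty$-free values by noting the statement only needs a bound; concretely, we define $\omega(\eta):=\sup_{z\in\DD^{D_*-2\eta}(\PP)}d(z,\theta)$ for $\eta<D_*/4$. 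For $\eta\ge D_*/4$ the claim cannot hold with a finite bound in general, so in that range we must read the statement with $\omega$ allowed to be large. I would flag this and restrict the nontrivial content to small $\eta$, or else interpret $\omega(\eta)=\infty$ there.

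The main step is to show $\sup_{z\in\DD^{D_*-\delta}(\PP)}d(z,\theta)\to0$ as $\delta\downarrow0$. The regions $K_\delta:=\DD^{D_*-\delta}(\PP)$ are compact and nested for $\delta<D_*/2$, and $\bigcap_{\delta>0}K_\delta=\DD^{D_*}(\PP)=\mu_*(\PP)=\{\theta\}$, where the uniqueness of the maximizer comes from Theorem \ref{thm:unique_rank_1}. Suppose the claim failed. Then there would be $z_k\in K_{\delta_k}$ with $\delta_k\downarrow0$ and $d(z_k,\theta)\ge\epsilon$. All $z_k$ lie in the compact set $K_{D_*/4}$, so a subsequence converges to some $z$. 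Upper semicontinuity (Proposition \ref{prop:usc}) gives $D(z;\PP)\ge\limsup_k D(z_k;\PP)\ge D_*$, so $z\in\mu_*(\PP)=\{\theta\}$. This contradicts $d(z,\theta)\ge\epsilon$. The only delicate point in this step is that it relies on compactness together with upper semicontinuity; I expect the rest of the argument to be routine bookkeeping.
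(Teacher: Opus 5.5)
\textbf{Assessment.} Your core argument is correct on the range where the statement can hold. You are also right to flag the large-$\eta$ problem; in fact it is sharper than you make it, and the paper's own proof does not escape it.

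\textbf{The statement is false as written.} Finite $\omega$ on all of $[0,1]$ is impossible. Assumption~\ref{assump:no_atom} makes $\PP$ atomless, since every point lies on a horosphere. So for any $y\in X$ we have $\|\PP-\delta_y\|_{\mathrm{TV}}\ge|\PP(\{y\})-1|=1$. Moreover $D(z;\delta_y)=1$ at $z=y$ and $D(z;\delta_y)=0$ for $z\neq y$: take $\xi$ the endpoint of the ray from $z$ through $y$, as in Step~1 of the proof of Lemma~\ref{lemma:D-upper}. Hence $\mu_*(\delta_y)=\{y\}$, and $\omega(1)\ge d(y,\theta)$ for every $y\in X$, which is impossible.

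\textbf{The paper's proof has the same problem.} Since $\delta(r)\le D_*$, its $\rho(\eta)=\inf\{r:\delta(r)\ge\eta\}$ equals $+\infty$ for $\eta>D_*$. So its $\omega(\eta)=\rho(2\eta)$ is $+\infty$ on $(D_*/2,1]$.

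\textbf{Your threshold is wrong.} The cutoff $D_*/4$, and the claim that $\mu_*(\QQ)$ can be arbitrarily far from $\theta$ once $\eta\ge D_*/4$, are both incorrect. Your own inclusion $\mu_*(\QQ)\subset\DD^{D_*-2\eta}(\PP)$, combined with compactness of $\DD^\alpha(\PP)$ for every $\alpha>0$ (Theorem~\ref{thm:region_region_prop}(2)), already gives a finite, nondecreasing bound
\[
\omega(\eta):=\sup_{z\in\DD^{D_*-2\eta}(\PP)}d(z,\theta)\qquad\text{for every }\eta<D_*/2.
\]
The cutoff $D_*/4$ was only a device to have one common compact set. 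The correct repaired statement is: $\omega$ takes values in $[0,\infty]$, is finite on $[0,D_*/2)$, and tends to $0$ as $\eta\downarrow 0$.

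\textbf{Comparison of routes.} On that range your proof differs from the paper's only in the shrinking step.
\begin{itemize}
\item The paper introduces the deficit function $\delta(r)=D_*-\sup_{d(z,z_*)=r}D(z;\PP)$. It gets monotonicity of $\delta$ from Corollary~\ref{coro:monotonicity} and then inverts $\delta$.
\item You use that the sets $K_\delta=\DD^{D_*-\delta}(\PP)$ are nested and compact with $\bigcap_\delta K_\delta=\{\theta\}$. You then conclude by sequential compactness and upper semicontinuity (Proposition~\ref{prop:usc}).
\end{itemize}

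\textbf{What each route buys.} Your route is more economical: it needs no monotonicity corollary. It also sidesteps two points the paper passes over.
\begin{itemize}
\item $\delta(r)>0$ requires the supremum over each sphere to be attained. That is itself a compactness plus upper-semicontinuity argument.
\item The inclusion $\{D\ge D_*-\eta\}\subset\overline{B}(z_*,\rho(\eta))$ fails at any level where $\delta$ has a flat stretch. Ruling such stretches out uses strict quasi-concavity (Theorem~\ref{thm:unique_rank_1}), not just monotonicity.
\end{itemize}
In exchange, the paper's deficit-function formulation is the natural object for quantitative moduli, as in the quadratic-peak remark after the corollary.
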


\begin{remark}[Convention for total variation]
Throughout Sections \ref{sec:uniqueness} and \ref{sec:robust}, we use the probability-measure convention $\|\PP-\QQ\|_{\mathrm{TV}}:=\sup_{A\in\mathcal B(X)} |\PP(A)-\QQ(A)|$, so $\|\PP-\QQ\|_{\mathrm{TV}}\in[0,1]$. With this normalization, the modulus in Corollary \ref{coro:stability} is naturally defined on $[0,1]$, and under Huber contamination one has $\|\PP_\varepsilon-\PP\|_{\mathrm{TV}}\le \varepsilon$.
\end{remark}

Corollary \ref{coro:stability} says that once negative curvature has collapsed the terminal depth region to a single point, the Busemann median becomes a sharp peak of the depth landscape. Section \ref{sec:robust} converts that geometric sharpness into robustness under perturbations of the underlying measure.

\begin{remark}[Rate of the modulus of continuity]
Corollary \ref{coro:stability} guarantees the existence of a modulus $\omega$ but says nothing about its rate.  In the Euclidean setting, quantitative moduli are known when the depth function has a \emph{quadratic peak}, meaning $D(\theta;\PP)-D(z;\PP)\ge c\,d(z,\theta)^2$ near the median $\theta$.  On a negatively curved manifold, the strict convexity of Busemann functions along non-asymptotic geodesics (Theorem \ref{thm:busemann_f_convex}) provides a curvature-dependent lower bound on the second derivative of $B_\xi\circ\gamma$, which suggests that a quadratic peak condition could be verified under lower curvature bounds.  More precisely, if $\Sec_X\le -\kappa^2<0$ and the survival function $S_\xi$ has a uniformly bounded density near the critical level, one expects $\omega(\eta)=O(\eta^{1/2})$, the same rate as in the Euclidean case.  A quantitative statement requires controlling the curvature-dependent constants in the strict convexity estimate and is left to future work.
\end{remark}

\section{Robustness}
\label{sec:robust}

Robustness enters the present theory at three levels. First, the depth itself is stable under arbitrary perturbations of the underlying probability measure in total variation distance. Second, when contaminating mass escapes to infinity along a geodesic ray, the contaminated depths admit a deterministic directional limit that depends on the boundary direction but not on the rate of escape. Under the additional uniqueness hypothesis in Theorem \ref{thm:boundary_robust} part (3), the corresponding contaminated medians converge to the unique maximizer of that limiting depth. Third, these bounds imply a nontrivial lower bound on the breakdown point of the Busemann median. The first statement is the direct analogue of the classical robustness of Tukey depth in $\R^d$ \cite{donoho1992breakdown, chen2002influence}; the second is specific to Hadamard geometry.

\subsection{Stability under total variation perturbations}

We begin with the simplest notion of perturbation: changing the underlying probability measure by a small amount in total variation. The key point is that horospherical depth is defined through probabilities of measurable halfspace-like sets, so total variation control transfers immediately to the depth function.

\begin{theorem}\label{thm:robust}
Let $X$ be a Hadamard manifold. For any Borel probability measures $\PP$ and $\QQ$ on $X$ and any $z\in X$,
\[
\bigl|D(z;\PP)-D(z;\QQ)\bigr|\le \|\PP-\QQ\|_{\mathrm{TV}}.
\]
\end{theorem}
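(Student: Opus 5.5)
The plan is to compare the two infima direction by direction and then pass to the infimum, using only that each horospherical halfspace is a Borel set. First, for fixed $\xi\in\partial X$ and $z\in X$, the set $H^+_{\xi,z}=\{x:B_\xi(x)\ge B_\xi(z)\}$ is closed, hence Borel, because $B_\xi$ is $1$-Lipschitz by Proposition \ref{prop:cont}. The definition of total variation in the convention fixed above then gives
\[
\bigl|\PP(H^+_{\xi,z})-\QQ(H^+_{\xi,z})\bigr|\le \|\PP-\QQ\|_{\mathrm{TV}}=:\eta
\qquad\text{for every }\xi\in\partial X .
\]

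Second, I transfer this uniform bound to the infima. For each $\xi$ we have $\PP(H^+_{\xi,z})\ge \QQ(H^+_{\xi,z})-\eta\ge D(z;\QQ)-\eta$. Taking the infimum over $\xi\in\partial X$ yields $D(z;\PP)\ge D(z;\QQ)-\eta$. Exchanging the roles of $\PP$ and $\QQ$ gives the reverse inequality, and the two together give the claim. This is the standard fact that $|\inf_\xi f(\xi)-\inf_\xi g(\xi)|\le\sup_\xi|f(\xi)-g(\xi)|$ for bounded families, applied to $f(\xi)=\PP(H^+_{\xi,z})$ and $g(\xi)=\QQ(H^+_{\xi,z})$.

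No geometric input beyond measurability is needed. In particular, no curvature hypothesis is used, and no attainment of the infimum is used, so the least favorable direction from the earlier remark is not required. The only point requiring care is that the infimum is over a possibly uncountable family of directions. Measurability of $\xi\mapsto\PP(H^+_{\xi,z})$ is irrelevant here, since the argument is purely pointwise in $\xi$. I therefore expect no real obstacle beyond the routine check above that each $H^+_{\xi,z}$ is Borel.

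As an immediate byproduct, taking suprema over $z$ gives $|D_*(\PP)-D_*(\QQ)|\le\|\PP-\QQ\|_{\mathrm{TV}}$ by the same inf/sup comparison. This is presumably what later feeds Corollary \ref{coro:stability} and the breakdown bounds.
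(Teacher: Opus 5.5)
Your proof is correct and follows the same route as the paper: you bound $\bigl|\PP(H^+_{\xi,z})-\QQ(H^+_{\xi,z})\bigr|$ by $\|\PP-\QQ\|_{\mathrm{TV}}$ for each $\xi$, then pass to the infimum via $|\inf f-\inf g|\le\sup|f-g|$, which is the paper's Lemma \ref{lemma:inf_sup}. Your explicit check that each $H^+_{\xi,z}$ is closed, hence Borel, is a small detail the paper leaves implicit.
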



An immediate consequence is stability under Huber's $\varepsilon$-contamination model.

\begin{corollary}[Huber's $\varepsilon$-contamination]\label{coro:huber}
Let $\PP$ and $\QQ$ be Borel probability measures on $X$, and define $\PP_\varepsilon:=(1-\varepsilon)\PP+\varepsilon\QQ$ for $\varepsilon\in[0,1]$. Then for every $z\in X$,
\[
D(z;\PP)-\varepsilon \le D(z;\PP_\varepsilon) \le D(z;\PP)+\varepsilon.
\]
\end{corollary}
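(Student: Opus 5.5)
The plan is to deduce both inequalities from Theorem \ref{thm:robust} by computing the total variation distance between $\PP_\varepsilon$ and $\PP$. The first step is to write $\PP_\varepsilon-\PP=\varepsilon(\QQ-\PP)$ as signed measures, since $(1-\varepsilon)\PP+\varepsilon\QQ-\PP=\varepsilon\QQ-\varepsilon\PP$. Then for every Borel set $A\subseteq X$,
\[
|\PP_\varepsilon(A)-\PP(A)|=\varepsilon\,|\QQ(A)-\PP(A)|\le \varepsilon,
\]
because both $\PP(A)$ and $\QQ(A)$ lie in $[0,1]$. Taking the supremum over $A$ gives $\|\PP_\varepsilon-\PP\|_{\mathrm{TV}}\le\varepsilon$ under the probability-measure convention fixed in the preceding remark.

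The second step is to apply Theorem \ref{thm:robust} with the pair $(\PP,\PP_\varepsilon)$. This is legitimate because $\PP_\varepsilon$ is a Borel probability measure, being a convex combination of two such measures. The theorem yields $|D(z;\PP_\varepsilon)-D(z;\PP)|\le \|\PP_\varepsilon-\PP\|_{\mathrm{TV}}\le\varepsilon$ for every $z\in X$. Unfolding the absolute value gives the two-sided bound in the statement.

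For completeness, we can also avoid citing the theorem and argue directly from Definition \ref{def:depth}. For each fixed $\xi$, the bound $\PP(H^+_{\xi,z})-\varepsilon\le \PP_\varepsilon(H^+_{\xi,z})\le \PP(H^+_{\xi,z})+\varepsilon$ follows from the set-wise estimate above. The infimum over $\xi\in\partial X$ preserves additive shifts by a constant, so the same bounds pass to $D(z;\cdot)$.

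There is no real obstacle here. The only points needing care are the normalization of the total variation norm, with no factor of $2$, and the edge cases $\varepsilon\in\{0,1\}$, which the identity above covers trivially.
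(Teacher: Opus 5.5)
Your proposal is correct and matches the paper's proof, which likewise observes $\|\PP_\varepsilon-\PP\|_{\mathrm{TV}}\le\varepsilon$ and applies Theorem \ref{thm:robust}. Your explicit verification of that bound via $\PP_\varepsilon-\PP=\varepsilon(\QQ-\PP)$ simply fills in the one-line step the paper takes for granted.
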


The same bound propagates from the depth function to the depth regions.

\begin{theorem}[Inclusion of depth regions]\label{thm:inclusion_region}
Let $\PP_\varepsilon=(1-\varepsilon)\PP+\varepsilon\QQ$ as above. Then for every $\alpha\in[0,1]$,
\[
\DD^{\alpha+\varepsilon}(\PP)\subset \DD^{\alpha}(\PP_\varepsilon)\subset \DD^{\alpha-\varepsilon}(\PP),
\]
where, as usual, $\DD^{\alpha}(\PP)=X$ for $\alpha\le 0$ and $\DD^{\alpha}(\PP)=\varnothing$ for $\alpha>1$.
\end{theorem}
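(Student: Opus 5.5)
The plan is to derive both inclusions directly from Corollary~\ref{coro:huber}, which holds pointwise in $z$ and with no assumption on $\PP$ or $\QQ$. The work is then just careful bookkeeping of the conventions at the edges of the $\alpha$-range. The key observation is that the two-sided bound
\[
D(z;\PP)-\varepsilon\le D(z;\PP_\varepsilon)\le D(z;\PP)+\varepsilon
\]
converts membership in one superlevel set into membership in a shifted one.

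\emph{Left inclusion.} Take $z\in\DD^{\alpha+\varepsilon}(\PP)$, so that $D(z;\PP)\ge\alpha+\varepsilon$. The lower half of Corollary~\ref{coro:huber} gives $D(z;\PP_\varepsilon)\ge D(z;\PP)-\varepsilon\ge\alpha$, hence $z\in\DD^{\alpha}(\PP_\varepsilon)$. If $\alpha+\varepsilon>1$, the left side is empty by convention and there is nothing to prove. If $\alpha\le 0$, the middle set is all of $X$ and the inclusion is trivial.

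\emph{Right inclusion.} Take $z\in\DD^\alpha(\PP_\varepsilon)$. The upper half of Corollary~\ref{coro:huber} gives $D(z;\PP)\ge D(z;\PP_\varepsilon)-\varepsilon\ge\alpha-\varepsilon$, so $z\in\DD^{\alpha-\varepsilon}(\PP)$. When $\alpha-\varepsilon\le0$, the right side equals $X$ by convention. This is consistent with Definition~\ref{def:depth}, since depth is always nonnegative, so $\{z:D(z;\PP)\ge\alpha-\varepsilon\}=X$ anyway.

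The only point needing care, which I regard as the main (mild) obstacle, is confirming that the stated conventions for $\alpha\le0$ and $\alpha>1$ agree with the literal superlevel-set definition. This holds because $0\le D\le1$ always, since $D$ is an infimum of probabilities. Hence $\{D\ge\beta\}=X$ for $\beta\le0$ and $\{D\ge\beta\}=\varnothing$ for $\beta>1$, and the pointwise argument above goes through verbatim in every case without a separate case split. Optionally, one can remark that the same argument with Theorem~\ref{thm:robust} in place of Corollary~\ref{coro:huber} yields $\DD^{\alpha+\delta}(\PP)\subset\DD^\alpha(\QQ)\subset\DD^{\alpha-\delta}(\PP)$ with $\delta=\|\PP-\QQ\|_{\mathrm{TV}}$ for arbitrary $\QQ$.
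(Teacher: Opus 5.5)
Your proposal is correct and matches the paper's proof: both inclusions follow pointwise from the two-sided bound of Corollary~\ref{coro:huber}, and the paper likewise writes out only the first inclusion and calls the second analogous. Your extra check that the conventions for $\alpha\le 0$ and $\alpha>1$ agree with the literal superlevel sets (because $0\le D\le 1$) fills in a detail the paper leaves implicit.
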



When the median is unique, the estimator itself inherits quantitative stability.

\begin{corollary}[Stability of the Busemann median under ordinary contamination]\label{coro:median_huber}
Assume $\Sec_X<0$ and that $\PP$ satisfies Assumptions \ref{assump:no_atom}--\ref{assump_connected_supp}. Let $\omega$ be the modulus of continuity from Corollary \ref{coro:stability}. Then for every Borel probability measure $\QQ$ and every $\varepsilon\in[0,1]$,
\[
\sup_{z\in \mu_*(\PP_\varepsilon)} d\bigl(z,\mu_*(\PP)\bigr)\le \omega(\varepsilon),
\qquad
\PP_\varepsilon=(1-\varepsilon)\PP+\varepsilon\QQ.
\]
\end{corollary}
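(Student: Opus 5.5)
This is a direct combination of Corollary \ref{coro:huber} and Corollary \ref{coro:stability}, so I would argue as follows.

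First I check that the contaminated measure is TV-close to $\PP$, with the normalization $\|\cdot\|_{\mathrm{TV}}=\sup_A|\cdot(A)|$ fixed in the remark above. For every Borel set $A$,
\[
\PP_\varepsilon(A)-\PP(A)=\varepsilon\bigl(\QQ(A)-\PP(A)\bigr).
\]
Since $\QQ(A),\PP(A)\in[0,1]$, the right-hand side has absolute value at most $\varepsilon$. Hence $\|\PP_\varepsilon-\PP\|_{\mathrm{TV}}\le\varepsilon\le 1$, so the argument lies in the domain $[0,1]$ of $\omega$.

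Second, I apply Corollary \ref{coro:stability} with the perturbed measure taken to be $\PP_\varepsilon$. Its hypotheses concern only the reference measure $\PP$: namely $\Sec_X<0$ and Assumptions \ref{assump:no_atom}--\ref{assump_connected_supp}. These are exactly what is assumed here. Crucially, no regularity of $\QQ$ or $\PP_\varepsilon$ is required, since that corollary holds for every Borel probability measure $\QQ$. This gives
\[
\sup_{z\in\mu_*(\PP_\varepsilon)} d\bigl(z,\mu_*(\PP)\bigr)\le\omega\bigl(\|\PP_\varepsilon-\PP\|_{\mathrm{TV}}\bigr).
\]
Note that $\mu_*(\PP_\varepsilon)$ is nonempty by Corollary \ref{thm:busemann_exist}, so the supremum is over a nonempty set.

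Third, I use that $\omega$ is nondecreasing to replace $\|\PP_\varepsilon-\PP\|_{\mathrm{TV}}$ by $\varepsilon$, which yields the claim.

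There is essentially no obstacle. The only points to verify are the two just noted: that the TV convention matches the one used to define $\omega$, and that Corollary \ref{coro:stability} is genuinely uniform over arbitrary perturbing measures. Since that corollary is stated with $\QQ$ arbitrary, both hold, and the result follows in a few lines.
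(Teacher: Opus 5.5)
Your reduction is the same as the paper's proof: bound $\|\PP_\varepsilon-\PP\|_{\mathrm{TV}}\le\varepsilon$, then apply Corollary~\ref{coro:stability} with $\PP_\varepsilon$ as the perturbed measure. Your direct check of the TV bound and your explicit use of the monotonicity of $\omega$ are, if anything, tidier than the paper's citation of Corollary~\ref{coro:huber} for that bound.

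Your remark that ``there is essentially no obstacle'' is mistaken, however. The black box cannot hold as stated, and so the present corollary cannot hold for all $\varepsilon\in[0,1]$ with a $[0,\infty)$-valued $\omega$. Here is a counterexample.

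Take $\QQ=\delta_y$ for an arbitrary $y\in X$. Then $D(y;\PP_\varepsilon)\ge\varepsilon$, since $y$ lies in every $H^+_{\xi,y}$.

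For $z\neq y$, let $\xi$ be the endpoint of the geodesic ray from $z$ through $y$. By Theorem~\ref{thm:busemann_f_convex}(2), $B_\xi(y)=B_\xi(z)-d(z,y)<B_\xi(z)$. So $y\notin H^+_{\xi,z}$, and $D(z;\PP_\varepsilon)\le(1-\varepsilon)\PP(H^+_{\xi,z})\le 1-\varepsilon$.

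Hence for every $\varepsilon>1/2$ one gets $\mu_*(\PP_\varepsilon)=\{y\}$. The left-hand side then equals $d(y,\mu_*(\PP))$, which is unbounded as $y$ ranges over $X$, so no finite value $\omega(\varepsilon)$ can work.

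The paper's own construction shows the same thing. There $\omega(\eta)=\rho(2\eta)$, and $\rho(2\eta)=\inf\varnothing=+\infty$ once $2\eta>D_*(\PP)$, because $\delta(r)\le D_*(\PP)$ for all $r$.

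So your argument, like the paper's, proves the inequality only with $\omega$ taking values in $[0,\infty]$. It is finite, and the bound informative, for $\varepsilon<D_*(\PP)/2$. You should state this explicitly, for instance by restricting the conclusion to that range of $\varepsilon$, rather than assert a finite modulus on all of $[0,1]$.
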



\subsection{Contamination escaping to the boundary}

The previous results treat contamination in a purely measure-theoretic sense. On a Hadamard manifold there is an additional failure mode: the contaminating mass may remain small but move arbitrarily far away, converging to a point of the visual boundary. This has no Euclidean analogue. The basic mechanism is already visible in the asymptotic expansion previewed in the heuristic at the beginning of Section \ref{sec:background}. Along a geodesic ray $\gamma$ with endpoint $\xi$, 
\[
d^2(z,\gamma(t)) = t^2 + 2tB_\xi(z) + O(1).
\]
For the Fr\'echet mean, the escape direction is weighted by the diverging factor $2t$; for horospherical depth, only the ordering induced by the Busemann functions matters. The former is sensitive to how far the outliers have travelled, whereas the latter is sensitive only to the direction in which they escape.

\begin{theorem}[Boundary robustness]\label{thm:boundary_robust}
Let $X$ be a Hadamard manifold with $\Sec_X<0$, let $\PP$ be a Borel probability measure on $X$, and let $\gamma:[0,\infty)\to X$ be a unit-speed geodesic ray with $\gamma(\infty)=\xi$. Fix $\varepsilon\in(0,1)$ and define $\PP_t:=(1-\varepsilon)\PP+\varepsilon\delta_{\gamma(t)}$.
Assume that Assumption \ref{assump:no_atom} holds. Then the following statements are true.
\begin{enumerate}
    \item For every $z\in X$, $D(z;\PP_t)\to D_\infty(z)$ as $t\to\infty$,
    where
    \[
    D_\infty(z)
    =
    \min\Bigl\{(1-\varepsilon)\PP(H^+_{\xi,z}),\ (1-\varepsilon)D(z;\PP)+\varepsilon\Bigr\}.
    \]
    \item Writing $\DD_\infty^\alpha:=\{z\in X: D_\infty(z)\ge \alpha\}$, one has for every $\alpha\in(0,1-\varepsilon]$,
    \[
    \DD_\infty^\alpha
    =
    \DD^{\alpha_2}(\PP)\cap H_\xi\bigl(t_\xi(\alpha_1)\bigr),
    \qquad
    \alpha_1:=\frac{\alpha}{1-\varepsilon},
    \quad
    \alpha_2:=\frac{\alpha-\varepsilon}{1-\varepsilon},
    \]
    with the convention $\DD^{\alpha}(\PP)=X$ for $\alpha\le 0$. For $\alpha>1-\varepsilon$, one has $\DD_\infty^\alpha=\varnothing$.
    \item If $D_\infty$ has a unique maximizer $\mu_\infty$ and $D_\infty(\mu_\infty)>\varepsilon$, then every choice $\mu_t\in \mu_*(\PP_t)$ satisfies $\mu_t\to \mu_\infty$ in $X$. 
    \item Assume in addition that $\PP$ has finite second moment, and let $\mu_t^F$ denote the Fr\'echet mean of $\PP_t$. Then $\mu_t^F\to \xi$ in $X\cup\partial X$.
\end{enumerate}
\end{theorem}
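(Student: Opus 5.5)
The plan is to prove the four parts in order, since each feeds the next.

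\emph{Part (1).} Fix $z$. For each direction $\eta\in\partial X$ we have
\[
\PP_t(H^+_{\eta,z})=(1-\varepsilon)\PP(H^+_{\eta,z})+\varepsilon\,\mathbf 1\{B_\eta(\gamma(t))\ge B_\eta(z)\}.
\]
For $\eta=\xi$, Theorem \ref{thm:busemann_f_convex}(2) gives $B_\xi(\gamma(t))=-t+B_\xi(\gamma(0))\to-\infty$. Hence the indicator is eventually $0$, and
\[
D(z;\PP_t)\le(1-\varepsilon)\PP(H^+_{\xi,z})\quad\text{for large }t.
\]
For $\eta\neq\xi$, strict negative curvature implies that $B_\eta(\gamma(t))\to+\infty$. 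Here I would use that $B_\eta\circ\widetilde\gamma$ is strictly convex with slope tending to $+1$ at $+\infty$, because $\gamma(t)\to\xi\ne\eta$. This convergence is not uniform as $\eta\to\xi$, and that is the main obstacle.

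For the lower bound, take a minimizing direction $\eta_t$ (the infimum is attained by the Remark after Proposition \ref{prop:usc}). Pass to a subsequence with $\eta_t\to\eta_0$ in the compact $\partial X$. There are two cases.
\begin{itemize}
\item If the indicator equals $1$ along the subsequence, then $D(z;\PP_t)\ge(1-\varepsilon)D(z;\PP)+\varepsilon$ directly.
\item If the indicator equals $0$, then $D(z;\PP_t)=(1-\varepsilon)\PP(H^+_{\eta_t,z})$. By Assumption \ref{assump:no_atom} and continuity of $(\eta,s)\mapsto B_\eta$, the map $\eta\mapsto\PP(H^+_{\eta,z})$ is continuous. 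This gives the limit $(1-\varepsilon)\PP(H^+_{\eta_0,z})$.
\end{itemize}
In the second case I then need $\eta_0=\xi$. I would argue this via the cone topology: if $\eta_0\ne\xi$, then Busemann functions for $\eta$ near $\eta_0$ are uniformly close on compacts. Using the visibility (strict negative curvature) property, there is a geodesic from $\eta_0$ to $\xi$, and $B_\eta(\gamma(t))\to+\infty$ uniformly for $\eta$ in a neighbourhood of $\eta_0$. That makes the indicator eventually $1$, a contradiction.

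For the upper bound $(1-\varepsilon)D(z;\PP)+\varepsilon$, choose $\eta$ nearly minimizing $\PP(H^+_{\eta,z})$, using continuity in $\eta$ so that we may take $\eta\ne\xi$ if needed. Combining the two bounds, the $\liminf$ and $\limsup$ both equal the minimum in the statement.

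\emph{Part (2).} This is algebra. The condition $D_\infty(z)\ge\alpha$ holds iff $\PP(H^+_{\xi,z})\ge\alpha_1$ and $D(z;\PP)\ge\alpha_2$. The first condition is $B_\xi(z)\le t_\xi(\alpha_1)$, by the same quantile argument as in Theorem \ref{lemma:superlevel}, which uses right-continuity of the survival function $S_\xi$ under Assumption \ref{assump:no_atom}. The second condition is $z\in\DD^{\alpha_2}(\PP)$. If $\alpha>1-\varepsilon$, then $\alpha_1>1$ and the set is empty.

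\emph{Part (3).} By Corollary \ref{coro:huber}, $D(\cdot;\PP_t)\ge D(\cdot;\PP)(1-\varepsilon)$, and the maxima satisfy $D_*(\PP_t)\ge\liminf D(\mu_\infty;\PP_t)=D_\infty(\mu_\infty)>\varepsilon$. Next I need tightness of the medians $\mu_t$. A point $w$ outside a large compact set has $D(w;\PP)$ small by Proposition \ref{prop:no_infty}, and the inequality $D(w;\PP_t)\le(1-\varepsilon)D(w;\PP)+\varepsilon$ (taking near-minimizing directions) then gives $D(w;\PP_t)<D_\infty(\mu_\infty)$. So the $\mu_t$ stay in a compact set. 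For a subsequential limit $\bar\mu$, I would get $D_\infty(\bar\mu)\ge\limsup D(\mu_t;\PP_t)\ge D_\infty(\mu_\infty)$. This needs a joint upper semicontinuity analogue of Proposition \ref{prop:usc}, proved by rerunning the Part (1) argument with $z_t\to\bar\mu$. Uniqueness of the maximizer then forces $\bar\mu=\mu_\infty$.

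\emph{Part (4).} The Fr\'echet mean $\mu^F_t$ minimizes $(1-\varepsilon)\int d^2(x,\cdot)\,d\PP+\varepsilon d^2(\cdot,\gamma(t))$. Comparing with the candidate point $\gamma(\varepsilon t)$ shows $d(\mu^F_t,\gamma(\varepsilon t))=O(1)+o(t)$, using the CAT(0) comparison. Hence $\mu^F_t$ escapes to infinity within angle $o(1)$ of $\gamma$ seen from $o$, and therefore $\mu^F_t\to\xi$ in the cone topology.
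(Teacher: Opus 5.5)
Parts (1)--(3) are essentially the paper's argument. In (1), your subsequence of near-minimizing directions $\eta_t\to\eta_0$, with the indicator vanishing infinitely often only if $\eta_0=\xi$, is the compactness dual of the paper's device: a fixed neighbourhood $U$ of $\xi$ off which the indicator is eventually $1$. Your (3) is the paper's Step 4: tightness from $D(w;\PP_t)\le(1-\varepsilon)D(w;\PP)+\varepsilon$, then a limsup bound at moving points via joint continuity of $(z,\eta)\mapsto\PP(H^+_{\eta,z})$.

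Two small repairs are needed.
\begin{itemize}
\item Because of the atom at $\gamma(t)$, the map $\eta\mapsto\PP_t(H^+_{\eta,z})$ is only upper semicontinuous. That does not guarantee the infimum is attained (upper semicontinuity gives attained suprema, not infima), so use $1/t$-near-minimizers instead; nothing else changes.
\item Strict convexity of $B_\eta\circ\widetilde\gamma$ gives only an increasing slope with some limit $\le 1$, not slope tending to $+1$. The divergence $B_\eta(\gamma(t))\to+\infty$ is really the visibility input, which the paper invokes in exactly the same way. Strictly speaking, visibility is guaranteed by a bound $\Sec_X\le -a^2<0$, not by $\Sec_X<0$ alone. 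Granted pointwise divergence, the uniformity you need near $\eta_0$ follows from convexity in $t$, continuity in $\eta$, and compactness.
\end{itemize}

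Part (4) takes a genuinely different route. The paper's steps are:
\begin{itemize}
\item show $d(z,\gamma(t))\ge t-M$ on $\{B_\xi\ge -M\}$;
\item compare with the fixed point $\gamma(M+1)$ to get $B_\xi(\mu^F_t)\to-\infty$;
\item conclude because horoballs centred at $\xi$ shrink to $\xi$.
\end{itemize}
You instead localize $\mu^F_t$ near $\gamma(\varepsilon t)$. To make ``CAT(0) comparison'' precise:
\begin{itemize}
\item The triangle inequality and the moments give $F_t(\gamma(\varepsilon t))\le\varepsilon(1-\varepsilon)t^2+O(t)$.
\item Apply the CN inequality at $\gamma(\varepsilon t)\in[\gamma(0),\gamma(t)]$ and combine it with $\int d^2(z,x)\,\PP(\dd x)\ge r^2-2r\int d(\gamma(0),x)\,\PP(\dd x)$, where $r=d(z,\gamma(0))$. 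This gives $F_t(z)\ge\varepsilon(1-\varepsilon)t^2+\delta^2-O(t+\delta)$ with $\delta=d(z,\gamma(\varepsilon t))$.
\item The leading terms match exactly, so $d(\mu^F_t,\gamma(\varepsilon t))=O(\sqrt t)$, and the comparison angle at $\gamma(0)$ tends to $0$.
\end{itemize}

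What each route buys:
\begin{itemize}
\item Your route uses only CAT(0) geometry, so it works on every Hadamard manifold, flat ones included. It is also quantitative: the Fr\'echet mean advances at speed $\varepsilon$ along $\gamma$.
\item The paper's final step needs horoballs at $\xi$ to meet $\partial X$ only at $\xi$. That is a visibility property, and it fails in $\R^2$: for $\xi$ the endpoint of $t\mapsto te_1$, the points $x_n=(n,n^2)$ have $B_\xi(x_n)=-n\to-\infty$ but converge to the endpoint of $t\mapsto te_2$.
\item The paper's route is shorter. It needs only the monotone bound $d(z,\gamma(t))-t\ge B_\gamma(z)$, which keeps the argument in the Busemann language of the rest of the section.
\end{itemize}
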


The form of $D_\infty$ is geometrically transparent. The term $(1-\varepsilon)\PP(H^+_{\xi,z})$ is the new obstruction created by the escape direction $\xi$: along that direction the contaminating point eventually lies on the wrong side of every horosphere through $z$, so the extra mass does not contribute. In every other direction, the point mass eventually lies inside the corresponding upper horospherical halfspace and contributes a full $\varepsilon$. The limiting depth is therefore the minimum of one directional constraint and one globally shifted copy of the original depth. Statement (2) simply rewrites these two inequalities in the language of depth regions and horoballs.


Theorem \ref{thm:boundary_robust} isolates a qualitative distinction between depth-based and distance-based location estimators on negatively curved spaces. The limiting depth $D_\infty$ depends on the escape direction but not on the distance traveled. When $D_\infty$ has a unique maximizer above level $\varepsilon$, part (3) shows that the contaminated Busemann medians converge to that limiting maximizer. The Fréchet mean behaves differently because squared distance amplifies remote contamination: once the outliers move along a ray, the linear term $2tB_\xi(z)$ dominates the objective and pulls the minimizer toward the corresponding boundary point. In spaces such as $\bSS_p^+$, this says that increasingly singular covariance matrices influence the Busemann-depth landscape through their limiting eigenstructure, while the Fr\'echet mean is driven toward degeneracy itself.

\subsection{Breakdown under contamination}

The preceding results show that small contamination cannot arbitrarily perturb the depth. We now translate this into a breakdown statement for the median. Because the ambient space admits a visual compactification, there are two natural ways for breakdown to occur: the estimator can drift arbitrarily far from the original median inside $X$, or it can converge to a specific boundary direction.

\begin{definition}\label{def:breakdown}
For a point $x\in X$ and a nonempty set $A\subset X$, write $d(x,A):=\inf_{a\in A} d(x,a)$. Given a Borel probability measure $\PP$ on $X$, a contamination level $\varepsilon\in[0,1]$, and a sequence of Borel probability measures $\{\QQ_n\}$, define $\PP_{\varepsilon,n}:=(1-\varepsilon)\PP+\varepsilon\QQ_n$. The \emph{breakdown point} of the Busemann median is
\[
\varepsilon_{\mathrm{bd}}(\PP)
:=
\inf\Bigl\{
\varepsilon\in[0,1]:
\exists\, \QQ_n,\ \exists\, \mu_n\in \mu_*(\PP_{\varepsilon,n})
\text{ such that }
 d\bigl(\mu_n,\mu_*(\PP)\bigr)\to\infty
\Bigr\}.
\]
For $\xi\in\partial X$, the \emph{boundary-direction breakdown point in direction $\xi$} is
\[
\varepsilon_{\mathrm{bd}}^\xi(\PP)
:=
\inf\Bigl\{
\varepsilon\in[0,1]:
\exists\, \QQ_n,\ \exists\, \mu_n\in \mu_*(\PP_{\varepsilon,n})
\text{ such that }
 \mu_n\to \xi
\text{ in }X\cup\partial X
\Bigr\}.
\]
Finally, define $\varepsilon_{\mathrm{bd}}^\partial(\PP):=\inf_{\xi\in\partial X}\varepsilon_{\mathrm{bd}}^\xi(\PP)$.
\end{definition}

\begin{theorem}[Breakdown point]\label{thm:breakdown}
For any Borel probability measure $\PP$ on a Hadamard manifold $X$ and any $\xi\in\partial X$,
\[
\varepsilon_{\mathrm{bd}}^\xi(\PP)
\ge
\varepsilon_{\mathrm{bd}}^\partial(\PP)
\ge
\varepsilon_{\mathrm{bd}}(\PP)
\ge
\frac{D_*(\PP)}{1+D_*(\PP)}.
\]
In particular, Theorem \ref{thm:depth_lbound} yields the universal bound
\[
\varepsilon_{\mathrm{bd}}^\xi(\PP)
\ge
\varepsilon_{\mathrm{bd}}^\partial(\PP)
\ge
\varepsilon_{\mathrm{bd}}(\PP)
\ge
\frac{1}{d+2}.
\]
\end{theorem}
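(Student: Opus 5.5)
The two left inequalities are formal consequences of the definitions, and the real content is the rightmost bound $\varepsilon_{\mathrm{bd}}(\PP)\ge D_*(\PP)/(1+D_*(\PP))$; the universal bound then follows from Theorem \ref{thm:depth_lbound}, since $t\mapsto t/(1+t)$ is increasing and $D_*\ge 1/(d+1)$ gives $D_*/(1+D_*)\ge 1/(d+2)$.

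For the ordering, $\varepsilon_{\mathrm{bd}}^\xi\ge\varepsilon_{\mathrm{bd}}^\partial$ is immediate because the latter is an infimum over $\xi$. For $\varepsilon_{\mathrm{bd}}^\partial\ge\varepsilon_{\mathrm{bd}}$, observe that if $\mu_n\to\xi$ in the cone topology of $X\cup\partial X$ then $d(o,\mu_n)\to\infty$. Since $\mu_*(\PP)$ is nonempty and compact (Corollary \ref{thm:busemann_exist}), this forces $d(\mu_n,\mu_*(\PP))\to\infty$. Hence every $\varepsilon$ admissible in the definition of $\varepsilon_{\mathrm{bd}}^\xi$ is admissible for $\varepsilon_{\mathrm{bd}}$, and the inequality of infima follows.

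For the main bound, fix $\varepsilon<D_*/(1+D_*)$, equivalently $\varepsilon<(1-\varepsilon)D_*$. I will show that, uniformly in $\QQ$, every median of $\PP_\varepsilon=(1-\varepsilon)\PP+\varepsilon\QQ$ lies in one fixed compact set. Pick $z_0\in\mu_*(\PP)$. Since $D(\cdot;\cdot)$ is affine-concave in the measure, in the sense that $\PP_\varepsilon(H)\ge(1-\varepsilon)\PP(H)$ for every set $H$, we get
\[
D(z_0;\PP_\varepsilon)\ge (1-\varepsilon)D_*.
\]
Hence $D_*(\PP_\varepsilon)\ge(1-\varepsilon)D_*$. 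In the other direction, $\PP_\varepsilon(H)\le(1-\varepsilon)\PP(H)+\varepsilon$ gives $D(z;\PP_\varepsilon)\le(1-\varepsilon)D(z;\PP)+\varepsilon$. So any $\mu\in\mu_*(\PP_\varepsilon)$ satisfies
\[
(1-\varepsilon)D(\mu;\PP)+\varepsilon\ge(1-\varepsilon)D_*,
\]
that is,
\[
D(\mu;\PP)\ge D_*-\frac{\varepsilon}{1-\varepsilon}=:\beta.
\]
The condition $\varepsilon<(1-\varepsilon)D_*$ is exactly $\beta>0$. Thus $\mu\in\DD^\beta(\PP)$, which is compact by Theorem \ref{thm:region_region_prop}(2) (alternatively, by Proposition \ref{prop:no_infty}). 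Consequently $d(\mu_n,\mu_*(\PP))\le \operatorname{diam}(\DD^\beta(\PP)\cup\mu_*(\PP))<\infty$ for every sequence $\QQ_n$ and every choice of $\mu_n$. Breakdown therefore cannot occur at level $\varepsilon$, and taking the supremum over such $\varepsilon$ gives the bound. Theorem \ref{thm:inclusion_region} packages the same two inequalities if one prefers to cite it.

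The only delicate point is the strictness at the threshold: we need $\beta>0$ strictly, since the region $\DD^0(\PP)=X$ is not compact. This is why the argument works only for $\varepsilon$ strictly below $D_*/(1+D_*)$, which suffices for a lower bound on an infimum. One should also handle the degenerate case $\varepsilon=0$ trivially and note that the estimate is uniform over all $\QQ_n$.
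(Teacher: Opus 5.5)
Your proposal is correct and follows the paper's proof essentially step for step. Both arguments use the same two formal inequalities, the same lower bound $D(z_0;\PP_\varepsilon)\ge(1-\varepsilon)D_*(\PP)$ at a population median, and the same localization of every contaminated median in a compact set that does not depend on $\QQ$. The differences are cosmetic. You use the slightly sharper bound $D(z;\PP_\varepsilon)\le(1-\varepsilon)D(z;\PP)+\varepsilon$ together with compactness of $\DD^{\beta}(\PP)$ from Theorem \ref{thm:region_region_prop}(2). The paper instead uses $D(z;\PP)+\varepsilon$ from Corollary \ref{coro:huber} and invokes vanishing at infinity directly, and the threshold is the same.

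One caution about your aside: Theorem \ref{thm:inclusion_region} can stand in only for your upper bound. Its lower inclusion gives just $D(z_0;\PP_\varepsilon)\ge D_*(\PP)-\varepsilon$, and that alone would only yield $\varepsilon<D_*(\PP)/2$.
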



\begin{remark}
The bound $\varepsilon_\mathrm{bd}(\PP)\ge 1/(d+2)$ is weaker than the classical Euclidean benchmark $1/(d+1)$.  It is worth noting that the loss comes entirely from the algebraic step $D^*/(1+D^*)\ge 1/(d+2)$ applied to the centerpoint bound $D^*\ge 1/(d+1)$, not from any weakening of the centerpoint theorem itself.  In other words, the centerpoint theorem already provides the sharp $1/(d+1)$ depth guarantee; the gap arises in the passage from depth to breakdown.  Closing this gap, for instance, by showing directly that $\varepsilon_\mathrm{bd}(\PP)\ge D^*(\PP)$ or by improving the algebraic translation under additional regularity or symmetry assumptions, remains open.
\end{remark}

\section{Empirical convergence}
\label{sec:computation}

We now pass from the population depth to its empirical counterpart. Given i.i.d. observations $X_1,\dots,X_n\sim \PP$, write $\PP_n:=n^{-1}\sum_{i=1}^n\delta_{X_i}$ for the empirical measure. The statistical question is whether the random depth landscape $z\mapsto D(z;\PP_n)$ converges uniformly to $z\mapsto D(z;\PP)$ and whether this in turn forces the sample depth regions and sample Busemann medians to converge. Once uniform convergence is available, the rest is largely deterministic: the key geometric inputs have already been established in Sections \ref{sec:existence} and \ref{sec:uniqueness}.

\begin{definition}[Sample horospherical depth and sample Busemann median]
Given observations $X_1,\dots,X_n\in X$, the \emph{sample horospherical depth} is
\begin{equation}\label{eqn:u_conver_sample_def}
D(z;\PP_n):=\inf_{\xi\in\partial X}\frac1n\sum_{i=1}^n \mathbf 1\{B_\xi(z)\le B_\xi(X_i)\}.
\end{equation}
Its set of maximizers is the \emph{sample Busemann median}, denoted by $\mu_*(\PP_n)$. For $\alpha\ge 0$, the corresponding sample depth region is $\DD^\alpha(\PP_n):=\{z\in X:D(z;\PP_n)\ge \alpha\}$.
\end{definition}



The empirical theory reduces to a uniform law of large numbers for the class of upper horospherical halfspaces 
\[
\mathcal H^+ := \{H^+_{\xi,z}:\xi\in\partial X,\ z\in X\}.
\] 
Indeed, $D(z;\PP)=\inf_{\xi\in\partial X}\PP(H^+_{\xi,z})$ and $D(z;\PP_n)=\inf_{\xi\in\partial X}\PP_n(H^+_{\xi,z})$, so 
\[
\sup_{z\in X}\bigl|D(z;\PP_n)-D(z;\PP)\bigr|
\le \sup_{A\in\mathcal H^+}\bigl|\PP_n(A)-\PP(A)\bigr|.
\] 
The problem is therefore to control the empirical process uniformly over an uncountable family indexed jointly by a boundary direction $\xi$ and a threshold point $z$.

We treat this problem in two regimes. On symmetric spaces of noncompact type, the family $\mathcal H^+$ admits a finite-dimensional parametrization, and VC theory gives the required uniform control. On a general Hadamard manifold, we instead combine compactness of the visual boundary with continuity of Busemann functions and Assumption \ref{assump:no_atom}, which rules out mass on horospheres. The compactness argument, deferred to the supplement, approximates $\partial X$ by finite nets, controls the
resulting threshold perturbations uniformly on compact positive-depth regions, and then uses vanishing at infinity to remove the remaining
tail of the manifold.

\begin{lemma}\label{lemma:VC}
    Let \(X\) be a symmetric space of noncompact type, and let \[
    \mathcal H^+ := \{H^+_{\xi,z}:\xi\in\partial X,\ z\in X\}.
    \] 
    Then $\mathcal H^+$ has finite VC dimension. In particular, if $X=H^d$, then $\mathrm{VC}(\mathcal H^+)\le d+2$.
\end{lemma}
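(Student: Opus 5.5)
The plan is to realize $\mathcal H^+$ as a \emph{definable family} of sets in an o-minimal structure, or in the hyperbolic case as the trace of a class of Euclidean affine halfspaces. Finite VC dimension then follows from standard results. I treat $\mathbb H^d$ first, since it gives the explicit bound, and then the general symmetric space.

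\textbf{Hyperbolic space.} Work in the hyperboloid model $\mathbb H^d=\{x\in\R^{d+1}:\langle x,x\rangle_L=-1,\ x_0>0\}$ with Minkowski form $\langle\cdot,\cdot\rangle_L$. Each $\xi\in\partial\mathbb H^d$ corresponds to a future null ray $\R_{>0}\ell_\xi$. A direct computation along the geodesic $\gamma(t)=\cosh t\, o+\sinh t\, v$ gives
\[
d(x,\gamma(t))-t\;\to\;\log\bigl(-\langle x,\ell_\xi\rangle_L\bigr)
\qquad\text{with } \ell_\xi=o+v .
\]
Hence $B_\xi(x)=\log(-\langle x,\ell_\xi\rangle_L)$, and this is consistent with Example \ref{example:busemann_hyper} after the Cayley transform. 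It follows that
\[
H^+_{\xi,z}=\{x\in\mathbb H^d:\ \langle x,\ell_\xi\rangle_L\le -c\},\qquad c=-\langle z,\ell_\xi\rangle_L>0 .
\]
This is the intersection of $\mathbb H^d$ with a closed affine halfspace of $\R^{d+1}$. Affine halfspaces in $\R^{d+1}$ have VC dimension $d+2$ (Radon's theorem), and passing to traces on a subset cannot increase VC dimension. Therefore $\mathrm{VC}(\mathcal H^+)\le d+2$.

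\textbf{General symmetric space of noncompact type.} By Mostow/Karpelevich, $X=G/K$ admits an isometric, totally geodesic embedding $\iota:X\to\bSS_n^+$ for some $n$, after rescaling the metric on each irreducible factor. Geodesic rays of $X$ are rays of $\bSS_n^+$. The Busemann function is defined by the limit \eqref{eqn:busemann_def}, and the embedding preserves distances. Therefore $B^X_\xi=B^{\bSS}_{\iota(\xi)}\circ\iota$. So $\mathcal H^+(X)$ is contained in the trace on $\iota(X)$ of the class $\mathcal H^+(\bSS_n^+)$, and it suffices to treat $\bSS_n^+$.

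There, a ray from $I$ is $t\mapsto k\,e^{t\Lambda}k^\top$ with $k\in O(n)$, $\Lambda=\mathrm{diag}(\lambda_1\ge\dots\ge\lambda_n)$ and $\sum\lambda_i^2=1$. A standard Iwasawa/Cholesky computation gives
\[
B_\xi(P)=\sum_{i}c_i(\Lambda)\,\log\det\bigl[(k^\top P k)_{[i]}\bigr],
\]
where the $c_i$ are explicit linear combinations of the differences $\lambda_i-\lambda_{i+1}$, and $(\cdot)_{[i]}$ denotes a leading principal minor (with a sign/inversion convention that I will fix when I do the computation).

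Consequently, the relation $P\in H^+_{\xi,Z}$ is given by a single inequality between finite sums of terms of the form
\[
(\text{real parameter})\times\log(\text{polynomial in } P,k)
\]
and the analogous terms in $Z,k$. The parameters $(k,\Lambda,Z)$ range over a semialgebraic set. Hence the family $\{H^+_{\xi,Z}\}$ is uniformly definable in $\R_{\exp}$, which is o-minimal by Wilkie's theorem. By Laskowski's theorem, every uniformly definable family in an o-minimal structure has finite VC dimension.

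\textbf{Main obstacle.} The hardest part is the general case, specifically obtaining a Busemann formula on $\bSS_n^+$ whose parameter dependence is uniformly definable. The naive limit formula involves $t\to\infty$ and is not itself first-order. I would first derive the formula directly from the Iwasawa decomposition $P=n a n^\top$ (Cholesky in the flag basis $k$), where $B_\xi$ equals minus the $\Lambda$-component of $\log a$. This component is a $\Lambda$-weighted sum of logs of ratios of leading minors. Definability in $\R_{\exp}$ is then immediate. If that bookkeeping gets messy, a fallback is to note that one only needs definability of the graph of $(P,k,\Lambda)\mapsto B$. That graph is obtained from the semialgebraic Cholesky map composed with $\log$, so it is definable regardless of the exact closed form.
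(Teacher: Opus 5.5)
Your proof is correct, but it takes a different route from the paper in both parts.

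\textbf{Hyperbolic case.} The paper works in the Poincar\'e ball. It writes each $\{B_\xi\ge t\}$ as the trace on $\mathbb B^d$ of the complement of a Euclidean ball tangent to $S^{d-1}$ at $\xi$. It then bounds the VC dimension of all Euclidean balls in $\R^d$ by lifting along $x\mapsto(x,\|x\|^2)$ to halfspaces of $\R^{d+1}$. In the hyperboloid model this linearization is already built in. Since $B_\xi(x)=\log(-\langle x,\ell_\xi\rangle_L)$, each $H^+_{\xi,z}$ is directly the trace of an affine halfspace of $\R^{d+1}$, so you get the same bound $d+2$ in one step. Your formula does agree with Example~\ref{example:busemann_hyper}, but the identification is via stereographic projection to the ball, not a Cayley transform.

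\textbf{General case.} The paper pulls $\mathcal H^+$ back along $G\to G/K$. It cites a representation-theoretic formula $\sum_i c_i\log\|\rho_i(gg_0)v_i\|+C$, with $c_i\ge0$, for Busemann functions, and concludes from definability in $\R_{\mathrm{an},\exp}$ and finite VC density. You instead embed $X$ totally geodesically in $\bSS_n^+$ and use the classical Iwasawa/Cholesky formula there. That formula is essentially the $GL_n$ instance of the same kind of expression: principal minors of $gg^\top$ are squared norms of images of $e_1\wedge\cdots\wedge e_i$ in the fundamental representations $\wedge^i\R^n$. Your conclusion via Wilkie and Laskowski is the same o-minimality mechanism the paper uses.

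Your route is more elementary and self-contained, since the only Busemann formula it needs is essentially the paper's own Example~\ref{example:busemann_spd}. The paper's route stays intrinsic on $G$. Because its coefficients $c_i\ge0$ range freely, it is automatically insensitive to how the invariant metric is normalized.

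That last point is the one step you leave unjustified. For reducible $X$, the Karpelevich--Mostow embedding is isometric only for a metric $X'$ in which each irreducible factor is rescaled by its own constant. So your identity $B^X_\xi=B^{\bSS}_{\iota(\xi)}\circ\iota$ holds for $X'$, not for $X$. You need to add the observation that $\mathcal H^+(X)=\mathcal H^+(X')$:
\begin{itemize}
\item On a product $X_1\times\cdots\times X_m$ with metric $\sum_i a_i^2g_i$, the Busemann functions are, up to additive constants, exactly $\sum_i a_ic_i\,B^{(i)}_{\xi_i}$, with $(c_i)$ a unit vector in the closed positive orthant.
\item Changing the $a_i$ therefore only reparametrizes the family of nonnegative combinations of the factor Busemann functions.
\item $H^+_{\xi,z}$ is unchanged when $B_\xi$ is multiplied by a positive constant.
\end{itemize}
With that added, the argument is complete. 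Fixing your leading-versus-trailing minor convention is harmless, because $k$ ranges over all of $O(n)$, including permutation matrices.
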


\begin{theorem}\label{thm:sample_converg}
Let $X$ be a symmetric space of the noncompact type and $X_1,\dots,X_n \sim \PP$. Then
\begin{equation}\label{eqn:u_conver}
\sup_{z\in X}\bigl|D(z;\PP_n)-D(z;\PP)\bigr|\to 0
\qquad\text{almost surely.}
\end{equation}
\end{theorem}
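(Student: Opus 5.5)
The plan is to reduce the claim to a uniform Glivenko--Cantelli statement over the class $\mathcal H^+$ and then invoke Lemma \ref{lemma:VC}. First I record the deterministic reduction noted just before the lemma. For each fixed $z$ and $\xi$ we have $|\PP_n(H^+_{\xi,z})-\PP(H^+_{\xi,z})|\le \sup_{A\in\mathcal H^+}|\PP_n(A)-\PP(A)|=:\Delta_n$. Taking infima over $\xi$ uses the elementary fact $|\inf_\xi a_\xi-\inf_\xi b_\xi|\le\sup_\xi|a_\xi-b_\xi|$, and this gives $|D(z;\PP_n)-D(z;\PP)|\le\Delta_n$ uniformly in $z\in X$. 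So \eqref{eqn:u_conver} follows once $\Delta_n\to0$ almost surely.

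Second, I would apply the Vapnik--Chervonenkis uniform law of large numbers. By Lemma \ref{lemma:VC}, $\mathcal H^+$ has finite VC dimension $V$. Sauer's lemma then bounds the shatter coefficient by $(n+1)^V$. The classical VC inequality gives $\PP(\Delta_n>\epsilon)\le 8(n+1)^V e^{-n\epsilon^2/32}$, which is summable in $n$, and Borel--Cantelli yields $\Delta_n\to0$ almost surely. No assumption on $\PP$ is needed, which matches the table.

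The main obstacle is measurability. $\Delta_n$ is a supremum over an uncountable family indexed by $(\xi,z)\in\partial X\times X$, so I must check that it is a random variable, for instance by showing $\mathcal H^+$ is permissible or image-admissible Suslin. I would argue as follows. The map $(\xi,z,x)\mapsto B_\xi(x)-B_\xi(z)$ is jointly continuous on $\partial X\times X\times X$, by continuity of Busemann functions in the boundary parameter on a Hadamard manifold, and $\partial X\times X$ is Polish. The indicator $\mathbf 1\{B_\xi(x)\ge B_\xi(z)\}$ is therefore jointly Borel, and the class is image-admissible Suslin (Dudley), so the VC Glivenko--Cantelli theorem applies with measurable suprema.

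A simpler alternative handles the supremum over the index set directly. Since $H^+_{\xi,z}$ is closed, it is the decreasing limit of $H^+_{\xi,z_k}$ along suitable countable approximations, so the supremum can be restricted to a countable dense set of parameters plus one-sided limits. If neither route works cleanly, I would fall back on outer probability, in the sense of van der Vaart--Wellner, where the almost sure conclusion holds with outer-measure bounds.
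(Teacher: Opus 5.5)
Your proposal is correct and follows essentially the same route as the paper: the same inf--sup reduction of $\sup_{z}|D(z;\PP_n)-D(z;\PP)|$ to $\sup_{A\in\mathcal H^+}|\PP_n(A)-\PP(A)|$, followed by the Glivenko--Cantelli property of the finite-VC class supplied by Lemma \ref{lemma:VC}. Your explicit VC inequality with Sauer's lemma and Borel--Cantelli, together with the image-admissible Suslin measurability check via joint continuity of $(\xi,z,x)\mapsto B_\xi(x)-B_\xi(z)$, spell out what the paper covers only with the phrase ``under various regularity conditions.''
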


\begin{remark}
    Since $\sup_{z\in X}|D(z;\PP_n)-D(z;\PP)| \le \sup_{A\in\mathcal H^+}|\PP_n(A)-\PP(A)|$, any standard VC deviation inequality for the class $\mathcal H^+$ yields the corresponding nonasymptotic bound for the depth sup-norm. We do not record constants here.
\end{remark}


On a general Hadamard manifold, one instead uses compactness of the visual boundary and continuity of the Busemann functions.

\begin{theorem}\label{thm:sample_converg_2}
Let $X$ be a Hadamard manifold, and assume that $\PP$ satisfies Assumption \ref{assump:no_atom}. Then, for i.i.d. observations $X_1,\dots,X_n\sim\PP$, the uniform convergence in \eqref{eqn:u_conver} holds almost surely.
\end{theorem}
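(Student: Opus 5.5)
We plan to prove the uniform convergence by a Glivenko--Cantelli-type argument over the compact parameter space $\partial X$, combined with a localization step that handles the non-compactness of $X$. Set $F(\xi,t):=\PP(\{x:B_\xi(x)\ge t\})$ and $F_n(\xi,t):=\PP_n(\{x:B_\xi(x)\ge t\})$. Then $D(z;\PP)=\inf_\xi F(\xi,B_\xi(z))$, and it suffices to show
\[
\sup_{\xi\in\partial X,\ t\in\R}|F_n(\xi,t)-F(\xi,t)|\to 0 \quad\text{a.s.}
\]

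The first step is to show that $F$ is jointly continuous on $\partial X\times\R$. Fix the origin $o$. The map $(\xi,x)\mapsto B_\xi(x)$ is continuous on $\partial X\times X$: Busemann functions based at $o$ converge uniformly on compacts as $\xi$ varies in the cone topology, and each is $1$-Lipschitz. If $(\xi_k,t_k)\to(\xi,t)$, then $\mathbf 1\{B_{\xi_k}(x)\ge t_k\}\to\mathbf 1\{B_\xi(x)\ge t\}$ for every $x$ with $B_\xi(x)\neq t$. By Assumption \ref{assump:no_atom} these $x$ have full $\PP$-measure, so dominated convergence gives continuity of $F$. The same assumption shows that $F(\xi,\cdot)$ is continuous and decreasing from $1$ to $0$.

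The second step builds a finite bracketing. Fix $\delta>0$. Since $\PP$ is tight, choose a compact $K$ with $\PP(K^c)<\delta$. Uniform continuity of $B$ on $\partial X\times K$, together with the uniform continuity of $F$ on $\partial X\times[-R,R]$, gives a finite $\delta$-net $\xi_1,\dots,\xi_m$ of $\partial X$ and a grid of thresholds $t_{j,0}<\dots<t_{j,L}$ with the following property. For every $\xi$ there is some $j$ such that $|B_\xi-B_{\xi_j}|\le\eta$ on $K$, with $\eta$ chosen so that $F(\xi_j,t\pm\eta)$ and $F(\xi,t)$ differ by at most $\delta$ uniformly in $t$. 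We then sandwich
\[
\{B_{\xi_j}\ge t+\eta\}\cap K\ \subset\ \{B_\xi\ge t\}\cap K\ \subset\ \{B_{\xi_j}\ge t-\eta\},
\]
and add the mass of $K^c$ to account for points outside $K$. For the finitely many sets $\{B_{\xi_j}\ge t_{j,l}\}$, $\{B_{\xi_j}\ge t_{j,l}\}\cap K$ and $K^c$, the strong law of large numbers holds simultaneously almost surely. Monotonicity in $t$, as in the classical Glivenko--Cantelli proof, interpolates between grid points. Together these yield $\limsup_n\sup_{\xi,t}|F_n-F|\le C\delta$ almost surely, and letting $\delta\downarrow0$ along a countable sequence finishes the argument.

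The main obstacle is the step that makes the direction perturbation uniform. Busemann functions based at $o$ are only uniformly close on compacts, not on all of $X$. This is why the compact $K$ and the outer $K^c$ correction are needed, and it is also where the thresholds $t$ have to be restricted to a bounded range. For $|t|$ large, with $K\subset B(o,R)$ and $|B_\xi|\le R$ on $K$, the quantity $F_n(\xi,t)$ is determined up to $\PP_n(K^c)$, which handles the large thresholds. Alternatively, as the paper suggests, we can handle large $z$ using Proposition \ref{prop:no_infty}: both $D(z;\PP)$ and $D(z;\PP_n)$ are small outside a fixed compact. For the empirical depth, this follows because $D(z;\PP_n)\le \PP_n(H^+_{\xi,z})$ for a direction $\xi$ pointing from $o$ toward $z$, and that set eventually has small $\PP_n$-mass, uniformly in $n$ large, by the same finite-net control.
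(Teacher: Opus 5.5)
Your argument is correct, but it takes a different route from the paper's proof.

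\textbf{The paper's route.} The paper first invokes Varadarajan's theorem to get $\PP_n\Rightarrow\PP$ almost surely, and from then on argues deterministically on that event.
\begin{enumerate}
\item A Portmanteau sandwich, using Lemma~\ref{lemma:compact_uniform_convergence_busemann} and Assumption~\ref{assump:no_atom}, shows that $\PP_n(H^+_{\xi_n,z_n})\to\PP(H^+_{\xi,z})$ whenever $(\xi_n,z_n)\to(\xi,z)$.
\item Sequential compactness of $\partial X\times K$ upgrades this to uniform convergence over $z$ in a compact set $K$.
\item The tail in $z$ is removed geometrically through the bound $D(z;\QQ)\le\QQ(X\setminus\overline B(o,R))$ for $d(o,z)>R$. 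This bound holds for any probability measure $\QQ$, so it applies to both $\PP$ and $\PP_n$.
\end{enumerate}

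\textbf{Your route.} You run a classical Glivenko--Cantelli bracketing argument directly on the full class $\{\{B_\xi\ge t\}:\xi\in\partial X,\ t\in\R\}$. The ingredients are a finite net in $\partial X$ that is uniform on a compact $K$, a finite threshold grid with monotone interpolation, and the SLLN on finitely many sets. For $|t|>R$ you use $|F_n-F|\le\PP_n(K^c)+\delta$.

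\textbf{Comparison.} Your argument proves the slightly stronger fact that the whole threshold class is $\PP$-Glivenko--Cantelli, uniformly over all $t\in\R$. As a result, no separate tail argument in $z$ is needed. The explicit bracketing structure is also the natural starting point for rates. The paper's route is shorter. Because it is deterministic once Varadarajan's theorem is applied, it gives uniform depth convergence along any sequence $\QQ_n\Rightarrow\PP$, not only empirical measures. Your argument can be phrased that way too, since under Assumption~\ref{assump:no_atom} your bracketing sets (and a closed ball $K$ with $\PP$-null boundary) are $\PP$-continuity sets.

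\textbf{A sign error in your optional tail remark.} You take $\xi$ to be the endpoint of the ray from $o$ through $z$. For that $\xi$ one has $B_\xi(z)=-d(o,z)$, so $H^+_{\xi,z}=\{B_\xi\ge -d(o,z)\}$ contains $o$ and carries large mass rather than small mass. The right choice, as in the paper, is the opposite endpoint $\eta=\widetilde\gamma(-\infty)$. For this $\eta$ one has $B_\eta(z)=d(o,z)$ and $H^+_{\eta,z}\subset X\setminus B(o,d(o,z))$. The remark is not used in your main bracketing argument, so the proof still stands.
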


\begin{proof}[Proof Sketch]
Varadarajan's theorem \cite{varadarajan1958convergence} gives $\PP_n\Rightarrow \PP$ almost surely. Fix a compact set $K\subset X$. If $(\xi_n,z_n)\to(\xi,z)$ in $\partial X\times K$, continuity of the Busemann functions in the boundary parameter implies $B_{\xi_n}\to B_\xi$ uniformly on $K$. Hence the sets $H^+_{\xi_n,z_n}\cap K$ are squeezed between $\{B_\xi>B_\xi(z)+\varepsilon\}\cap K$ and $\{B_\xi\ge B_\xi(z)-\varepsilon\}\cap K$; Assumption \ref{assump:no_atom} removes the boundary mass on $\{B_\xi=B_\xi(z)\}$, so $\PP_n(H^+_{\xi_n,z_n})\to \PP(H^+_{\xi,z})$. By compactness of $\partial X\times K$, this convergence is uniform on $\partial X\times K$, and therefore
\[
\sup_{z\in K}|D(z;\PP_n)-D(z;\PP)|\to 0.
\]
To control the tail, let $z\notin \overline B(o,R)$ and let $\eta$ be the boundary point opposite to $z$ along the geodesic through $o$ and $z$. Then $B_\eta(z)=d(o,z)$ and $H^+_{\eta,z}\subset X\setminus B(o,R)$, so $D(z;Q)\le Q(X\setminus B(o,R))$ for every probability measure $Q$. Choosing $R$ so that $\PP(X\setminus \overline B(o,R))$ is small and using $\PP_n\Rightarrow \PP$ on this continuity set gives the same tail bound for $\PP_n$. Combining the compact-uniform convergence on $\overline B(o,R)$ with the tail estimate yields
\[
\sup_{z\in X}|D(z;\PP_n)-D(z;\PP)|\to 0
\]
almost surely. The full compactness argument is given in the supplement.
\end{proof}

Once \eqref{eqn:u_conver} is known, convergence of regions and medians follows from standard contour and argmax arguments. We state these consequences together because, conceptually, they are all manifestations of the same fact: a uniformly convergent sequence of depth functions cannot create or destroy macroscopic contours.

\begin{theorem}\label{thm:sample_converg_depth_region}
Assume that \eqref{eqn:u_conver} holds almost surely, and let $\alpha_n\to \alpha$.
\begin{enumerate}
    \item For every $\varepsilon>0$ and every $\delta\in(0,\varepsilon)$,
    \[
    \DD^{\alpha+\varepsilon}(\PP)\subset \DD^{\alpha_n+\delta}(\PP_n)\subset \DD^{\alpha_n}(\PP_n)\subset \DD^{\alpha_n-\delta}(\PP_n)\subset \DD^{\alpha-\varepsilon}(\PP)
    \]
    almost surely for all sufficiently large $n$.

    \item If $\PP\bigl(\{z\in X:D(z;\PP)=\alpha\}\bigr)=0$, then $\DD^{\alpha_n}(\PP_n)\to \DD^{\alpha}(\PP)$ almost surely in the Painlev\'e--Kuratowski sense. When $\alpha>0$, the sets are eventually compact by Theorem \ref{thm:region_region_prop}, so this is equivalent to Hausdorff convergence.

    \item If $\mu_*(\PP)=\{\theta\}$ is a singleton, then every selection $\theta_n\in\mu_*(\PP_n)$ satisfies $\theta_n\to\theta$ almost surely.
\end{enumerate}
\end{theorem}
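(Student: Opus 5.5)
Write $\Delta_n:=\sup_{z\in X}|D(z;\PP_n)-D(z;\PP)|$. By \eqref{eqn:u_conver}, $\Delta_n\to0$ almost surely. We work on this full-probability event, which makes the whole argument deterministic. In this setting we have the following facts: $D(\cdot;\PP)$ is upper semicontinuous (Proposition \ref{prop:usc}); it vanishes at infinity (Proposition \ref{prop:no_infty}); and the regions $\DD^\beta(\PP)$ are compact and convex for $\beta>0$ (Theorem \ref{thm:region_region_prop}).

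For part (1), fix $\varepsilon>0$ and $\delta\in(0,\varepsilon)$. Choose $n$ large enough that $\Delta_n<(\varepsilon-\delta)/2$ and $|\alpha_n-\alpha|<(\varepsilon-\delta)/2$.
\begin{itemize}
\item If $D(z;\PP)\ge\alpha+\varepsilon$, then $D(z;\PP_n)\ge \alpha+\varepsilon-\Delta_n>\alpha_n+\delta$.
\item If $D(z;\PP_n)\ge\alpha_n-\delta$, then $D(z;\PP)\ge\alpha_n-\delta-\Delta_n>\alpha-\varepsilon$.
\end{itemize}
The two middle inclusions are trivial monotonicity of superlevel sets. The conventions $\DD^\beta=X$ for $\beta\le0$ and $\DD^\beta=\varnothing$ for $\beta>1$ cover the degenerate ranges.

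For part (2), Painlev\'e--Kuratowski convergence needs two things: $\limsup_n\DD^{\alpha_n}(\PP_n)\subset\DD^\alpha(\PP)$, and $\DD^\alpha(\PP)\subset\liminf_n\DD^{\alpha_n}(\PP_n)$.
\begin{itemize}
\item \emph{Outer limit.} Let $z_{n_k}\in\DD^{\alpha_{n_k}}(\PP_{n_k})$ with $z_{n_k}\to z$. Then $D(z_{n_k};\PP)\ge\alpha_{n_k}-\Delta_{n_k}$, and upper semicontinuity gives $D(z;\PP)\ge\alpha$.
\item \emph{Inner limit.} Here the hypothesis on the level set $\{D(\cdot;\PP)=\alpha\}$ enters. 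Its intended role is that every point $z$ with $D(z;\PP)=\alpha$ is a limit of points $z_j$ with $D(z_j;\PP)>\alpha$. Given such $z_j$, part (1) with $\varepsilon_j=D(z_j;\PP)-\alpha$ places $z_j$ in $\DD^{\alpha_n}(\PP_n)$ for all large $n$, and a diagonal argument yields $z$ in the inner limit. Points with $D(z;\PP)>\alpha$ are handled directly by part (1).
\end{itemize}
The main obstacle is justifying this density of $\{D>\alpha\}$ in $\DD^\alpha(\PP)$ from the stated null-set hypothesis. My first attempt would read the hypothesis as saying the contour has empty interior relative to the region. I would then use convexity: by quasi-concavity from Theorem \ref{lemma:superlevel} together with Corollary \ref{coro:monotonicity}, moving from a contour point toward a deeper point stays in the region. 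If the contour contained an open piece of the region, that would contradict the hypothesis whenever $\PP$ charges open subsets of the region. If this reading fails, I would add the standard Euclidean assumption that the contour is the boundary of the region. Once both limits hold and $\alpha>0$, the sets are eventually contained in the compact $\DD^{\alpha/2}(\PP)$ by part (1). On a common compact set, Painlev\'e--Kuratowski convergence equals Hausdorff convergence.

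For part (3), let $\theta_n\in\mu_*(\PP_n)$. Then
\[
D(\theta_n;\PP)\ge D(\theta_n;\PP_n)-\Delta_n\ge D(\theta;\PP_n)-\Delta_n\ge D_*(\PP)-2\Delta_n .
\]
Since $D_*(\PP)\ge1/(d+1)>0$ by Theorem \ref{thm:depth_lbound}, the points $\theta_n$ eventually lie in the compact set $\DD^{D_*/2}(\PP)$. Any limit point $\vartheta$ satisfies $D(\vartheta;\PP)\ge D_*(\PP)$ by upper semicontinuity, so $\vartheta\in\mu_*(\PP)=\{\theta\}$. Compactness together with uniqueness of the limit point gives $\theta_n\to\theta$.
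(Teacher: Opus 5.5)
Parts (1) and (3) of your proposal are correct. Part (1) is the same elementary sandwich argument that the paper delegates to the Euclidean contour theorem of \cite{zuo2000structural}.

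For part (3) you take a different route from the paper. You argue by compactness: the bound $D(\theta_n;\PP)\ge D_*(\PP)-2\Delta_n$ traps $\theta_n$ in the compact set $\DD^{D_*/2}(\PP)$, and upper semicontinuity identifies every cluster point with $\theta$. The paper instead extracts, for each $r>0$, a depth gap $\eta_r$ between $\theta$ and the complement of $B(\theta,r)$. It then concludes $d(\theta_n,\theta)<r$ once $\Delta_n$ is smaller than $\eta_r$. Both arguments use the same inputs: upper semicontinuity, localization, and uniqueness. The paper's form has one advantage: it is the one you would use to turn deviation bounds for $\Delta_n$ into rates.

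The gap is in the inner-limit half of part (2), and it cannot be closed, because part (2) as stated is false. You correctly noticed that you need $\{z:D(z;\PP)>\alpha\}$ to be dense in $\DD^\alpha(\PP)$. The hypothesis $\PP(\{D(\cdot;\PP)=\alpha\})=0$ does not give this: the contour can fill the whole region as long as $\PP$ does not charge it. Here is a counterexample.
\begin{itemize}
\item Take $X=\R$, which is a Hadamard manifold on which $D$ is Tukey depth.
\item Let $\PP$ be uniform on $[-2,-1]\cup[1,2]$, and set $\alpha_n=\alpha=1/2$.
\item Then $D(\cdot;\PP)\equiv 1/2$ on $[-1,1]$, so $\DD^{1/2}(\PP)=[-1,1]$ and $\PP(\{D(\cdot;\PP)=1/2\})=\PP([-1,1])=0$.
\item Uniform convergence holds by Glivenko--Cantelli.
\item For every odd $n$, however, $\DD^{1/2}(\PP_n)$ is the single sample median, which lies in $[-2,-1]\cup[1,2]$.
\item So $0$ is not in the inner limit, and the Hausdorff distance to $[-1,1]$ is at least $2$.
\end{itemize}
An even simpler counterexample: take $\alpha=0$, $\alpha_n=1/n$, and $\PP$ uniform on $[-1,1]$. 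Then $\DD^0(\PP)=\R$, while every $\DD^{\alpha_n}(\PP_n)$ lies in $[-1,1]$.

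Your ``first reading'' fails exactly in the first example, because $\PP$ puts no mass on the interior of the region. Your convexity remark does not help either. Plain quasi-concavity (Theorem \ref{lemma:superlevel}, Corollary \ref{coro:monotonicity}) only gives depth $\ge\alpha$ on the segment from a contour point toward a deeper point, not $>\alpha$.

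What the null-set hypothesis does yield, by your part (1) reasoning applied pointwise, is a measure-theoretic statement. Namely, $\mathbf 1_{\DD^{\alpha_n}(\PP_n)}(z)\to\mathbf 1_{\DD^{\alpha}(\PP)}(z)$ for every $z$ off the contour. Hence the convergence holds $\PP$-a.e., and by dominated convergence $\PP\bigl(\DD^{\alpha_n}(\PP_n)\triangle\DD^\alpha(\PP)\bigr)\to 0$.

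Painlev\'e--Kuratowski or Hausdorff convergence genuinely needs your fallback hypothesis $\DD^\alpha(\PP)=\overline{\{z:D(z;\PP)>\alpha\}}$. Under it, your outer-limit and diagonal inner-limit arguments are complete. The paper's own proof supplies no missing idea here. It cites the Euclidean theorem and lists only upper semicontinuity, vanishing at infinity, and uniform convergence, and the examples show these are insufficient.

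Your convexity instinct can be rescued in the negatively curved regime. Assume $\Sec_X<0$, Assumptions \ref{assump:no_atom}--\ref{assump_connected_supp}, and $0<\alpha<D_*(\PP)$. Apply Theorem \ref{thm:unique_rank_1} to the segment from a contour point $z$ to the median $\theta$: every interior point of the segment has depth $>\alpha$. These points accumulate at $z$, which is exactly the density you need.
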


Theorem \ref{thm:sample_converg_depth_region} is the statistical closure of the earlier geometric theory. The sample depth regions converge to their population counterparts, and whenever the population median is unique the sample Busemann median is strongly consistent. Thus, the intrinsic depth constructed from horoballs is not only well posed at the population level but statistically stable under sampling.

\section{Examples and computation}
\label{sec:numerical}
The construction is intrinsic, but in the model spaces that motivate the paper the Busemann functions admit explicit coordinates. This makes the theory concrete and suggests practical approximation schemes. The aim of this section is illustrative: we record formulas in basic examples and describe a finite-direction approximation for empirical depth regions and medians, while leaving a quantitative analysis of discretization and optimization error to future work.



\subsection{Explicit formulas in model spaces} 

\begin{example}[Poincar\'e ball model $\mathbb H^d$]
By Example \ref{example:busemann_hyper}, for $\xi\in S^{d-1}$ and $x\in\mathbb H^d$ one has $B_\xi(x)=\log\|x-\xi\|^2-\log(1-\|x\|^2)$. Hence
\[
D(z;\PP)=\inf_{\xi\in S^{d-1}} \PP\!\left\{x\in\mathbb H^d:\frac{\|x-\xi\|^2}{1-\|x\|^2}\ge \frac{\|z-\xi\|^2}{1-\|z\|^2}\right\}.
\]
For $\alpha\in(0,1)$, the depth region takes the form
\[
\DD^\alpha(\PP)
=\bigcap_{\xi\in S^{d-1}}\left\{x\in\mathbb H^d:\|x-\xi\|^2+e^{t_\xi(\alpha)}\|x\|^2\le e^{t_\xi(\alpha)}\right\}.
\]
Thus, in the ball model, each depth region is an intersection of Euclidean balls internally tangent to the boundary sphere. The curvature is visible directly in the geometry of the contours.
\end{example}

\begin{example}[Affine-invariant geometry on $\bSS_p^+$]\label{example:busemann_spd}
Let $X=\bSS_p^+$ be the manifold of $p\times p$ symmetric positive-definite matrices equipped with the affine-invariant metric. A boundary direction may be represented by a unit symmetric matrix $H\in \bSS_p^1$, and if $H=Q\Lambda Q^\top$ is its spectral decomposition, then \cite[Proposition 10.69]{bridson2013metric}
\[
B_H(X)=-2\bigl\langle \Lambda,\log \operatorname{diag}\bigl(\mathcal U(Q^\top XQ)\bigr)\bigr\rangle_F.
\]
Here $\mathcal U(X)$ denotes the reversed Cholesky factor, that is $\mathcal{U}(X) = U$ for $X = UU^\top$ where $U$ is an upper triangular matrix with positive diagonal, and $\operatorname{diag}(\cdot)$ extracts the diagonal part. Writing $U(X,H):=\operatorname{diag}(\mathcal U(Q^\top XQ))$, one obtains
\[
D(Z;\PP)=\inf_{H\in \bSS_p^1} \PP\!\left\{X\in\bSS_p^+:\bigl\langle \Lambda(H),\log U(X,H)-\log U(Z,H)\bigr\rangle_F\le 0\right\}.
\]
Equivalently,
\[
\DD^\alpha(\PP)=\bigcap_{H\in \bSS_p^1}\{Z\in\bSS_p^+:B_H(Z)\le t_H(\alpha)\}.
\]
The formula is more involved than in hyperbolic space, but the computational structure is the same: each direction produces one scalar score, and depth regions are obtained by intersecting the corresponding horoballs.
\end{example}


\begin{remark}\label{remark:compute}
The Busemann function is often cheaper to evaluate than the full geodesic distance. The distance $d(x,y)$ requires solving a two-point boundary-value problem or, equivalently, computing a Riemannian logarithm. By contrast, $B_\xi(x)$ keeps one endpoint at infinity fixed and typically reduces to a closed-form scalar functional. On $\bSS_p^+$, for example, evaluating $B_H(X)$ requires one eigendecomposition of $H$ and one Cholesky factorization of $Q^\top XQ$, whereas computing $d(X,Y)$ involves matrix square roots and logarithms. This matters because depth evaluation requires many repeated directional comparisons and never uses pairwise geodesic distances directly.
\end{remark}

\subsection{Approximating empirical depth regions}

For the empirical measure $\PP_n$, the threshold $t_\xi(\alpha)$ is the $\lceil n\alpha\rceil$-th largest value among $B_\xi(X_1),\dots,B_\xi(X_n)$. This suggests a direct approximation scheme: discretize the visual boundary, compute the one-dimensional directional scores, and intersect the corresponding finitely many horoballs.

\begin{algorithm}[t]
\footnotesize
\caption{Sampled-direction approximation of an empirical depth region}
\label{alg:H2}
\begin{algorithmic}[1]
\Require Data $X_1,\dots,X_n\in X$, depth level $\alpha$, sampled directions $\xi_1,\dots,\xi_m\in\partial X$
\Ensure Approximation of $\DD^\alpha(\PP_n)$
\State $k\gets \lceil n\alpha\rceil$
\For{$j=1,\dots,m$}
    \State Compute $s_i^{(j)}\gets B_{\xi_j}(X_i)$ for $i=1,\dots,n$
    \State Let $t_j$ be the $k$-th largest value among $\{s_i^{(j)}\}_{i=1}^n$
\EndFor
\State Define $F(z)\gets \max_{1\le j\le m}(B_{\xi_j}(z)-t_j)$
\State Return the sublevel set $\DD_{n,m}^\alpha=\{z\in X:F(z)\le 0\}$
\end{algorithmic}
\end{algorithm}

\begin{definition}[Sampled-direction approximation]    
    Fix $\alpha\in(0,1)$ and choose directions $\xi_1,\ldots,\xi_m\in\partial X$.  Let $k:=\lceil n\alpha\rceil$, and for each $j$ let $t_j$ be the $k$-th largest value among $B_{\xi_j}(X_1),\ldots,B_{\xi_j}(X_n)$.  The \emph{sampled-direction approximation} of the depth region is
    \[
    \mathcal{D}^\alpha_{n,m}:=\bigcap_{j=1}^m\{z\in X: B_{\xi_j}(z)\le t_j\}.
    \]
    Equivalently, defining $F(z):=\max_{1\le j\le m}(B_{\xi_j}(z)-t_j)$, one has $\mathcal{D}^\alpha_{n,m}=\{z:F(z)\le 0\}$.
\end{definition}

Since $\mathcal D^\alpha_{n,m}$ is the intersection of finitely many of the horoballs whose full intersection defines $\mathcal D^\alpha(\PP_n)$, one always has $\mathcal D^\alpha(\PP_n)\subset \mathcal D^\alpha_{n,m}$,
so the sampled-direction region is an outer approximation. Now fix a dense sequence $(\xi_j)_{j\ge1}\subset\partial X$ and define $\mathcal D^\alpha_{n,m}$ using the first $m$ directions. Then
\[
\mathcal D^\alpha_{n,m+1}\subset \mathcal D^\alpha_{n,m}
\quad\text{and}\quad
\bigcap_{m\ge1}\mathcal D^\alpha_{n,m}=\mathcal D^\alpha(\PP_n).
\]
Indeed, if $z\notin\mathcal D^\alpha(\PP_n)$, then by the attainment property (see the remark following Proposition \ref{prop:usc}) there exists $\xi^*\in\partial X$ such that $B_{\xi^*}(z)>t(\xi^*)$, where $t(\xi)$ denotes the $k$-th largest value among $B_\xi(X_1),\dots,B_\xi(X_n)$. By continuity of $\xi\mapsto B_\xi(z)$ and $\xi\mapsto t(\xi)$, this strict inequality persists for all $\xi$ sufficiently close to $\xi^*$, so $z\notin\mathcal D^\alpha_{n,m}$ for all sufficiently large $m$.

\begin{figure}[t]
    \centering
    \begin{subfigure}{0.30\columnwidth}
        \includegraphics[width=\columnwidth, trim={15cm 10cm 5cm 8.5cm},clip]{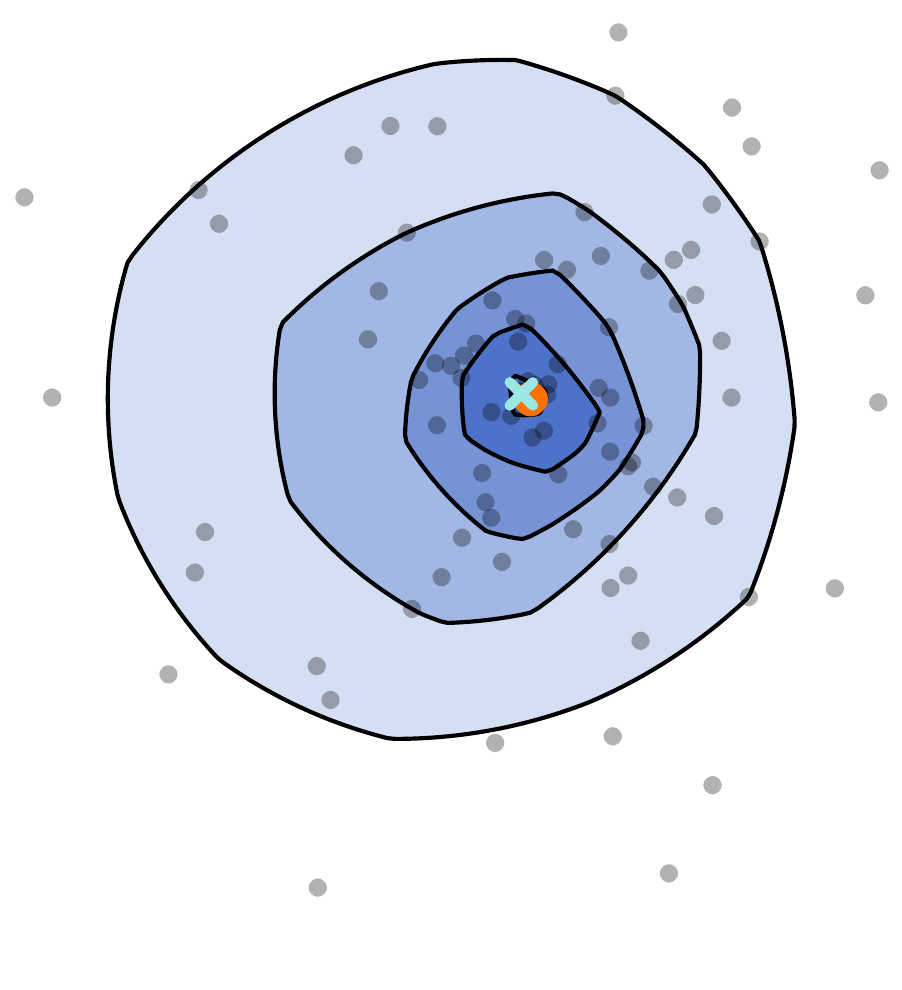}
          \caption{}
          \label{fig:h_sample_depth_region}
      \end{subfigure}
    \hspace{0.20\columnwidth}
    \begin{subfigure}{0.30\columnwidth}
        \includegraphics[width=\columnwidth, trim={15cm 10cm 5cm 8.5cm},clip]{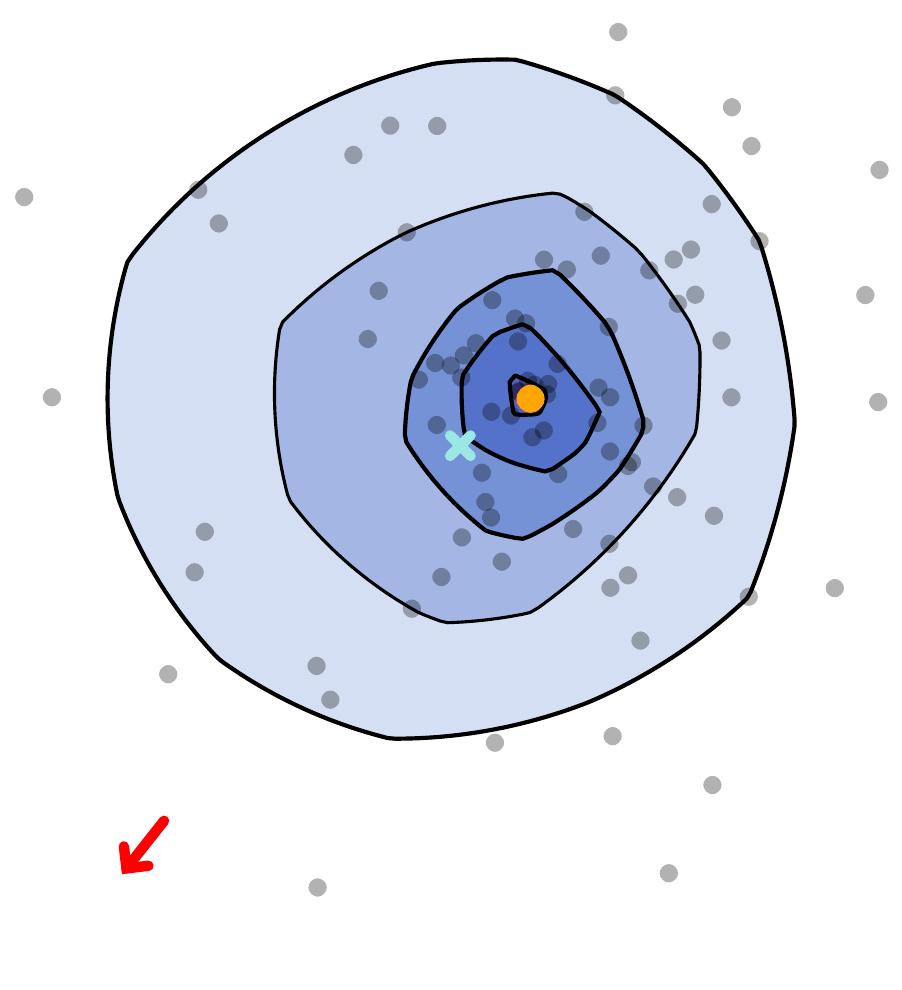}
          \caption{}
          \label{fig:h_sample_depth_region_2}
      \end{subfigure}
    \caption{Numerical illustrations of sampled-direction depth regions. Panel (a) shows zoomed-in empirical depth contours and the empirical Fr\'echet mean (marked by the $\times$ symbol) in $\mathbb H^2$ before the addition of a near-boundary outlier. Panel (b) shows the same construction after adding a near-boundary outlier; the outlier itself lies outside the displayed region, and its direction is indicated by the arrow.
    }
    \label{fig:hyper_sample_depth_region}
\end{figure}

\begin{example}[Numerical illustration on $\mathbb H^2$]\label{example_num_hyper}
\cref{fig:hyper_sample_depth_region} illustrates the sampled-direction approximation on one empirical sample of size $n=100$ in the Poincaré disc. We use $m=180$ sampled boundary directions and plot the approximate regions $D^\alpha_{n,m}$ for $\alpha\in\{0.1,0.2,0.3,0.4,0.5\}$. The center of the symmetry is plotted as the large (orange) circular dot in both panels. Panel (a) shows the uncontaminated sample. Panel (b) shows the same construction after adding a single near-boundary outlier in a fixed boundary direction; the outlier itself lies outside the displayed window, and, its direction is marked by the red arrow. For visual comparison, we also plot the empirical Fr\'echet mean (marked by the $\times$ symbol) computed numerically from the hyperbolic squared-distance objective. The point of the figure is qualitative: it illustrates the directional stability predicted by Section \ref{sec:robust}, not a benchmarking study or a definitive algorithmic claim.
\end{example}

\begin{example}[Numerical illustration on $\bSS_p^+$]
\cref{fig:spd_sample_depth_region} shows the same sampled-direction idea on $\bSS_2^+$ in the SPD cone coordinate visualization. The specific plotting coordinates are ancillary; the point is that one can evaluate $B_H(X_i)$ for finitely many boundary directions $H$, convert those directional scores into thresholds, and intersect the resulting horoballs. The anisotropy of the displayed region reflects the affine-invariant geometry rather than an ambient Euclidean notion of circularity. As in Example \ref{example_num_hyper}, we treat the figure as a schematic demonstration of the computational template, not as a
quantitative performance study.
\end{example}

\begin{figure}
    \centering
    \begin{subfigure}{0.44\columnwidth}
        \includegraphics[width = \columnwidth, trim={0cm 1.5cm 0cm 2cm},clip]{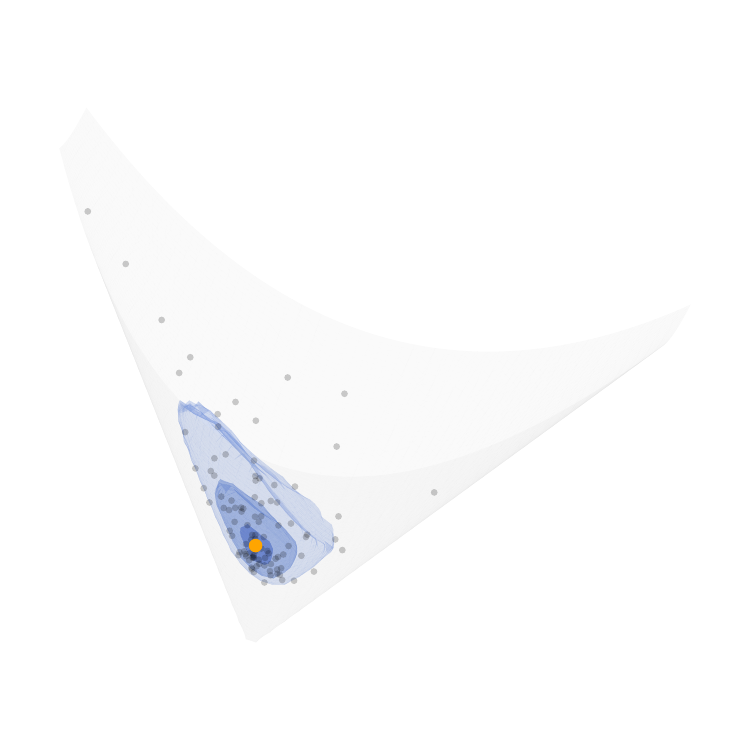}
        \caption{}
        \label{fig:spd_cone}
    \end{subfigure}
    \hspace{0.1\columnwidth}
    \begin{subfigure}{0.28\columnwidth}
        \includegraphics[width = \columnwidth, trim={3.5cm 0.5cm 4cm 6cm},clip]{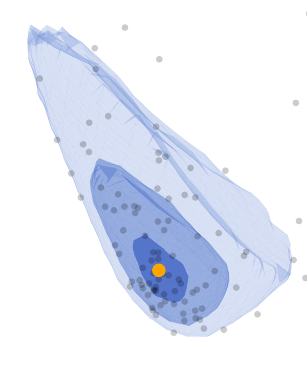}
        \caption{}
        \label{fig:spd_cone_zoomed}
    \end{subfigure}
    \caption{Illustrative sampled-direction depth regions in $\bSS_2^+$ for $\alpha\in\{0.05,0.15,0.3\}$. The blue shaded sets are approximate empirical depth regions in the SPD-cone coordinate visualization; the large orange circular dot marks the symmetric center used in the illustration. Panel (b) shows a magnified view of panel (a) around the symmetric center.}
    \label{fig:spd_sample_depth_region}
\end{figure}

\cref{fig:hyper_sample_depth_region} gives a visual illustration of the robustness theory: the depth contours remain qualitatively stable under contamination by points escaping in a fixed boundary direction, whereas the Fr\'echet mean is visibly affected. \cref{fig:spd_sample_depth_region} shows that the same finite-direction construction is not confined to hyperbolic geometry, but extends to settings in which boundary directions and the associated Busemann functions admit explicit coordinate formulas.

\subsection{Computing the sample Busemann median}

The exact sample Busemann median solves
\[
\max_{z\in X} D(z;\PP_n)=\max_{z\in X}\inf_{\xi\in\partial X}\frac1n\sum_{i=1}^n \mathbf 1\{B_\xi(z)\le B_\xi(X_i)\}.
\]
For fixed $z$, the inner problem is an optimization over the compact boundary $\partial X$. In practice, one replaces this infimum by either a finite direction set, as above, or a local search in a parametrization of the boundary. This yields the approximate depth
\[
D_m(z;\PP_n):=\min_{1\le j\le m}\frac1n\sum_{i=1}^n \mathbf 1\{B_{\xi_j}(z)\le B_{\xi_j}(X_i)\},
\]
which is a lower approximation to $D(z;\PP_n)$.

The numerical picture suggested by Sections \ref{sec:existence} and \ref{sec:uniqueness} is favorable. Vanishing at infinity restricts the search to a compact region, and in strictly negative curvature the population depth is strictly quasi-concave, so the deepest point is isolated. This suggests a coarse-to-fine strategy: first identify a compact high-depth region using sampled directions, then refine the maximizer inside that region by manifold optimization. A complete analysis of the discretization error in $m$ and of the optimization error for the sample median remains open, but the structural results of the paper indicate clearly where efficient algorithms should look.

\section{Discussion}
\label{sec:discussion}
The results establish a Tukey-type depth theory tailored to Hadamard manifolds. The restriction to this geometric setting is deliberate: by working with horoballs and the visual boundary, one recovers geodesically convex depth regions, a centerpoint theorem, curvature-driven uniqueness, and a notion of robustness to contamination escaping to infinity. The contribution is therefore not a universal depth notion for arbitrary object data, but an intrinsic depth theory for a statistically important class of nonpositively curved spaces.


\subsection{Relation to existing notions of depth}

Three points of comparison are especially relevant. The first is lens depth and its metric extensions, which are defined from balls and are available in much greater generality \citep{liu2011lens, cholaquidis2023weighted, cholaquidis2023level}. These methods are robust and flexible, but they are not built from halfspace-type objects and therefore do not naturally produce a centerpoint theorem. 
The second is the metric halfspace depth of \cite{dai2023tukey}. It is defined using metric halfspaces $H_{x_1,x_2}:=\{u\in X: d(u,x_1)\le d(u,x_2)\}$, and, for a query point $z\in X$, takes the infimum over all such halfspaces containing $z$. This construction is very general and yields nested compact depth regions under mild assumptions. However, unlike the present horospherical depth, it is not naturally parametrized by boundary directions alone. Its indexing family is given by anchor pairs $(x_1,x_2)$, or equivalently by direction-plus-level data, whereas horospherical depth is indexed by $\partial X$, the sphere of asymptotic directions. This directional parametrization is what makes horospherical depth closer in spirit to Euclidean Tukey depth and what allows centerpoint arguments.
The third is the tangent-space depth of \cite{rusciano2018riemannian}, which recovers a centerpoint theorem by linearization at a base point, at the cost of losing full intrinsic equivariance. The full comparison is summarized in Table \ref{table:compare}. Entries in the first two comparison columns record what is explicitly proved in the cited papers, rather than all properties that may hold under additional assumptions.

The horospherical depth occupies a middle ground. It is less general than the metric construction of \cite{dai2023tukey}, but more intrinsic than tangent-space linearization. The gain from specializing to Hadamard manifolds is exactly what the paper has used throughout: geodesically convex depth regions, an intrinsic centerpoint theorem, curvature-driven uniqueness, and a meaningful notion of contamination escaping to the visual boundary.

\begin{table}
\caption{Comparison with metric halfspace depth \cite{dai2023tukey}, tangent-space depth \cite{rusciano2018riemannian}, and horospherical depth.}
\begin{center}
\resizebox{\columnwidth}{!}{%
\begin{tabular}{llll}
\toprule
\textbf{Feature} & \textbf{Metric halfspace} & \textbf{Tangent-space depth} & \textbf{Horospherical depth} \\
\midrule
Underlying space & Metric spaces & Riemannian manifolds & Hadamard manifolds \\
Intrinsic/isometry-equivariant & Yes & Depends on a chosen base point & Yes \\
Convex depth regions & Not shown in \cite{dai2023tukey} & Not shown in \cite{rusciano2018riemannian} & Yes \\
Centerpoint theorem & Not proved in \cite{dai2023tukey} & Yes & Yes \\
Uniqueness from curvature & Not addressed in \cite{dai2023tukey} & Not addressed in \cite{rusciano2018riemannian} & Yes ($\Sec_X<0$) \\
Boundary robustness & Not addressed in \cite{dai2023tukey} & Not addressed in \cite{rusciano2018riemannian} & Yes \\
\bottomrule
\end{tabular}
}
\end{center}
\label{table:compare}
\end{table}

With that interpretation, the scope of the present paper is clear. The contribution is not a universal depth notion for arbitrary metric data, but an intrinsic depth notion tuned to Hadamard geometry and therefore strong enough to recover a large part of the classical Tukey picture in a curved setting.

\subsection{Rank one, higher rank, and flats}

The strict quasi-concavity theorem in Section \ref{sec:uniqueness} uses the hypothesis $\Sec_X<0$. This is satisfied by rank-one spaces such as $\mathbb H^d$, but not by higher-rank symmetric spaces such as $\bSS_p^+$, where flat totally geodesic submanifolds are unavoidable. On those spaces, the depth regions remain geodesically convex and the existence, robustness, and consistency theory remain intact, but strict uniqueness of the median can fail along flat directions.

This is not merely a technical annoyance. It points to the real geometric dividing line in the theory: not between Euclidean and non-Euclidean spaces, but between spaces with uniform negative curvature and spaces containing flats. An important next step is therefore to replace the global condition $\Sec_X<0$ by a condition that rules out concentration of the probability measure along flat geodesic submanifolds. Such a result would likely recover uniqueness for generic measures on higher-rank symmetric spaces while respecting the intrinsic geometry.

On a higher-rank symmetric space, the visual boundary \(\partial_\infty X\) is a spherical building whose points are stratified by the face types of the spherical Weyl chamber. The Furstenberg boundary \(\partial_F X\) is the space of chambers at infinity; it is closely related to, but not identical with, the open dense set of regular points in \(\partial_\infty X\). It is \(\partial_F X\) that plays the central role in harmonic analysis and the Poisson transform. This suggests a possible variant of Definition \ref{def:depth} in higher rank: instead of taking the infimum over all visual boundary points, one could restrict to a distinguished family of regular directions, for instance one representative of fixed type in each chamber at infinity. Such a restriction would amount to intersecting a smaller family of horoballs, and hence would generally produce larger depth values and larger depth regions. Whether this yields a better-behaved notion of depth is unclear. In particular, one should not expect strict quasi-concavity to follow automatically from excluding singular directions, since even for regular \(\xi\), the Busemann function \(B_\xi\) remains affine along maximal flats asymptotic to \(\xi\). Whether such a regular-direction or Furstenberg-type depth retains useful properties, such as a centerpoint theorem, appears to be an interesting open question.

\subsection{Open problems}

\smallskip
\noindent
\textbf{Higher-rank uniqueness.} Determine conditions under which uniqueness of the Busemann median holds on Hadamard manifolds with flats. The natural conjecture is that one does not need strict negative curvature everywhere; it should suffice to exclude the directions along which the measure can produce a flat maximum.

\smallskip
\noindent
\textbf{Rates and limit theory.} Theorems \ref{thm:sample_converg} and \ref{thm:sample_converg_2} establish uniform almost sure convergence, but a sharper asymptotic theory remains open. On symmetric spaces the VC bound already suggests nonasymptotic deviation inequalities, and in the unique-median regime one expects corresponding argmax rates. A functional central limit theorem for $z\mapsto D(z;\PP_n)$ would be the natural next step.

\smallskip
\noindent
\textbf{Discretization error and algorithms.} Section \ref{sec:numerical} used a finite set of boundary directions to approximate both depth regions and medians. A quantitative theory should relate the number of sampled directions $m$ to the approximation error in $D_m(z;\PP_n)$ and in the resulting contours. This is the algorithmic counterpart of the statistical convergence theory developed in Section \ref{sec:computation}.

\smallskip
\noindent
\textbf{Beyond smooth Hadamard manifolds.} Busemann functions and horoballs are available on proper CAT$(0)$ spaces, including spaces such as the Billera--Holmes--Vogtmann tree space \citep{billera2001geometry}. Extending the existence and centerpoint theory to that level of generality is plausible; extending the strict-convexity and empirical-process arguments is substantially subtler and would require new tools.

\smallskip
\noindent
\textbf{Optimal breakdown.} Section \ref{sec:robust} proves the lower bound $\varepsilon_{\mathrm{bd}}(\PP)\ge D_*(\PP)/(1+D_*(\PP))$, hence in particular $\varepsilon_{\mathrm{bd}}(\PP)\ge 1/(d+2)$. It remains open whether, under suitable regularity assumptions, the exact breakdown point matches the Euclidean benchmark $1/(d+1)$.

These questions indicate that the present paper is best viewed as a first structural theory. It identifies the correct halfspace analogue on Hadamard manifolds, proves the core geometric and statistical properties, and leaves a fairly clear map of what remains to be done.

\bibliographystyle{imsart-number} 
\bibliography{bib}       


\appendix

\section{Useful Lemmas} 


\begin{lemma}\label{lemma:compact_uniform_convergence_busemann}
    Let $X$ be a finite-dimensional Hadamard manifold. If $\xi_n\to \xi$ in $\partial X$, then $B_{\xi_n}\to B_\xi$ uniformly on compact subsets of $X$. In particular, $\xi \mapsto B_\xi$ is continuous. 
\end{lemma}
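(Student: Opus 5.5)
The plan is to normalize all Busemann functions at the fixed origin $o$, so that $B_{\xi_n}(o)=B_\xi(o)=0$, and to use the $1$-Lipschitz property from Proposition~\ref{prop:cont}. First I prove pointwise convergence. Uniform convergence on compacta then follows from an equicontinuity argument: on a compact $K$, the functions $B_{\xi_n}$ are uniformly $1$-Lipschitz. So for any $\varepsilon>0$ take a finite $\varepsilon$-net $\{x_1,\dots,x_N\}\subset K$; pointwise convergence at the net points gives
\[
\sup_{x\in K}|B_{\xi_n}(x)-B_\xi(x)|\le 2\varepsilon+\max_i|B_{\xi_n}(x_i)-B_\xi(x_i)|,
\]
which tends below $3\varepsilon$. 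Continuity of $\xi\mapsto B_\xi$, say in the topology of uniform convergence on compacta, is then immediate.

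For pointwise convergence, let $\gamma_n,\gamma$ be the unit-speed rays from $o$ to $\xi_n,\xi$. In a Hadamard manifold the cone topology based at $o$ is realized through initial vectors. Hence $\xi_n\to\xi$ means $v_n:=\dot\gamma_n(0)\to v:=\dot\gamma(0)$ in the unit sphere $S_oX$, and therefore $\gamma_n(t)\to\gamma(t)$ uniformly for $t\in[0,T]$, for each $T$.

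Fix $x$. The key monotonicity fact is that $t\mapsto d(x,\gamma(t))-t$ is nonincreasing (triangle inequality), with limit $B_\xi(x)$. So for any $t$,
\[
B_{\xi_n}(x)\le d(x,\gamma_n(t))-t \to d(x,\gamma(t))-t,
\]
which gives $\limsup_n B_{\xi_n}(x)\le B_\xi(x)+\delta$ once $t$ is chosen large enough.

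The lower bound is the main obstacle: I need $d(x,\gamma_n(t))-t$ to approach its limit at a rate uniform in $n$. I would use CAT(0) comparison. Let $p_n(t)$ be the projection of $x$ onto the segment or ray, and compare with the Euclidean triangle $o,x,\gamma_n(t)$. In the comparison triangle,
\[
d(x,\gamma_n(t))-t-B_{\xi_n}(x)\le \frac{C\,d(o,x)^2}{t},
\]
a standard estimate from comparing with the Euclidean Busemann expansion. This works because $d(x,\gamma_n(t))-t$ decreases to its limit, and the defect is controlled by the comparison angle at $\gamma_n(t)$ subtended by $[o,x]$, which is at most $\arcsin(d(o,x)/t)$. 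A concrete alternative for the bound is the following. For $s>t$, the difference
\[
\bigl(d(x,\gamma_n(t))-t\bigr)-\bigl(d(x,\gamma_n(s))-s\bigr)
\]
is bounded by $d(o,x)\bigl(1-\cos\angle_{\gamma_n(t)}(x,\gamma_n(s))\bigr)$-type quantities. By CAT(0) these are $\le d(o,x)^2/t$ uniformly in $n$.

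Given this uniform tail bound, choose $t$ with $d(o,x)^2/t<\delta$. Then
\[
B_{\xi_n}(x)\ge d(x,\gamma_n(t))-t-\delta\to d(x,\gamma(t))-t-\delta\ge B_\xi(x)-\delta,
\]
which closes the argument. If the uniform rate proves fiddly, I would instead cite \cite[II.8]{bridson2013metric}, which states that the map $\xi\mapsto$ (normalized Busemann function) is a homeomorphism from $\partial X$ onto the boundary of $X$ in the compact-open topology on $C(X)/\text{constants}$, for proper CAT(0) spaces. Together with the normalization at $o$, this yields the lemma directly.
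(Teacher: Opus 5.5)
Your proof is correct, but your primary route differs from the paper's. The paper's proof is a single citation: by \cite[Chapter II.8]{bridson2013metric}, the Busemann map $\xi\mapsto B_\xi$ identifies the visual boundary homeomorphically with the boundary of $X$ in the compact-open topology (the horofunction boundary, which the paper calls the Gromov boundary). Hence $\xi_n\to\xi$ forces $B_{\xi_n}\to B_\xi$ uniformly on compacta. This is exactly your fallback.

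Your main argument instead proves the continuity by hand:
\begin{itemize}
\item the uniform $1$-Lipschitz bound plus an $\varepsilon$-net reduces everything to pointwise convergence;
\item the upper bound comes from monotonicity of $t\mapsto d(x,\gamma_n(t))-t$ together with $\gamma_n(t)\to\gamma(t)$ for fixed $t$;
\item the lower bound comes from a tail estimate that is uniform in the boundary direction.
\end{itemize}
Your route gives a self-contained, quantitative statement: distances to far points on rays from $o$ approximate the Busemann functions at a rate independent of the direction. It also exposes the one nontrivial ingredient that the citation hides. The paper's route buys brevity.

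The step you should tighten is the uniform tail estimate. Your ``concrete alternative'' is misstated as written. The angle at $\gamma_n(t)$ between $x$ and $\gamma_n(s)$ is close to $\pi$, so $1-\cos$ of it is close to $2$. The prefactor in that triangle-excess bound should also be $d(x,\gamma_n(t))\approx t$, not $d(o,x)$.

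Your first justification, via the angle subtended by $[o,x]$, is the right one and can be made rigorous as follows. Put $r=d(o,x)$, and let $\beta_n(s)$ be the angle at $\gamma_n(s)$ between the directions to $x$ and to $o$.
\begin{itemize}
\item The first variation formula gives $\frac{d}{ds}\bigl(d(x,\gamma_n(s))-s\bigr)=\cos\beta_n(s)-1$.
\item CAT(0) angle comparison gives $\beta_n(s)\le\bar\beta_n(s)$, where $\bar\beta_n(s)$ is the Euclidean comparison angle.
\item For $s>2r$ the side of length $r$ is strictly the shortest in the comparison triangle, since $d(x,\gamma_n(s))\ge s-r>r$. So $\bar\beta_n(s)$ is acute, and the law of sines gives $\sin\bar\beta_n(s)\le r/(s-r)$.
\item Hence $1-\cos\beta_n(s)\le \sin^2\bar\beta_n(s)\le r^2/(s-r)^2$.
\end{itemize}
Integrating over $[t,\infty)$ gives
\[
0\le d(x,\gamma_n(t))-t-B_{\xi_n}(x)\le \frac{d(o,x)^2}{t-d(o,x)}\qquad\text{for all } n \text{ and all } t>2d(o,x).
\]
This is exactly the uniform rate your lower bound needs.
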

\begin{proof}
    Since, for a Hadamard manifold, the visual boundary are homeomorphic to the gromov boundary via the Busemann map \cite[Chapter II.8]{bridson2013metric}, the map $\xi \mapsto B_\xi$ is continuous when the space of Busemann functions is endowed with the compact-open topology; in particular, if $\xi_n \to \xi$ in $\partial X$, then $B_{\xi_n}\to B_\xi$ uniformly on compact subsets of $X$.
\end{proof}

\begin{lemma}\label{lemma:p_usc}
For any probability measure $\PP$ on a Hadamard manifold $X$ that satisfies 
Assumption 1
, the map
\[
\partial X\to[0,1],\qquad \xi\mapsto p_\xi(z),
\]
is lower semicontinuous.\footnote{Recall $p_\xi(z):= \PP(H_{\xi, z}^+)$ with $H^+_{\xi, z}:= \{x \in X: B_\xi(x) \ge B_\xi(z)\}$.} 
\end{lemma}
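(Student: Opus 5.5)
Fix $z\in X$ and a sequence $\xi_n\to\xi$ in $\partial X$. The goal is $\liminf_n p_{\xi_n}(z)\ge p_\xi(z)$. Since $\partial X$ is metrizable (it is homeomorphic to the unit sphere $S_oX$ via the cone topology), sequential lower semicontinuity suffices. The approach is a Fatou-type argument on indicator functions, together with the locally uniform convergence of Busemann functions from Lemma \ref{lemma:compact_uniform_convergence_busemann}.

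\textbf{Step 1: pointwise convergence.} Set $f_n(x):=B_{\xi_n}(x)-B_{\xi_n}(z)$ and $f(x):=B_\xi(x)-B_\xi(z)$. By Lemma \ref{lemma:compact_uniform_convergence_busemann}, applied on the compact set $\{x,z\}$, we have $f_n(x)\to f(x)$ for every $x\in X$.

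\textbf{Step 2: indicators on the open part.} Take $x$ with $f(x)>0$, that is, $x$ in the open upper horospherical halfspace $\{B_\xi>B_\xi(z)\}$. Then $f_n(x)>0$ for all large $n$, so $\mathbf 1_{H^+_{\xi_n,z}}(x)\to 1$. It follows that
\[
\liminf_n \mathbf 1_{H^+_{\xi_n,z}} \ge \mathbf 1_{\{B_\xi>B_\xi(z)\}}
\]
pointwise on $X$.

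\textbf{Step 3: Fatou and the no-atom assumption.} Fatou's lemma gives
\[
\liminf_n p_{\xi_n}(z)\ge \PP\bigl(\{x:B_\xi(x)>B_\xi(z)\}\bigr).
\]
Assumption \ref{assump:no_atom} says the horosphere $\{B_\xi=B_\xi(z)\}$ is $\PP$-null. So the right-hand side equals $\PP(H^+_{\xi,z})=p_\xi(z)$, which is the claim.

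The only delicate point is Step 3: the closed halfspace may carry mass on its boundary horosphere that the approximating halfspaces miss. Assumption \ref{assump:no_atom} is exactly what removes this mass. No uniformity in $x$ is needed, because Fatou's lemma only requires pointwise convergence. Combined with the standard upper semicontinuity (the closed set $H^+_{\xi,z}$ contains the pointwise $\limsup$ of the $H^+_{\xi_n,z}$), this argument in fact yields continuity of $\xi\mapsto p_\xi(z)$.
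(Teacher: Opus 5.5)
Your proof is correct. It rests on the same basic idea as the paper's: approximate $H^+_{\xi,z}$ from inside by the open set $\{B_\xi>B_\xi(z)\}$, and let Assumption \ref{assump:no_atom} remove the mass on the horosphere $\{B_\xi=B_\xi(z)\}$. The limiting mechanism, however, is different.

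The paper uses the locally uniform convergence $B_{\xi_n}\to B_\xi$ from Lemma \ref{lemma:compact_uniform_convergence_busemann} on a large ball $B(o,R)$ that carries all but $\delta$ of the mass. This gives the inclusion $\{h>\varepsilon\}\cap B(o,R)\subseteq\{h_n\ge 0\}$ for large $n$, and the paper then sends $\delta\downarrow 0$ and $\varepsilon\downarrow 0$.

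You replace both the tightness truncation and the $\varepsilon$-margin by Fatou's lemma applied to the indicators. This needs only the pointwise convergence $B_{\xi_n}(x)\to B_\xi(x)$. It is shorter and uses strictly less than Lemma \ref{lemma:compact_uniform_convergence_busemann}. Your closing reverse-Fatou remark also yields continuity of $\xi\mapsto p_\xi(z)$ at no extra cost.

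What the paper's ``uniform on a ball plus margin'' formulation buys is portability. The same template reappears in Step~1 of the proof of Theorem \ref{thm:sample_converg_2}. There the measure $\PP_n$ varies along the sequence, so Fatou's lemma is no longer available. One must instead pass through open and closed sets with slack and invoke the Portmanteau theorem, and locally uniform convergence is what makes those set inclusions hold.
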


\begin{proof}

Let $\xi_n\to\xi$ in $\partial X$, and set
\[
h_n(u):=B_{\xi_n}(u)-B_{\xi_n}(z),
\qquad
h(u):=B_\xi(u)-B_\xi(z).
\]
By Lemma \ref{lemma:compact_uniform_convergence_busemann}, $h_n\to h$ uniformly on compact subsets of $X$.
Fix $\varepsilon,\delta>0$, and choose $R>0$ such that $\PP\bigl(X\setminus B(o,R)\bigr)<\delta$. For all large $n$, we have $|h_n-h|<\varepsilon$ on $B(o,R)$; hence
\[
\{h>\varepsilon\}\cap B(o,R)\subseteq \{h_n\ge 0\}.
\]
Therefore,
\[
\liminf_{n\to\infty} p_{\xi_n}(z)
\ge
\PP(\{h>\varepsilon\}\cap B(o,R))
\ge
\PP(h>\varepsilon)-\delta.
\]
Letting first $\delta\downarrow 0$ and then $\varepsilon\downarrow 0$, and using 
Assumption 1
to get
\[
\PP(h=0)=\PP\bigl(\{u:B_\xi(u)=B_\xi(z)\}\bigr)=0,
\]
we obtain $\liminf_{n\to\infty} p_{\xi_n}(z)\ge p_\xi(z)$ as required.
\end{proof}

\begin{lemma}\label{lemma:threshold-value}
    Under 
    Assumption \ref{assump:no_atom}
    , $S_\xi(t_\xi(\alpha)) = \alpha$ for every $\xi$ and $\alpha \in (0,1)$.\footnote{Recall that $S_\xi(t):= \PP(\{x \in X: B_\xi(x) \ge t\})$.}
\end{lemma}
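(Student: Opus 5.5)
The plan is to read the statement purely as a fact about the one-dimensional pushforward $\nu_\xi:=(B_\xi)_\#\PP$, a Borel probability measure on $\R$. By Assumption \ref{assump:no_atom}, $\nu_\xi$ is atomless, so $S_\xi(t)=\nu_\xi([t,\infty))=\nu_\xi((t,\infty))$ for every $t$. The proof then has three steps: show that $t_\xi(\alpha)$ is a finite real number, prove $S_\xi(t_\xi(\alpha))\ge\alpha$ by approaching $t_\xi(\alpha)$ from the left, and prove $S_\xi(t_\xi(\alpha))\le\alpha$ by approaching it from the right.

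\emph{Finiteness.} As $t\to-\infty$ we have $S_\xi(t)\to 1>\alpha$, so the set $\{t:S_\xi(t)\ge\alpha\}$ is nonempty. As $t\to+\infty$ we have $S_\xi(t)\to 0<\alpha$, and since $S_\xi$ is nonincreasing, the set is bounded above. Hence $t^*:=t_\xi(\alpha)\in\R$.

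\emph{Lower bound.} Because $S_\xi$ is nonincreasing, the set $\{t:S_\xi(t)\ge\alpha\}$ is a down-set, so it contains every $t<t^*$. Take $t_k\uparrow t^*$. The sets $[t_k,\infty)$ decrease to $[t^*,\infty)$, so continuity from above of the finite measure $\nu_\xi$ gives $S_\xi(t^*)=\lim_k S_\xi(t_k)\ge\alpha$. This step does not use the atomless assumption; it only uses that $S_\xi$ is left-continuous.

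\emph{Upper bound.} Take $s_k\downarrow t^*$ with $s_k>t^*$. Each $s_k$ lies outside the set in the definition of $t^*$, so $S_\xi(s_k)<\alpha$. The sets $[s_k,\infty)$ increase to $(t^*,\infty)$, so continuity from below gives $\nu_\xi((t^*,\infty))=\lim_k S_\xi(s_k)\le\alpha$. By Assumption \ref{assump:no_atom}, $\nu_\xi(\{t^*\})=\PP(\{B_\xi=t^*\})=0$, and therefore $S_\xi(t^*)=\nu_\xi((t^*,\infty))\le\alpha$.

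Combining the two bounds gives $S_\xi(t_\xi(\alpha))=\alpha$. The upper bound is the only place where atomlessness enters. Without it, $S_\xi$ is merely left-continuous, and it can jump across the level $\alpha$ exactly at $t^*$. There is no real obstacle beyond keeping the one-sided continuity straight.
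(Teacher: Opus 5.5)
Your proof is correct and matches the paper's argument: a left limit at $t_\xi(\alpha)$ gives $S_\xi(t_\xi(\alpha))\ge\alpha$, and right-continuity, the only place Assumption~\ref{assump:no_atom} is used, gives $S_\xi(t_\xi(\alpha))\le\alpha$; the paper phrases the upper bound as a contradiction. Your explicit check that $t_\xi(\alpha)$ is finite is a detail the paper leaves implicit.
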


\begin{proof}
    Under 
    Assumption \ref{assump:no_atom}
    , $S_\xi$ is continuous. For every $t < t_\xi(\alpha)$, we have $S_\xi(t) \ge \alpha$ by 
    definition (5)
    . $S_\xi(t_\xi(\alpha)) = \lim_{t \uparrow t_\xi(\alpha)} S_\xi(t) \ge \alpha$. Similarly, we must have $S_\xi(t_\xi(\alpha)) \le \alpha$. Suppose for contradiction, we have $S_\xi(t_\xi(\alpha)) > \alpha$. By continuity and monotonicity of $S_\xi$, there exists $\varepsilon > 0$ such that $S_\xi(t_\xi(\alpha) + \varepsilon) > \alpha$. It follows $t_\xi(\alpha) + \varepsilon \in \{t: S_\xi(t) \ge \alpha\}$, contradiction. Thus, we have $S_\xi(t_\xi(\alpha)) \le \alpha$.
    
\end{proof}



\begin{lemma}\label{lemma:slab}
Under 
Assumption \ref{assump:no_atom}
, for any $\xi\in\partial X$ and $z\in X$ with $B_\xi(z)<t_\xi(\alpha)$ and $\alpha\in(0,1)$:
\begin{equation}\label{eq:slab}
p_\xi(z) = \alpha + \PP\bigl(U_\xi(z,\alpha))
\end{equation}
where $U_\xi(z,\alpha):=\{x\in X : B_\xi(z)<B_\xi(x)<t_\xi(\alpha)\}$. Furthermore, $U_\xi(z,\alpha)$ is the preimage of the open interval $(B_\xi(z),t_\xi(\alpha))$ under the continuous submersion $B_\xi$, hence is a nonempty open subset of $X$.
\end{lemma}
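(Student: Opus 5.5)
The plan is to split $H^+_{\xi,z}$ along the Busemann level $t_\xi(\alpha)$ and then treat the topological claim separately. Since $B_\xi(z)<t_\xi(\alpha)$, we have the disjoint decomposition
\[
H^+_{\xi,z}=\{x:B_\xi(x)\ge t_\xi(\alpha)\}\ \cup\ \{x:B_\xi(z)\le B_\xi(x)<t_\xi(\alpha)\}.
\]
By Lemma \ref{lemma:threshold-value}, the first piece has mass $S_\xi(t_\xi(\alpha))=\alpha$. The second piece differs from $U_\xi(z,\alpha)$ only by the horosphere $\{B_\xi=B_\xi(z)\}$, which is $\PP$-null by Assumption \ref{assump:no_atom}. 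Summing the two masses gives \eqref{eq:slab}: $p_\xi(z)=\alpha+\PP(U_\xi(z,\alpha))$.

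One preliminary check is that $t_\xi(\alpha)$ is finite, so that Lemma \ref{lemma:threshold-value} applies and the interval $(B_\xi(z),t_\xi(\alpha))$ is a genuine bounded interval.
\begin{itemize}
\item As $t\to-\infty$, $S_\xi(t)\to 1>\alpha$, so the set in \eqref{eq:t_xi} is nonempty.
\item As $t\to\infty$, $S_\xi(t)\to 0<\alpha$, so that set is bounded above.
\end{itemize}
Both limits follow from continuity of measure, since $B_\xi$ is real-valued.

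For the topological statement, Proposition \ref{prop:cont} gives that $B_\xi$ is $C^2$ (hence continuous) with $\|\grad B_\xi\|\equiv 1$. So $B_\xi$ is a submersion, and $U_\xi(z,\alpha)=B_\xi^{-1}\bigl((B_\xi(z),t_\xi(\alpha))\bigr)$ is open.

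Nonemptiness is the only step needing a construction. Take the complete geodesic $\widetilde\gamma$ through $z$ asymptotic to $\xi$, i.e. $\widetilde\gamma(0)=z$ and $\widetilde\gamma(+\infty)=\xi$. By Theorem \ref{thm:busemann_f_convex}(2), $B_\xi(\widetilde\gamma(s))=B_\xi(z)-s$, and this affine function takes every real value. Choosing
\[
s=-\tfrac12\bigl(t_\xi(\alpha)-B_\xi(z)\bigr)<0
\]
gives $B_\xi(\widetilde\gamma(s))=\tfrac12\bigl(B_\xi(z)+t_\xi(\alpha)\bigr)$, which lies strictly inside the interval. Hence $\widetilde\gamma(s)\in U_\xi(z,\alpha)$, so $U_\xi(z,\alpha)$ is nonempty.

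The main subtlety is not an obstacle but a matter of bookkeeping. The identity $S_\xi(t_\xi(\alpha))=\alpha$ depends on atomlessness, as does discarding the level set at $B_\xi(z)$, so I will cite Assumption \ref{assump:no_atom} explicitly at both places.
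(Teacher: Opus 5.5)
Your proposal is correct and follows the paper's proof. The paper uses the same split of $H^+_{\xi,z}$ at level $t_\xi(\alpha)$, Lemma \ref{lemma:threshold-value} for the upper piece, Assumption \ref{assump:no_atom} to discard the horosphere $\{B_\xi=B_\xi(z)\}$, and the unit-gradient submersion argument for openness. Your explicit check that $t_\xi(\alpha)$ is finite, and your concrete point on the geodesic through $z$ toward $\xi$, are minor refinements of the paper's appeal to surjectivity of $B_\xi$ onto $\R$.
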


\begin{proof}
First, we write,
\begin{align*}
p_\xi(z) &= \PP(\{x: B_\xi(x)\ge B_\xi(z)\}) \\
&= \PP(\{x: B_\xi(X)\ge t_\xi(\alpha)\}) + \PP(\{x: B_\xi(z)\le B_\xi(x) < t_\xi(\alpha)\}).\\
&= S_\xi(t_\xi(\alpha)) + \PP(\{x: B_\xi(z)\le B_\xi(x) < t_\xi(\alpha)\}).
\end{align*}
The first term equals $\alpha$ by Lemma \ref{lemma:threshold-value}.  Under 
Assumption \ref{assump:no_atom}
, $\PP(\{x: B_\xi(x)=B_\xi(z)\})=0$, so the second term equals $\PP(\{x: B_\xi(z)<B_\xi(x)<t_\xi(\alpha)\})$.

For openness: on Hadamard manifold, $B_\xi$ is $C^2$ with $\|\grad B_\xi\|\equiv 1$ 
(Proposition \ref{prop:cont})
, so it is a submersion from $X$ to $\R$. The preimage of the nonempty open interval $(B_\xi(z),t_\xi(\alpha))\subset\R$ under a submersion is open.  It is nonempty because $B_\xi$ is surjective onto $\R$ since $B_\xi(\gamma(t)) = -t \to-\infty$ along any ray $\gamma$ toward $\xi$, and $B_\xi\to+\infty$ in the opposite direction.
\end{proof}

\begin{lemma}\label{lemma:slab-positive}
Under Assumption \ref{assump:no_atom} and Assumption \ref{assump_connected_supp}
, for any $\xi \in \partial X$ and $z \in X$, if $B_\xi(z)<t_\xi(\alpha)$ for some $\alpha\in(0,1)$, then $p_\xi(z) > \alpha$.
\end{lemma}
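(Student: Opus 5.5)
By Lemma \ref{lemma:slab}, $p_\xi(z)=\alpha+\PP(U_\xi(z,\alpha))$, where $U_\xi(z,\alpha)=B_\xi^{-1}\bigl((B_\xi(z),t_\xi(\alpha))\bigr)$. So it suffices to show that the open interval $I:=(B_\xi(z),t_\xi(\alpha))$ has positive mass under the pushforward $\mu_\xi:=(B_\xi)_\#\PP$. The whole argument therefore takes place on the real line. Write $a:=B_\xi(z)<b:=t_\xi(\alpha)$.

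The first step is to show that $b$ lies in the support of $\mu_\xi$. By Lemma \ref{lemma:threshold-value}, $S_\xi(b)=\mu_\xi([b,\infty))=\alpha>0$, so $\mu_\xi$ charges $[b,\infty)$. Since $S_\xi$ is continuous and nonincreasing under Assumption \ref{assump:no_atom}, the definition of $t_\xi(\alpha)$ as a supremum gives $S_\xi(t)<\alpha$ for all $t>b$. Hence $\mu_\xi([b,t))=\alpha-S_\xi(t)>0$ for every $t>b$, so every neighborhood of $b$ has positive mass, and $b\in\supp\mu_\xi$.

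The second step is to find support strictly below $b$. We have $\mu_\xi((-\infty,b))=1-\alpha>0$, again by atomlessness. The support of a probability measure on $\R$ carries full mass, so there is a point $c\in\supp\mu_\xi$ with $c<b$. By Assumption \ref{assump_connected_supp}, $\supp\mu_\xi$ is an interval, so it contains $[c,b]$.

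Now split into cases.
\begin{itemize}
\item If $c\le a$, then $I=(a,b)\subset[c,b]\subset\supp\mu_\xi$. Any point of the support lying in the open set $I$ forces $\mu_\xi(I)>0$.
\item If $c>a$, pick any point of the support in $(c,b)$; it lies in $I$, so again $\mu_\xi(I)>0$.
\end{itemize}
Either way $\PP(U_\xi(z,\alpha))=\mu_\xi(I)>0$, and therefore $p_\xi(z)>\alpha$.

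The main delicate point is the first step, showing that $b$ itself is in the support rather than the mass above $b$ sitting far away. This is exactly what the supremum definition of $t_\xi(\alpha)$, combined with continuity of $S_\xi$, delivers. Once both a support point $c<b$ and the point $b$ are in $\supp\mu_\xi$, connectedness of the support fills the gap and places support inside $I$.
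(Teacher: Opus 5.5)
Your proof is correct and follows essentially the same route as the paper. Both reduce via Lemma~\ref{lemma:slab} to showing that $(B_\xi)_\#\PP$ charges the open interval $(B_\xi(z),t_\xi(\alpha))$, show that $t_\xi(\alpha)$ lies in the support, and let connectedness of the support fill the gap; the paper instead splits on whether $B_\xi(z)$ itself lies in the support, noting $p_\xi(z)=1$ when it lies below it. Your direct check that every interval $[t_\xi(\alpha),t)$ with $t>t_\xi(\alpha)$ has positive mass is a cleaner justification that $t_\xi(\alpha)$ is a support point than the paper's appeal to ``mass both above and below,'' which by itself only places $t_\xi(\alpha)$ between support points and so implicitly leans on Assumption~\ref{assump_connected_supp}.
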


\begin{proof}
We note that under Assumption \ref{assump_connected_supp} 
and Lemma \ref{lemma:slab}, $p_\xi(z) > \alpha$ if $B_\xi(z)$ and $t_\xi(\alpha)$ are both in $\in \overline{B_\xi(\supp(\PP))}$. Denote $\supp_\xi:=\overline{B_\xi(\supp(\PP))}$. By Lemma \ref{lemma:threshold-value}, $S_\xi(t_\xi(\alpha)) = \alpha$. For $\alpha \in (0,1)$, there is mass both above and below $t_\xi(\alpha)$. Thus, $t_\xi(\alpha) \in \supp_\xi$. 

Since $\supp_\xi$ is a closed interval, and $B_\xi(z) < t_\xi(\alpha) \in \supp_\xi$, we have either $B_\xi(z) \in \supp_\xi$ or $B_\xi(z) < \inf \supp_\xi$. If $B_\xi(z) \in \supp_\xi$, then by Lemma \ref{lemma:slab}, we have $p_\xi(z) > \alpha$. If $B_\xi(z) < \inf \supp_\xi$, then $p_\xi(z) = S_\xi(B_\xi(z)) = 1 > \alpha$.






\end{proof}


\begin{lemma}\label{lemma:grad_bijection}
    Let $\xi \in \partial X$ and denote $S_zX$ as the unit sphere on the tangent space $T_zX$. The map, $\partial X \to S_zX$, $\xi \mapsto \grad_z B_\xi$ is a bijection. 
\end{lemma}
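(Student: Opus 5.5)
The plan is to factor the map $\xi\mapsto\grad_z B_\xi$ through the standard identification of $\partial X$ with the unit sphere $S_zX$ given by geodesic rays issuing from $z$. Fix $z\in X$. By Proposition \ref{prop:busemann_asymptotic}, changing the base point of a Busemann function only adds a constant, so $\grad_z B_\xi=\grad_z B_{\xi,z}$. Here $B_{\xi,z}$ is the Busemann function of the unique geodesic ray $\gamma_{z,\xi}$ with $\gamma_{z,\xi}(0)=z$ and $\gamma_{z,\xi}(\infty)=\xi$. Proposition \ref{prop:cont} then gives the explicit formula
\[
\grad_z B_\xi=-\dot\gamma_{z,\xi}(0)\in S_zX .
\]
Consequently the map in question is the composition of $\xi\mapsto\dot\gamma_{z,\xi}(0)$ with the antipodal map $v\mapsto -v$ of $S_zX$. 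Since the antipodal map is a bijection of the sphere, it suffices to show that $\Phi:\partial X\to S_zX$, $\xi\mapsto\dot\gamma_{z,\xi}(0)$, is a bijection.

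\textbf{Well-definedness and injectivity.} For each $\xi$ there is exactly one ray from $z$ in the class $\xi$. Existence is the standard CAT(0)/Hadamard fact that every asymptote class has a representative starting at any given point (Bridson--Haefliger II.8.2). Uniqueness is the remark in Section \ref{sec:boundary}. Hence $\Phi$ is well defined. If $\Phi(\xi)=\Phi(\eta)$, then $\gamma_{z,\xi}$ and $\gamma_{z,\eta}$ are geodesics with the same initial point and the same initial velocity. By uniqueness of solutions of the geodesic ODE they coincide, so $\xi=\gamma_{z,\xi}(\infty)=\gamma_{z,\eta}(\infty)=\eta$.

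\textbf{Surjectivity.} Given $v\in S_zX$, completeness (Hopf--Rinow) lets us define $\gamma(t):=\exp_z(tv)$ for all $t\ge0$. On a Hadamard manifold, $\exp_z$ is a diffeomorphism by Cartan--Hadamard, so every geodesic is minimizing and $d(z,\gamma(t))=t$. Thus $\gamma$ is a geodesic ray, and $\xi:=\gamma(\infty)\in\partial X$ satisfies $\gamma_{z,\xi}=\gamma$ by uniqueness. Hence $\Phi(\xi)=v$, and applying the antipodal map shows $-v=\grad_z B_\xi$ is attained for every $v\in S_zX$.

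Nothing here is a genuine obstacle. The only point needing care is the existence of a representative ray from an arbitrary base point, together with the invariance of $\grad_z B_\xi$ under the choice of base point. I would cite Bridson--Haefliger for the former and use Proposition \ref{prop:busemann_asymptotic} for the latter, rather than reprove them.
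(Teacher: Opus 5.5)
Your proof is correct and follows the same route as the paper. Both arguments use Proposition~\ref{prop:cont} to identify $\grad_z B_\xi$ with $-\dot\gamma_{z,\xi}(0)$, get injectivity from uniqueness of geodesics with the same initial point and velocity, and get surjectivity from a ray out of $z$. The paper uses $t\mapsto\exp_z(-tv)$ directly where you use $\exp_z(tv)$ plus the antipodal map, and your remarks on base-point independence and on existence of a ray from $z$ in each class only make explicit what the paper uses implicitly.
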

\begin{proof}
    First, $\|\grad B_\xi\|\equiv 1$ by 
    Proposition \ref{prop:cont}.
    \textbf{Surjectivity}: For any $v \in S_zX$, the geodesic ray $\gamma_v(t) = \exp_z(-tv)$ define a boundary point $\xi_v= \gamma_{v}(+\infty) \in \partial X$, by 
    Proposition \ref{prop:cont},
    $\grad_z B_{\xi_v} = v$. 
    \textbf{Injectivity}: If $\grad_z B_\xi = \grad_z B_\eta$, then $\dot\gamma_{z, \xi} = \dot\gamma_{z, \eta}$ by 
    Proposition \ref{prop:cont},
    so $\gamma_{z, \xi} = \gamma_{z, \eta}$ since they share the same starting point, we must have $\xi = \eta$. 
\end{proof}

\begin{lemma}\label{lemma:g-Busemann}
    Let $\varphi: X \to X$ be an isometry, there is a boundary isometry $\partial \varphi: \partial X \to \partial X$ such that for each $\xi \in \partial X$, we have any all $x \in X$,  
    \begin{equation*}
        B_{\partial \varphi(\xi)}(\varphi(x)) = B_{\xi}(x) - B_\xi(\varphi^{-1}(o)).
    \end{equation*}
\end{lemma}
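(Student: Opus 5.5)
The plan is to define $\partial\varphi$ through images of rays and then compare Busemann functions using Proposition \ref{prop:busemann_asymptotic}. For $\xi\in\partial X$, let $\gamma$ be the unit-speed ray from the origin $o$ to $\xi$. Because $\varphi$ is an isometry, $\varphi\circ\gamma$ is again a unit-speed geodesic ray, now starting at $\varphi(o)$. Asymptotic rays are mapped to asymptotic rays, since $d(\varphi\gamma(t),\varphi\gamma'(t))=d(\gamma(t),\gamma'(t))$. So we may set $\partial\varphi(\xi):=(\varphi\circ\gamma)(\infty)$, and this does not depend on the representative ray. The inverse isometry $\varphi^{-1}$ induces the inverse map in the same way, so $\partial\varphi$ is a bijection. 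Since $\varphi$ carries cone neighbourhoods based at $o$ to cone neighbourhoods based at $\varphi(o)$, and the cone topology does not depend on the base point, $\partial\varphi$ is a homeomorphism of the boundary. This is the sense in which we read ``boundary isometry.''

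Next we compute the Busemann function of the image ray directly from the definition \eqref{eqn:busemann_def}:
\[
B_{\varphi\circ\gamma}(\varphi(x))=\lim_{t\to\infty}\bigl(d(\varphi\gamma(t),\varphi(x))-t\bigr)=\lim_{t\to\infty}\bigl(d(\gamma(t),x)-t\bigr)=B_\xi(x).
\]
The function $B_{\varphi\circ\gamma}$ is the Busemann function of the ray based at $\varphi(o)$ with endpoint $\partial\varphi(\xi)$, so in the paper's notation it is $B_{\partial\varphi(\xi),\varphi(o)}$.

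We then convert to base point $o$ using \eqref{eqn:busemann_relation}:
\[
B_{\partial\varphi(\xi),\varphi(o)}(y)=B_{\partial\varphi(\xi)}(y)-B_{\partial\varphi(\xi)}(\varphi(o)).
\]
This gives
\[
B_{\partial\varphi(\xi)}(\varphi(x))=B_\xi(x)+B_{\partial\varphi(\xi)}(\varphi(o)).
\]
It remains to identify the constant $c:=B_{\partial\varphi(\xi)}(\varphi(o))$ with $-B_\xi(\varphi^{-1}(o))$. Evaluate the last identity at $x=\varphi^{-1}(o)$. The left side becomes $B_{\partial\varphi(\xi)}(o)=0$, because the Busemann function based at $o$ vanishes at $o$. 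Hence $c=-B_\xi(\varphi^{-1}(o))$, which is the claimed formula.

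The main point needing care is the well-definedness and continuity of $\partial\varphi$ in the cone topology. The algebraic identity itself is just an application of Proposition \ref{prop:busemann_asymptotic}. For continuity I would cite the standard fact (Bridson--Haefliger, Chapter II.8) that isometries extend to homeomorphisms of $X\cup\partial X$, rather than reprove it.
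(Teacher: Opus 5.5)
Your proof is correct and follows the paper's argument: define $\partial\varphi$ via images of rays, compute $B_{\varphi\circ\gamma}(\varphi(x))=B_\xi(x)$ directly from the definition, and rebase to $o$ using Proposition \ref{prop:busemann_asymptotic}/\eqref{eqn:busemann_relation}. The only difference is in the additive constant. You find it by evaluating at $x=\varphi^{-1}(o)$ and using $B_{\partial\varphi(\xi)}(o)=0$, whereas the paper computes $B_{\varphi\circ\gamma}(o)=B_\xi(\varphi^{-1}(o))$ directly from the isometry property; these are equivalent.
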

\begin{proof}
An isometry $\varphi$ maps geodesic rays to geodesic rays and thus induces a homeomorphism $\partial \varphi:\partial X\to\partial X$ by sending $\xi=\gamma(\infty)$ to $(\varphi\circ\gamma)(\infty)$. Fix $\xi\in\partial X$ and let $\gamma:[0,\infty)\to X$ be the geodesic ray from $o$ with $\gamma(\infty)=\xi$. Then $\varphi \circ \gamma$ is a geodesic ray with endpoint $(\varphi \circ \gamma)(\infty) = \partial \varphi(\xi)$, but based at $(\varphi\circ\gamma)(0) = \varphi(o)$ rather than $o$.

Let $\widetilde\gamma:[0,\infty)\to X$ be the geodesic ray based at $o$ with $\widetilde\gamma(\infty)=\partial \varphi(\xi)$. Since $\widetilde\gamma$ and $\varphi \circ\gamma$ have the same endpoint at infinity, they are asymptotic. 
By Proposition \ref{prop:busemann_asymptotic},
we have
\[
B_{\partial \varphi(\xi)}(y) = B_{\tilde \gamma}(y) = B_{\varphi \circ \gamma} (y) - B_{\varphi \circ \gamma}(\tilde{\gamma}(0)) = B_{\varphi \circ \gamma} (y) - B_{\varphi \circ \gamma}(o). 
\]
Now set $y=\varphi(x)$ and use that $\varphi$ is an isometry:
\begin{align*}
    B_{\varphi \circ \gamma} (y) &= \lim_{t\to\infty}\bigl(d(y,\varphi(\gamma(t))) - t \bigr) = \lim_{t\to\infty}\bigl(d(\varphi(x),\varphi(\gamma(t))) - t \bigr) \\
    &= \lim_{t\to\infty}\bigl(d(x,\gamma(t))-t\bigr)
    = B_\gamma(x) = B_\xi(x).    
\end{align*}
Similarly, we have, 
\begin{align*}
    B_{\varphi \circ \gamma}(o) &= \lim_{t\to\infty}\bigl(d(o,\varphi(\gamma(t))) - t \bigr) = \lim_{t\to\infty}\bigl(d(\varphi^{-1}(o),\gamma(t)) - t \bigr) \\
    &= B_\gamma(\varphi^{-1}(o)) = B_\xi(\varphi^{-1}(o)).
\end{align*}
Therefore, we have $B_{\partial \varphi(\xi)}(\varphi(x))=B_\xi(x) - B_\xi(\varphi^{-1}(o))$ for all $x \in X$ as required.
\end{proof}

\begin{lemma}\label{lemma:uppersemicont}
    Let $\F$ be an arbitrary family of upper semicontinuous functions on a given topological space $X$. Then the function $F(x) = \inf_{f\in\F} f(x)$ is upper semicontinuous.
\end{lemma}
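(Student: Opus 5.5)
We prove this directly from the definition of upper semicontinuity in its level-set form. A function $g:X\to[-\infty,\infty]$ is upper semicontinuous if and only if, for every $c\in\R$, the strict sublevel set $\{x\in X: g(x)<c\}$ is open.

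Fix $c\in\R$ and set $F(x)=\inf_{f\in\F}f(x)$. The key step is the set identity
\[
\{x\in X: F(x)<c\}=\bigcup_{f\in\F}\{x\in X: f(x)<c\}.
\]
For the inclusion ``$\subseteq$'': if $F(x)<c$, then $c$ is not a lower bound for $\{f(x):f\in\F\}$, so some $f\in\F$ satisfies $f(x)<c$. For the inclusion ``$\supseteq$'': if $f(x)<c$ for some $f\in\F$, then $F(x)\le f(x)<c$.

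Each set $\{f<c\}$ on the right is open because $f$ is upper semicontinuous. An arbitrary union of open sets is open, so $\{F<c\}$ is open for every $c$, and hence $F$ is upper semicontinuous.

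Two degenerate situations need a remark. If $F$ takes the value $-\infty$, the argument is unaffected, since we work with extended-real-valued functions throughout. If $\F=\varnothing$, then $F\equiv+\infty$ and every strict sublevel set is empty, which is open, so the conclusion still holds. Beyond checking these cases there is no real obstacle; the proof rests on the fact that openness is preserved under arbitrary unions, whereas continuity would require the corresponding statement for finite intersections.
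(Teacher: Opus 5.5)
Your proof is correct and is essentially the paper's argument. Both rest on the identity $\{F<c\}=\bigcup_{f\in\F}\{f<c\}$, together with the fact that an arbitrary union of open sets is open. You additionally verify both inclusions explicitly and treat the degenerate cases ($F=-\infty$, $\F=\varnothing$), which the paper leaves implicit.
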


\begin{proof}
    It's sufficient to show that $F^{-1}((-\infty, c))$ is open for any $c \in \R$.
    First, note that, $F(x) < c$ if and only if there exist $f \in \F$, such that $f(x) < c$. Thus, 
    \[
    F^{-1}((-\infty,c)) = \bigcup_{f \in \F}f^{-1}((-\infty, c)).
    \]
    Since $f \in \F$ is upper continuous, $f^{-1}((-\infty, c))$ is open. The result follows from that any arbitrary union of open sets is open.
\end{proof}


\begin{lemma}\label{lemma:inf_sup}
    $|\inf_x f(x) - \inf_x g(x)| \le \sup_x |f(x) - g(x)|$. 
\end{lemma}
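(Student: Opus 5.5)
We prove the elementary inequality $|\inf_x f(x)-\inf_x g(x)|\le \sup_x|f(x)-g(x)|$ for real-valued functions $f,g$ on a common domain, assuming both infima are finite. If the right-hand side is $+\infty$ there is nothing to prove, so we may set $M:=\sup_x|f(x)-g(x)|<\infty$. In all our applications the functions are probabilities indexed by $\xi\in\partial X$, so both infima lie in $[0,1]$.

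The plan is a one-sided comparison followed by symmetry. For every $x$ we have $f(x)\le g(x)+|f(x)-g(x)|\le g(x)+M$. Taking the infimum over $x$ on the left gives
\[
\inf_y f(y)\le g(x)+M \qquad\text{for every } x.
\]
Taking the infimum over $x$ on the right then yields $\inf f\le \inf g+M$.

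Interchanging the roles of $f$ and $g$ gives $\inf g\le \inf f+M$. Combining the two inequalities gives $|\inf f-\inf g|\le M$, which is the claim.

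There is no real obstacle. The only point needing care is the order of the two infimum operations in the first step, which avoids any need for the infima to be attained. We also need finiteness of the infima so that the subtraction is meaningful; this holds automatically in the bounded setting where the lemma is applied, namely Theorem \ref{thm:robust}. There one takes $f(\xi)=\PP(H^+_{\xi,z})$ and $g(\xi)=\QQ(H^+_{\xi,z})$, so that $M\le\|\PP-\QQ\|_{\mathrm{TV}}$.
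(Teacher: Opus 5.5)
Your proposal is correct and follows essentially the same argument as the paper: bound $\inf f \le f(x) \le g(x) + \sup|f-g|$ pointwise, take the infimum over $x$, then swap $f$ and $g$. Your added remarks on finiteness of the infima, and on $M \le \|\PP-\QQ\|_{\mathrm{TV}}$ in the application, are appropriate housekeeping that the paper leaves implicit.
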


\begin{proof}
    For any $x$, we have
    \begin{align*}
        \inf_x f(x) &\le f(x) \le g(x) + |f(x) - g(x)|\le g(x) + \sup_x |f(x) - g(x)|
    \end{align*}
    Thus, we have $\inf_x f(x)\le \inf_x g(x) + \sup_x |f(x) - g(x)|$. Swapping $f$ and $g$, and combine the two inequalities. 
\end{proof}

\section{Proofs in Section \ref{sec:background} and \ref{sec:definition}}

\subsection{Proof of Proposition \ref{prop:busemann_asymptotic}}
\begin{proof}
    We will show that $\|\grad (B_\gamma - B_{\gamma'})\| \equiv 0$, which implies $B_\gamma - B_\gamma' \equiv C$ for some constant $C$. The expression is then obtained by plugging in $x := \gamma(0)$.

    By \cite[Proposition 3.1]{heintze1977geometry}, given $\xi \in \partial X$, we have $-\grad B_\gamma = Z$ where $Z$ is the radial  field in the direction of $\xi$. Recall that given $\xi \in \partial X$, the radial field $Z$ is defined as $Z(p) = \dot\gamma_{p, \xi}$, where $\gamma_{p, \xi}$ denotes the geodesic connecting $p$ and $\xi$. Note that the radial field $Z$ is uniquely determined by $\xi$.  Therefore, for two geodesic rays $\gamma, \gamma'$ with $\xi = \gamma(\infty) = \gamma'(\infty)$, we have $\grad_p B_\gamma = -Z(p) =  \grad_p B_{\gamma'}$ for all $p \in X$.
\end{proof}

\subsection{Proof of Theorem \ref{lemma:superlevel}}
\begin{proof}
Fix $\alpha \in (0,1)$ and $z\in X$. By 
Definition \ref{def:depth}
of the horospherical depth,
\[
D(z; \PP)\ge \alpha
\quad\iff\quad
S_\xi(B_\xi(z))\ge \alpha \quad \forall \xi\in\partial X.
\]
For fixed $\xi$, by 
\eqref{eq:t_xi},
the condition $S_\xi(B_\xi(z))\ge\alpha$ is equivalent to $B_\xi(z)\le t_\xi(\alpha)$. Taking the conjunction over all $\xi$ yields 
\eqref{eq:superlevel}. 
Each set $H_\xi(t_\xi(\alpha)) = \{z : B_\xi(z) \leq t_\xi(\alpha)\}$ is a horoball, hence geodesically convex 
(Proposition \ref{prop:cont}).
The intersection of geodesically convex sets is geodesically convex, so $\DD^\alpha(\PP)$ is geodesically convex.
\end{proof}

\subsection{Proof of Proposition \ref{prop:depth_non_equivalence}}

\begin{proof}
    By \cite[Proposition 3.1]{heintze1977geometry}, $B_\xi = -B_\zeta + C$ implies that there exists two radial fields $Z_\xi, Z_\zeta$ in the direction of $\xi$ and $\zeta$, respectively such that $Z_\xi \equiv -Z_\zeta$. Equivalently, $\dot\gamma_{p, \xi} = -\dot\gamma_{p,\zeta}$ for all $p \in X$. Combine these two rays together we have $\gamma_p: \R \to X$ with $\gamma_p(+\infty) = \xi$ and $\gamma_p(-\infty) = \zeta$. Let $p, q \in X$, then the geodesic $\gamma_p$ and $\gamma_q$ are bi-asymptotic (asymptotic in both directions). By the Flat strip theorem \cite[Chapter II.2, Theorem 2.13]{bridson2013metric}, the convex hull of $\gamma_p(\R) \cup\gamma_q(\R)$ is isometric to a flat strip $\R \times [0, d(p, q)]$ in Euclidean space. Since these hold for any pair of points $p$ and $q$, it forces the sectional curvature to vanish everywhere.  
\end{proof}

\subsection{Proof of Proposition \ref{prop:horo_strip}}

\begin{proof}
    Denote $F_\xi(t): = \PP(\{x \in X: B_\xi(x) \le t\})$.
    
    Fix $\alpha \in (0,1)$ and $z\in X$. 
    By Definition \eqref{eq:depth_two_sided} 
    of the two-sided depth,
    \[
    \widetilde D(z; \PP)\ge \alpha
    \quad\iff\quad
    \min\{S_\xi(B_\xi(z)), \, F_\xi(B_\xi(z))\} \ge \alpha \quad \forall \xi\in\partial X.
    \]
    For fixed $\xi$, 
    by \eqref{eq:t_xi} and \eqref{eq:q_xi}, 
    the condition $\min\{S_\xi(B_\xi(z)), \, F_\xi(B_\xi(z))\} \ge \alpha$ is equivalent to $B_\xi(z)\le t_\xi(\alpha)$ and $B_\xi(z) \ge q_\xi(\alpha)$. Taking the conjunction over all $\xi$ yields 
    \eqref{eq:strip}.
\end{proof}


\section{Proofs in Section \ref{sec:properties}}

\subsection{Proof of Theorem \ref{lemma:g-depth}}

\begin{proof}

\medskip
\noindent
\textbf{Step 1: $\varphi$-invariance of depth.} First, we note the following, 
    \begin{align*}
        H^+_{\xi, \varphi(z)} &= \{x \in X: B_\xi(\varphi(z)) \le B_\xi(x)\} = \{\varphi(x) \in X: B_\xi(\varphi(z)) \le B_\xi(\varphi(x))\} \\
        &= \varphi(\{x \in X: B_\xi(\varphi(z)) \le B_\xi(\varphi(x))\}) \\
        &= \varphi(\{x \in X: B_\xi(\varphi(z)) - B_\xi(\varphi^{-1}(o)) \le B_\xi(\varphi(x)) - B_\xi(\varphi^{-1}(o))\}) \\
        &\stackrel{(1)}{=} \varphi(\{x \in X: B_{\partial \varphi(\xi)}(z) \le B_{\partial \varphi(\xi)}(x)\}) = \varphi(H^+_{\partial \varphi(\xi), z}). 
    \end{align*}
    where $(1)$ follows from Lemma \ref{lemma:g-Busemann}. Thus, we have,
    \begin{align*}
        D(\varphi(z); \varphi_\#\PP) :=& \inf_{\xi \in \partial X}(\varphi_\#\PP)(H^+_{\xi, \varphi(z)}) = \inf_{\xi \in \partial X} \PP(\varphi^{-1}(H^+_{\xi, \varphi(z)})) = \inf_{\xi \in \partial X} \PP(H^+_{\partial \varphi(\xi), z}) \\
        \stackrel{(2)}{=}& \inf_{\xi \in \partial X} \PP(H^+_{\xi, z}) = D(z; \PP),
    \end{align*}
    where $(2)$ follows from the fact that $\partial \varphi$ is a bijection. 

\medskip
\noindent
\textbf{Step 2: $\varphi$-equivalence of depth region and Busemann median.}
    \begin{align*}
        \DD^\alpha(\varphi_\#\PP) &= \{z \in X: D(z; \varphi_\#\PP) \ge \alpha \} = \{\varphi(z) \in X: D(\varphi(z); \varphi_\#\PP) \ge \alpha \} \\
        &= \varphi\{z \in X: D(\varphi(z); \varphi_\#\PP)\ge \alpha\}\\  &\stackrel{(1)}{=} \varphi \{z \in X: D(z; \PP) \ge \alpha\} = \varphi(\DD^\alpha(\PP)),
    \end{align*}
    where  $(1)$ follows from Step 1. In particular, we have $\mu_*(\varphi_\#\PP) = \varphi(\mu_*(\PP))$.
\end{proof}

\subsection{Proof of Proposition \ref{prop:symmetric_prob}}

\begin{proof}
Let $s_\theta:X\to X$ be the geodesic symmetry reflection on $\theta$, 
\[
s_\theta(x) = \exp_\theta(- \exp^{-1}_\theta(x)),
\] 
and by central symmetry, $(s_\theta)_\#\PP=\PP$. Fix $\xi\in\partial X$. Define $f_\xi(x):=B_\xi(x)-B_\xi(\theta)$, then $H_{\xi,\theta}^+ =\{x\in X: f_\xi(x)\ge 0\}$. Since $B_\xi$ is geodesically convex, so is $f_\xi$. Now for any $x\in X$, the point $\theta$ is the midpoint of the geodesic segment joining $x$ and $s_\theta(x)$, because $s_\theta$ is the geodesic reflection at $\theta$. Therefore, by geodesic convexity of $f_\xi$,
\[
f_\xi(\theta)\le \frac{1}{2}\bigl(f_\xi(x)+f_\xi(s_\theta(x))\bigr).
\]
But $f_\xi(\theta)=0$, so $f_\xi(x)+f_\xi(s_\theta(x))\ge 0$. Hence, if $x\not\in H_{\xi,\theta}^+$, i.e. $f_\xi(x)<0$, then necessarily $f_\xi(s_\theta(x))>0$, which implies $s_\theta(x)\in H_{\xi,\theta}^+$. Thus $X = H_{\xi,\theta}^+ \cup s_\theta^{-1}(H_{\xi,\theta}^+)$. Taking $\PP$-probabilities and using $(s_\theta)_\#\PP=\PP$, we obtain
\[
1
\le \PP(H_{\xi,\theta}^+) + \PP\bigl(s_\theta^{-1}(H_{\xi,\theta}^+)\bigr)
= \PP(H_{\xi,\theta}^+) + \PP(H_{\xi,\theta}^+)
= 2\PP(H_{\xi,\theta}^+).
\]
Therefore $\PP(H_{\xi,\theta}^+)\ge 1/2$. Since $\xi\in\partial X$ was arbitrary, $\PP$ is horospherically symmetric about $\theta$.
\end{proof}

\subsection{Proof of Proposition \ref{lemma:maximality}}

\begin{proof}
By definition, $D(\theta;\PP)=\inf_{\xi\in\partial X}\PP(H^+_{\xi,\theta})\ge 1/2$.
If $z\neq\theta$, the uniqueness of the symmetry center implies that there exists some $\xi\in\partial X$ for which $\PP(H^+_{\xi,z})<1/2$. Hence
\[
D(z;\PP)=\inf_{\xi\in\partial X}\PP(H^+_{\xi,z})<\frac12\le D(\theta;\PP),
\]
so $\theta$ is the unique maximizer.
\end{proof}

\section{Proofs in Section \ref{sec:existence}}

\subsection{Proof of Proposition \ref{prop:no_infty}}

First we need the following lemma.
\begin{lemma}\label{assump:tight1}
    Any Borel probability measure $\PP$ satisfies the following condition: there exists $\alpha(s) \to 0$ as $s \to \infty$, such that
    \begin{equation*}
        \sup_{\xi}\PP(\{x \in X: s \le B_{\xi}(x)\}) \le \alpha(s).
    \end{equation*}
\end{lemma}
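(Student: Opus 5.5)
The plan is to use the normalization $B_\xi(o)=0$ (since $B_\xi=B_{\xi,o}$) together with the $1$-Lipschitz property from Proposition \ref{prop:cont}. These give
\[
|B_\xi(x)|=|B_\xi(x)-B_\xi(o)|\le d(o,x)\qquad\text{for all }\xi\in\partial X,\ x\in X.
\]
So if $B_\xi(x)\ge s$ with $s>0$, then $d(o,x)\ge s$. This bound is uniform in $\xi$, which is the whole point of the lemma.

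We therefore get the inclusion
\[
\{x\in X:\ s\le B_\xi(x)\}\subseteq X\setminus B(o,s)\qquad\text{for every }\xi\in\partial X,
\]
where $B(o,s)$ is the open ball. Taking the supremum over $\xi$, we define
\[
\alpha(s):=\PP\bigl(X\setminus B(o,s)\bigr)=\PP\bigl(\{x:\ d(o,x)\ge s\}\bigr).
\]
This function does not depend on $\xi$, and it satisfies $\sup_\xi\PP(\{s\le B_\xi\})\le\alpha(s)$.

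It remains to check that $\alpha(s)\to0$ as $s\to\infty$. The sets $\{d(o,\cdot)\ge s\}$ decrease as $s$ increases, and their intersection over $s\in\mathbb N$ is empty because every point lies at finite distance from $o$. Continuity from above of the finite measure $\PP$ then gives $\alpha(n)\to0$. Since $\alpha$ is nonincreasing, $\alpha(s)\to0$ along all real $s\to\infty$. Measurability is not an issue, because the set involved is closed.

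There is no substantial obstacle here. The only points to check are that the base point normalization really gives $B_\xi(o)=0$ for every $\xi$, which follows from the definition $B_\gamma(\gamma(0))=0$, and that the Lipschitz bound holds uniformly over the boundary.
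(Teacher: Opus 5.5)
Your proposal is correct and follows essentially the same route as the paper. Both use $B_\xi(o)=0$ and the $1$-Lipschitz property to get $B_\xi(x)\le d(o,x)$ uniformly in $\xi$, then bound by $\PP(\{d(o,\cdot)\ge s\})$ and conclude by continuity of measure from above. Your explicit handling of the monotone passage from integer to real $s$, and of measurability, adds detail the paper leaves implicit.
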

\begin{proof}
    Since $B_\xi(x)$ is 1-Lipschitz and $B_\xi(o) = 0$, we have$|B_\xi(x)| \le d(o, x)$. In particular, $B_\xi(x) \le d(o, x)$ and thus we have, 
\begin{align*}
    \sup_{\xi}\PP(\{x \in X: s \le B_{\xi}(x)\}) \le \PP(\{x \in X: s \le d(o, x)\}).
\end{align*}
Therefore, a sufficient condition is the following standard tightness condition, 
    \begin{equation*}
        \PP(\{x \in X: s \le d(o, x)\}) \to 0 \quad \text{as} \quad s \to \infty,
    \end{equation*}
which holds for any Borel probability measure $\PP$ due to the continuity of measure from above. 
\end{proof}

\begin{proof}[Proof of Proposition \ref{prop:no_infty}]
    Fix $z \in X$, denote $\xi_z:=\gamma(+\infty) \in \partial X$ given by the geodesic ray $\gamma$ from origin $o$ and pass through $z$. That is, $\gamma(s) = z$ for some $s \in \R$. WLOG we assume $s\ge 0$, and it follows $d(o, z) = s$. Denote the opposite ray $\gamma^-(t) := \widetilde{\gamma}(-t)$ where $\widetilde\gamma:\R \to X$ is the complete extension of $\gamma$, denote the corresponding boundary point as $\xi^-_z$, we have
    \begin{align*}
        B_{\xi^-_z}(z) &= \lim_{t \to \infty} (d(z, \gamma^-(t))-t) =\lim_{t \to \infty} (d(\gamma(s), \gamma^-(t))-t) \\ &=\lim_{t \to \infty} (d(\widetilde \gamma(s), \widetilde \gamma(-t))-t) = s.
    \end{align*}
    Thus, we have 
    \begin{align*}
        H_{\xi^-_z, z}^+ = \{x \in X: B_{\xi^-_z}(z) \le B_{\xi^-_z}(x)\} = \{x \in X: s \le B_{\xi^-_z}(x)\}.
    \end{align*}
    It follows that,
    \begin{align*}
        D(z; \PP) &= \inf_\xi \PP(H^+_{\xi, z}) \le \PP(H^+_{\xi^-_z, z}) = \PP(\{x \in X: s \le B_{\xi^-_z}(x)\}) \\
        &\le \sup_{\xi}\PP(\{x \in X: s \le B_{\xi}(x)\}) \le \alpha(s).
    \end{align*}
    where the last inequality follows from Lemma \ref{assump:tight1}. Since $\alpha(s) \to 0$ as $s \to \infty$, we have $D(z; \PP) \to 0$ as $d(o,z) \to \infty$.
\end{proof}

\subsection{Proof of Proposition \ref{prop:usc}}

\begin{proof}
    Denote $f_{\xi}(z, \PP):=\PP(H^+_{\xi,z})$. By Lemma \ref{lemma:uppersemicont}, it's sufficient to show that $f_{\xi}$ is upper semicontinuous (u.s.c) for any $\xi \in \partial X$. 
    That is, we need to show, given $\xi \in \partial X$, for any $z_0 \in X$ and Borel probability $\PP_0$,
    \[
    \limsup_{n} \PP_n(\{x \in X: B_\xi(x) \ge B_\xi(z_n)\}) \le \PP_0(\{x \in X: B_\xi(x) \ge B_\xi(z_0)\})
    \]
    as $(z_n, \PP_n) \to (z_0, \PP_0)$ with $z_n \to z_0$ in $X$ and $\PP_n \to \PP_0$ weakly. By the continuity of $B_\xi$ 
    (Proposition \ref{prop:cont}), 
    we have,
    \[
    t_n:= B_\xi(z_n) \to t_0:= B_\xi(z_0).
    \]
    Thus, for any $\varepsilon >0$, eventually we have $t_n > t_0 - \varepsilon$, thus $\{B_\xi \ge t_n\} \subset \{B_\xi \ge t_0 - \varepsilon\}$. Since the set $\{B_\xi \ge t_0 - \varepsilon\}$ is closed, by Portmanteau theorem, we have,
    \begin{equation}\label{eqn:lessthan}
    \limsup_{n} \PP_n(B_\xi \ge t_n) \le \limsup_{n} \PP_n(B_\xi \ge t_0 - \varepsilon) \le \mathbb{P}_0(B_\xi \ge t_0 - \varepsilon).
    \end{equation}
    As $\varepsilon \downarrow 0$, we have $\{B_\xi \ge t_0 - \varepsilon\} \downarrow \{B_\xi \ge t_0\}$. By continuity of measure from above, we have
    \begin{equation}\label{eqn:fromabove}
    \lim_{\varepsilon \downarrow 0} \mathbb{P}_0(B_\xi \ge t_0 - \varepsilon) = \PP_0(B_\xi \ge t_0).
    \end{equation}
    Combine \eqref{eqn:lessthan} and \eqref{eqn:fromabove}, we have 
    $
    \limsup_{n} \PP_n(B_\xi \ge t_n) \le \PP_0(B_\xi \ge t_0)
    $
    as required. 
    
\end{proof}

\subsection{Proof of Lemma \ref{lemma:existence}}

\begin{proof}
If $D_*(\PP)=0$, then $D(z;\PP)=0$ for every $z\in X$, so $\mu_*(\PP)=X$ and there is nothing to prove. Assume now that $D_*(\PP)>0$. By 
Proposition \ref{prop:no_infty},
there exists $R>0$ such that
\[
d(o,z)\ge R \quad\Longrightarrow\quad D(z;\PP)<\frac{D_*(\PP)}{2}.
\]
Hence every maximizer lies in the closed ball $\overline{B}(o,R)$. Since closed balls are compact on Hadamard manifolds and $D(\cdot;\PP)$ is upper semicontinuous by 
Proposition \ref{prop:usc},
the supremum of $D(\cdot;\PP)$ is attained on $\overline{B}(o,R)$. Thus $\mu_*(\PP)$ is nonempty, and being the argmax of an upper semicontinuous function on a compact set, it is compact.
\end{proof}

\subsection{Proof of Theorem \ref{thm:depth_lbound}}
\label{proof:depth_lbound}

First, we introduce another notion of half-space \cite{rusciano2018riemannian}, which is defined via the tangent space as follows,
\[
    H^{\tang}_{v, z} := \{\exp_z(w): w \in T_zX, \langle w, v\rangle_z \ge 0\}.
\]
We can then define the corresponding depth as 
\[
D^{\tang}(z; \PP) = \inf_{v \in S^{d-1} \subset T_zX} \PP(H^\tang_{v, z}).
\]
It's shown in \cite[Theorem 2]{rusciano2018riemannian} that for $\PP \ll \vol$, there exists $z \in X$ such that for any $v \in T_zX$, 
\[
\PP((H_{v, z}^\tang)^c) \le 1 - 1/(d+1).
\]
Equivalently, there exists $z \in X$ such that for any $v \in T_zX$, $\PP(H_{v, z}^\tang) \ge 1/(d+1)$. It follows that for $\PP \ll \vol$, there exists $z \in X$ such that $D^\tang(z; \PP) \ge 1/(d+1)$. 

We can show that the horospherical depth $D(z; \PP)$ dominates $D^\tang(z; \PP)$ in the following lemma.

\begin{lemma}\label{lemma:dominate}
    For any $z \in X$, $D(z; \PP) \ge D^\tang(z; \PP)$.
\end{lemma}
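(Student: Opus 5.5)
The plan is to show, for each fixed $z\in X$ and each $\xi\in\partial X$, the set inclusion
\[
H^{\tang}_{v(\xi),z}\subset H^+_{\xi,z},\qquad v(\xi):=\grad_z B_\xi ,
\]
and then take infima. Given the inclusion, $\PP(H^+_{\xi,z})\ge \PP(H^{\tang}_{v(\xi),z})\ge D^{\tang}(z;\PP)$ for every $\xi$, since $v(\xi)$ is a unit vector by Proposition \ref{prop:cont}. Taking the infimum over $\xi\in\partial X$ yields $D(z;\PP)\ge D^{\tang}(z;\PP)$.

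Lemma \ref{lemma:grad_bijection} shows that $\xi\mapsto v(\xi)$ is in fact a bijection onto the unit sphere $S_zX$. So the two infima run over matched index sets, but only the easy direction of this correspondence is needed.

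For the inclusion, I fix $w\in T_zX$ with $\langle w,v(\xi)\rangle_z\ge 0$ and set $x=\exp_z(w)$. Let $c(s)=\exp_z(sw)$ for $s\in[0,1]$ be the geodesic from $z$ to $x$; this is well defined since $\exp_z$ is a diffeomorphism on a Hadamard manifold. The function $f(s):=B_\xi(c(s))$ is convex by Proposition \ref{prop:cont} (geodesic convexity of $B_\xi$) and $C^1$ because $B_\xi$ is $C^2$. Hence
\[
f(1)\ge f(0)+f'(0)=B_\xi(z)+\langle \grad_z B_\xi, w\rangle_z\ge B_\xi(z),
\]
which says $x\in H^+_{\xi,z}$. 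The degenerate case $w=0$ gives $x=z$, which is trivially in $H^+_{\xi,z}$.

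I see no serious obstacle. The only points needing care are the use of the tangent-line (supporting hyperplane) inequality for a differentiable convex function of one variable, and the observation that every point of $X$ is of the form $\exp_z(w)$ with the geodesic $s\mapsto\exp_z(sw)$ joining $z$ to it. Both are standard on Hadamard manifolds. Measurability of $H^{\tang}_{v,z}$ is immediate, since it is the image of a closed half-space under the homeomorphism $\exp_z$.
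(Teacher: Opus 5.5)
Your proof is correct and follows the paper's argument: the paper also proves $H^{\tang}_{u_\xi,z}\subset H^+_{\xi,z}$ with $u_\xi=\grad_z B_\xi$ via the first-order convexity inequality for $B_\xi\circ\gamma$, and then compares the infima. Your matched-pair formulation is slightly cleaner than the paper's stated reduction, which asks for the inclusion over all pairs $(\xi,v)$ and invokes surjectivity of $\xi\mapsto\grad_z B_\xi$. You are right that this direction of the inequality only needs the map $\xi\mapsto\grad_z B_\xi$ into the unit sphere, not its surjectivity.
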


\begin{proof}
        Note that, 
    \begin{align*}
        D(z; \PP) \ge D^\tang(z; \PP) &\iff \inf_{\xi \in \partial X}\PP(H^+_{\xi, z}) \ge \inf_{v \in S^{d-1}}\PP(H^\tang_{v, z}) \\
        &\Longleftarrow \PP(H^+_{\xi, z}) \ge \PP(H^\tang_{v, z}), \quad \forall \xi \in \partial X, v \in S^{d-1} \\
        &\iff H^+_{\xi, z} \supset H^\tang_{v, z}, \quad \forall \xi \in \partial X, v \in S^{d-1}. 
    \end{align*}
    Therefore, it's sufficient to show 
    \begin{equation}\label{eq:supset_condtion}
    H^+_{\xi, z} \supset H^\tang_{v, z}, \quad \forall \xi \in \partial X, v \in S^{d-1}.
    \end{equation}
    Fix $\xi \in \partial X$. Let $v\in T_z X$ and consider the geodesic $\gamma(t):= \exp_z(tv)$ with $\gamma(0) =z$ and $\gamma(1) = x$. As shown in 
    Theorem \ref{thm:busemann_f_convex}, 
    $B_\xi \circ \gamma$ is convex. Thus, we have,
    \begin{align*}
        B_\xi(x) = B_\xi(\gamma(1)) &\ge B_\xi(\gamma(0)) + \frac{\dd}{\dd t} \Bigr|_{t=0} B_\xi(\gamma(t)) \\
        &= B_\xi(\gamma(0)) + \langle \grad_{\gamma(0)} B_\xi, \dot \gamma(0) \rangle_{\gamma(0)} \\
        &= B_\xi(z) + \langle \grad_z B_\xi\, v \rangle_z.
    \end{align*}
    That is,  $B_\xi(x) - B_\xi(z) \ge \langle u_\xi, v \rangle_z$. It follows that, given $\xi \in \partial X$ and $z \in X$, we have
    \begin{align*}
        H^+_{\xi, z} = \{x \in X: B_\xi(x) \ge B_\xi(z)\} & \supset \{x \in X: \langle \grad_z B_\xi, v\rangle_z \ge 0\} \\
        &= \{\exp_z(v): v \in T_zX, \langle \grad_z B_\xi, v\rangle_z \ge 0\}\\
        &= H^\tang_{u_\xi, z}, \quad\text{where}\; u_\xi := \grad_z B_\xi
    \end{align*}
    We note that $u_\xi \in S^{d-1} \subset T_zX$ since $\|\grad B_\xi\| \equiv 1$ 
    by Proposition \ref{prop:cont}.
    Furthermore, the map $\xi \mapsto \grad_z B_\xi$ is surjective on $S^{d-1}$ by Lemma \ref{lemma:grad_bijection}. Therefore, \eqref{eq:supset_condtion} follows.
\end{proof}

\begin{proof}[Proof of Theorem \ref{thm:depth_lbound}]
    \medskip
    \noindent
    \textbf{Step 1: convolute $\PP$ using heat kernel.}
    Let $\PP$ as any Borel probability measure on Hadamard manifold $X$. Following \cite[Lemma 6.3]{donoho1992breakdown}, we consider the depth $D(z; \PP_h)$ of the probability measure $\PP_h$, obtained by convoluting $\PP$ with heat kernel $p_h(x, \cdot)$ of time horizon $h$, defined as follows,
    \begin{equation}
        \PP_h(A) := \int_X \int_A p_h(x, y) \, \dd\vol(y) \,\dd\PP(x).
    \end{equation}
    Thus, for any bounded continuous function $f \in C_b(X)$, 
    \begin{align*}
        \int f(x) \,  \dd \PP_h(x) = \int_X \int_X f(y) p_h(x,y) \, \dd \vol(y) \, \dd \PP(x) 
        = \int (e^{h\Delta}f)(x) \dd \PP(x).
    \end{align*}
    By \cite[Theorem 7.16]{grigoryan2009heat}, we have $(e^{h\Delta}f)(x) \to f(x)$ locally uniform in $x$. Since the heat semigroup is a contraction on $L^\infty$, we have $\|e^{h\Delta}f\| \le \|f\|_\infty$. Dominated convergence therefore gives, 
    \[
    \int f \, \dd \PP_h \to \mathbb \int f \, \dd \PP. 
    \]
    Thus, $\PP_h \Rightarrow\PP$ as $h \downarrow 0$. 

       \medskip
    \noindent
    \textbf{Step 2: Lower bounded by tangent space depth.} 
    Since $\PP_h\ll\vol$, \cite[Theorem 2]{rusciano2018riemannian} yields a point $z_h\in X$ such that $D^{\tang}(z_h;\PP_h)\ge 1/(d+1)$. Combine with Lemma \ref{lemma:dominate}, we have for every $h >0$, $D(z_h;\PP_h)\ge 1/(d+1)$.
    
    Now let $h_n\downarrow 0$ and set $z_n:=z_{h_n}$. Since $\PP_{h_n}\Rightarrow \PP$, the sequence $(\PP_{h_n})$ is uniformly tight. Choose $R>0$ such that
    \[
    \sup_n \PP_{h_n}(X\setminus B(o,R))<\frac{1}{d+1}.
    \]
    We show that $z_n\in B(o,R)$ for every $n$. Suppose instead that $r_n:=d(o,z_n)\ge R$. Let $\widetilde\gamma_n:\R\to X$ be the complete geodesic with $\widetilde\gamma_n(0)=o$ and $\widetilde\gamma_n(r_n)=z_n$, and set $\eta_n:=\widetilde\gamma_n(-\infty)$. Then
    \[
    B_{\eta_n}(z_n)=\lim_{t\to\infty}\bigl(d(z_n,\widetilde\gamma_n(-t))-t\bigr)
    =\lim_{t\to\infty}\bigl(d(\widetilde\gamma_n(r_n),\widetilde\gamma_n(-t))-t\bigr)=r_n.
    \]
    Since $B_{\eta_n}(x)\le d(o,x)$ for all $x\in X$, we have
    \[
    H^+_{\eta_n,z_n}
    =\{x\in X: B_{\eta_n}(x)\ge B_{\eta_n}(z_n)\}
    \subset \{x\in X: d(o,x)\ge r_n\}
    \subset X\setminus B(o,R).
    \]
    Therefore
    \[
    D(z_n;\PP_{h_n})\le \PP_{h_n}(H^+_{\eta_n,z_n})
    \le \PP_{h_n}(X\setminus B(o,R))
    < \frac{1}{d+1},
    \]
    contradicting $D(z_n;\PP_{h_n})\ge 1/(d+1)$. Thus $z_n\in B(o,R)$ for all $n$.
    
    Because $\overline{B}(o,R)$ is compact, after passing to a subsequence we may assume $z_n\to z_*\in \overline{B}(o,R)$. By the joint upper semicontinuity of $D(z;\PP)$ in $(z,\PP)$ 
    (Proposition \ref{prop:usc}),
    \[
    D(z_*;\PP)\ge \limsup_{n\to\infty} D(z_n;\PP_{h_n})\ge \frac{1}{d+1}.
    \]
    Hence $D_*(\PP)\ge D(z_*;\PP)\ge 1/(d+1)$.
    
    
\end{proof}

\subsection{Proof of Theorem \ref{thm:region_region_prop}}

\begin{proof}
    \textbf{Part 1:} It directly follows from 
    Theorem \ref{lemma:superlevel}.
    
    \medskip
    \noindent
    \textbf{Part 2:} For $\alpha > D_*$, $\DD^\alpha(\PP) =\emptyset$, thus compact. By 
    Proposition \ref{prop:no_infty},
    for any $0 <\alpha \le D_*$ ($D_* \ge 1/(d+1)$ by Centerpoint 
    Theorem \ref{thm:depth_lbound}), 
    there exists some $R> 0$ such that $d(o, z) \ge R$ implies $D(z;\PP) \le \alpha /2 < \alpha$. Thus, we must have $\DD^\alpha(\PP) \subset \bar{B}(o, R)$, which is compact on any complete manifolds. Since $\DD^\alpha(\PP)$ is the intersection of a family of horoball, $\DD^\alpha(\PP)$ is closed as well. The compactness of $\DD^\alpha(\PP)$ then follows from the fact any closed subset of compact set is compact.

    \medskip
    \noindent
    \textbf{Part 3: } It follows directly from the Centerpoint 
    Theorem \ref{thm:depth_lbound}. 
    
    \medskip
    \noindent
    \textbf{Part 4:} Since $\alpha < D_*(\PP)$, there exist $y \in X$ such that $D(y; \PP) > \alpha$. Set $\varepsilon := (D(y;\PP) - \alpha)/2$. By continuity of $D(\cdot; \PP)$, there exist $r_0 > 0$ such that $d(y, z) < r_0$ implies $|D(z; \PP) - D(y;\PP)| < \varepsilon$. Hence, for every $z \in B(y, r_0)$, 
    \[
    D(z; \PP) > D(y; \PP) - \varepsilon = \frac{D(y; \PP) + \alpha}{2} > \alpha.
    \]
    Therefore, $B(y, r_0) \subset D^\alpha(\PP)$. In particular, $D^\alpha(\PP)$ has nonempty interior. 
    Since $X$ is a Hadamard manifold, $\exp_y$ is a global diffeomorphism. Denote $S_yX = \{v \in T_yX: \|v\| = 1\}$, and for each $u \in S_yX$, define 
    \[
    \rho(u) := \sup\{t\ge0: \exp_y(su) \in D^\alpha(\PP) \; \forall 0 \le s \le t\}.
    \]
    Since $B(y, r_0) \subset D^\alpha(\PP)$, we have $\rho(u) \ge r_0 > 0$. Since $D^\alpha(\PP)$ is compact by Part 2, $\rho(u) < \infty$ for all $u$. Because $D^\alpha(\PP)$ is geodesically convex and contains $y$, the intersection of $D^\alpha(\PP)$ with the geodesic ray $\gamma_u(t) := \exp_y(tu)$ is exactly the interval
    \[
    D^\alpha(\PP) \cap \gamma_u([0,\infty))
    =
    \{ \gamma_u(t) : 0 \le t \le \rho(u) \}.
    \]
    Consequently,
    \begin{equation}
    D^\alpha(\PP)\label{eqn:tangent_ball}
    =
    \{ \exp_y(tu) : u \in S_yX,\ 0 \le t \le \rho(u) \}. 
    \end{equation}
    We next show that $\rho$ is continuous on $S_yX$.
    
    \medskip
    \noindent
    \textbf{Upper semicontinuity of $\rho$.} Let $u_n \to u$ in $S_yX$. Passing to a subsequence if necessary, assume
    \[
    \rho(u_n) \to L = \limsup_{n\to\infty} \rho(u_n).
    \]
    Since each $\exp_y(\rho(u_n)u_n)$ belongs to $D^\alpha(\PP)$ and $D^\alpha(\PP)$ is compact, we obtain $\exp_y(Lu) \in D^\alpha(\PP)$.
    Hence $L \le \rho(u)$, so $\limsup_{n\to\infty} \rho(u_n) \le \rho(u)$.
    
    \medskip
    \noindent
    \textbf{Lower semicontinuity of $\rho$.}
    Fix $u \in S_yX$, and let $0 < s < \rho(u)$. Set $p := \exp_y(\rho(u)u) \in D^\alpha(\PP)$ and $q := \exp_y(su)$. Then $q$ lies on the geodesic segment joining $p$ and $p$, but $q \ne p$. We claim that
    \begin{equation}\label{eqn:int_contain}
    [y,p) \subset \operatorname{int}(D^\alpha(\PP)). 
    \end{equation}
    Indeed, since $y \in \operatorname{int}(D_\alpha(\PP))$, there exists $r>0$ such that
    \[
    B(y,r) \subset D^\alpha(\PP).
    \]
    For $0 \le \lambda < 1$, define $H_\lambda(x) := \exp_p\bigl((1-\lambda)\exp_p^{-1}(x)\bigr)$ for $x \in X$. Since $\exp_p$ is a diffeomorphism, $H_\lambda$ is a homeomorphism of $X$ onto itself.
    Moreover, for each $x \in B(y,r)$, the point $H_\lambda(x)$ lies on the geodesic segment joining $x$ and $p$, hence belongs to $D^\alpha(\PP)$ by geodesic convexity. Therefore, $H_\lambda(B(y,r)) \subset D^\alpha(\PP)$.

    Since $H_\lambda(B(y,r))$ is open and contains $H_\lambda(y)$, we conclude that $H_\lambda(y) \in \operatorname{int}(D^\alpha(\PP))$ for every $0 \le \lambda < 1$.
    As $\lambda$ varies in $[0,1)$, the points $H_\lambda(y)$ trace exactly the segment $[y,p)$, proving \eqref{eqn:int_contain}.
    
    Applying \eqref{eqn:int_contain} to $q$, we get $q \in \operatorname{int}(D^\alpha(\PP))$.
    Since $\operatorname{int}(D^\alpha(\PP))$ is open and $\exp_y$ is continuous, there exists a neighborhood $U$ of $u$ in $S_yX$ such that
    \[
    \exp_y(su') \in D^\alpha(\PP)
    \qquad\text{for all } u' \in U.
    \]
    Hence $\rho(u') \ge s$ for all $u' \in U$, and therefore $\liminf_{u' \to u} \rho(u') \ge s$. Since this holds for every $s < \rho(u)$, it follows that $\liminf_{u' \to u} \rho(u') \ge \rho(u)$. Combining the two parts, $\rho$ is continuous on $S_yX$.
    
    Now identify $\overline{B}^d$ with the closed unit ball in $T_yX$, and define $\Phi : \overline{B}^d \to D^\alpha(\PP)$ by $\Phi(0) := y$, $\Phi(ru) := \exp_y(r\,\rho(u)\,u)$, $u \in S_yX$, $0 < r \le 1$. 

    \medskip
    \noindent
    \textbf{$\Phi$ is homeomorphic.} Since $\rho$ is continuous on the compact set $S_yX$, it is bounded; thus $\Phi$ is continuous at $0$, and clearly continuous elsewhere.
    
    By \eqref{eqn:tangent_ball}, the map $\Phi$ is surjective.
    It is also injective: if $\Phi(r_1u_1)=\Phi(r_2u_2)$, then $\exp_y(r_1\rho(u_1)u_1)=
    \exp_y(r_2\rho(u_2)u_2)$, and since $\exp_y$ is injective, we get $r_1\rho(u_1)u_1 = r_2\rho(u_2)u_2$. Because $u_1,u_2$ are unit vectors and $\rho>0$, this implies $u_1=u_2$ and $r_1=r_2$.
    
    Finally, the inverse map is given by $\Phi^{-1}(y)=0$, and, for $x \neq y$, writing uniquely, $x=\exp_y(tu)$, $u \in S_yX$, $0 < t \le \rho(u)$, one has $\Phi^{-1}(x)=tu/\rho(u)$. Since both $t$ and $u$ depend continuously on $x$ through the global diffeomorphism $\exp_y$, and $\rho$ is continuous and strictly positive, $\Phi^{-1}$ is continuous. Thus $\Phi$ is homeomorphism from $\overline{B}^d$ onto $D^\alpha(\PP)$. Therefore, $D^\alpha(\PP) \cong \overline{B}^d$.
   
\end{proof}

\section{Proofs in Section \ref{sec:uniqueness}}

\subsection{Proof of Theorem \ref{thm:busemann_f_convex}}
\label{proof:busemann_f_convex}

\begin{proof} Let $\widetilde\gamma:\R \to X$ be a complete geodesic with $\widetilde\gamma(+\infty) = \xi^+$ and $\widetilde\gamma(-\infty) = \xi^-$.

    \medskip
    \noindent
    \textbf{Part 2: } Denote $\gamma^+, \gamma^-$ as the geodesic ray with starting point $o$ and end with $\xi^+$ and $\xi^-$, respectively. We have,
    \begin{align*}
        B_{\xi^+}(\widetilde\gamma(t)) \stackrel{(1)}{=} B_{\xi^+, \widetilde\gamma(0)}(\widetilde\gamma(t)) + B_{\xi^+}(\widetilde\gamma(0)) =B_{\widetilde\gamma}(\widetilde\gamma(t)) + B_{\xi^+}(\widetilde\gamma(0)) = -t + B_{\xi^+}(\widetilde\gamma(0))
    \end{align*}
    where (1) follows from 
    \eqref{eqn:busemann_relation}. 
    Similarly, define the reverse direction geodesic of $\widetilde\gamma$ as $\gamma^*(t):= \widetilde\gamma(-t)$, so $\gamma^*(+\infty) = \xi^-$. We have,
    \begin{align*}
        B_{\xi^-}(\widetilde\gamma(t)) &= B_{\xi^-}(\gamma^*(-t))  = B_{\xi^-, \gamma^*(0)}(\gamma^*(-t)) + B_{\xi^-}(\gamma^*(0)) \\
        &=B_{\gamma^*}(\gamma^*(-t)) + B_{\xi^-}(\gamma^*(0)) = t + B_{\xi^-}(\widetilde\gamma(0)). 
    \end{align*}

    \medskip
    \noindent    
    \textbf{Part 1: } For notation convenience, we will use $\gamma$ to denote $\widetilde\gamma$ for this part of the proof. Denote $f(t):= B_\xi (\gamma(t))$. It's sufficient to show that the second derive of $f$, $f''$, is strictly positive under $\sec_X < 0$ and nonnegative under $\sec_X \le 0$. 
    
    \medskip
    \noindent
    \textbf{Step 1: Give alternative form of $f''(t)$.}  By chain rule, we have,
    \begin{align*}
        f'(t) &= \langle \grad_{\gamma(t)} B_\xi, \dot{\gamma}(t)\rangle, \\
        f''(t)&= \langle \nabla_{\dot \gamma(t)} \grad_{\gamma(t)} B_\xi, \dot\gamma(t)\rangle + \langle \grad_{\gamma(t)} B_\xi, \nabla_{\dot\gamma}\dot{\gamma}(t)\rangle.
    \end{align*}
    Since $\gamma$ is a geodesic, $\nabla_{\dot\gamma}\dot\gamma \equiv 0$, thus,
    \begin{equation}\label{eq_hessian}
        f''(t)= \langle \nabla_{\dot \gamma(t)} \grad_{\gamma(t)} B_\xi , \dot\gamma(t)\rangle = \langle \hess_{B_\xi}(\dot \gamma(t)), \dot\gamma(t) \rangle,
    \end{equation}
    where $\hess_{B_\xi}$ denotes the $(1,1)$-tensor field $\nabla (\grad B_\xi)$ (following the convention in \cite[Page 320]{lee2018introduction}).  
    We now simplify $\langle \hess_{B_\xi}(\dot \gamma(t)), \dot\gamma(t) \rangle$ using standard structural properties of Busemann functions. By 
    Proposition \ref{prop:cont},
    one has $\|\grad B_\xi\|\equiv 1$. Differentiating $\frac12\|\grad B_\xi\|^2\equiv \frac12$ yields
    \begin{equation*}
    0= \langle \hess_{B_\xi}\left(v\right),\grad B_\xi\rangle
    \qquad\text{for all }v\in TX.
    \end{equation*}
    Since the Hessian of a $C^2$ function ($B_\xi$ is $C^2$ on Hadamard manifold) is self-adjoint, we have,
    \begin{equation}\label{eq:busemann_zero}
    0= \langle \hess_{B_\xi}\left(\grad B_\xi\right),v\rangle
    \qquad\text{for all }v\in TX.
    \end{equation}
    Decompose the tangent vector $\dot\gamma(t)$ into its component parallel to $\grad_{\gamma(t)} B_\xi$ and its orthogonal component:
    \[
    \dot\gamma(t)=a(t)\,\grad_{\gamma(t)} B_\xi + w(t),
    \qquad
    w(t)\perp \grad_{\gamma(t)} B_\xi.
    \]
    Importantly, we note that under $\xi \not\in \{\xi^+, \xi^-\}$, $w(t) \ne 0$. Indeed, by 
    Proposition \ref{prop:cont},
    we have $\grad B_\xi(\gamma(t)) = - \dot \gamma_{\gamma(t), \xi}$ where $\gamma_{\gamma(t), \xi^+}$ denote the geodesic connecting $\gamma(t)$ and $\xi$. Thus, $\gamma$ and $\gamma_{\gamma(t), \xi}$ share the same starting point but are in different asymptotic class since $\xi \notin \{\xi^+, \xi^-\}$. It follows that $\grad_{\gamma(t)} B_\xi$ and $\dot\gamma(t)$ are linearly independent. 
    
    Using bilinearity and \eqref{eq:busemann_zero} we obtain
    \begin{equation}\label{eq:fpp_reduce_to_orth}
        f''(t)=\langle \hess_{B_\xi}(w(t)),w(t)\rangle.
    \end{equation}
    Next, From \cite[Proposition 3.1]{heintze1977geometry}, for $v \in T_pX$, $\hess_{B_\xi}(v) = -J_{v}'(0)$, where $J_v$ is  the stable Jacobi field along $\gamma_{p, \xi}$ that connects $p$ and $\xi$, with $J_v(0) = v$. Denote $p:= \gamma(t)$, plug into \eqref{eq:fpp_reduce_to_orth}, we have,
    \[
        f''(t) = -\langle J_{w(t)}'(0), J_{w(t)}(0) \rangle.
    \]
    where $J_{w(t)}$ is the stable Jacobi field along $\gamma_{p, \xi}$ that connects $p =\gamma(t)$ and $\xi$, with $J_{w(t)}(0) = w(t)$. 
    Let's denote $a_t(s):= \langle J_{w(t)}'(s), J_{w(t)}(s) \rangle$.
    It follows that \begin{equation}\label{eq:hessian_a}
        -a_t(0) = f''(t).
    \end{equation}
    Therefore, it's sufficient to show $a_t(0) < 0$ under $\sec_X <0$ and $a_t(0) \le 0$ under $\sec_X \le 0$. 

    \medskip
    \noindent
    \textbf{Step 2: Show $a_t$ is non-decreasing.} First, note that, by chain rule, 
    \begin{equation}\label{eq:jacobi_a}
    \frac{\dd}{\dd s} \|J_{w(t)}(s)\|^2 = 2 \langle J'_{w(t)}(s), J_{w(t)}(s)\rangle = 2a_t(s)
    \end{equation}
    Differentiate $a_t(s)$, we have,
    \begin{align*}
        a_t'(s) &= \langle J''_{w(t)}(s), J_{w(t)}(s)\rangle + \langle J'_{w(t)}(s), J'_{w(t)}(s)\rangle \\
        &=\langle J''_{w(t)}(s), J_{w(t)}(s)\rangle + \|J'_{w(t)}(s)\|^2 \\
        &\stackrel{(1)}{=} -\langle R(J_{w(t)}(s), \dot \gamma_{p, \xi}(s))\dot \gamma_{p, \xi}(s), J_{w(t)}(s)\rangle + \|J'_{w(t)}(s)\|^2 \\
        &\stackrel{(2)}{=} -\sec(J_{w(t)}(s), \dot \gamma_{p, \xi}(s))\underbrace{\left(\|J_{w(t)}(s)\|^2\|\dot\gamma_{p, \xi}(s)\|^2 -\langle J_{w(t)}(s), \dot\gamma_{p, \xi}(s) \rangle^2  \right)}_{(3)} + \|J'_{w(t)}(s)\|^2,
    \end{align*}
    where $(1)$ comes from the Jacobi equation,  
    \[
    J''_{w(t)}(s) + R(J_{w(t)}(s), \dot \gamma_{p, \xi}(s))\dot \gamma_{p, \xi}(s) = 0,
    \]
    $(2)$ comes from the definition of sectional curvature. 
    
    Now we observe that $(3) \ge 0$ by Cauchy-Schwarz inequality with equality if and only if $J_{w(t)}(s)$ and $\dot\gamma_{p, \xi}(s)$ are linearly dependent. One can see $J_{w(t)}(s) \perp \dot\gamma_{p, \xi}(s)$ as follows. Consider the scalar function
    \[
    b_t(s):=\big\langle J_{w(t)}(s),\,\dot\gamma_{p, \xi}(s)\big\rangle .
    \]
    Since $\gamma_{p, \xi}$ is a geodesic, $\nabla_{\dot\gamma_{p, \xi}}\dot\gamma_{p, \xi}\equiv 0$, hence
    \[
    b_t'(s)=\big\langle J_{w(t)}'(s),\,\dot\gamma_{p, \xi}(s)\big\rangle,
    \qquad
    b_t''(s)=\big\langle J_{w(t)}''(s),\,\dot\gamma_{p, \xi}(s)\big\rangle.
    \]
    Using the Jacobi equation $J_{w(t)}''(s)+R\!\big(J_{w(t)}(s),\dot\gamma_{p, \xi}(s)\big)\dot\gamma_{p, \xi}(s)=0$,
    we obtain
    \[
    b_t''(s)
    =-\big\langle R\!\big(J_{w(t)}(s),\dot\gamma_{p, \xi}(s)\big)\dot\gamma_{p, \xi}(s),\,\dot\gamma_{p, \xi}(s)\big\rangle
    =0,
    \]
    since $R(u,v)w\perp w$ for all $u,v \in TX$. Hence $b_t(s)=\alpha s+\beta$ is affine. Now $J_{w(t)}$ is a stable Jacobi field, so $\|J_{w(t)}(s)\|$ is bounded as $s \to \infty$. Therefore $|b_t(s)|\le \|J_{w(t)}(s)\|\|\dot\gamma_{p, \xi}(s)\| = \|J_{w(t)}(s)\|$ is bounded, which forces $\alpha=0$, so $b_t \equiv \beta$.
    Moreover,
    \[
    b_t(0)=\big\langle J_{w(t)}(0),\,\dot\gamma_{p, \xi}(0)\big\rangle
    =\big\langle w(t),\,\dot\gamma_{p, \xi}(0)\big\rangle=0,
    \]
    since $w(t)\perp \grad_{\gamma(t)} B_{\xi}$ and $\grad_{\gamma(t)} B_{\xi}=-\dot\gamma_{p, \xi}(0)$ by 
    Proposition \ref{prop:cont}.
    Consequently $b_t \equiv 0$, i.e.
    \[
    J_{w(t)}(s)\perp \dot\gamma_{p, \xi}(s)\qquad\text{for all }s\ge 0.
    \]
    Thus, whenever $J_{w(t)}(s)\neq 0$, we have $(3) > 0$. 
    
    Therefore, we have, for $\sec_X < 0$ and $J_w(t) \ne 0$, we have $\sec(J_{w(t)}(s), \dot \gamma_{p, \xi}(s)) < 0$, and thus $a'(s) > \|J_{w(t)}(s)\|^2 \ge 0$. Thus, for $\sec_X < 0$, $a_t$ is strictly increasing on any interval where $J_{w(t)} \ne 0$ and non-decreasing everywhere. For $\sec_X \le 0$, $a_t$ is non-decreasing everywhere. 

    \medskip
    \noindent
    \textbf{Step 3: Show $a_t(s) \le 0$ for all $s$.} Suppose $a_t(s_0)>0$ for some $s_0$, then it follows $a_t(s) \ge a_t(s_0)$ for all $s > s_0$. Combine with \eqref{eq:jacobi_a}, we must have $\|J_{w(t)}(s)\|^2$ grow at least linearly, and which implies $\|J_{w(t)}(s)\|$ is unbounded as $s \to \infty$. However, this contradicts with the fact that $J_{w(t)}$ is a stable Jacobi field. Therefore, $a_t(s) \le 0$ for all $s$. In particular, we have $a_t(0) \le 0$. 

    \medskip
    \noindent
    \textbf{Step 4: Show $a_t(0) \ne 0$ under $\sec_X < 0$.} Suppose for contradiction, we have $a_t(0) = 0$. Since $a_t(s)$ is non-decreasing and $a_t(s) \le 0$ for all $s$, then we must have $a_t(s) \equiv 0$. Note previously in Step 2, under $\sec_X < 0$, we have $a_t'(s) > 0$ where $J_{w(t)}(s) \ne 0$, which forces $J_{w(t)} \equiv 0$. However, we established in step 1 that $J_{w(t)}(0) = w(t) \ne 0$. Contradiction, and thus $a_t(0) < 0$. Combine with \eqref{eq:hessian_a}, we have $f''(t) > 0$ under $\sec_X < 0$ as desired. 
\end{proof}

\subsection{Proof for Corollary \ref{prop:horoball_convex}}

\begin{proof}
    Let $x, y \in H_\xi(t)$ and geodesic segment $\gamma$ with $\gamma(0) = x$ and $\gamma(1) = y$, by 
    Theorem \ref{thm:busemann_f_convex},
    we have $B_\xi(\gamma(s))\le (1-s)B_\xi(x) + sB_\xi(y)$. Suppose at least one of $x, y$ are in $H^\circ_\xi(t)$, then we have $B_\xi(\gamma(s)) < t$ for any $s \in (0,1)$, thus $\gamma(s) \in H^\circ_\xi(t)$. Now we consider the other case where neither of $x, y$ are in $H^\circ_\xi(t)$, then $x, y \in \partial H_\xi(t)$. Since $x \ne y$ and $x, y \in \partial H_\xi(t)$, we have $\gamma(\infty), \gamma(-\infty) \ne \xi$. By 
    Theorem \ref{thm:busemann_f_convex},
    part (1), we have $B_\xi(\gamma(s)) < t$ if $\sec_X < 0$ and $B_\xi(\gamma(s)) \le t$ otherwise. 
\end{proof}

\subsection{Proof for Corollary \ref{coro:monotonicity}}

\begin{proof}
    By Theorem \ref{thm:busemann_f_convex},
    $B_\xi \circ \gamma$ is convex for any $\xi \in \partial X$ and geodesic $\gamma$. Let $\gamma$ be the geodesic connecting $z_*$ and $z$ with $\gamma(0) = z^*$ and $\gamma(1) = z$. 
    \begin{align*}
    \PP(H^+_{\xi, \gamma(t)}) 
    &\ge \PP(\{x\in X: B_\xi(x) \ge (1-t) B_\xi(z_*) + tB_\xi(z)\}) \\
    &\ge \PP(\{x\in X: B_\xi(x) \ge \max\{B_\xi(z_*), B_\xi(z)\}\}) = \min \{\PP(H^+_{\xi, z_*}), \PP(H^+_{\xi, z}) \}
    \end{align*}
    It follows that
    \begin{align*}
        D(\gamma(t); \PP) = \inf_\xi \PP(H^+_{\xi, \gamma(t)}) \ge \inf_\xi \min \{\PP(H^+_{\xi, z_*}), \PP(H^+_{\xi, z}) \}
        \stackrel{(1)}{=} \inf_\xi\PP(H^+_{\xi,z}) = D(z; \PP). 
    \end{align*}
    where $(1)$ follows from the fact that $z_*$ is maximizer,
    $
        \inf_\xi \PP(H^+_{\xi, z_*}) \ge \inf_\xi \PP(H^+_{\xi, z}).
    $
\end{proof}

\subsection{Proof of Lemma \ref{lemma:D-upper}}
\label{proof:D-upper}

\begin{proof}
Suppose for contradiction that there exist $z$ such that $D(z; \PP)=1$, i.e. $\PP(\{x: B_\xi(x)\ge B_\xi(z)\})=1$ for every $\xi\in\partial X$.

\medskip
\noindent
\textbf{Step 1: The horoball intersection is a singleton.}  We claim
\[
\bigcap_{\xi\in\partial X}\{x\in X : B_\xi(x)\ge B_\xi(z)\} = \{z\}.
\]
Indeed, if $x \ne z$, we can construct $\xi$ such that $B_\xi(x) < B_\xi(z)$ as follows: let $\gamma$ be the unit-speed geodesic from $\gamma(0) = z$ to $\gamma(t) = x$ with $t := d(x,z) $ and set $\xi:=\gamma(+\infty )$. Then $B_\xi(\gamma(t))=B_\xi(z)-t$ 
(part 2 of Theorem \ref{thm:busemann_f_convex})
, so $B_\xi(x) = B_\xi(z) - d(z,x) < B_\xi(z)$. Hence $x\not\in\{x\in X:B_\xi(x) \ge B_\xi(z)\}$.

\medskip
\noindent
\textbf{Step 2: $\PP$ must be a point mass.}  Choose a countable dense subset $\{\xi_k\}_{k\ge 1}\subset\partial X$.  For each $k$, $\PP(\{x :B_{\xi_k}(x)\ge B_{\xi_k}(z)\})=1$.  By countable intersection, 
\[
\PP\left(\bigcap_{k=1}^\infty \{x : B_{\xi_k}(x)\ge B_{\xi_k}(z)\}\right) = 1.
\]
By continuity of $\xi\mapsto B_\xi(x)$ for each fixed $x$: if $B_{\xi_k}(x)\ge B_{\xi_k}(z)$ for all $k$ in a dense subset and $\xi_n\to\xi$, then $B_\xi(x)=\lim_{n \to \infty} B_{\xi_n}(x)\ge\lim_{n \to \infty} B_{\xi_n}(z)=B_\xi(z)$. Therefore
\[
\bigcap_{k=1}^\infty \{x: B_{\xi_k}(x) \ge B_{\xi_k}(z)\} = \bigcap_{\xi\in\partial X}\{x: B_\xi\ge B_\xi(z)\} = \{z\},
\]
where the last equality is from Step 1.  Hence $\PP(\{z\})=1$, i.e. $\PP=\delta_z$.

\medskip
\noindent
\textbf{Step 3: Contradiction.} If $\PP=\delta_z$, then $1 = \PP(\{z\}) \le \PP(\{x : B_\xi(x)=B_\xi(z)\})$, contradicting 
Assumption \ref{assump:no_atom}.
\end{proof}

\subsection{Proof of Theorem \ref{thm:unique_rank_1}}
\label{proof:unique_rank_1}

\begin{proof}
    We prove by contradiction. Let $x \ne y \in X$ and denote $\alpha := \min\{D(x; \PP), D(y;\PP)\}$. By 
    Lemma \ref{lemma:D-upper},
    we have $\alpha < 1$ under 
    Assumption \ref{assump:no_atom}.
    Let geodesic $\gamma$ with $\gamma(0) = x$ and $\gamma(s) = y$ where $s := d(x, y)$. 

    \medskip
    \noindent
    \textbf{Step 1: $B_\xi(\gamma(s)) < t_\xi(\alpha)$ for every $\xi \in \partial X$ and any $s \in (0,1)$. } For $\xi \not\in\{\xi^+, \xi^-\}$, by 
    Theorem \ref{thm:busemann_f_convex} part (1), 
    we have,
    \[
    B_\xi(\gamma(s)) < (1-s) B_\xi(x) + s B_\xi(y) \le t_\xi(\alpha).
    \]
    where the last inequality follows from $B_\xi(x), B_\xi(y) \in \{t: S_\xi(t) \ge \alpha\}$ since 
    \[
    D(x; \PP) = \inf_\xi S_\xi(B_\xi(x)) \ge \alpha, \quad D(y; \PP) = \inf_\xi S_\xi(B_\xi(y)) \ge \alpha; 
    \]
    thus $B_\xi(x), B_\xi(y) \le t_\xi(\alpha)$, 
    see \eqref{eq:t_xi}
    for the definition of $t_\xi(\alpha)$. 
    
    For $\xi \in \{\xi^+, \xi^-\}$, by 
    Theorem \ref{thm:busemann_f_convex} part (2), 
    we have,
    \[
    B_\xi(\gamma(s)) = (1-s)B_\xi(x) + sB_\xi(y) < t_\xi(\alpha),
    \]
    where the strict inequality follows from $B_\xi(x) \ne B_\xi(y)$.

    \medskip
    \noindent
    \textbf{Step 2: Show $p_\xi(\gamma(s)) > \alpha$ for every $\xi$ and every $s \in (0,1)$. } By Step 1, $B_\xi(\gamma(s)) < t_\xi(\alpha)$. By Lemma \ref{lemma:slab-positive}, we have under 
    Assumption \ref{assump:no_atom} and \ref{assump_connected_supp}, 
    for $\alpha \in (0,1)$, $p_\xi(\gamma(s))) > \alpha$.

    \medskip
    \noindent
    \textbf{Step 3: Show $D(\gamma(s); \PP) > \alpha$.}  By step 2, we have $p_\xi(\gamma(s)) > \alpha$ for every $\xi$. By Lemma \ref{lemma:p_usc}, under under 
    Assumption \ref{assump:no_atom},
    the map $\xi \mapsto p_\xi(\gamma(s)):= \PP(\{x: B_\xi(x) \ge B_\xi(\gamma(s))\})$ is lower semicontinuous on the compact set $\partial X$ and thus attain its minimum at some $\xi_0 \in \partial X$, thus we have,
        \[
            D(\gamma(s); \PP) = p_{\xi_0}(\gamma(s)) > \alpha
        \]
    as required. 

    To prove $\mu_*(\PP)$ is singleton, suppose for contradiction let $x, y \in \mu_*(\PP)$, then $\alpha = D_*(\PP)$. By the Centerpoint 
    theorem \ref{thm:depth_lbound} and Assumption \ref{assump:no_atom}, 
    $0 < 1/ (d+1) < \alpha < 1$, and thus by strict quasi-concavity, $D(\gamma(s); \PP) > D_*(\PP)$ for any $s \in (0,1)$, contradiction.

\end{proof}

\subsection{Proof of Corollary \ref{coro:stability}}

\begin{proof}[Proof]
The argument proceeds in three steps. Crucially, no assumptions are placed on $\mathbb{Q}$: the sharp peak of $D(\cdot;\PP)$ at $\mu_*(\PP)$ is the only structural ingredient, and this depends on $\PP$ alone.

\medskip
\noindent\textbf{Step 1: Sharp peak from strict quasi-concavity.}
Under $\sec_X < 0$ and 
Assumptions \ref{assump:no_atom}-\ref{assump_connected_supp}, Theorem \ref{thm:unique_rank_1}
gives strict quasi-concavity of $D(\cdot;\PP)$ along geodesics. Denote $z_* = \mu_*(\PP)$ and $D_* = D(z_*;\PP)$. Define the \emph{depth deficit function}:
\[
\delta(r) := D_* - \sup_{\{z: d(z,z_*) = r\}} D(z;\PP), \qquad r > 0.
\]
Since $D(z; \PP) < D_*$ for any $z \ne z_*$, we have $\delta(r) > 0$ for all $r > 0$: the depth strictly decreases as we move away from $z_*$. By 
Proposition \ref{prop:no_infty},
$D(z;\PP) \to 0$ as $d(o,z) \to \infty$, so $\delta(r) \to D_*$ as $r \to \infty$.

The function $\delta$ is non-decreasing (by monotonicity of depth along rays from $z_*$, 
Corollary \ref{coro:monotonicity})
Therefore $\delta$ has a well-defined inverse: for any $\eta > 0$, define
\[
\rho(\eta) := \inf\{r > 0 : \delta(r) \geq \eta\}.
\]
This is the radius of the $(D_*-\eta)$-depth region:
\begin{equation}\label{eq:nearly-deepest}
\{z : D(z;\PP) \geq D_* - \eta\} \subset \overline{B}(z_*, \rho(\eta)).
\end{equation}

\medskip
\noindent\textbf{Step 2: From depth perturbation to spatial perturbation.}
Let $\mathbb{Q}$ be \emph{any} Borel probability measure with $|\PP - \mathbb{Q}|_{\tv} \leq \eta$. By 
Theorem \ref{thm:robust}:
\[
|D(z;\PP) - D(z;\mathbb{Q})| \leq \eta \quad \forall z \in X.
\]
Let $\hat z \in \mu_*(\mathbb{Q})$ be any maximizer of $D(\cdot;\mathbb{Q})$ 
(existence in Lemma \ref{lemma:existence}). 
Since $\hat z$ maximizes $D(\cdot;\mathbb{Q})$:
\[
D(\hat z;\mathbb{Q}) \geq D(z_*;\mathbb{Q}) \geq D(z_*;\PP) - \eta = D_* - \eta.
\]
Applying the TV bound again:
\[
D(\hat z;\PP) \geq D(\hat z;\mathbb{Q}) - \eta \geq D_* - 2\eta.
\]
By \eqref{eq:nearly-deepest}: $D(\hat z;\PP) \geq D_* - 2\eta$ implies $d(z_*, \hat z) \leq \rho(2\eta)$. Since $\hat z$ was an arbitrary element of $\mu_*(\mathbb{Q})$:
\[
\sup_{z \in \mu_*(\mathbb{Q})} d(z, z_*) \le \rho(2|\PP - \mathbb{Q}|_{\tv}).
\]
Setting $\omega(\eta) := \rho(2\eta)$ gives the claimed bound.


\end{proof}

\section{Proofs in Section \ref{sec:robust}}

\subsection{Proof of Theorem \ref{thm:robust}}

\begin{proof}
    For any probability measures $\PP, \QQ$ and any measurable set $A$,  we have $|\PP(A) - \QQ(A)| \le \|\PP -\QQ\|_{TV}$. 
    \begin{align*}
        |D(z; \PP) - D(z; \QQ)| &= \left|\inf_{\xi \in \partial{X}} \PP(H^+_{\xi,z}) - \inf_{\xi \in \partial{X}} \QQ(H^+_{\xi,z}) \right| \\
        &\stackrel{(1)}{\le} \sup_{\xi \in \partial{X}} \left| \PP(H^+_{\xi,z}) - \QQ(H^+_{\xi,z}) \right| \le \|\PP - \QQ\|_{TV},
    \end{align*}
    where $(1)$ follows from Lemma \ref{lemma:inf_sup}. 
\end{proof}

\subsection{Proof of Corollary \ref{coro:huber}}

\begin{proof}
Since $\|\PP_\varepsilon-\PP\|_{\mathrm{TV}}\le \varepsilon$, the result follows immediately from 
Theorem \ref{thm:robust}.
\end{proof}

\subsection{Proof of Theorem \ref{thm:inclusion_region}}

\begin{proof}
If $z\in \DD^{\alpha+\varepsilon}(\PP)$, then $D(z;\PP)\ge \alpha+\varepsilon$, hence 
Corollary \ref{coro:huber}
gives $D(z;\PP_\varepsilon)\ge \alpha$. This proves the first inclusion. The second is analogous.
\end{proof}

\subsection{Proof of Corollary \ref{coro:median_huber}}

\begin{proof}
By Corollary \ref{coro:huber},
$\|\PP_\varepsilon-\PP\|_{\tv}\le \varepsilon$. The claim then follows from 
Corollary \ref{coro:stability}.
\end{proof}

\subsection{Proof of Theorem \ref{thm:boundary_robust}}

For notation convience, write
\[
D_t(z):=D(z;\PP_t)
=
\inf_{\eta\in\partial X}
\Bigl[(1-\varepsilon)\PP(H_{\eta,z}^+)
      +\varepsilon\,\mathbf 1\{B_\eta(\gamma(t))\ge B_\eta(z)\}
\Bigr].
\]
Further, set $a(z, \eta):=\PP(H_{\eta,z}^+)$ and $I_t(\eta,z):=\mathbf 1\{B_\eta(\gamma(t))\ge B_\eta(z)\}$. Thus
\[
D_t(z)=\inf_{\eta\in\partial X}\bigl((1-\varepsilon)a_z(\eta)+\varepsilon I_t(\eta,z)\bigr).
\]
We will need the following Lemma. 
\begin{lemma}
    $a(\cdot, \cdot)$ is continuous on $X\times \partial X$.
\end{lemma}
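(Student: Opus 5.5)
We prove joint continuity of $a(z,\eta)=\PP(\{x:B_\eta(x)\ge B_\eta(z)\})$ on $X\times\partial X$ by sequences, using Assumption \ref{assump:no_atom}. This is the same squeezing argument as in Lemma \ref{lemma:p_usc}, run in both directions and with $z$ allowed to move. Let $(z_n,\eta_n)\to(z,\eta)$ and set
\[
h_n(u):=B_{\eta_n}(u)-B_{\eta_n}(z_n),\qquad h(u):=B_\eta(u)-B_\eta(z),
\]
so that $a(z_n,\eta_n)=\PP(h_n\ge 0)$ and $a(z,\eta)=\PP(h\ge 0)$.

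First we show $h_n\to h$ uniformly on compact sets. By Lemma \ref{lemma:compact_uniform_convergence_busemann}, $B_{\eta_n}\to B_\eta$ uniformly on compact subsets of $X$. Since $\{z_n\}\cup\{z\}$ is contained in a compact set, we also get $|B_{\eta_n}(z_n)-B_\eta(z_n)|\to 0$. Because $B_\eta$ is $1$-Lipschitz (Proposition \ref{prop:cont}), $|B_\eta(z_n)-B_\eta(z)|\le d(z_n,z)\to 0$. Combining these, $h_n\to h$ uniformly on each closed ball $\overline B(o,R)$.

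Next fix $\varepsilon,\delta>0$ and choose $R$ with $\PP(X\setminus \overline B(o,R))<\delta$. For large $n$ we have $|h_n-h|<\varepsilon$ on $\overline B(o,R)$, which gives the inclusions
\[
\{h>\varepsilon\}\cap \overline B(o,R)\subset\{h_n\ge 0\},\qquad \{h_n\ge0\}\cap \overline B(o,R)\subset\{h>-\varepsilon\}.
\]
Taking probabilities,
\[
\PP(h>\varepsilon)-\delta\le \liminf_n a(z_n,\eta_n)\le\limsup_n a(z_n,\eta_n)\le \PP(h>-\varepsilon)+\delta.
\]
Now send $\delta\downarrow0$ and then $\varepsilon\downarrow0$. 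By continuity of measure, $\PP(h>\varepsilon)\uparrow\PP(h>0)$ and $\PP(h>-\varepsilon)\downarrow\PP(h\ge 0)$. Assumption \ref{assump:no_atom} gives $\PP(h=0)=\PP(\{B_\eta=B_\eta(z)\})=0$, so both limits equal $a(z,\eta)$. Hence $a(z_n,\eta_n)\to a(z,\eta)$.

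The only delicate point is the uniform convergence of $h_n$: the point $z_n$ and the direction $\eta_n$ vary simultaneously. This is handled by splitting $B_{\eta_n}(z_n)-B_\eta(z)$ into the two differences above, as done in the first step. Sequential continuity suffices because $X\times\partial X$ is metrizable: $\partial X$ with the cone topology of a proper space is compact metrizable. The upper half of the estimate could alternatively be taken from Proposition \ref{prop:usc}.
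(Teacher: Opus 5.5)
Your proof is correct and is essentially the paper's argument: the same $h_n,h$, the same two-sided squeeze on a large ball, and Assumption~\ref{assump:no_atom} to remove $\PP(h=0)$. You also spell out that $B_{\eta_n}(z_n)\to B_\eta(z)$, which the paper leaves implicit when it cites Lemma~\ref{lemma:compact_uniform_convergence_busemann}, and your closing aside is slightly off, since Proposition~\ref{prop:usc} fixes the boundary direction and so does not by itself give the upper half when $\eta_n$ varies, but nothing in your argument depends on it.
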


\begin{proof}
    Let $(z_n,\eta_n)\to (z,\eta)$ in $X\times \partial X$, and define
\[
h_n(u):=B_{\eta_n}(u)-B_{\eta_n}(z_n),
\qquad
h(u):=B_\eta(u)-B_\eta(z).
\]
By Lemma \ref{lemma:compact_uniform_convergence_busemann}, $h_n\to h$ uniformly on compact subsets of $X$.
Fix $\delta,\rho>0$, and choose $R>0$ such that $\PP\bigl(X\setminus B(o,R)\bigr)<\delta$. For all sufficiently large $n$, one has $|h_n-h|<\rho$ on $B(o,R)$, hence
\[
\{h>\rho\}\cap B(o,R)\subset \{h_n\ge 0\}\cap B(o,R)\subset \{h\ge -\rho\}\cap B(o,R).
\]
Therefore,
\[
\PP(h>\rho)-\delta
\le
\liminf_{n\to\infty} a(z_n,\eta_n)
\le
\limsup_{n\to\infty} a(z_n,\eta_n)
\le
\PP(h\ge -\rho)+\delta.
\]
Letting first $\delta\downarrow 0$ and then $\rho\downarrow 0$, and using 
Assumption \ref{assump:no_atom} 
to obtain
\[
\PP(h=0)=\PP\bigl(\{u:B_\eta(u)=B_\eta(z)\}\bigr)=0,
\]
we conclude that $a(z_n,\eta_n)\to a(z,\eta)$ as required.
\end{proof}

\begin{proof}[Proof of Theorem \ref{thm:boundary_robust}]

\medskip
\noindent
\textbf{Step 1: asymptotics of the contaminating indicator.}
Since $\gamma(\infty)=\xi$, 
Theorem \ref{thm:busemann_f_convex} part (2)
gives
\[
B_\xi(\gamma(t))=-t+B_\xi(\gamma(0))\to -\infty.
\]
Hence, for every fixed $z\in X$, $I_t(\xi,z)=0$ for all sufficiently large $t$.

Now fix $\eta\neq \xi$. Because $\Sec_X<0$, $X$ is a visibility manifold; a standard consequence is that every ray to $\xi$ leaves every horoball centered at a distinct boundary point $\eta$. Equivalently,
\[
B_\eta(\gamma(t))\to +\infty
\qquad\text{as } t\to\infty
\qquad (\eta\neq \xi).
\]
Hence, for every fixed $z\in X$ and every fixed $\eta\neq \xi$, $I_t(\eta,z)\to 1$. More generally, if $U$ is a neighborhood of $\xi$ in $\partial X$, then the visibility statement yields
\[
\inf_{\eta\in \partial X\setminus U} B_\eta(\gamma(t))\to +\infty.
\]
Since $\eta\mapsto B_\eta(z)$ is continuous on the compact set $\partial X\setminus U$ (Lemma \ref{lemma:compact_uniform_convergence_busemann}), it follows that $I_t(\eta,z)=1$ for all $\eta\in \partial X\setminus U$ once $t$ is large enough.

\medskip
\noindent\textbf{Step 2: pointwise convergence of the contaminated depth.}
\emph{Upper bound.}
Evaluating the infimum at $\eta=\xi$ and using Step 1 yields
\[
\limsup_{t\to\infty} D_t(z)\le (1-\varepsilon)a(z,\xi).
\]
Next, because $a(z,\cdot)$ is continuous and $\partial X\setminus\{\xi\}$ is dense in $\partial X$, we may choose a sequence $\eta_m\in\partial X\setminus\{\xi\}$ such that $a(z,\eta_m)\downarrow D(z;\PP)$. For each fixed $m$, Step 1 gives $I_t(\eta_m,z)=1$ for all sufficiently large $t$, hence
\[
\limsup_{t\to\infty} D_t(z)\le (1-\varepsilon)a(z,\eta_m)+\varepsilon.
\]
Letting $m\to\infty$ gives $\limsup_{t\to\infty} D_t(z)\le (1-\varepsilon)D(z;\PP)+\varepsilon$. Therefore,
\[
\limsup_{t\to\infty} D_t(z)
\le
\min\Bigl\{(1-\varepsilon)a(z,\xi),\ (1-\varepsilon)D(z;\PP)+\varepsilon\Bigr\}.
\]

\emph{Lower bound.}
Fix $\delta>0$. By continuity of $a(z,\cdot)$ at $\xi$, choose a neighborhood $U$ of $\xi$ such that $a(z,\eta)\ge a(z,\xi)-\delta$ for all $\eta\in U$. By Step 1, for all sufficiently large $t$ one has $I_t(\eta,z)=1$ for every $\eta\in \partial X\setminus U$. Hence, for all such $t$,
\begin{align*}
D_t(z)
&\ge
\min\Bigl\{
\inf_{\eta\in U}\bigl((1-\varepsilon)a(z,\eta)+\varepsilon I_t(\eta,z)\bigr),\
\inf_{\eta\in \partial X\setminus U}\bigl((1-\varepsilon)a(z,\eta)+\varepsilon\bigr)
\Bigr\}\\
&\ge
\min\Bigl\{
(1-\varepsilon)(a(z,\xi)-\delta),\
(1-\varepsilon)\inf_{\eta\in \partial X\setminus U}a(z,\eta)+\varepsilon
\Bigr\}\\
&\ge
\min\Bigl\{
(1-\varepsilon)(a(z,\xi)-\delta),\
(1-\varepsilon)D(z;\PP)+\varepsilon
\Bigr\}.
\end{align*}
Taking $\liminf_{t\to\infty}$ and then letting $\delta\downarrow 0$, we obtain
\[
\liminf_{t\to\infty} D_t(z)
\ge
\min\Bigl\{(1-\varepsilon)a(z,\xi),\ (1-\varepsilon)D(z;\PP)+\varepsilon\Bigr\}.
\]

Combining the upper and lower bounds gives
\[
D_t(z)\to D_\infty(z):=
\min\Bigl\{(1-\varepsilon)\PP(H^+_{\xi,z}),\ (1-\varepsilon)D(z;\PP)+\varepsilon\Bigr\}.
\]
This proves part \textup{(1)}.

\medskip
\noindent
\textbf{Step 3: description of the limiting depth regions.}
It follows, for $\alpha \in (0, 1 -\varepsilon]$, the $\alpha$-depth region of $D_\infty(z)$ takes the following form, 
\begin{align*}
    \DD^\alpha_\infty &:= \{z \in X: D_\infty(z; \PP) \ge \alpha\} \\
    &= \{z \in X: (1-\varepsilon)\PP(H_{\xi,z}^+) \ge \alpha\} \cap \{z \in X: (1-\varepsilon)D(z;\PP) + \varepsilon \ge \alpha\} \\
    &= \{z \in X: \PP(H_{\xi,z}^+) \ge \alpha / (1-\varepsilon)\} \cap \DD^{(\alpha - \varepsilon)/(1-\varepsilon)}(\PP) \\
    &= H_\xi(t_\xi(\alpha/(1 -\varepsilon))) \cap \DD^{(\alpha - \varepsilon)/(1-\varepsilon)}(\PP).
\end{align*}
This proves part (2).

\medskip
\noindent
\textbf{Step 4: convergence of contaminated Busemann medians.}
Assume now that $D_\infty$ has a unique maximizer $\mu_\infty$ and that $D_\infty(\mu_\infty)>\varepsilon$. Choose $\rho>0$ such that
\[
D_\infty(\mu_\infty)\ge \varepsilon+3\rho.
\]
Since $D(z;\PP)\to 0$ as $d(o,z)\to\infty$ by 
Proposition \ref{prop:no_infty},
there exists $R>0$ such that
\[
d(o,z)\ge R
\implies
(1-\varepsilon)D(z;\PP)+\varepsilon\le \varepsilon+\rho.
\]
Because $D_t(z)\le (1-\varepsilon)D(z;\PP)+\varepsilon$ for all $z\in X,\ t\ge 0$, we obtain
\[
d(o,z)\ge R \implies D_t(z)\le \varepsilon+\rho
\qquad\text{for all } t\ge 0.
\]
On the other hand, by part \textup{(1)},
\[
D_t(\mu_\infty)\to D_\infty(\mu_\infty)\ge \varepsilon+3\rho,
\]
so for all sufficiently large $t$, $D_t(\mu_\infty)\ge \varepsilon+2\rho$. Hence every maximizer $\mu_t\in \mu_*(\PP_t)$ must belong to the compact ball $\overline B(o,R)$ once $t$ is large enough.

Now let $t_n\to\infty$, and choose $\mu_n\in \mu_*(\PP_{t_n})$. After passing to a subsequence, we may assume $\mu_n\to \mu\in \overline B(o,R)$. We claim that
\begin{equation}\label{eq:limsup-moving}
\limsup_{n\to\infty} D_{t_n}(\mu_n)\le D_\infty(\mu).
\end{equation}
To prove this, first note that $B_\xi(\mu_n)$ is bounded (because $\mu_n\in \overline B(o,R)$), so Step 1 gives $I_{t_n}(\xi,\mu_n)=0$ for all large $n$. Hence, by continuity of $a$,
\[
\limsup_{n\to\infty} D_{t_n}(\mu_n)
\le
\lim_{n\to\infty}(1-\varepsilon)a(\mu_n,\xi)
=
(1-\varepsilon)a(\mu,\xi).
\]
Next choose a sequence $\eta_m\in\partial X\setminus\{\xi\}$ such that $a(\mu,\eta_m)\downarrow D(\mu;\PP)$. For each fixed $m$, Step 1 gives $I_{t_n}(\eta_m,\mu_n)=1$ for all large $n$ (again because $\mu_n$ remains in the fixed compact ball $\overline B(o,R)$). Therefore,
\[
\limsup_{n\to\infty} D_{t_n}(\mu_n)
\le
\lim_{n\to\infty}\bigl((1-\varepsilon)a(\mu_n,\eta_m)+\varepsilon\bigr)
=
(1-\varepsilon)a(\mu,\eta_m)+\varepsilon.
\]
Letting $m\to\infty$ gives
\[
\limsup_{n\to\infty} D_{t_n}(\mu_n)\le (1-\varepsilon)D(\mu;\PP)+\varepsilon.
\]
Combining the two bounds yields \eqref{eq:limsup-moving}.

Since $\mu_n$ maximizes $D_{t_n}$, we have $D_{t_n}(\mu_\infty)\le D_{t_n}(\mu_n)$ for every $n$. Passing to the limit superior and using part \textup{(1)} together with \eqref{eq:limsup-moving}, we obtain
\[
D_\infty(\mu_\infty)
=
\lim_{n\to\infty} D_{t_n}(\mu_\infty)
\le
\limsup_{n\to\infty} D_{t_n}(\mu_n)
\le
D_\infty(\mu).
\]
Thus $\mu$ is a maximizer of $D_\infty$. By uniqueness, $\mu=\mu_\infty$.

We have shown that every subsequential limit of $\mu_t$ equals $\mu_\infty$, thus  $\mu_t\to \mu_\infty$ in $X$. This proves part (3).

\medskip
\noindent
\textbf{Step 5: the Fr\'echet mean escapes to the boundary point $\xi$.}

Assume now that $\PP$ has finite second moment, and let
\[
F_t(z):=(1-\varepsilon)\int_X d^2(z,x)\,\PP(\dd x)+\varepsilon\, d^2(z,\gamma(t)).
\]
Let $\mu_t^F$ be any minimizer of $F_t$. Fix $M>0$, and set $y_M:=\gamma(M+1)$. Since $\PP$ has finite second moment and $y_M$ is fixed,
\[
C_M:=(1-\varepsilon)\int_X d^2(y_M,x)\,\PP(\dd x)<\infty.
\]
For any $z\in X$ with $B_\xi(z)\ge -M$, the function $u\mapsto d(z,\gamma(u))-u$ is decreasing in $u$ and converges to $B_\xi(z)$. Hence, for every $t\ge 0$,
\[
d(z,\gamma(t))-t\ge B_\xi(z)\ge -M,
\]
that is, $d(z,\gamma(t))\ge t-M$. Therefore $F_t(z)\ge \varepsilon (t-M)^2$ whenever $B_\xi(z)\ge -M$. On the other hand,
\[
F_t(y_M)=C_M+\varepsilon\, d^2(y_M,\gamma(t))
=
C_M+\varepsilon (t-M-1)^2.
\]
Thus
\[
F_t(z)-F_t(y_M)\ge \varepsilon\bigl((t-M)^2-(t-M-1)^2\bigr)-C_M
=
\varepsilon\bigl(2(t-M)-1\bigr)-C_M.
\]
The right-hand side tends to $+\infty$ as $t\to\infty$. Hence there exists $T_M$ such that for all $t\ge T_M$,
\[
B_\xi(z)\ge -M
\implies
F_t(z)>F_t(y_M).
\]
Since $\mu_t^F$ minimizes $F_t$, we conclude that for all $t\ge T_M$, $B_\xi(\mu_t^F)<-M$. Because $M>0$ was arbitrary, $B_\xi(\mu_t^F)\to -\infty$. The horoballs $\{z:B_\xi(z)\le -M\}$ form a neighborhood basis of $\xi$ in the visual compactification $X\cup\partial X$. Therefore $\mu_t^F\to \xi$ in $X\cup\partial X$. This proves part (4).

\end{proof}

\subsection{Proof of Theorem \ref{thm:breakdown}}

\begin{proof}
We first prove the chain of inequalities
\[
\varepsilon_{\mathrm{bd}}^\xi(\PP)\ge \varepsilon_{\mathrm{bd}}^\partial(\PP)\ge \varepsilon_{\mathrm{bd}}(\PP).
\]

By definition, $\varepsilon_{\mathrm{bd}}^\partial(\PP)=\inf_{\zeta\in\partial X}\varepsilon_{\mathrm{bd}}^\zeta(\PP)$, hence for every fixed $\xi\in\partial X$, $\varepsilon_{\mathrm{bd}}^\xi(\PP)\ge \varepsilon_{\mathrm{bd}}^\partial(\PP)$. Next we show $\varepsilon_{\mathrm{bd}}^\partial(\PP)\ge \varepsilon_{\mathrm{bd}}(\PP)$. Suppose $\mu_n\to \xi\in\partial X$ in the visual compactification $X\cup\partial X$. Since $\mu_*(\PP)$ is compact 
(Corollary \ref{thm:busemann_exist}),
we claim that $d(\mu_n,\mu_*(\PP))\to\infty$. Indeed, if not, then there exist $R<\infty$ and a subsequence $(\mu_{n_k})$ such that $d(\mu_{n_k},\mu_*(\PP))\le R$ for all $k$. Hence
\[
\mu_{n_k}\in \overline{B}(\mu_*(\PP),R):=\{x\in X:d(x,\mu_*(\PP))\le R\}.
\]
Since $\mu_*(\PP)$ is compact and $X$ is proper, the closed $R$-neighborhood
$\overline{B}(\mu_*(\PP),R)$ is compact. Therefore $(\mu_{n_k})$ has a further subsequence converging to a point of $X$, contradicting $\mu_n\to \xi\in\partial X$. This proves the claim. Thus every boundary breakdown sequence is also a coarse breakdown sequence, and so $\varepsilon_{\mathrm{bd}}^\partial(\PP)\ge \varepsilon_{\mathrm{bd}}(\PP)$.

It remains to prove the lower bound
\[
\varepsilon_{\mathrm{bd}}(\PP)\ge \frac{D_*(\PP)}{1+D_*(\PP)}.
\]
Fix $0\le \varepsilon<D_*(\PP)/(1+D_*(\PP))$, and set $\delta:=(1-\varepsilon)D_*(\PP)-\varepsilon>0$. Choose $\mu\in \mu_*(\PP)$ so that $D(\mu;\PP)=D_*(\PP)$. Let $\QQ$ be any Borel probability measure on $X$, and define
\[
\PP_\varepsilon:=(1-\varepsilon)\PP+\varepsilon \QQ.
\]
For every $\eta\in\partial X$,
\[
\PP_\varepsilon(H_{\eta,\mu}^+)
=
(1-\varepsilon)\PP(H_{\eta,\mu}^+)+\varepsilon \QQ(H_{\eta,\mu}^+)
\ge (1-\varepsilon)\PP(H_{\eta,\mu}^+).
\]
Taking the infimum over $\eta\in\partial X$ yields $D(\mu;\PP_\varepsilon)\ge (1-\varepsilon)D_*(\PP)$. On the other hand, by the contamination bound, $D(z;\PP_\varepsilon)\le D(z;\PP)+\varepsilon$ for all $z\in X$. Since $D(z;\PP)\to 0$ as $z\to\infty$, there exists a compact set $K\subset X$ such that $D(z;\PP)<\delta$  for all $z\notin K$. Therefore, for every $z\notin K$,
\[
D(z;\PP_\varepsilon)
\le D(z;\PP)+\varepsilon
<\delta+\varepsilon
=(1-\varepsilon)D_*(\PP)
\le D(\mu;\PP_\varepsilon).
\]
Hence no maximizer of $D(\cdot;\PP_\varepsilon)$ can lie outside $K$, that is, $\mu_*(\PP_\varepsilon)\subset K$ for every Borel probability measure $\QQ$. Now let $\{\QQ^{(n)}\}_n$ be any sequence of contaminating measures, and write
\[
\PP_\varepsilon^{(n)}:=(1-\varepsilon)\PP+\varepsilon\QQ^{(n)}.
\]
If $\mu_n\in \mu_*(\PP_\varepsilon^{(n)})$, then $\mu_n\in K$ for all $n$. Since both $K$ and
$\mu_*(\PP)$ are compact,
\[
\sup_{x\in K} d(x,\mu_*(\PP))<\infty.
\]
Therefore
\[
d(\mu_n,\mu_*(\PP))
\le \sup_{x\in K} d(x,\mu_*(\PP))
<\infty
\qquad\text{for all }n,
\]
so $d(\mu_n,\mu_*(\PP))\not\to\infty$. Thus $\varepsilon$ is strictly below $\varepsilon_{\mathrm{bd}}(\PP)$. Since every
\[
\varepsilon<\frac{D_*(\PP)}{1+D_*(\PP)}
\]
has this property, we conclude that
\[
\varepsilon_{\mathrm{bd}}(\PP)\ge \frac{D_*(\PP)}{1+D_*(\PP)}.
\]
Combining this with the first part gives
\[
\varepsilon_{\mathrm{bd}}^\xi(\PP)\ge
\varepsilon_{\mathrm{bd}}^\partial(\PP)\ge
\varepsilon_{\mathrm{bd}}(\PP)\ge
\frac{D_*(\PP)}{1+D_*(\PP)}.
\]
Finally, by 
Theorem~\ref{thm:depth_lbound}, 
$D_*(\PP)\ge 1/(d+1)$, and therefore
\[
\varepsilon_{\mathrm{bd}}(\PP)\ge
\frac{1/(d+1)}{1+1/(d+1)}
=
\frac{1}{d+2}.
\]
\end{proof}


\section{Proofs in Section \ref{sec:computation}}

\subsection{Proof of Lemma \ref{lemma:VC}}

\begin{lemma}\label{lem:pullback-vc}
Let $\pi:Y\to X$ be a surjective map, and let $\mathcal F\subset \mathcal P(X)$.\footnote{Here $\mathcal{P}(X)$ denotes the powerset of $X$.}
If
\[
\pi^{-1}\mathcal F:=\{\pi^{-1}(A):A\in\mathcal F\}
\]
is a VC class on $Y$, then $\mathcal F$ is a VC class on $X$.
\end{lemma}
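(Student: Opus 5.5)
The plan is to show that any set shattered by $\mathcal F$ lifts to a set of the same size shattered by $\pi^{-1}\mathcal F$. Then $\mathrm{VC}(\mathcal F)\le \mathrm{VC}(\pi^{-1}\mathcal F)<\infty$.

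Suppose $\{x_1,\dots,x_k\}\subset X$ consists of distinct points shattered by $\mathcal F$. By surjectivity of $\pi$, we pick preimages $y_i\in\pi^{-1}(x_i)$. These $y_i$ are distinct, since $\pi(y_i)=x_i$ are distinct.

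Fix any subset $S\subset\{1,\dots,k\}$. Shattering gives $A\in\mathcal F$ with $A\cap\{x_1,\dots,x_k\}=\{x_i:i\in S\}$. For each $i$, $y_i\in\pi^{-1}(A)$ if and only if $\pi(y_i)=x_i\in A$, which holds if and only if $i\in S$. Hence
\[
\pi^{-1}(A)\cap\{y_1,\dots,y_k\}=\{y_i:i\in S\},
\]
so $\{y_1,\dots,y_k\}$ is shattered by $\pi^{-1}\mathcal F$. It follows that $k\le \mathrm{VC}(\pi^{-1}\mathcal F)$, which gives the claim.

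There is no genuine obstacle here. The only point needing care is that surjectivity is what lets every $x_i$ be lifted. Distinctness of the lifts is automatic, because distinct images force distinct preimages.
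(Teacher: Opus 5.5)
Your proof is correct and follows the same argument as the paper. You lift each shattered point through the surjection, use $y_i\in\pi^{-1}(A)\iff x_i\in A$ to see that the lifts are shattered by $\pi^{-1}\mathcal F$, and conclude $\mathrm{VC}(\mathcal F)\le \mathrm{VC}(\pi^{-1}\mathcal F)$. Your explicit check that the lifts are distinct is a small detail the paper leaves implicit.
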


\begin{proof}
Suppose $x_1,\dots,x_m\in X$ are shattered by $\mathcal F$. Choose lifts
$y_i\in \pi^{-1}(x_i)$ for $1\le i\le m$. For every $I\subset \{1,\dots,m\}$,
pick $A_I\in\mathcal F$ such that $x_i\in A_I \iff i\in I$. Then
\[
y_i\in \pi^{-1}(A_I)\iff x_i\in A_I \iff i\in I,
\]
so $y_1,\dots,y_m$ are shattered by $\pi^{-1}\mathcal F$. Hence
\[
\operatorname{VCdim}(\mathcal F)\le \operatorname{VCdim}(\pi^{-1}\mathcal F).
\]
\end{proof}

\begin{lemma}\label{lemma:vc-horoballs-symmetric}
Let $X$ be a symmetric space of noncompact type. Then
\[
\mathcal H
:=
\bigl\{
\{x\in X:\, B_\xi(x)\ge t\}
:\ \xi\in \partial_\infty X,\ t\in\R
\bigr\}
\]
is a VC class.
\end{lemma}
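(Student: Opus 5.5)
We will prove that the class of upper horospherical sets $\{B_\xi\ge t\}$ is VC by exhibiting a finite-dimensional parametrization in which membership is decided by the sign of a function that depends on the parameters through finitely many polynomial (or Pfaffian) operations. The natural route is the Iwasawa decomposition. Write $X=G/K$ with $G=KAN$, let $\mathfrak a$ be the Lie algebra of $A$, and fix the origin $o=eK$. Every $\xi\in\partial_\infty X$ has the form $\xi=k\cdot\xi_H$ with $k\in K$ and $H$ a unit vector in a closed Weyl chamber $\overline{\mathfrak a^+}$, where $\xi_H=\lim_{s\to\infty}\exp(sH)\cdot o$. For such $\xi$ the Busemann function has the classical closed form
\[
B_{k\xi_H}(x)=-\bigl\langle H,\ \mathcal A(k^{-1}g)\bigr\rangle, \qquad x=g\cdot o,
\]
where $\mathcal A(g)\in\mathfrak a$ is the Iwasawa $A$-projection defined by $g\in K\exp(\mathcal A(g))N$. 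This is the general form of the $\bSS_p^+$ formula in Example~\ref{example:busemann_spd}, where $\mathcal A$ is the log-diagonal of a Cholesky factor. Lemma~\ref{lem:pullback-vc} lets us pass to $G$ through the surjection $\pi:G\to X$, $g\mapsto g\cdot o$. It then suffices to show that
\[
\{g\in G:\ \langle H,\mathcal A(k^{-1}g)\rangle\le -t\},\qquad (k,H,t)\in K\times \mathfrak a\times\R,
\]
is a VC class of subsets of $G$.

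For this step we realize $G$ as a real algebraic (or semialgebraic) subgroup of $GL_N(\R)$ through a faithful representation. Because $G$ is semisimple, the $A$-part of the Iwasawa decomposition is computed from leading principal minors. Concretely, we take the representation so that $K\subset O(N)$, $A$ is diagonal and $N$ is upper triangular in a compatible basis. Then for $h=k^{-1}g$, the diagonal entries of $\exp\mathcal A(h)$ are ratios of square roots of leading principal minors $\Delta_j(h^\top h)$, read in the appropriate order. Hence $\langle H,\mathcal A(h)\rangle=\sum_j c_j(H)\log \Delta_j(h^\top h)$, where the $c_j(H)$ are linear in $H$. The membership condition therefore becomes
\[
\sum_j c_j(H)\log \Delta_j(g^\top k k^\top g)\le -t,
\]
and since $kk^\top=I$ we must keep track of how the orthogonal factor $k$ enters; it does so through polynomial entries. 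This is a sign condition on a function of the data $g$ and the finite-dimensional parameter $(k,H,t)$, built from polynomials and logarithms of polynomials.

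We then conclude by the Karpinski--Macintyre bound (or Khovanskii/o-minimality, since $\R_{\exp}$ is o-minimal): a family of sets defined by a fixed Boolean combination of sign conditions on Pfaffian functions of bounded complexity, indexed by finitely many real parameters, has finite VC dimension. Equivalently, any definable family in an o-minimal structure is VC. This argument gives finiteness, and it is all we need here. The sharper bound $d+2$ for $\mathbb H^d$ is handled separately: in the hyperboloid model, $\{B_\xi\ge t\}$ is $\{x:\langle x,\ell\rangle_{\mathrm{Mink}}\le -e^{t}\}$ for a null vector $\ell$, which is a Euclidean halfspace intersected with the hyperboloid.

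The main obstacle is verifying, uniformly over all $\xi$ including singular boundary directions, that the Busemann function is exactly $-\langle H,\mathcal A(k^{-1}g)\rangle$ with $\mathcal A$ given by minors in one fixed representation. I would first cite the standard formula (Helgason; Bridson--Haefliger II.10 for $\bSS_p^+$) and then check that it extends continuously to the walls of the chamber. The pullback lemma avoids any need to make the quotient by $K$ or the isotropy of $\xi$ definable.
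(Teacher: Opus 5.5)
Your overall strategy matches the paper's proof. You pull back along $\pi:G\to X$ via Lemma~\ref{lem:pullback-vc}, write each $B_\xi-t$ as a finite linear combination of logarithms of polynomial quantities in matrix coordinates, and conclude from o-minimality. Principal minors of Gram matrices are squared norms of decomposable vectors in exterior powers. So your Iwasawa computation is an explicit, self-contained version of the representation-theoretic formula $\sum_i c_i\log\|\rho_i(\cdot)v_i\|$ that the paper cites, and it needs only $\R_{\exp}$.

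However, the displayed Busemann formula, the one substantive input, is wrong as written, and the argument does not go through with it. The $KAN$ projection is \emph{left} $K$-invariant: if $g=k_1\exp(\mathcal A(g))n$, then $k^{-1}g=(k^{-1}k_1)\exp(\mathcal A(g))n$, so $\mathcal A(k^{-1}g)=\mathcal A(g)$. This is exactly the cancellation $g^\top kk^\top g=g^\top g$ you ran into: $k$ does not enter ``through polynomial entries,'' it does not enter at all. Moreover, $g\mapsto\mathcal A(g)$ is not right $K$-invariant, so $-\langle H,\mathcal A(g)\rangle$ is not even a function on $X=G/K$. For example, in $\mathbb H^2$ let $k_\pi\in K$ be the rotation by $\pi$ about $o$. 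Then $\mathcal A(k_\pi a)=\log a$, whereas $k_\pi a\cdot o=a^{-1}\cdot o$ and $B_{\xi_H}(a^{-1}\cdot o)=+\langle H,\log a\rangle$. So the family you would prove to be VC is not $\pi^{-1}\mathcal H$.

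The repair is to put $K$ on the correct side.
\begin{itemize}
\item Since $k$ fixes $o$, $B_{k\xi_H}(g\cdot o)=B_{\xi_H}(h\cdot o)$ with $h=k^{-1}g$. For $h$ you need the $NAK$ decomposition $h=n\,a\,k'$, where $\mathfrak n$ is spanned by the positive root spaces of the chamber containing $H$. Then $B_{\xi_H}(h\cdot o)=-\langle H,\log a\rangle$.
\item Equivalently, $B_{k\xi_H}(g\cdot o)=\langle H,\mathcal A(g^{-1}k)\rangle$ with your $KAN$ projection $\mathcal A$.
\item In your matrix realization, $\log a$ is read off the \emph{trailing} principal minors of $hh^\top=k^\top gg^\top k$, not the leading minors of $h^\top h=g^\top g$. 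This is precisely the reversed Cholesky factor of $Q^\top XQ$ in Example~\ref{example:busemann_spd}, which your formula was meant to generalize.
\end{itemize}
Now $k$ genuinely enters, polynomially. Membership in $\pi^{-1}\{B_\xi\ge t\}$ becomes a sign condition on $\sum_j c_j(H)\log\Delta^{\mathrm{tr}}_j(k^\top gg^\top k)-t$, with $c_j$ linear in $H$ and parameters $(k,H,t)$ ranging over a semialgebraic set. This family is uniformly definable in $\R_{\exp}$, so your o-minimality (or Karpinski--Macintyre) conclusion then holds.

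Your continuity plan for the chamber walls is sound. Regular $H$ are dense in the unit sphere of $\overline{\mathfrak a^+}$, $H\mapsto\xi_H$ is continuous, and $\xi\mapsto B_\xi$ is continuous by Lemma~\ref{lemma:compact_uniform_convergence_busemann}.
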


\begin{proof}
Write $X=G /K$, where $G$ is a connected linear real reductive algebraic group and $K$ is a maximal compact subgroup, and let $\pi:G\to X$ be the quotient map. Define the pulled-back family
\[
\widetilde{\mathcal H}
:=
\{\pi^{-1}(H):\, H\in \mathcal H\}.
\]
By Lemma \ref{lem:pullback-vc}, it is enough to prove that
$\widetilde{\mathcal H}$ is a VC class on $G$. By the representation-theoretic description of Busemann functions \cite[Theorem 3.7]{solan2025geometric}, there exist finitely many real fundamental representations $\rho_1,\dots,\rho_r$ of $G$ and corresponding highest-weight vectors $v_1,\dots,v_r$ such that every function of the form $B_\xi-t$ can be written as
\[
B_\xi(\pi(g))-t = \sum_{i=1}^r c_i \log \|\rho_i(gg_0)v_i\| + C
\]
for suitable parameters $g_0\in G$, $c_1,\dots,c_r\ge 0$, and $C\in\R$. Therefore every set in $\widetilde{\mathcal H}$ has the form
\[
\left\{
g\in G:\,
\sum_{i=1}^r c_i \log \|\rho_i(gg_0)v_i\|
\ge s
\right\}
\]
for some $(g_0,c,s)\in G\times \R_{\ge 0}^r\times \R$. Choose matrix coordinates on $G\subset \mathrm{Mat}_N(\R)$ and on each
representation space of $\rho_i$. Consider the set
\[
\Sigma
:=
\left\{
(g,g_0,c,s)\in G\times G\times \R_{\ge 0}^r\times \R
:\,
\sum_{i=1}^r c_i \log \|\rho_i(gg_0)v_i\|\ge s
\right\}.
\]
This set is definable in the o-minimal structure $\R_{\mathrm{an},\exp}$ \cite{van1994real}:
the group $G$ is algebraic, multiplication on $G$ is algebraic, each $\rho_i$ is an
algebraic representation, the norm is semialgebraic, and $\log$ is definable in
$\R_{\mathrm{an},\exp}$.

Hence $\widetilde{\mathcal H}$ is a uniformly definable family in an o-minimal
structure. It follows that $\widetilde{\mathcal H}$ has finite VC density, and in
particular finite VC dimension \cite{johnson2010compression}. Thus $\widetilde{\mathcal H}$ is a VC class. Applying Lemma \ref{lem:pullback-vc}, we conclude that $\mathcal H$ is a VC class.
\end{proof}

\begin{corollary}\label{cor:vc-depth-family-symmetric}
The class
\[
\mathcal A
:=
\bigl\{
\{x\in X:\, B_\xi(x)\ge B_\xi(z)\}
:\ \xi\in \partial_\infty X,\ z\in X
\bigr\}
\]
is also a VC class.
\end{corollary}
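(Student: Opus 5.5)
The class $\mathcal A$ will be identified as a subclass of the class $\mathcal H$ of Lemma \ref{lemma:vc-horoballs-symmetric}. Once that inclusion is in place, the VC property is inherited automatically, because VC dimension is monotone under taking subfamilies: any finite set shattered by $\mathcal A$ is also shattered by $\mathcal H$.

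\textbf{Step 1 (inclusion).} Fix $\xi\in\partial_\infty X$ and $z\in X$, and put $t:=B_\xi(z)\in\R$. Then
\[
\{x\in X: B_\xi(x)\ge B_\xi(z)\}=\{x\in X: B_\xi(x)\ge t\},
\]
and the right-hand side is an element of $\mathcal H$ with parameters $(\xi,t)$. Hence $\mathcal A\subset\mathcal H$.

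\textbf{Step 2 (the two classes coincide).} For completeness one can note that the inclusion is an equality, although this is not needed. By Lemma \ref{lemma:slab}, $B_\xi$ is surjective onto $\R$, so every threshold $t$ is realized as $t=B_\xi(z)$ for some $z$.

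\textbf{Step 3 (monotonicity of VC dimension).} If $x_1,\dots,x_m$ are shattered by $\mathcal A$, then for each subset $I\subset\{1,\dots,m\}$ there is a set in $\mathcal A$, and therefore in $\mathcal H$, that picks out exactly $\{x_i:i\in I\}$. It follows that
\[
\operatorname{VCdim}(\mathcal A)\le\operatorname{VCdim}(\mathcal H)<\infty
\]
by Lemma \ref{lemma:vc-horoballs-symmetric}.

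The argument has no real obstacle. The only point requiring care is that the family $\mathcal H$ from the preceding lemma is parametrized by all pairs $(\xi,t)\in\partial_\infty X\times\R$, and in particular does not depend on a base point. This is consistent with the intrinsic nature of $H^+_{\xi,z}$ noted after \eqref{eqn:busemann_relation}: changing the origin only shifts $t$ by a constant.
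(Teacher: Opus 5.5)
Your argument is correct and is essentially the paper's proof: setting $t:=B_\xi(z)$ shows $\mathcal A\subset\mathcal H$, and the VC property passes to subfamilies. Your Step 2 (equality of the two classes via surjectivity of $B_\xi$) is also correct, but as you note it is not needed.
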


\begin{proof}
For every $(\xi,z)$, setting $t:=B_\xi(z)$ gives
\[
\{x\in X:\, B_\xi(x)\ge B_\xi(z)\}
=
\{x\in X:\, B_\xi(x)\ge t\}\in \mathcal H.
\]
Hence $\mathcal A\subset \mathcal H$, so $\mathcal A$ is VC.
\end{proof}

\begin{proposition}\label{prop:VC-hyperbolic-horoballs}
Let $X=\mathbb H^d$ be real hyperbolic $n$-space, and let
\[
\mathcal H:=\bigl\{\{x\in X:\ B_\xi(x)\ge t\}:\ \xi\in \partial_\infty X,\ t\in\R\bigr\},
\]
where $B_\xi$ is the Busemann function associated with $\xi$.
Then $\mathcal H$ is a VC class. In fact,
\[
\operatorname{VCdim}(\mathcal H)\le d+2.
\]
\end{proposition}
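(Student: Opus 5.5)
Use the upper half-space model of $\mathbb H^d$ and the Busemann functions there explicitly, so that every set in $\mathcal H$ becomes a Euclidean object. In the upper half-space model $\{(y,s):y\in\R^{d-1},\,s>0\}$ there are two kinds of boundary points.
\begin{itemize}
\item For $\xi=\infty$, the Busemann function is $B_\infty(y,s)=-\log s+c$. The superlevel sets $\{B_\infty\ge t\}$ are then the slabs $\{s\le e^{c-t}\}$.
\item For a finite $\xi=(a,0)$, one has $B_\xi(y,s)=\log\bigl((\|y-a\|^2+s^2)/s\bigr)+c$. The condition $B_\xi\ge t$ then reads $\|y-a\|^2+s^2-\lambda s\ge 0$ with $\lambda=e^{t-c}>0$. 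This is the complement of the open Euclidean ball tangent to $\{s=0\}$ at $a$.
\end{itemize}
So every set in $\mathcal H$ is the intersection with $\{s>0\}$ of a set of the form $\{(y,s):\, q(y,s)\ge 0\}$. Here $q$ ranges over functions of the form
\[
q(y,s)=A\bigl(\|y\|^2+s^2\bigr)+\langle b,y\rangle+e\,s+f,
\]
with real parameters $A,b,e,f$. Setting $A=0,b=0$ handles the horoballs centered at $\infty$.

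The key step is to note that all of these are positivity sets of a single finite-dimensional vector space of functions. That space is
\[
V=\operatorname{span}\{\|y\|^2+s^2,\ y_1,\dots,y_{d-1},\ s,\ 1\},
\]
which has dimension $1+(d-1)+1+1=d+2$. By the classical result of Dudley (Wenocur--Dudley), the class $\{\{q\ge 0\}:q\in V\}$ has VC dimension at most $\dim V=d+2$. Since $\mathcal H$ is a subclass, restricted to the subset $\{s>0\}$, its VC dimension is also at most $d+2$. Restricting to a subset cannot increase VC dimension, because any points shattered in the subset are shattered in the whole space.

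For completeness I would recall why Dudley's bound holds. Suppose $m=d+3$ points $x_1,\dots,x_m$ were shattered. The evaluation map $V\to\R^m$ has image of dimension at most $d+2<m$. Hence there is a nonzero $\beta\in\R^m$ orthogonal to the image, and we may assume some coordinate of $\beta$ is positive. Now take $q\in V$ whose positivity set picks exactly the indices with $\beta_i\ge 0$. Then $\sum_i\beta_i q(x_i)$ is a sum of a nonnegative part and a strictly positive part, contradicting $\sum_i\beta_i q(x_i)=0$. To get strictness one uses the standard variant with the sets $\{q>0\}$ and their complements. Alternatively, apply the argument to the class of complements, which has the same VC dimension.

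The main obstacle is the bookkeeping of the two forms of $B_\xi$, and of whether the sets are $\{\ge\}$ or $\{>\}$. Dudley's theorem applies to $\{q\ge0\}$ and $\{q>0\}$ alike. The only other check is that the Busemann normalizations by the constants $c$ are absorbed into $t$. The paper already notes that changing the base point shifts $B_\xi$ by a constant, so this is immediate.
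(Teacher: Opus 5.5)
Your argument is correct and is essentially the paper's: both proofs linearize the horosphere inequalities in the $(d+2)$-dimensional span of $\|x\|^2,x_1,\dots,x_d,1$; your $V$, with $x=(y,s)$, is exactly this space, and the paper, working in the ball model with no case split, lifts via $x\mapsto(x,\|x\|^2)$ to halfspaces in $\R^{d+1}$, whereas you work in the upper half-space model, which forces you to handle $\xi=\infty$ separately, and invoke Dudley's theorem directly. One sign slip in your recollection of Dudley's proof: with $I=\{i:\beta_i\ge 0\}$ you must normalize $\beta$ to have a \emph{negative} coordinate, so that $I^c\neq\varnothing$ supplies the strictly positive part; under your stated normalization (some coordinate positive) $I^c$ could be empty and no contradiction arises.
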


\begin{proof}
We work in the Poincar\'e ball model
\[
\mathbb H^d \cong \mathbb B^d:=\{x\in \R^d:\ \|x\|<1\},
\]
with ideal boundary $\partial_\infty \mathbb H^d=S^{d-1}$ Fix $\xi\in S^{d-1}$, the Busemann function is given by the standard formula
\[
B_\xi(x)=\log\frac{\|x-\xi\|^2}{1-\|x\|^2},
\qquad x\in \mathbb B^d.
\]
Hence, for any $t\in\R$,
\[
B_\xi(x)\ge t
\iff
\|x-\xi\|^2\ge e^t(1-\|x\|^2).
\]
Expanding the right-hand side gives $(1+e^t)\|x\|^2-2x\cdot \xi +(1-e^t)\ge 0$. Completing the square, we obtain
\[
\left\|x-\frac{1}{1+e^t}\xi\right\|^2 \ge \left(\frac{e^t}{1+e^t}\right)^2.
\]
Therefore
\[
\{x\in \mathbb H^d:\ B_\xi(x)\ge t\}
=
\mathbb B^d\cap
\left\{
x\in\R^d:\ 
\left\|x-\frac{1}{1+e^t}\xi\right\|
\ge
\frac{e^t}{1+e^t}
\right\}.
\]
Thus every set in $\mathcal H$ is the complement (inside $\mathbb B^d$) of an open Euclidean ball
whose center is $c_{\xi,t}:=1/(1+e^t)\xi$ and whose radius is $r_t:=e^t / (1+e^t)$. Since $|c_{\xi,t}|+r_t=1$, this Euclidean ball is tangent to the boundary sphere $S^{d-1}$ at the point $\xi$. It follows that $\mathcal H$ is a subfamily of the class of complements of Euclidean balls in $\R^d$. Since taking complements does not change VC dimension, it is enough to show that the
class of Euclidean balls in $\R^d$ is VC.

Let
\[
\mathcal B:=\bigl\{\overline{B}(c,r):\ c\in\R^d,\ r\ge 0\bigr\}.
\]
Define the lifting map
\[
\Phi:\R^d\to\R^{d+1},
\qquad
\Phi(x):=(x,\|x\|^2).
\]
Then
\[
x\in \overline{B}(c,r)
\iff
\|x-c\|^2\le r^2
\iff
\|x\|^2-2c\cdot x+\|c\|^2-r^2\le 0.
\]
Equivalently,
\[
x\in \overline{B}(c,r)
\iff
\langle (-2c,1),\Phi(x)\rangle \le r^2-\|c\|^2.
\]
Hence $\mathcal B$ is the pullback under $\Phi$ of the class of affine halfspaces in $\R^{d+1}$. Since affine halfspaces in $\R^{d+1}$ have VC dimension $d+2$,
the class $\mathcal B$ is VC with
\[
\operatorname{VCdim}(\mathcal B)\le d+2.
\]
Therefore the class of complements of Euclidean balls is also VC with the same bound, and so is its subfamily $\mathcal H$. 
\end{proof}

\begin{corollary}\label{cor:VC-depth-family-hyperbolic}
The class
\[
\mathcal A
:=
\bigl\{
\{x\in \mathbb H^d:\ B_\xi(x)\ge B_\xi(z)\}
:\ \xi\in \partial_\infty \mathbb H^d,\ z\in \mathbb H^d
\bigr\}
\]
is also a VC class.
\end{corollary}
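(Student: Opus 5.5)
This is essentially the proof of Corollary \ref{cor:vc-depth-family-symmetric}, now with Proposition \ref{prop:VC-hyperbolic-horoballs} supplying the quantitative input. We reduce the two-parameter family indexed by $(\xi,z)$ to the family $\mathcal H$ of upper Busemann superlevel sets indexed by $(\xi,t)$, and then inherit the VC property from Proposition \ref{prop:VC-hyperbolic-horoballs}.

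\textbf{Step 1: inclusion $\mathcal A\subset\mathcal H$.} Fix $\xi\in\partial_\infty\mathbb H^d$ and $z\in\mathbb H^d$, and put $t:=B_\xi(z)\in\R$. Then
\[
\{x\in\mathbb H^d: B_\xi(x)\ge B_\xi(z)\}=\{x\in\mathbb H^d: B_\xi(x)\ge t\}\in\mathcal H .
\]
Here $B_\xi$ is the Busemann function normalized at the origin of the Poincar\'e ball, which matches the formula used in Proposition \ref{prop:VC-hyperbolic-horoballs}. If another base point is used, Proposition \ref{prop:busemann_asymptotic} shows the sets are unchanged, so the normalization causes no trouble.

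\textbf{Step 2: monotonicity of VC dimension.} VC dimension is monotone under inclusion of set classes: any finite set shattered by $\mathcal A$ is also shattered by $\mathcal H$. Combining this with Step 1 and Proposition \ref{prop:VC-hyperbolic-horoballs} gives
\[
\operatorname{VCdim}(\mathcal A)\le\operatorname{VCdim}(\mathcal H)\le d+2.
\]
In particular $\mathcal A$ is a VC class. This is exactly the bound $\mathrm{VC}(\mathcal H^+)\le d+2$ claimed in Lemma \ref{lemma:VC}.

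No step is a real obstacle. The only point to check is that every $z$ yields a threshold $t=B_\xi(z)$ that is an admissible real parameter in $\mathcal H$, which is immediate. Surjectivity of $B_\xi$, shown in Lemma \ref{lemma:slab}, would also give the reverse inclusion $\mathcal H\subset\mathcal A$, so the two classes coincide, but the argument does not need this.
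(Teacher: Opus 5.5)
Your proposal is correct and follows the paper's proof: set $t:=B_\xi(z)$ to obtain $\mathcal A\subset\mathcal H$, then inherit the VC property from Proposition~\ref{prop:VC-hyperbolic-horoballs} by monotonicity of VC dimension under inclusion. Your additional remarks, that the base-point normalization is harmless and that the explicit bound $\operatorname{VCdim}(\mathcal A)\le d+2$ transfers, are accurate but not needed for the argument.
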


\begin{proof}
For each $\xi\in \partial_\infty \mathbb H^d$ and $z\in \mathbb H^d$, put
\[
t=B_\xi(z).
\]
Then
\[
\{x\in \mathbb H^d:\ B_\xi(x)\ge B_\xi(z)\}
=
\{x\in \mathbb H^d:\ B_\xi(x)\ge t\}\in \mathcal H.
\]
Hence $\mathcal A\subset \mathcal H$, and therefore $\mathcal A$ is VC.
\end{proof}

\subsection{Proof of Theorem \ref{thm:sample_converg}}
\label{proof:sample_converg}

\begin{proof}
    By Lemma \ref{lemma:inf_sup}, we have, given any $z \in X$,
    \[
    \left|\inf_{\xi}\PP_n(H^+_{\xi, z}) - \inf_{\xi}\PP(H^+_{\xi, z}) \right| \le \sup_{\xi} \left|\PP_n(H^+_{\xi, z}) - \PP(H^+_{\xi, z})\right|.
    \]
    It follows that, we have,
    \begin{align*}
        \sup_z \left| D(z; \PP_n) - D(z; \PP) \right| 
        \le \sup_{\xi,z} \left|\PP_n(H^+_{\xi, z}) - \PP(H^+_{\xi, z})\right|.
    \end{align*}
    Thus, it is sufficient to show 
    \[
    \sup_{\xi, z} \left|\PP_n(H^+_{\xi, z}) - \PP(H^+_{\xi, z})\right| \to 0, \quad a.s., 
    \]
    where the supremum is over $\{(\xi, z): \xi \in \partial X, z \in X\}$. 
    Denote $\mathcal{A}= \{\{x \in X: B_\xi(x) \ge B_\xi(z)\}:z \in X, \xi \in \partial X\}$, then we can write
    \[
    \sup_{\xi, z} \left|\PP_n(H^+_{\xi, z}) - \PP(H^+_{\xi, z})\right| \le \sup_{A \in \mathcal A} \left|\PP_n(A) - \PP(A)\right|.
    \]
    It's then sufficient to show $\mathcal A$ is Glivenko-Cantelli (GC) class. Under various regularity conditions, a class of sets is GC class if and only if it's Vapnik–Chervonenkis (VC) class, that is, of finite VC dimension. The result then follows from 
    Lemma \ref{lemma:VC}.

\end{proof}


    
    
\subsection{Proof of Theorem \ref{thm:sample_converg_2}}
\label{proof:sample_converg_2}
\begin{proof}
    First we note, since $X$ is a Hadamard manifold (separable metric space), Varadarajan's theorem \cite{varadarajan1958convergence} yields $\PP_n\Rightarrow \PP$ a.s.
    
    \medskip 
    \noindent
    \textbf{Step 1: point-wise convergence.}
    We claim that under 
    Assumption \ref{assump:no_atom},
    if $\PP_n\Rightarrow \PP$ and $(\xi_n,z_n)\to (\xi,z)$ in $\partial X\times X$, then
    \[
    \PP_n\bigl(H^+_{\xi_n,z_n}\bigr)\to\PP\bigl(H^+_{\xi,z}\bigr).
    \]
    Indeed, define $h_n(x):=B_{\xi_n}(x)-B_{\xi_n}(z_n)$ and $h(x):=B_\xi(x)-B_\xi(z)$. Fix $\varepsilon,\eta>0$ and choose $R>0$ so that
    \[
    \PP\bigl(X\setminus \overline{B}(o,R)\bigr)<\eta
    \qquad\text{and}\qquad
    \PP\bigl(\partial \overline{B}(o,R)\bigr)=0.
    \]
    Since $z_n\to z$ and $B_{\xi_n}\to B_\xi$ uniformly on compact sets (by Lemma \ref{lemma:compact_uniform_convergence_busemann}), we have
    \[
    \sup_{x\in \overline{B}(o,R)}|h_n(x)-h(x)|<\varepsilon
    \]
    for all sufficiently large $n$. Hence, for all large $n$,
    \[
    \{h>\varepsilon\}\cap \overline{B}(o,R)\subset H^+_{\xi_n,z_n}\cap \overline{B}(o,R)
    \subset \{h\ge -\varepsilon\}\cap \overline{B}(o,R).
    \]
    Therefore
    \[
    \PP_n(\{h>\varepsilon\})-\PP_n\bigl(X\setminus \overline{B}(o,R)\bigr)
    \le
    \PP_n(H^+_{\xi_n,z_n})
    \le
    \PP_n(\{h\ge -\varepsilon\})+\PP_n\bigl(X\setminus \overline{B}(o,R)\bigr).
    \]
    Since $\PP_n\Rightarrow \PP$ and $\overline{B}(o,R)$ is a $\PP$-continuity set,
    \[
    \PP_n\bigl(X\setminus \overline{B}(o,R)\bigr)\to \PP\bigl(X\setminus \overline{B}(o,R)\bigr)<\eta.
    \]
    By the Portmanteau theorem,
    \begin{align*}
        \PP(\{h>\varepsilon\}) - \eta \le \liminf_{n\to\infty}\PP_n(H^+_{\xi_n,z_n}) \le \limsup_{n\to\infty}\PP_n(H^+_{\xi_n,z_n}) \le 
        \PP(\{h\ge -\varepsilon\}) + \eta.
    \end{align*}
    Now let $\eta\downarrow 0$, then $\varepsilon\downarrow 0$. Since under 
    Assumption \ref{assump:no_atom}, 
    \[
    \PP(\{h=0\})=\PP\bigl(\{x:\, B_\xi(x)=B_\xi(z)\}\bigr)=0,
    \]
    we obtain $\PP_n(H^+_{\xi_n,z_n})\to \PP(H^+_{\xi,z})$, which proves the claim.
    
    \medskip
    \noindent
    \textbf{Step 2: uniform convergence on compact sets.}
    Fix a compact set $K\subset X$. We claim that
    \[
    \sup_{(\xi,z)\in \partial X\times K}
    \bigl|\PP_n(H^+_{\xi,z})-\PP(H^+_{\xi,z})\bigr| \to 0.
    \]
    Suppose not. Then there exist $\varepsilon>0$, integers $n_k\to\infty$, and points $\{(\xi_k,z_k)\}\in \partial X\times K$ such that
    \[
    \bigl|\PP_{n_k}(H^+_{\xi_k,z_k})-\PP(H^+_{\xi_k,z_k})\bigr|\ge \varepsilon
    \qquad\text{for all }k.
    \]
    Since $\partial X\times K$ is compact, after passing to a subsequence we may assume
    \[
    (\xi_k,z_k)\to (\xi,z)\in \partial X\times K.
    \]
    By Step 1, we have $\PP_{n_k}(H^+_{\xi_k,z_k})\to \PP(H^+_{\xi,z})$ and $\PP(H^+_{\xi_k,z_k})\to \PP(H^+_{\xi,z})$. Hence
    \[
    \bigl|\PP_{n_k}(H^+_{\xi_k,z_k})-\PP(H^+_{\xi_k,z_k})\bigr|\to 0,
    \]
    and contradiction. Thus
    \[
    \sup_{(\xi,z)\in \partial X\times K}
    \bigl|\PP_n(H^+_{\xi,z})-\PP(H^+_{\xi,z})\bigr|\to 0.
    \]
    It follows immediately that (by Lemma \ref{lemma:inf_sup})
    \[
    \sup_{z\in K}|D(z;\PP_n)-D(z;\PP)|
    \le
    \sup_{(\xi,z)\in \partial X\times K}
    \bigl|\PP_n(H^+_{\xi,z})-\PP(H^+_{\xi,z})\bigr|
    \to 0.
    \]
    So the depth converges uniformly on compact subsets of $X$.
    
    \medskip
    \noindent
    \textbf{Step 3: tail estimate.} Let $z\in X$ and $r:=d(o,z)$, consider a geodesic $\gamma:\R\to X$ with $\gamma(0)=o$ and $\gamma(r)=z$. Let $\eta=\gamma(-\infty)\in \partial X$ be the endpoint opposite to $z$. It follows that $B_\eta(z)=r$. Since $B_\eta(o)=0$ and $B_\eta$ is $1$-Lipschitz 
    (Proposition \ref{prop:cont}), 
    $B_\eta(x)\le d(o,x)$ for any $x \in X$. Hence
    \[
    H^+_{\eta,z}=\{x:\, B_\eta(x)\ge B_\eta(z)\} \subset \{x:\, d(o,x)\ge r\} = X\setminus B(o,r).
    \]
    Therefore, for every Borel probability measure $\PP$ on $X$ and every $R>0$,
    \begin{equation}\label{eqn:tail}
        \sup_{z\notin \overline{B}(o,R)} D(z;\PP) \le \PP\bigl(X\setminus \overline{B}(o,R)\bigr).
    \end{equation}
    Now fix $\delta>0$ and choose $R>0$ such that
    \[
    \PP\bigl(X\setminus \overline{B}(o,R)\bigr)<\delta
    \qquad\text{and}\qquad
    \PP\bigl(\partial \overline{B}(o,R)\bigr)=0.
    \]
    Since $\PP_n\Rightarrow \PP$, $\PP_n\bigl(X\setminus \overline{B}(o,R)\bigr) \to \PP\bigl(X\setminus \overline{B}(o,R)\bigr)$. Now we have
    \begin{align*}
    &\sup_{z\in X}|D(z;\PP_n)-D(z;\PP)| \\
    \le &\sup_{z\in \overline{B}(o,R)}|D(z;\PP_n)-D(z;\PP)| + \sup_{z\notin \overline{B}(o,R)}D(z;\PP_n) + \sup_{z\notin \overline{B}(o,R)}D(z;\PP).
    \end{align*}
    Using the compact-uniform convergence on $\overline{B}(o,R)$, the tail bound \eqref{eqn:tail}, and taking $\limsup$ as $n\to\infty$, we obtain
    \[
    \limsup_{n\to\infty}\sup_{z\in X}|D(z;\PP_n)-D(z;\PP)|
    \le 2\delta.
    \]
    Since $\delta>0$ was arbitrary, we have $\sup_{z\in X}|D(z;\PP_n)-D(z;\PP)|\to 0$ as required.
\end{proof}

\subsection{Proof of Theorem \ref{thm:sample_converg_depth_region}}

\begin{proof}
Parts (1) and (2) are the manifold version of the contour-convergence theorem of \cite{zuo2000structural}. The Euclidean proof uses only upper semicontinuity of the depth, vanishing at infinity, and uniform convergence of the sample depth; these ingredients are available here by 
Proposition \ref{prop:usc}, Proposition \ref{prop:no_infty}, and \eqref{eqn:u_conver}.

For part (3), let $\theta$ be the unique population median. Fix $r>0$. By uniqueness of the maximizer, upper semicontinuity of $D(\cdot;\PP)$, and vanishing at infinity, there exists $\eta_r>0$ such that
\[
\sup_{d(z,\theta)\ge r} D(z;\PP)\le D(\theta;\PP)-2\eta_r.
\]
On the almost sure event where 
\eqref{eqn:u_conver}
holds, for all sufficiently large $n$ we have
\[
\sup_{z\in X}\bigl|D(z;\PP_n)-D(z;\PP)\bigr|\le \eta_r.
\]
Therefore $D(\theta;\PP_n)\ge D(\theta;\PP)-\eta_r$, while every $z$ with $d(z,\theta)\ge r$ satisfies $D(z;\PP_n)\le D(\theta;\PP)-\eta_r$. Hence no maximizer of $D(\cdot;\PP_n)$ can lie outside $B(\theta,r)$, so $\theta_n\in B(\theta,r)$ for all sufficiently large $n$. Since $r>0$ was arbitrary, $\theta_n\to \theta$ almost surely.
\end{proof}

\end{document}